%
\documentclass{amsart}
\usepackage{amssymb}
\usepackage{amscd}
\usepackage{epsfig}
\usepackage{graphicx}

\usepackage{pgf,tikz}
\usepackage{braids}
\usepackage{caption}

\usetikzlibrary{arrows}
\usetikzlibrary{decorations.pathreplacing}

\title[Galois action on knots II]
{Galois action on knots II: \\
Proalgebraic string links and knots}

\author{Hidekazu Furusho}

\address{Graduate School of Mathematics, Nagoya University, 
Chikusa-ku, Furo-cho, Nagoya, 464-8602,  Japan}

\email{furusho@math.nagoya-u.ac.jp}

%

\subjclass[2010]{Primary 16W25; Secondary 11M32, 20F36, 57M25}
\keywords{}

\date{August 21, 2017}

\newtheorem{thm}{Theorem}[section]
\newtheorem{lem}[thm]{Lemma}
\newtheorem{cor}[thm]{Corollary}
\newtheorem{prop}[thm]{Proposition}  

\theoremstyle{remark}

\theoremstyle{definition}
\newtheorem{defn}[thm]{Definition}
\newtheorem{rem}[thm]{Remark}
\newtheorem{nota}[thm]{Notation}     

\newtheorem{eg}[thm]{Example}       
\newtheorem{conj}[thm]{Conjecture}    
   
\newtheorem{prob}[thm]{Problem}

\numberwithin{equation}{section}
\numberwithin{figure}{section}

\newcommand{\creation}{\rotatebox[origin=c]{180}{$\curvearrowleft$}}
\newcommand{\opcreation}{\rotatebox[origin=c]{180}{$\curvearrowright$}}
\newcommand{\annihilation}{\curvearrowright}
\newcommand{\opannihilation}{\curvearrowleft}

\def\orientedcircle{\unitlength.2ex
 \begin{minipage}{8\unitlength}
   \begin{tikzpicture}
     \draw[->] (0,0.125) arc (90:450:0.125);
   \end{tikzpicture}
 \end{minipage}
}

\def\diaCrossP{\unitlength.2ex
  \begin{minipage}{15\unitlength}
    \begin{picture}(15,15)
      \put(0,0){\vector(1,1){15}}
      \qbezier(15,0)(15,0)(10,5)
      \qbezier(5,10)(0,15)(0,15)
      \put(0,15){\vector(-1,1){0}}
    \end{picture}
  \end{minipage}
}
\def\diaCrossN{\unitlength.2ex
  \begin{minipage}{15\unitlength}
    \begin{picture}(15,15)
      \put(15,0){\vector(-1,1){15}}
      \qbezier(0,0)(0,0)(5,5)
      \qbezier(10,10)(15,15)(15,15)
      \put(15,15){\vector(1,1){0}}
    \end{picture}
  \end{minipage}
}



\def\diaProjected{\unitlength.1em
  \begin{minipage}{14\unitlength}
    \begin{picture}(14,14)
      \put(0,0){\vector(1,1){14}}
      \put(14,0){\vector(-1,1){14}}
    \end{picture}
  \end{minipage}
}
\begin{document}
\bibliographystyle{amsalpha+}
\maketitle

\begin{abstract}
We discuss an action of the Grothendieck-Teichm\"{u}ller proalgebraic group 
on the linear span of proalgebraic tangles, oriented tangles completed by a filtration of Vassiliev.
The action yields a motivic structure on tangles.
We derive distinguished properties of the action particularly on
proalgebraic string links and on proalgebraic knots which can not be observed in the action on 
proalgebraic braids.
By exploiting the properties,
we  explicitly calculate
the inverse image of the trivial (the chordless) chord diagram under the Kontsevich isomorphism.
\end{abstract}


\setcounter{section}{-1}
\section{Introduction}
This paper is a continuation of our previous paper \cite{F12},
where the action of the absolute Galois group on profinite knots
was constructed by an action of 
the Grothendieck-Teichm\"{u}ller {\it profinite} group $\widehat{GT}$ there.
While in this paper, the action of the motivic Galois group,
which is the Galois group of
the tannakian category of 
mixed Tate motives over ${\rm Spec}\;\mathbb Z$,
on the linear span of proalgebraic tangles
is deduced
from an action of the Grothendieck-Teichm\"{u}ller {\it proalgebraic} group $GT(\mathbb K)$ ($\mathbb K$: a field of characteristic $0$) there.

{\it Proalgebraic tangles} (Definition \ref{definition of proalgebraic knots})
means the $\mathbb K$-linear span of oriented tangles completed by a filtration \`{a} la Vassiliev.
{\it Proalgebraic $n$-string links} (resp. {\it proalgebraic knots})
are proalgebraic analogues of 
$n$-string links (a good example can be found in 
Figure \ref{example of string link}) (resp. knots) and
they are subspaces of proalgebraic tangles spanned by them.
In \S \ref{Proalgebraic tangles and chord diagrams}, we give 
a $GT(\mathbb K)$-action on proalgebraic tangles
by following a method indicated in \cite{KT98}.
The action is interpreted as an extension of the $GT(\mathbb K)$-action 
on the proalgebraic braids (reviewed in \S \ref{Proalgebraic braids and infinitesimal braids}) into the one on the proalgebraic tangles.
In \S \ref{Main results} we derive distinguished properties of
the $GT(\mathbb K)$-action 
on proalgebraic tangles 
which can not be observed in $GT(\mathbb K)$-action on proalgebraic braids.
Particularly 

{\bf Theorem A (Theorem \ref{inner automorphism theorem on tangles}
and Proposition \ref{Gm-action on knots}).}
{\it Let $GT_1(\mathbb K)$ be the unipotent part of $GT(\mathbb K)$. Then

(1) The $GT_1(\mathbb K)$-action on proalgebraic string links is given by an inner conjugation.

(2) The $GT_1(\mathbb K)$-action on proalgebraic knots is trivial,
which yields a non-trivial decomposition \eqref{grading decomposition of knots}
of each oriented knot.
}

They are  derived by Twistor Lemma (Lemma \ref{twistor lemma} and \ref{twistor lemma on GT}),
which can be seen as reformulations of \cite{AT12} Theorem 7.5,
\cite{AET} Theorem 2,
\cite{LM} Theorem 8 and \cite{LS} Theorem 2.1
in our setting,
though all of which originate from \cite{Dr} Theorem ${\mathrm A}^\prime$. 
The action on proalgebraic $n$-string links in Theorem A.(1)
is faithful for $n\geqslant 3$
and is not faithful (actually it is trivial) for $n=1$ (see Remark \ref{faithful-trivial}).
However the case for $n=2$ remains unsolved (Problem \ref{problem n=2}).
Since $GT(\mathbb K)$ contains the motivic Galois group,
our $GT(\mathbb K)$-action on proalgebraic tangles yields 
the Galois action there,
which yields a structure 
of 
mixed Tate motives of ${\rm Spec}\;\mathbb Z$
there.
Particularly Theorem A.(2) says that proalgebraic knots 
can be decomposed into infinite summations of Tate motives.

By exploiting the properties shown in the above theorem,
we explicitly determine the inverse image of the unit $e$, 
the chordless chord diagram on the oriented circle, 
under Kontsevich isomorphism $I$ in \eqref{Kontsevich's isomorphism}:

{\bf Theorem B (Theorem \ref{inverse image theorem}).}
{\it
Let $c_0$ be the proalgebraic knot which is the infinite summation of (topological) knots given in Figure \ref{c0-intro}.
Put 
\begin{equation}\label{gamma0-intro}
\gamma_0:=\orientedcircle-{c_0}+c_0\sharp c_0-c_0\sharp c_0\sharp c_0
+c_0\sharp c_0\sharp c_0\sharp c_0-\cdots
\end{equation}
where $\orientedcircle$ is the trivial knot and $\sharp$ is the product called the connected sum.
Then Kontsevich (knot) invariant of $\gamma_0$ becomes trivial, i.e.
$I(\gamma_0)=e$. Namely
$$
I^{-1}(e)=\gamma_0.
$$
}
We recall that the image $I(\orientedcircle)$ of the unit $\orientedcircle$,
the trivial knot, 
under the Kontsevich isomorphism $I$ 
was calculated in \cite{BLT}.
Hence the above theorem could be regarded as a calculation 
in an opposite direction to their calculation.

The contents of the paper go as follows:
\S \ref{Proalgebraic braids and infinitesimal braids}
is a review on Drinfeld's tools of Grothendieck-Teichm\"{u}ller groups
and their actions on braids,
which serves for a good understanding of the actions of the groups on
proalgebraic tangles given in \S \ref{Proalgebraic tangles and chord diagrams}.
Main results will be shown in \S \ref{Main results}.

{\bf Convention.}
In this paper, $\mathbb K$ means a commutative field with characteristic $0$.
(Actually we may more generally assume that it is a commutative ring containing the rational number field $\mathbb Q$.)
The symbols $\mathbb C$, $\mathbb R$, $\mathbb Q_p$, $\mathbb Z_p$, $\mathbb Z$
and $\widehat{\mathbb Z}$ stand for
the complex number field, the real number field, the $p$-adic number field ($p$: a prime),
the $p$-adic integer ring, the integer ring and its profinite completion respectively.
\section{Proalgebraic braids and infinitesimal braids}
\label{Proalgebraic braids and infinitesimal braids}
This is an expository section for non-experts on Drinfeld's works on the action of
the proalgebraic Grothendieck-Teichm\"{u}ller groups
on proalgebraic braids and also on infinitesimal braids
and also a short review on a relationship of the groups with
the motivic Galois group. 

\subsection{The $GT$-action}\label{GT-action on proalgebraic braids}
We recall  in Definition \ref{definition of proalgebraic GT} explicitly  the definition 
of the  Grothendieck-Teichm\"{u}ller group $GT(\mathbb K)$, 
a proalgebraic group introduced by  Drinfeld \cite{Dr},
and explain its action on the proalgebraic braids  $\widehat{{\mathbb K}[B_n]}$
for $n\geqslant 2$ in Proposition \ref{proalg-GT-action theorem on braids}.

\begin{nota}\label{various braids preparation}
(1)
Let $B_n$  be the {\it Artin braid group} with $n$-strings ($n\geqslant 2$)
with  standard generators $\sigma_i$
($1\leqslant i \leqslant n-1$) and defining relations
$\sigma_i\sigma_{i+1}\sigma_i=\sigma_{i+1}\sigma_i\sigma_{i+1}$ and
$\sigma_i\sigma_j=\sigma_j\sigma_i$ for $|i-j|>1$.
The generator $\sigma_i$ in $B_n$ is 
depicted as in Figure \ref{sigma}. 
And for $b$ and $b'\in B_n$, 
we draw the product $b\cdot b'\in B_n$ as in 
Figure \ref{product picture} 
(the order of product $b\cdot b'$ is chosen to combine the bottom endpoints of $b$
with the top endpoints of $b'$).
\begin{figure}[h]
\begin{tabular}{c}
  \begin{minipage}{0.5\hsize}
      \begin{center}
          \begin{tikzpicture}
                  \draw[-] (-0.2,0) --(-0.2, 0.5) ;
                    \draw[-] (0,0) --(0, 0.5) ;
                    \draw[dotted] (0.1,0.3) --(0.6, 0.3) ;
                    \draw[-] (0.7,0) --(0.7, 0.5) ;
                     \draw[decorate,decoration={brace,mirror}] (-0.3,0) -- (0.8,0) node[midway,below]{$i-1$};
                   \draw [-] (1.3,0)--(0.9,0.5);
               \draw[color=white, line width=5pt]  (0.9,0)--(1.3,0.5);
                   \draw [-] (0.9,0)--(1.3,0.5);

                   \draw[-] (1.5,0)--(1.5,0.5) ;
                    \draw[-] (1.7,0) --(1.7, .5)  ;

                    \draw[-] (2.6,0) --(2.6, .5)  ;
                   \draw[dotted] (1.8,0.3) --(2.5, 0.3)  ;
                    \draw[decorate,decoration={brace,mirror}] (1.4,0) -- (2.7,0) node[midway,below]{$n-i-1$};
        \end{tikzpicture}
            \caption{$\sigma_i$}
            \label{sigma}
     \end{center}
 \end{minipage}

 \begin{minipage}{0.5\hsize}
      \begin{center}
        \begin{tikzpicture}
                    \draw[-] (4.0,0.2) --(4.0, 0.5) (4.0,1.0) --(4.0, 1.5) (4.0,2.0) --(4.0, 2.3);
                    \draw[-] (4.1,0.2) --(4.1, 0.5) (4.1,1.0) --(4.1, 1.5) (4.1,2.0) --(4.1, 2.3);
                    \draw[dotted] (4.2,0.3) --(4.9, 0.3) (4.2,1.1)--(4.9,1.1)  (4.2,1.35)--(4.9,1.35) (4.2,2.2) --(4.9, 2.2); 
                    \draw[-] (5.0,0.2) --(5.0, 0.5) (5.0,1.0) --(5.0, 1.5) (5.0,2.0) --(5.0, 2.3) ;
                    \draw (3.9,0.5) rectangle (5.1,1);
                    \draw (4.5,0.7) node{$b'$};
                    \draw (3.9,1.5) rectangle (5.1,2.0);
                    \draw (4.5,1.7) node{$b$};
                    \draw[decorate,decoration={brace,mirror}] (3.9,0.2) -- (5.1,0.2) node[midway,below]{$n$};
                    \draw[decorate,decoration={brace}]  (3.9,2.3) -- (5.1,2.3) node[midway,above]{$n$};
                   \draw[color=white, line width=3pt] (3.8,1.25)--(5.2,1.25);

           \end{tikzpicture}
               \caption{$b\cdot b'$}
               \label{product picture}
      \end{center}
  \end{minipage}
\end{tabular}
\end{figure}

We denote the  pure part of $B_n$ by $P_n$, i.e. the kernel of the natural homomorphism $P_n\to\frak S_n$,
and call it by the {\it pure braid group}.
For $1\leqslant i<j\leqslant n$,
special elements
$$x_{i,j}=x_{j,i}=(\sigma_{j-1}\cdots\sigma_{i+1})\sigma_i^2(\sigma_{j-1}\cdots\sigma_{i+1})^{-1}$$
form a generating set of $P_n$.
For $1\leqslant a\leqslant a+\alpha<b\leqslant b+\beta\leqslant n$,
we define
\begin{align*}
x_{a\cdots a+\alpha,b\cdots b+\beta}:=
&(x_{a,b}x_{a,b+1}\cdots x_{a,b+\beta})\cdot
(x_{a+1,b}x_{a+1,b+1}\cdots x_{a+1,b+\beta}) \\
&\cdots(x_{a+\alpha,b}x_{a+\alpha,b+1}\cdots x_{a+\alpha,b+\beta})
\in P_n.
\end{align*}
They are drawn in Figure \ref{pure generator 1} and \ref{pure generator 2}.
\begin{figure}[h]
\begin{tabular}{c}
  \begin{minipage}{0.5\hsize}
      \begin{center}
           \begin{tikzpicture}
                 \draw[-] (-0.5,0)--(-0.5,1) (-0.2,0)--(-0.2,1) (0.2,0)--(0.2,1) (0.3,0)--(0.3,1) (0.6,0)--(0.6,1) (1.2,0)--(1.2,1) (1.5,0)--(1.5,1);
                 \draw[dotted] (-0.5,0.5)--(-0.2,0.5) (0.3,0.5)--(0.6,0.5) (1.2,0.5)--(1.5,0.5);
                 \draw[-] (0.8,0)--(0.8,0.5);
                 \draw[color=white, line width=7pt](0,0) ..controls(0.1,0.5) and (0.9,0)        ..(1,0.5);
                 \draw[-] (0,0) ..controls(0.1,0.5) and (0.9,0)        ..(1,0.5);
                 \draw[color=white, line width=7pt](1,0.5) ..controls(0.9,1.0) and (0.1,0.5)       ..(0,1);
                 \draw[-] (1,0.5) ..controls(0.9,1.0) and (0.1,0.5)       ..(0,1);
                 \draw[color=white, line width=5pt] (0.8,0.5)--(0.8,1);
                 \draw[-] (0.8,0.5)--(0.8,1);
\draw[decorate,decoration={brace,mirror}] (-0.6,-0.1) -- (-0.1,-0.1) node[midway,below]{$i-1$};
\draw[decorate,decoration={brace}] (-0.6,1.1) -- (0.6,1.1) node[midway,above]{$j-1$};
            \end{tikzpicture}
               \caption{$x_{ij}$}
               \label{pure generator 1}     \end{center}
 \end{minipage}

 \begin{minipage}{0.5\hsize}
     \begin{center}
          \begin{tikzpicture}
                 \draw[-] (-0.5,-0.1)--(-0.5,1.1) (-0.2,-0.1)--(-0.2,1.1)  (0.6,-0.1)--(0.6,1.1)  (0.7,-0.1)--(0.7,1.1) (0.8,-0.1)--(0.8,1.1) (1.9,-0.1)--(1.9,1.1) (2.3,-0.1)--(2.3,1.1);
                \draw[dotted] (-0.5,0.5)--(-0.2,0.5) (0.6,0.95)--(0.8,0.95) (1.9,0.5)--(2.3,0.5);
                 \draw[-] (1.0,0)--(1.0,0.5) (1.1,0)--(1.1,0.5) (1.3,0)--(1.3,0.5) ;
                 \draw[color=white, line width=10pt](0.15,0) ..controls(0.25,0.15) and (1.55,0.25)        ..(1.65,0.5);
                 \draw[-] (0,0) ..controls(0.1, 0.5) and (1.4, 0.25)        ..(1.5,0.5);
                 \draw[-] (0.3,0) ..controls(0.4,0.25) and (1.7,0.25)        ..(1.8,0.5);
                 \draw[color=white, line width=7pt] (1,0.5) ..controls(0.9,1.0) and (0.1,0.5)       ..(0,1);
                 \draw[-] (1.5,0.5) ..controls(1.4,0.75) and (0.1,0.5)       ..(0,1);
                 \draw[-] (1.8,0.5) ..controls(1.7,0.75) and (0.4,0.75)       ..(0.3,1);
                 \draw[color=white, line width=12pt] (1.15,0.5)--(1.15,1);
                 \draw[-] (1,0.5)--(1,1.1) (1.1,0.5)--(1.1,1.1) (1.3,0.5)--(1.3,1.1) ;
                \draw[dotted] (0.1,0.1)--(0.3,0.1) (0.1,0.9)--(0.3,0.9) (1,0.8)--(1.3,0.8) ;
\draw[decorate,decoration={brace,mirror}] (0,-0.1) -- (0.3,-0.1) node[midway,below]{\tiny{$\alpha+1$}};
\draw[decorate,decoration={brace,mirror}] (1,-0.1) -- (1.3,-0.1) node[midway,below]{\tiny{$\beta+1$}};
\draw[decorate,decoration={brace}] (-0.5,1.15) -- (-0.2,1.15) node[midway,above]{\tiny{$a-1$}};
\draw[decorate,decoration={brace}] (0.55,1.3) -- (0.8,1.3) node[midway,above]{\tiny{$b-a-\alpha-1$}};
         \end{tikzpicture}
               \caption{$x_{a\cdots a+\alpha,b\cdots b+\beta}$}
               \label{pure generator 2}
      \end{center}
  \end{minipage}
\end{tabular}
\end{figure}

We mean
$\widehat{{\mathbb K}[B_n]}$ 
by the completion of the group algebra ${\mathbb K}[B_n]$
with respect to the two-sided ideal $I$ generated by $\sigma_i-\sigma_i^{-1}$
for $1\leqslant i \leqslant n-1$;
$$\widehat{{\mathbb K}[B_n]}:=\varprojlim_N{\mathbb K}[B_n]/I^N$$
(cf. \cite{F12}).
By abuse of notation, we denote the induced filtration on
$\widehat{{\mathbb K}[B_n]}$
by the same symbol $\{I^n\}_{n\geqslant 0}$.
It is checked that $\widehat{{\mathbb K}[B_n]}$ is a filtered Hopf algebra.
We call its group-like part by  the {\it proalgebraic braid group} and
denote it by $B_n({\mathbb K})$.
It naturally admits a structure of a proalgebraic group over $\mathbb K$.
We note that it is  Hain's \cite{H} relative  completion of $B_n$
with respect to the natural projection $B_n\to\frak S_n$.

(2)
Similarly we denote its pure part by  $\widehat{{\mathbb K}[P_n]}$.
Namely
$$\widehat{{\mathbb K}[P_n]}:=\varprojlim_N{\mathbb K}[P_n]/I_0^N$$
with $I_0=I\cap {\mathbb K}[P_n]$.
It is also  a filtered Hopf algebra.
The  {\it proalgebraic pure braid group} $P_n({\mathbb K})$ means its group-like part.
We note that it  is a unipotent (Malcev) completion of $P_n$
because $I_0$ forms an augmentation ideal of $ {\mathbb K}[P_n]$.

(3)
Let ${F}_2({\mathbb K})$ be the  prounipotent algebraic group over $\mathbb K$,
the unipotent completion of
the free  group $F_2$ of rank $2$ with two variables $x$ and $y$, that is,
the group-like part of the Hopf algebra $\widehat{{\mathbb K}[F_2]}$
completed by the augmentation ideal.
Similarly to the convention in \cite{F12},
for any $f\in{F}_2({\mathbb K})$ and any homomorphism 
$\tau:{F}_2(\mathbb K)\to G(\mathbb K)$ of proalgebraic groups 
sending $x\mapsto \alpha$ and $y\mapsto \beta$,
the symbol $f(\alpha,\beta)$ stands for the image $\tau(f)$.
Particularly for the (actually injective) homomorphism ${F}_2(\mathbb K)\to {P}_n(\mathbb K)$
of proalgebraic groups
sending $x\mapsto x_{a\cdots a+\alpha,b\cdots b+\beta}$ and 
$y\mapsto x_{b\cdots b+\beta,c\cdots c+\gamma}$
($1\leqslant a\leqslant a+\alpha<b\leqslant b+\beta<c\leqslant c+\gamma
\leqslant n$),
the image of $f\in {F}_2(\mathbb K)$ is denoted by 
$f_{a\cdots a+\alpha,b\cdots b+\beta,c\cdots c+\gamma}$.
\end{nota}

The proalgebraic 
group ${GT}(\mathbb K)$
is defined by  Drinfeld \cite{Dr} to be a subgroup of the automorphism
(proalgebraic) group of ${F}_2(\mathbb K)$.

\begin{defn}[\cite{Dr}]\label{definition of proalgebraic GT}
The {\it proalgebraic Grothendieck-Teichm\"{u}ller group}
${GT}(\mathbb K)$ is the proalgebraic group over $\mathbb K$ whose set of $\mathbb K$-values points
forms a  subgroup of  $\mathrm{Aut}{F}_2(\mathbb K)$ and is
defined by
\begin{equation*}
{GT}(\mathbb K):=\Bigl\{
\sigma\in \mathrm{Aut}{F}_2(\mathbb K)\Bigm |
{\begin{array}{l}
\sigma(x)=x^\lambda, \sigma(y)=f^{-1}y^\lambda f
\text{ for some } (\lambda,f)\in {\mathbb K}^\times\times {F}_2(\mathbb K)\\
\text{satisfying the three relations below. }
\end{array}}
\Bigr\}
\end{equation*}
\end{defn}
\begin{equation}\label{proalg-2-cycle}
f(x,y)f(y,x)=1
\quad\text{ in } {F}_2(\mathbb K),
\end{equation}
\begin{equation}\label{proalg-hexagon equation}
f(z,x)z^m f(y,z)y^m f(x,y)x^m=1
\text{ in } {F}_2(\mathbb K)
\text{ with } z=(xy)^{-1},\ m=\frac{\lambda-1}{2},
\end{equation}
\begin{equation}\label{proalg-pentagon equation}
f_{1,2,34}f_{12,3,4}=f_{2,3,4}f_{1,23,4}f_{1,2,3}
\quad\text{ in } {P}_4(\mathbb K).
\end{equation}

The powers $x^\lambda$, $y^\lambda$, $x^m$, $y^m$, $z^m$
appearing in the above all make sense
because  $F_2(\mathbb K)$ is the prounipotent completion of $F_2$.
For $f_{1,2,34}$ etc., see Notation \ref{various braids preparation}.

We remark that each $\sigma\in {GT}(\mathbb K)$ determines a pair $(\lambda, f)$ uniquely
because the pentagon equation \eqref{proalg-pentagon equation} implies that
$f$ belongs to the  commutator of ${F}_2(\mathbb K)$.
By abuse of notation,
we occasionally express the pair $(\lambda,f)$ 
to represents  $\sigma$ and denote as
$\sigma=(\lambda,f)\in {GT}(\mathbb K)$. 

The above set-theoretically defined  ${GT}(\mathbb K)$ forms indeed a proalgebraic group
whose product is induced from that of $\mathrm{Aut}{F}_2(\mathbb K)$
and is given by
\footnote{
For our purpose to make \eqref{GT to Aut B_n} not anti-homomorphic but homomorphic, 
we reverse the order of the product given in  the original paper \cite{Dr}.}
\begin{equation}\label{product of proalg-GT}
(\lambda_2,f_2)\circ (\lambda_1,f_1)
:=\Bigl(\lambda_2\lambda_1, f_1(f_2x^{\lambda_2}f_2^{-1},y^{\lambda_2})\cdot f_2\Bigr)
=\Bigl(\lambda_2\lambda_1,f_2\cdot f_1(x^{\lambda_2},f_2^{-1}y^{\lambda_2}f_2)\Bigr).
\end{equation}
The first equality is the definition and the second equality can be easily verified.
We denote the subgroup of $GT(\mathbb K)$ with $\lambda =1$ by $GT_1(\mathbb K)$;
$$
{GT}_1(\mathbb K):=\left\{\sigma=(\lambda,f)\in GT(\mathbb K) \ | \
\lambda=1\right\}.
$$
It is easily seen that it forms a proalgebraic unipotent subgroup of  $GT(\mathbb K)$.

\begin{rem}
In some literatures, \eqref{proalg-2-cycle}, \eqref{proalg-hexagon equation} and \eqref{proalg-pentagon equation}
are called {\it 2-cycle, 3-cycle and 5-cycle relation} respectively.
The author often calls  \eqref{proalg-2-cycle} and \eqref{proalg-hexagon equation} 
by {\it two hexagon equations}
and \eqref{proalg-pentagon equation} by  {\it one pentagon equation}
because they reflect the three axioms, two hexagon and one pentagon axioms,
of braided monoidal (tensor) categories \cite{JS}.
We remind that \eqref{proalg-pentagon equation} represents
$$
f(x_{12},x_{23}x_{24})f(x_{13}x_{23},x_{34})=f(x_{23},x_{34})f(x_{12}x_{13},x_{24}x_{34})f(x_{12},x_{23})
\quad\text{ in } {P}_4(\mathbb K).
$$
In  several literatures such as \cite{Ih, F06}, the equation
\eqref{proalg-pentagon equation} is replaced by a different (more symmetric) formulation:
$$
f(x^*_{12},x^*_{23})f(x^*_{34},x^*_{45})f(x^*_{51},x^*_{12})f(x^*_{23},x^*_{34})f(x^*_{45},x^*_{51})=1
\quad\text{ in } {P}^*_5(\mathbb K)
$$
where $P_5^*$ is the pure sphere braid group with $5$-strings.
\end{rem}

The author actually showed that  the pentagon equation implies two hexagon equations:

\begin{prop}[\cite{F10}]
Let $\mathbb K$ be an algebraically closed field of characteristic $0$.
For each $f\in F_2(\mathbb K)$
satisfying  \eqref{proalg-pentagon equation},
there always exists (actually unique up to signature) $\lambda\in \mathbb K$ such that  
the pair $(\lambda, f)$ satisfies the two hexagon equations
\eqref{proalg-2-cycle} and \eqref{proalg-hexagon equation}.
\end{prop}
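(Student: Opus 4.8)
The plan is to work in the pro-unipotent (hence graded) setting, to read off the scalar $\lambda$ from the degree $\leqslant2$ part of $f$, and then to establish the two hexagons by a rigidity argument, degree by degree, driven by the pentagon. Writing $f$ as a group-like element of the completed Hopf algebra, $\log f=\sum_{n\geqslant1}(\log f)_n$ is a Lie series in $x,y$ with $(\log f)_n$ homogeneous of degree $n$; by the remark following \eqref{proalg-pentagon equation} the pentagon puts $f$ into the commutator subgroup, so $(\log f)_1=0$, whence $(\log f)_2=c\,[x,y]$ for a unique scalar $c\in\mathbb K$ (the degree-$2$ part of the free Lie algebra on $x,y$ being one-dimensional). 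I would then define $\lambda$: reading \eqref{proalg-hexagon equation} modulo degree $3$ and using $(\log f)_1=0$, the degree-$1$ part is automatically trivial, while the degree-$2$ part collapses to a single scalar identity which, in the normalization at hand, is $\lambda^2=1+24c$. I would take $\lambda$ to be either square root. This is precisely where the hypotheses enter — a square root must be extracted from an element of $\mathbb K$ (hence ``$\mathbb K$ algebraically closed'') and it is pinned down only up to sign (``unique up to signature''); conversely any $(\lambda',f)$ satisfying \eqref{proalg-2-cycle}--\eqref{proalg-hexagon equation} must have $(\lambda')^2=1+24c$, so $\lambda'=\pm\lambda$, which disposes of uniqueness.

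For the $2$-cycle \eqref{proalg-2-cycle}, which contains no $\lambda$, I would argue from the pentagon alone, by comparing \eqref{proalg-pentagon equation} with its image under the strand-reversal automorphism of $P_4$ (the $\mathbb Z/2$ inside the dihedral symmetry of $\overline{\mathcal M}_{0,5}$): using that $f$ lies in the commutator and the strand-forgetting homomorphisms $P_4\to P_3$, this comparison should force $f(x,y)\,f(y,x)$ to be annihilated, hence to equal $1$.

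The main obstacle is the $3$-cycle \eqref{proalg-hexagon equation}. Put $h:=f(z,x)\,z^m\,f(y,z)\,y^m\,f(x,y)\,x^m$ with $z=(xy)^{-1}$ and $m=(\lambda-1)/2$; then $h$ is group-like, its degree-$1$ part vanishes identically, and by the very choice $\lambda^2=1+24c$ so does its degree-$2$ part. The task is to prove $h=1$ in all degrees, and the plan is an induction on $n$: assuming $(\log h)_{<n}=0$, one expresses $(\log h)_n$ as a Lie expression linear in $(\log f)_n$ — pulled back along the substitutions $x\mapsto z,\,y\mapsto x$ and their cyclic variants — plus a correction assembled from $\lambda$ and the $(\log f)_{<n}$; the pentagon, read in degree $n$, forces the vanishing modulo lower-degree data of the linearized pentagon relation applied to $(\log f)_n$, and the crux is to deduce from this the vanishing of the linearized hexagon relation applied to $(\log f)_n$, namely of $(\log h)_n$ — the only degree at which the linearized pentagon relation fails to determine the hexagon value being degree $2$, which has already been absorbed into $\lambda$. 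At bottom this rests on the graded fact that the infinitesimal pentagon relation in the degree-$4$ infinitesimal braid Lie algebra $\mathfrak t_4$ implies the infinitesimal hexagon relations in $\hat{\mathfrak f}_2$; the hard part will be to propagate this from the associated graded to the group level — controlling the lower-degree corrections and the Baker--Campbell--Hausdorff nonlinearity so that the induction closes. The conceptual reason to expect closure is operadic: the hexagon is the relation carried by the $\mathbb Z/3$-symmetric boundary of $\overline{\mathcal M}_{0,4}$, which embeds into the boundary of $\overline{\mathcal M}_{0,5}$, so that the pentagon together with the $\mathbb Z/5$-rotation of $\overline{\mathcal M}_{0,5}$ already encodes it.
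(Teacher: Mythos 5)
Your extraction of $\lambda$ is sound: the pentagon kills the degree-one part of $\log f$, its degree-two part is $c\,[X,Y]$ (writing $x=e^X$, $y=e^Y$ in $F_2(\mathbb K)$), and the degree-two piece of \eqref{proalg-hexagon equation} does reduce to $\lambda^2=1+24c$, which also settles uniqueness up to sign. The genuine gap is that the two steps carrying all the content are asserted rather than proved. For the hexagon you reduce everything to the ``graded fact'' that the linearized pentagon in $\frak p_4$ forces the linearized hexagons in every degree $\geqslant 3$, and you leave the passage from the associated graded back to the group level (the control of the BCH and lower-degree corrections needed to close the induction) explicitly open. But that graded fact is not an available input: in the cited source \cite{F10} it is obtained as a \emph{corollary} of the group-level theorem you are trying to prove (apply the theorem over $\mathbb K[\varepsilon]/(\varepsilon^2)$ to $f=\exp(\varepsilon\psi)$ with $\psi$ homogeneous of degree $\geqslant 3$ satisfying the linearized pentagon), and your operadic remark about $\overline{\mathcal M}_{0,4}$ sitting in the boundary of $\overline{\mathcal M}_{0,5}$ is a heuristic, not an independent proof. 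So the plan is circular at its crux, and even granting the graded fact, the inductive claim that $(\log h)_n$ is the linearized hexagon of $(\log f)_n$ plus corrections that are forced to vanish is precisely the part you defer.

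The 2-cycle \eqref{proalg-2-cycle} is a second gap. Comparing \eqref{proalg-pentagon equation} with its image under the strand-reversal automorphism of $P_4$ and then applying strand-forgetting maps $P_4\to P_3$ does not visibly produce $f(x,y)f(y,x)=1$: since the pentagon already puts $f$ in the commutator subgroup, deleting any single strand trivializes the relevant factors (each such specialization of the pentagon, reversed or not, collapses to an empty identity), so ``should force'' is doing all the work, and in \cite{F10} the duality relation is extracted only through the same nontrivial analysis in the four- and five-strand algebras that yields the hexagon, not by a symmetry one-liner. Note finally that the present paper contains no proof of this proposition---it is quoted from \cite{F10}---and the argument there proceeds along a quite different and substantially more involved route than your associated-graded induction, which is why your reliance on the graded statement cannot be waved through.
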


The following Drinfeld's $GT(\mathbb K)$-action on $\widehat{{\mathbb K}[B_n]}$ 
plays a fundamental role in our paper here.

\begin{prop}[\cite{Dr}]\label{proalg-GT-action theorem on braids}
Let $n\geqslant 2$.
There is a continuous $GT(\mathbb K)$-action on 
the filtered Hopf algebra $\widehat{{\mathbb K}[B_n]}$
\begin{equation}\label{GT to Aut B_n}
\rho_n:GT(\mathbb K)\to \mathrm{Aut}\ \widehat{{\mathbb K}[B_n]}
\end{equation}
which is induced by, for each $\sigma=(\lambda,f)\in GT(\mathbb K)$,
\begin{equation*}
\rho_n(\sigma):
\begin{cases}
\sigma_1 \quad \mapsto \quad&\sigma_1^\lambda, \\
\sigma_i  \quad \mapsto  \quad f_{1\cdots i-1,i,i+1}^{-1}&\sigma_i^\lambda f_{1\cdots i-1,i,i+1}
\qquad (2\leqslant i\leqslant n-1). \\
\end{cases}
\end{equation*}
\end{prop}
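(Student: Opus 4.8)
\emph{Proof idea.} The plan is to verify that the displayed formula for $\rho_n(\sigma)$ on the generators $\sigma_1,\dots,\sigma_{n-1}$ has four properties, following Drinfeld: (a) it respects the Artin relations of $B_n$, hence extends to a unital $\mathbb K$-algebra homomorphism $\mathbb K[B_n]\to\widehat{\mathbb K[B_n]}$; (b) it carries each $\sigma_i-\sigma_i^{-1}$ into the ideal $I$, hence is filtered and extends continuously to an endomorphism of $\widehat{\mathbb K[B_n]}$; (c) that endomorphism is a morphism of filtered Hopf algebras; (d) $\rho_n$ is a homomorphism of groups for the product \eqref{product of proalg-GT}, which in particular forces each $\rho_n(\sigma)$ to be invertible. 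Throughout write $\widetilde\sigma_i:=\rho_n(\sigma)(\sigma_i)=g_i^{-1}\sigma_i^\lambda g_i$ with $g_i:=f_{1\cdots i-1,i,i+1}$ and $g_1:=1$; each $\widetilde\sigma_i$ is a conjugate of the unit $\sigma_i^\lambda$, so it is invertible.

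Step (a) is the substantial one and, I expect, the main obstacle. I would first dispose of the far-commutation relations $\widetilde\sigma_i\widetilde\sigma_j=\widetilde\sigma_j\widetilde\sigma_i$ for $|i-j|>1$ by locality of the cabled pure-braid generators: for $j\geqslant i+2$ both $\sigma_j^\lambda$ and $g_j=f(x_{1\cdots j-1,j},x_{j,j+1})$ commute with everything supported on the strands $1,\dots,i+1$ that carry $\widetilde\sigma_i$ (here one uses that the cabled generator $x_{1\cdots j-1,j}$ commutes with the whole braiding on strands $1,\dots,j-1$). The genuine content is the braid relation $\widetilde\sigma_i\widetilde\sigma_{i+1}\widetilde\sigma_i=\widetilde\sigma_{i+1}\widetilde\sigma_i\widetilde\sigma_{i+1}$, which is exactly where the three defining relations of $GT(\mathbb K)$ enter. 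For $i=1$, i.e.\ inside $\widehat{\mathbb K[B_3]}$, after setting $x_{1,2}=\sigma_1^2$ and $x_{2,3}=\sigma_2^2$ it amounts precisely to the two hexagon equations \eqref{proalg-2-cycle} and \eqref{proalg-hexagon equation} --- this is the familiar fact that the compatibility of a braiding $\sigma_i^\lambda$ with an associator-type element $\Phi=f(x_{1,2},x_{2,3})$ forces the three-strand (Yang--Baxter) braid relation. For general $i\geqslant 2$ I would bundle the strands $1,\dots,i-1$ into a single cable, so that $\sigma_i,\sigma_{i+1}$ only see the three ``objects'' $\{1,\dots,i-1\}$, $\{i\}$, $\{i+1\}$ (with $\{i+2,\dots,n\}$ an idle spectator on the right); the pentagon equation \eqref{proalg-pentagon equation} is exactly the coherence that legitimizes this cabling, allowing one to re-express $g_{i+1}=f_{1\cdots i,i+1,i+2}$ through $g_i$, $f_{i,i+1,i+2}$ and $f_{1\cdots i-1,\,i\cdots i+1,\,i+2}$ and so reduce the identity to the three-strand case. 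The delicate point in this reduction is the bookkeeping of which cabled copies of $f$ commute with which braidings.

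For (b): $\sigma_i^2-1=\sigma_i(\sigma_i-\sigma_i^{-1})\in I$, so $\sigma_i^2$ is unipotent in $\widehat{\mathbb K[B_n]}$, whence $\sigma_i^\lambda=\sigma_i\exp(\tfrac{\lambda-1}{2}\log\sigma_i^2)\equiv\sigma_i\pmod I$; likewise each $x_{k,l}$ is unipotent, so the group-like element $g_i$ lies in $1+I$ (indeed in $1+I^2$, $f$ being a commutator). Hence $\widetilde\sigma_i-\widetilde\sigma_i^{-1}=g_i^{-1}(\sigma_i^\lambda-\sigma_i^{-\lambda})g_i\equiv\sigma_i-\sigma_i^{-1}\pmod I$, so $\rho_n(\sigma)$ sends the generators of $I$ into $I$, hence $I^N$ into $I^N$ for all $N$, descends to each $\mathbb K[B_n]/I^N$, and passes to the inverse limit $\widehat{\mathbb K[B_n]}$ with continuous $\rho_n(\sigma)$. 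For (c): the $\sigma_i$, the powers $\sigma_i^\lambda$ (as $\log\sigma_i^2$ is primitive) and $g_i$ (the image of the group-like $f\in F_2(\mathbb K)$ under the cabling Hopf homomorphism $\widehat{\mathbb K[F_2]}\to\widehat{\mathbb K[P_n]}$) are all group-like, hence so is each $\widetilde\sigma_i$; this forces $\rho_n(\sigma)$ to respect the coproduct and counit on generators, hence everywhere. Finally, for (d) I would substitute the composite $(\lambda_2,f_2)\circ(\lambda_1,f_1)$ from \eqref{product of proalg-GT} into the defining formula and check $\rho_n((\lambda_2,f_2)\circ(\lambda_1,f_1))=\rho_n(\lambda_2,f_2)\circ\rho_n(\lambda_1,f_1)$ directly; the order of the product in \eqref{product of proalg-GT} was reversed from \cite{Dr} precisely so that this identity, and not its opposite, holds (cf.\ the footnote to that product formula). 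Invertibility of $\rho_n(\sigma)$ then follows from $\rho_n(\sigma^{-1})\circ\rho_n(\sigma)=\mathrm{id}$, which completes the verification.
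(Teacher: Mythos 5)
The paper does not actually prove this proposition: it is imported verbatim from Drinfel'd's paper \cite{Dr}, and the only justification the author supplies is the remark following the statement, namely that $\sigma_i^\lambda=\sigma_i\cdot(\sigma_i^2)^{\frac{\lambda-1}{2}}$ makes sense because $\sigma_i^2$ lies in the prounipotent completion $P_n(\mathbb K)$. So there is no in-paper argument to compare yours with; what you have written is a reconstruction of the standard proof, and as an outline it is sound. Your far-commutation argument is exactly right (the point being that $x_{1\cdots j-1,j}$ commutes with all braiding on strands $1,\dots,j-1$, and $x_{j,j+1}$ is disjointly supported), the identification of the $3$-strand braid relation with the two hexagon equations \eqref{proalg-2-cycle}--\eqref{proalg-hexagon equation} is the correct use of those axioms, and invoking the pentagon \eqref{proalg-pentagon equation} to rewrite $f_{1\cdots i,i+1,i+2}$ in terms of $f_{1\cdots i-1,i,i+1}$, $f_{i,i+1,i+2}$ and $f_{1\cdots i-1,\,i\,i+1,\,i+2}$ is precisely how the general-$i$ braid relation is reduced to the $3$-strand case; your steps (b) and (c) (stability of $I$, hence continuity, and group-likeness of $\sigma_i^\lambda$ and of the cabled $f$, hence Hopf compatibility) are also fine, and your (b) covers the one point the paper does comment on. The two places where real work remains are the ones you yourself flag: the commutation bookkeeping in the cabling reduction, and step (d), where verifying $\rho_n(\sigma_2\circ\sigma_1)=\rho_n(\sigma_2)\circ\rho_n(\sigma_1)$ against \eqref{product of proalg-GT} first requires computing $\rho_n(\sigma_2)$ on the cabled pure-braid generators $x_{1\cdots i-1,i}$ and $x_{i,i+1}$ (each is sent to a conjugate of its $\lambda_2$-th power), without which the substitution you describe cannot be carried out. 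Neither is a wrong turn --- they are exactly the computations in \cite{Dr} (and in Bar-Natan's categorical reformulation) --- but a complete proof would have to supply them.
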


Here $\sigma_i^\lambda:=\sigma_i\cdot (\sigma_i^2)^{\frac{\lambda-1}{2}}\in \widehat{{\mathbb K}[B_n]}$
and $f_{1\cdots i-1,i,i+1}=f(x_{1i}x_{2i}\cdots x_{i-1,i},x_{i,i+1})\in P_n(\mathbb K)$ (see Notation \ref{various braids preparation}).
.It is well defined because $\sigma_i^2$ belongs to the unipotent completion $P_n(\mathbb K)$
and ${\frac{\lambda-1}{2}}$-th power of $\sigma_i^2$ makes sense in $P_n(\mathbb K)$.
We denote $\rho_n(\sigma)(b)$ simply by $\sigma(b)$ 
when there is no confusion.

We note that $\rho_n$ is injective when $n\geqslant 3$.

\begin{rem}
For each prime $l$,
there are natural homomorphisms
$\widehat{B}_n\to B_n(\mathbb Q_l)$
and $\widehat{P}_n\to P_n(\mathbb Q_l)$.
Hence we have
$$
\widehat{B}_n\to\widehat{{\mathbb Q}_l[B_n]}.
$$
By natural homomorphisms $\widehat{\mathbb Z}\to {\mathbb Q}_l$,
$\widehat{F}_2\to F_2(\mathbb Q_l)$ and
$\widehat{P}_4\to P_4(\mathbb Q_l)$, we obtain a continuous group homomorphism
\begin{equation}\label{GT to GTQl}
\widehat{GT}\to GT(\mathbb Q_l)
\end{equation}
from the profinite Grothendieck-Teichm\"{u}ller group $\widehat{GT}$ (cf. \cite{F12}).
By direct calculations, it can be verified that the above two homomorphisms are
consistent with the $ GT(\mathbb Q_l)$-action on $\widehat{{\mathbb Q}_l[B_n]}$
in Proposition \ref{proalg-GT-action theorem on braids}
and the $\widehat{GT}$-action  on $\widehat{B}_n$
(given in \cite{Dr} and see also \cite{F12}).
\end{rem}

In \S \ref{Main results}, we will extend the action on  proalgebraic braids to the one on 
proalgebraic tangles
and will show that actually the above action on proalgebraic pure braid groups
is realized as an inner automorphism
of proalgebraic string links (Corollary \ref{inner action on braids}).

\subsection{The $GRT$-action}\label{GRT-action on infinitesimal braids}
We recall  explicitly
Drinfeld's definitions of the {\it graded} Grothendieck-Teichm\"{u}ller group $GRT(\mathbb K)$
in Definition \ref{definition of proalgebraic GRT}. 
We discuss  their actions on the algebra $\widehat{U\frak b_n}$ of 
infinitesimal braids for $n\geqslant 2$
in Proposition \ref{proalg-GRT-action theorem on braids}.

\begin{nota}
(1)
Let $\frak p_n$  be the {\it infinitesimal pure braid Lie algebra} with $n$-strings ($n\geqslant 2$)
with  standard generators $t_{ij}$
($1\leqslant i,j \leqslant n$) and defining relations
$t_{ii}=0$, $t_{ij}=t_{ji}$, $[t_{ij},t_{ik}+t_{jk}]=0$ and $[t_{ij},t_{kl}]=0$
when $i$,$j$,$k$,$l$ all differ.
For $1\leqslant a\leqslant a+\alpha<b\leqslant b+\beta\leqslant n$,
we define
\begin{align*}
t_{a\cdots a+\alpha,b\cdots b+\beta}:=
\sum_{0\leqslant i\leqslant\alpha, 0\leqslant j\leqslant \beta}t_{a+i,b+j}
\in {\frak p}_n.
\end{align*}
We denote  by $U\frak p_n$ its enveloping algebra and by $\widehat{U\frak p_n}$ 
its completion with respect to its augmentation ideal.

(2)
We denote $\frak S_n$ to be the symmetric group acting on $\{1,2,\ldots,n\}$ 
($n\geqslant 1$)
and $\tau_{i,i+1}$ in $\frak S_n$ to be the transpose of $i$ and $i+1$
($1\leqslant i\leqslant n-1$).
The group $\frak S_n$ acts $\widehat{U\frak p_n}$
by $\tau\cdot t_{ij}=t_{\tau^{-1}(i),\tau^{-1}(j)}$
for $\tau\in \frak S_n$ and $1\leqslant i,j \leqslant n$.
We may consider the crossed product (cf. \cite{Maj} etc.)
$$\widehat{U\frak b_n}:={\mathbb K}[\frak S_n]*\widehat{U\frak p_n}.$$
It is ${\mathbb K}[\frak S_n]\otimes_{\mathbb K}\widehat{U\frak p_n}$
as vector space
with the product structure given by 
$$
(\tau_1\otimes t_{i_1j_1})\cdot (\tau_2\otimes t_{i_2j_2}):=
\tau_1\tau_2\otimes (t_{\tau_2^{-1}(i_1)\tau_2^{-1}(j_1)}\cdot t_{i_2j_2})
$$
for $\tau_1,\tau_2\in\frak S_n$.
By abuse of notation, occasionally $\tau$ indicate $\tau\otimes 1$ for $\tau\in\mathbb K[\frak S_n]$
and $t$ indicate $1\otimes t$ for $t\in\widehat{U\frak p_n}$ in this paper.
Hence we have
\begin{equation}\label{tau and t}
\tau\cdot t_{ij}=t_{\tau(i)\tau(j)}\cdot  \tau \quad
(=\tau\otimes t_{ij})
\end{equation}
for  $\tau\in\frak S_n$.
We occasionally depict  $t_{ij}\in\widehat{U\frak p_n}$
as the diagram with $n$ vertical lines and a dotted horizontal line
(called a {\it chord})
connecting $i$-th and $j$-th lines 
and $\tau\in\frak S_n$ as the diagram connecting each $i$-th point  on the bottom
with $\tau(i)$-th point on the top by an interval.
The order of the product $b\cdot b'$ is chosen to combine the bottom endpoints of $b$ with
the top endpoints of $b'$. 
By putting 
$\deg t_{ij}=1\ (1\leqslant i,j \leqslant n)\text{ and }\deg \tau=0 \ (\tau\in \frak S_n),  $
we can show that both $\widehat{U\frak p_n}$
and $\widehat{U\frak b_n}$ carry structures of graded Hopf algebras.


(3)
Let ${\frak f}_2$ be the free Lie algebra over $\mathbb K$ with two variables $A$ and $B$
and $\widehat{U{\frak f}_2}$ be its completed Hopf algebra. 
Again similarly,
for any $g\in\widehat{U{\frak f}_2}$ and any algebra homomorphism 
$\tau:\widehat{U{\frak f}_2}\to {S}$
sending $A\mapsto v$ and $B\mapsto w$,
the symbol $g(v,w)$ stands for the image $\tau(g)$.
Particularly for the (actually injective) homomorphism 
$\widehat{U{\frak f}_2}\to\widehat{U{\frak p}_n}$
sending $A\mapsto t_{a\cdots a+\alpha,b\cdots b+\beta}$ and 
$B\mapsto t_{b\cdots b+\beta,c\cdots c+\gamma}$
($1\leqslant a\leqslant a+\alpha<b\leqslant b+\beta<c\leqslant c+\gamma
\leqslant n$),
the image of $g\in \widehat{U{\frak f}_2}$ is denoted by 
$g_{a\cdots a+\alpha,b\cdots b+\beta,c\cdots c+\gamma}\in 
\widehat{U{\frak p}_n}$.
\end{nota}

We note that $\widehat{U\frak p_n}$ is not algebraically generated by $t_{i,i+1}$ 
($1\leqslant i\leqslant n-1$)
while the following holds for $\widehat{U\frak b_n}$.

\begin{lem}
The algebra $\widehat{U\frak b_n}$ is algebraically generated by $t_{i,i+1}$ 
and $\tau_{i,i+1}$ for $1\leqslant i\leqslant n-1$.
\end{lem}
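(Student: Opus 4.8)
The plan is to show that the subalgebra $R$ of $\widehat{U\frak b_n}$ generated by $t_{i,i+1}$ and $\tau_{i,i+1}$ ($1\leqslant i\leqslant n-1$) is all of $\widehat{U\frak b_n}$. Since $\widehat{U\frak b_n}$ is (topologically) generated as an algebra by all the $t_{ij}$ together with all $\tau\in\frak S_n$, it suffices to prove that every $t_{ij}$ and every $\tau\in\frak S_n$ lies in (the closure of) $R$. For the symmetric group part this is classical: $\frak S_n$ is generated by the adjacent transpositions $\tau_{i,i+1}$, so $\mathbb K[\frak S_n]\subseteq R$. The content is therefore the claim that each $t_{ij}$ is expressible using the $t_{k,k+1}$ and the permutations.

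First I would record the key conjugation identity. From \eqref{tau and t}, for any $\tau\in\frak S_n$ one has $\tau\cdot t_{ij}\cdot\tau^{-1}=t_{\tau(i),\tau(j)}$ (note $\tau t_{ij}=t_{\tau^{-1}(i),\tau^{-1}(j)}\tau$, so conjugating the right way around hits the correct indices — I would be careful with the direction of the inverse here, but the upshot is that $\frak S_n$ acts on the set $\{t_{ij}\}$ by permuting subscripts, and the action is transitive on pairs $\{i,j\}$ with $i\neq j$). Hence, choosing for each pair $\{i,j\}$ a permutation $\tau$ in $\langle\tau_{1,2},\dots,\tau_{n-1,n}\rangle$ with $\{\tau(1),\tau(2)\}=\{i,j\}$ — which exists since the adjacent transpositions generate $\frak S_n$ and $\frak S_n$ is $2$-transitive on $\{1,\dots,n\}$ for $n\geqslant 2$ — we get
\begin{equation*}
t_{ij}=\tau\cdot t_{1,2}\cdot\tau^{-1}\in R,
\end{equation*}
using $t_{1,2}\in R$ and $\tau,\tau^{-1}\in\mathbb K[\frak S_n]\subseteq R$. (Equivalently one can bootstrap: $t_{i,i+1}\in R$ by hypothesis, and $\tau_{j,j+1}t_{i,i+1}\tau_{j,j+1}=t_{i',i'+1\text{ or }i\pm1}$ lets one walk the ``chord'' across all strand pairs.) Thus all generators $t_{ij}$ of $\widehat{U\frak p_n}$, hence all of $\widehat{U\frak p_n}$, and together with $\mathbb K[\frak S_n]$ all of the crossed product $\widehat{U\frak b_n}$, lie in $R$.

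Finally I would address the topological/completion bookkeeping: $\widehat{U\frak b_n}=\mathbb K[\frak S_n]\otimes_{\mathbb K}\widehat{U\frak p_n}$ is complete with respect to the augmentation filtration, $\widehat{U\frak p_n}$ is the completion of the polynomial algebra on the $t_{ij}$, and ``generated'' in the lemma should be read in the complete (filtered) sense; since $R$ contains all $t_{ij}$ and all of $\mathbb K[\frak S_n]$, its closure contains every element of $\widehat{U\frak b_n}$, so $\overline R=\widehat{U\frak b_n}$. The only genuinely delicate point — and the one I would take most care with — is the direction-of-inverse convention in the conjugation formula \eqref{tau and t}, i.e. making sure the permutation chosen actually carries the pair $\{1,2\}$ to the desired pair $\{i,j\}$ under the correct (not inverse) action; everything else is a short bootstrap from the defining relations. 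I expect no real obstacle beyond this index-chasing.
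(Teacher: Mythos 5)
Your proposal is correct and follows essentially the same argument as the paper: the adjacent transpositions generate $\frak S_n$, and the relation \eqref{tau and t} lets one conjugate $t_{1,2}$ (or $t_{i,i+1}$) by permutations to obtain every $t_{kl}$, whence the crossed product is generated as claimed. The index/inverse direction you flag is harmless, since transitivity of the $\frak S_n$-action on pairs makes either convention work.
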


\begin{proof}
The elements $\tau_{i,i+1}$ generate $\frak S_n$ and, by \eqref{tau and t},
any $t_{kl}$ is obtained from  $t_{i,i+1}$ 
and $\tau_{i,i+1}$  ($1\leqslant i\leqslant n-1$),
which yields our claim.
\end{proof}

The proalgebraic 
group ${GRT}_1(\mathbb K)$
is defined by  Drinfeld \cite{Dr} to be a proalgebraic subgroup of the automorphism
(proalgebraic) group of $\exp{\frak f}_2$.

\begin{defn}[\cite{Dr}]\label{definition of proalgebraic GRT}
The {\it proalgebraic graded Grothendieck-Teichm\"{u}ller group}
${GRT}(\mathbb K)$ is
the subgroup of  $\mathrm{Aut} \exp {\frak f}_2$
defined by
\begin{equation*}
{GRT}(\mathbb K):=\Bigl\{
\sigma\in \mathrm{Aut} \exp {\frak f}_2\Bigm |
{\begin{array}{l}
\sigma(e^{A})=e^{A/c}, \sigma(e^B)=g^{-1}e^{B/c} g
\text{ for some }  c\in {\mathbb K}^\times \\
\text{and }
g\in \exp {\frak f}_2
\text{ satisfying two hexagon equations}\\
\text{\eqref{GRT-2-cycle}-\eqref{GRT-hexagon equation} 
and  one pentagon equation \eqref{GRT-pentagon equation} below.}
\end{array}}
\Bigr\}
\end{equation*}
\end{defn}
\begin{equation}\label{GRT-2-cycle}
g(A,B)g(B,A)=1
\quad\text{ in } \exp{\frak f}_2,
\end{equation}
\begin{equation}\label{GRT-hexagon equation}
g(C,A)
g(B,C)
g(A,B)=1\quad\text{ in } \exp{\frak f}_2
\text{  with }  C=-A-B,
\end{equation}
\begin{equation}\label{GRT-pentagon equation}
g_{1,2,34}g_{12,3,4}=g_{2,3,4}g_{1,23,4}g_{1,2,3}
\quad\text{ in } \exp{\frak p}_4.
\end{equation}

Similarly to Definition \ref{definition of proalgebraic GT},
we  remark that each $\sigma\in {GRT}(\mathbb K)$ determines a pair $(c, g)$ uniquely.
By abuse of notation,
we occasionally express the pair $(c,g)$ 
to represents  $\sigma\in {GRT}(\mathbb K)$ and denote as
$\sigma=(c,g)\in {GRT}(\mathbb K)$. 

The above set-theoretically defined  ${GRT}(\mathbb K)$ forms indeed a proalgebraic group
whose product is induced from that of $\mathrm{Aut} \exp{\frak f}_2(\mathbb K)$
and is given by
\footnote{
We remark again that, for our purpose, we reverse the order of the product given in \cite{Dr}.}
\begin{equation}\label{product of proalg-GRT}
(c_2,g_2)\circ (c_1,g_1)
=\left(c_2c_1,\ g_1\left(g_2\frac{A}{c_2}g_2^{-1},\ \frac{B}{c_2}\right)\cdot g_2\right)
=\left(c_2c_1,\ g_2\cdot g_1\left(\frac{A}{c_2},\ g_2^{-1}\frac{B}{c_2} g_2\right)\right).
\end{equation}
The first equality is the definition and the second equality can be easily verified.
Notice the simple equality
$
(1,g)\circ (c,1)=(c,g).
$
We denote the subgroup of $GRT(\mathbb K)$ with $c =1$ by $GRT(\mathbb K)_1$,
i.e.
$
{GRT}_1(\mathbb K):=\left\{\sigma=(c,g)\in GRT(\mathbb K) \ | \
c=1\right\}.
$
It is easily seen that it forms a proalgebraic unipotent subgroup of  $GRT(\mathbb K)$.

\begin{rem}
The symbol $GRT$ stands for \lq graded Grothendieck-Teichm\"{u}ller group.'
Indeed its grading on $GRT_1(\mathbb K)$
is equipped by the action of ${\mathbb G}_m(\mathbb K)$ 
$(={\mathbb K}^\times)$ given by 
\begin{equation}\label{Gm-action}
\left(1, g(A,B)\right)\mapsto\left(1,\ g(\frac{A}{c},\frac{B}{c})\right) 
\end{equation}
for $c\in {\mathbb K}^\times$ and $g\in GRT_1(\mathbb K)$,
which is reformulated by $(1,g)\mapsto (c,1)\circ (1,g)$.
Thus we have, by the action,
$$
GRT(\mathbb K)={\mathbb K}^\times\ltimes GRT_1(\mathbb K).
$$
\end{rem}

Two specific elements of $GRT(\mathbb K)$ are known.

\begin{eg}
(1)
The {\it $p$-adic Drinfeld associator} $\varPhi^p_\mathrm{KZ}(A,B)$, 
a $p$-adic analogue of the Drinfeld (KZ-)associator (cf. Example \ref{two associators})  
is a non-commutative formal power
series whose coefficients are $p$-adic multiple zeta values \cite{F04} . 
It was constructed
as a regularized holonomy of the $p$-adic KZ-equation
and was shown in \cite{F07} by the results of \cite{U} that it belongs to $GRT_1(\mathbb K)$ 
with ${\mathbb K}={\mathbb Q}_p$. 

(2)
The {\it $p$-adic Deligne associator} $\varPhi^p_\mathrm{De}(A,B)$,
a variant of the above $\varPhi^p_\mathrm{KZ}(A,B)$ (cf. \cite{F07})
is shown  in \cite{U} to be in $GRT_1(\mathbb K)$ with ${\mathbb K}={\mathbb Q}_p$.
It was in \cite{F07} shown that each of its coefficients is given by a  certain polynomial combination
of $p$-adic multiple zeta values.
\end{eg}

The following  $GRT(\mathbb K)$-action on 
$\widehat{U\frak b_n}$ 
was explicitly presented neither in Drinfeld's paper\cite{Dr} 
nor  Bar-Natan's paper \cite{Bar98},
where  they showed $GRT_1(\mathbb K)$-action there.

\begin{prop}\label{proalg-GRT-action theorem on braids}
Let $n\geqslant 2$.
There is a continuous $GRT(\mathbb K)$-action on the graded Hopf algebra 
$\widehat{U\frak b_n}$
\begin{equation}\label{GRT to Aut Ub_n}
\rho_n:GRT(\mathbb K)\to \mathrm{Aut}\ \widehat{U\frak b_n}
\end{equation}
which is induced by, for each $\sigma=(c,g) \in GRT(\mathbb K)$,
\begin{equation*}
\rho_n(\sigma):
\begin{cases}
t_{1,2} \quad \mapsto \quad&\frac{t_{1,2}}{c}, \\
t_{i,i+1}  \quad \mapsto  \quad g_{1\cdots i-1,i,i+1}^{-1}&\frac{t_{i,i+1}}{c} \
g_{1\cdots i-1,i,i+1}
\qquad (2\leqslant i\leqslant n-1), \\
\tau_{1,2} \quad \mapsto \quad&\tau_{1,2}, \\
\tau_{i,i+1}  \quad \mapsto  \quad g_{1\cdots i-1,i,i+1}^{-1}&\tau_{i,i+1} \
g_{1\cdots i-1,i,i+1}
\qquad (2\leqslant i\leqslant n-1). \\
\end{cases}
\end{equation*}
\end{prop}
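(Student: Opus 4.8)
The plan is to verify directly that the prescribed assignment extends to a well-defined continuous algebra automorphism of $\widehat{U\frak b_n}$, and that $\rho_n$ is a group homomorphism. First I would observe that, by the preceding Lemma, $\widehat{U\frak b_n}$ is topologically generated as an algebra by $t_{i,i+1}$ and $\tau_{i,i+1}$ for $1\leqslant i\leqslant n-1$, so it suffices to define $\rho_n(\sigma)$ on these generators (as above) and check that all defining relations of $\widehat{U\frak b_n}$ are preserved; continuity is automatic since the assignment respects the grading (note $\deg g_{\dots}=0$ in the associated graded sense, so conjugation by $g$ preserves the degree filtration) and the target elements lie in $\widehat{U\frak b_n}$. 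The relations to check split into three groups: (i) the symmetric-group relations among the $\tau_{i,i+1}$ (braid and involution relations in $\frak S_n$); (ii) the semidirect-product relations \eqref{tau and t} relating $\tau$'s and $t$'s; and (iii) the infinitesimal pure-braid relations $t_{ij}=t_{ji}$, $[t_{ij},t_{ik}+t_{jk}]=0$, $[t_{ij},t_{kl}]=0$ for disjoint indices.

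The key technical input is that the $GRT_1(\mathbb K)$-part of this action — i.e. the case $c=1$ — is exactly the action of Drinfeld and Bar-Natan, so relations (iii) among the $t$'s, and the compatibility \eqref{tau and t}, are already known in that case; what genuinely has to be added here is the behavior of the ${\mathbb G}_m$-factor, i.e. the rescaling $t_{i,i+1}\mapsto t_{i,i+1}/c$, $\tau_{i,i+1}\mapsto\tau_{i,i+1}$. For relations (iii): since every $t_{kl}$ is built from $t_{i,i+1}$ by $\frak S_n$-conjugation via \eqref{tau and t}, and since the homogeneous bracket relations among the $t$'s are quadratic and homogeneous of the \emph{same} degree $2$ on both sides, the overall scalar $1/c^2$ cancels, so these are preserved by the rescaling exactly when they are preserved by the $GRT_1$-action with $g$ replaced by $g(A/c,B/c)$; here one uses that $g$ satisfies the pentagon \eqref{GRT-pentagon equation}, which is \emph{degree-homogeneous}, hence $g(A/c,B/c)$ satisfies it as well. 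For relations (i), the formula leaves $\tau_{1,2}$ fixed and conjugates the remaining $\tau_{i,i+1}$ by elements $g_{1\cdots i-1,i,i+1}$; this is formally identical to the $GRT_1$ situation (conjugation preserves all group relations automatically once one knows the braid relations among the conjugated generators hold, which is precisely the content verified in \cite{Dr},\cite{Bar98} for $g$, and carries over verbatim to $g(A/c,B/c)$). For relation \eqref{tau and t}, one checks it on the generating pairs $(\tau_{i,i+1},t_{j,j+1})$ using the explicit images; again the scalar $1/c$ attached to $t$ is inert under $\frak S_n$-conjugation, so this reduces to the $GRT_1$ case.

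Finally, to see that $\rho_n$ is a homomorphism $GRT(\mathbb K)\to\mathrm{Aut}\,\widehat{U\frak b_n}$, I would compute $\rho_n(\sigma_2)\circ\rho_n(\sigma_1)$ on the generators $t_{i,i+1}$ and $\tau_{i,i+1}$ and match it against $\rho_n(\sigma_2\circ\sigma_1)$, using the product formula \eqref{product of proalg-GRT} for $GRT(\mathbb K)$ — in particular the decomposition $(c,g)=(1,g)\circ(c,1)$ and the ${\mathbb G}_m$-action \eqref{Gm-action}. The $c$-parts obviously compose multiplicatively; the $g$-parts compose by exactly the same manipulation that Drinfeld uses for the $GRT_1$-action, once one tracks how the rescaling $A\mapsto A/c_2$ conjugates the relevant free-group/enveloping-algebra morphisms (this is where the "reversed product" convention in the footnote is used to keep $\rho_n$ homomorphic rather than anti-homomorphic). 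I expect the main obstacle to be purely bookkeeping: correctly propagating the subscripts $1\cdots i-1,i,i+1$ and the scalar $1/c$ through the composition so that the image of $t_{i,i+1}$ under $\rho_n(\sigma_2)\circ\rho_n(\sigma_1)$ visibly equals $g'^{-1}_{1\cdots i-1,i,i+1}\,\frac{t_{i,i+1}}{c_2c_1}\,g'_{1\cdots i-1,i,i+1}$ with $(c_2c_1,g')=\sigma_2\circ\sigma_1$ — and similarly for $\tau_{i,i+1}$. None of these steps requires a new idea beyond what is in \cite{Dr} and \cite{Bar98}; the proposition's novelty is merely the inclusion of the grading factor ${\mathbb G}_m$, so the proof is essentially a careful extension of the known $GRT_1$-computation together with the trivial observation that homogeneous relations are unaffected by uniform rescaling.
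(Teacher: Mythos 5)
Your proposal is correct and takes essentially the route the paper intends: the paper states this proposition without a written proof, noting only that Drinfeld and Bar-Natan established the $GRT_1(\mathbb K)$-action and that the extension to all of $GRT(\mathbb K)$ amounts to adjoining the grading factor ${\mathbb G}_m$, which is exactly your decomposition of $\rho_n(c,g)$ into the known $GRT_1$-automorphism composed with the grading (rescaling) automorphism $t_{ij}\mapsto t_{ij}/c$, justified by the homogeneity of the defining relations. Your bookkeeping of the homomorphism property via \eqref{product of proalg-GRT}, \eqref{Gm-action} and the reversed-product convention is the routine verification the paper leaves implicit.
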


We recall that $\tau_{i,i+1}$ means the transpose of $i$ and $i+1$ in ${\frak S}_n$.
We again note that $\rho_n$ is injective when $n\geqslant 3$.

In \S \ref{Main results} we will extend the above action on infinitesimal braids to the one on 
chord diagrams
and will show that actually the above action on infinitesimal braids
is realized as an inner automorphism
of chord diagrams.

The associated Lie algebra $\frak{grt}_1$ of $GRT_1$,
which was independently introduced by Ihara \cite{Ih}
and called the stable derivation algebra,
is equipped grading by
the ${\mathbb G}_m$-action \eqref{Gm-action}.

\begin{conj}[\cite{De,Dr,Ih}]
The graded Lie algebra $\frak{grt}_1$ is freely generated by one element in each degree
$3,5,7,\dots$.
\end{conj}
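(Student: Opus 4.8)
This is the Deligne--Drinfeld--Ihara conjecture, which remains open; accordingly what follows is the plan that the known partial results make possible, together with an honest account of where it stalls. The strategy is to sandwich the graded dimensions of $\mathfrak{grt}_1$: produce enough elements to account for a free Lie algebra on one generator in each odd degree $\geqslant 3$ (the lower bound), then show there is nothing more (the upper bound). For the lower bound I would, following Ihara and Deligne, exhibit explicit nonzero elements $\sigma_{2i+1}\in(\mathfrak{grt}_1)_{2i+1}$ for $i\geqslant 1$: concretely, take the degree-$(2i{+}1)$ component of $\log\Phi_{\mathrm{KZ}}$ and correct it by lower-degree brackets so that it solves the linearizations of \eqref{GRT-2-cycle}--\eqref{GRT-pentagon equation}, the relevant surviving period being $\zeta(2i{+}1)\neq 0$, which forces $\sigma_{2i+1}\neq 0$. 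The subtler point is that the Lie subalgebra generated by the $\sigma_{2i+1}$ is \emph{free}; the cleanest route is motivic, using Brown's theorem that the tannakian Lie algebra $\mathfrak{g}^{\mathfrak m}$ of the category of mixed Tate motives over $\mathrm{Spec}\,\mathbb{Z}$ is free on one generator in each degree $3,5,7,\dots$, together with the canonical injection $\mathfrak{g}^{\mathfrak m}\hookrightarrow\mathfrak{grt}_1$ coming from the motivic interpretation of the Drinfeld associator, which carries Brown's generators to (a choice of) the $\sigma_{2i+1}$. This already shows that $\mathfrak{grt}_1$ \emph{contains} the asserted free Lie algebra, so the conjecture reduces to the surjectivity of $\mathfrak{g}^{\mathfrak m}\hookrightarrow\mathfrak{grt}_1$, equivalently to an upper bound on $\dim_{\mathbb K}(\mathfrak{grt}_1)_n$ matching the degree-$n$ dimension of the free Lie algebra on generators in degrees $3,5,7,\dots$ (the usual Witt/necklace formula).

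For the upper bound I would first invoke the result of Furusho quoted above, so that $\mathfrak{grt}_1$ is cut out inside $\exp\mathfrak{p}_4$ by the single pentagon equation \eqref{GRT-pentagon equation}; a degree-$n$ element of $\mathfrak{grt}_1$ is then, modulo lower-degree data, a solution of a finite $\mathbb{K}$-linear system, so each $\dim(\mathfrak{grt}_1)_n$ is effectively computable and, in every degree checked so far, equals the conjectural value. To promote this to a bound valid in all degrees one needs structure, and the natural candidates are: comparing with Racinet's double shuffle Lie algebra via the known embedding $\mathfrak{grt}_1\hookrightarrow\mathfrak{dmr}_0$ and bounding $\mathfrak{dmr}_0$ instead; controlling the depth filtration on $\mathfrak{grt}_1$ and proving a Broadhurst--Kreimer-type estimate for its graded pieces; or, most directly, establishing the reverse inclusion $\mathfrak{grt}_1\subseteq\mathfrak{g}^{\mathfrak m}$, i.e.\ that every solution of the pentagon equation is the tangent vector of a genuine motivic automorphism, hence accounted for by $MT(\mathbb{Z})$.

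The main obstacle is precisely this upper bound. Every known comparison map --- $\mathfrak{g}^{\mathfrak m}\hookrightarrow\mathfrak{grt}_1\hookrightarrow\mathfrak{dmr}_0\hookrightarrow\mathfrak{krv}$, and so on --- runs in the direction opposite to what the argument needs, and no mechanism is presently known, motivic, combinatorial, or via the defining equations, that forces a solution of the pentagon equation to be a $\mathbb{Q}$-linear combination of iterated brackets of the $\sigma_{2i+1}$. Producing such a mechanism, equivalently showing that the motivic Galois group of $MT(\mathbb{Z})$ already exhausts $GRT_1(\mathbb{K})$, is the crux of the conjecture, and the methods of the present paper, which treat $GRT_1$ and $\mathfrak{grt}_1$ as input, do not appear to bear on it.
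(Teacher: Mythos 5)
The statement is the Deligne--Drinfeld--Ihara conjecture, which the paper records without proof and which remains open; you correctly treat it as such rather than offering a purported proof. The one piece of genuine content in your account --- that Brown's theorem \cite{Br12}, via the injection of the motivic Lie algebra into $\frak{grt}_1$, yields a free Lie subalgebra on one generator in each degree $3,5,7,\dots$, so that only the ``upper bound'' (surjectivity) is missing --- is exactly what the paper itself notes in Remarks \ref{free Lie subalgebra} and \ref{rem-incl-Gal-M}, so your treatment matches the paper's.
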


\begin{rem}\label{free Lie subalgebra}
By \cite{Br12}, we know that $\frak{grt}_1$ contains such a free Lie subalgebra
(see Remark \ref{rem-incl-Gal-M} below).
\end{rem}

\subsection{Associators}\label{Associators}
We recall   Drinfeld's definition of the associator set $M(\mathbb K)$ 
in Definition \ref{definition of associator set} and 
its $\left(GRT(\mathbb K),GT(\mathbb K)\right)$-bitorsor structure
in Proposition \ref{GRT-GT-bitorsor}.
Then we will explain how associators give isomorphisms between
$\widehat{{\mathbb K}[B_n]}$ and $\widehat{U\frak b_n}$
in Proposition \ref{M to Isom}.

\begin{defn}[\cite{Dr}]\label{definition of associator set}
The {\it associator set}  $M(\mathbb K)$ is the proalgebraic variety whose set of 
$\mathbb K$-valued points is given by 
\begin{equation*}
{M}(\mathbb K):=\Bigl\{
p=(\mu, \varphi)\in {\mathbb K}\times\exp{\frak f}_2
\Bigm |
\mu\in {\mathbb K}^\times
\text{ and $(\mu,\varphi)$ satisfies \eqref{GRT-2-cycle}, \eqref{GRT-pentagon equation}
and \eqref{assoc-hexagon equation}}.
\Bigr\}
\end{equation*}
\begin{equation}\label{assoc-hexagon equation}
\exp\{\frac{\mu A}{2}\}\varphi(C,A)\exp\{\frac{\mu C}{2}\}\varphi(B,C)\exp\{\frac{\mu B}{2}\}\varphi(A,B)=1
\quad\text{ in } \exp{\frak f}_2
\end{equation}
with  $C=-A-B$.
For each fixed $\mu_0\in\mathbb K$, define the proalgebraic variety $M_{\mu_0}(\mathbb K)$ by
$$
M_{\mu_0}(\mathbb K):=\Bigl\{\varphi\in \exp{\frak f}_2\Bigm |
\varphi \text{ satisfies 
\eqref{GRT-2-cycle}, \eqref{GRT-pentagon equation}
and \eqref{assoc-hexagon equation} with $\mu=\mu_0$}
\Bigr\}.
$$
\end{defn}

Hence we have
$M_0(\mathbb K)=GRT_1(\mathbb K).$
Three examples of associators are known:

\begin{eg}\label{two associators}
(1) 
The {\it KZ-associator}, also known as the Drinfeld associator,
$\varPhi_\mathrm{KZ}(A,B)$  is a non-commutative formal power
series whose coefficients are multiple zeta values. 
It was constructed by Drinfeld \cite{Dr} 
as a regularized holonomy of the KZ-equation
and was shown by him that it belongs to $M_{\mu}(\mathbb K)$ 
with ${\mathbb K}={\mathbb C}$ and $\mu=\pm 2\pi\sqrt{-1}$.
It is known to be expressed as follows:
\begin{align}\label{LM-formula}
\varPhi_\mathrm{KZ}(A,B)=1+
\underset{k_m>1}
{\underset{m,k_1,\dots,k_m\in{\mathbb N}}{\sum}}
(-1)^m & \zeta(k_1,\cdots,k_m)
A^{k_m-1}B\cdots A^{k_1-1}B \\ 
&+\text{(regularized terms)}.\notag
\end{align}
Here $\zeta(k_1,\cdots,k_m)$ is the
{\it multiple zeta value} (MZV in short),
the real number defined by the following power series
\begin{equation*}\label{MZV}
\zeta(k_1,\cdots,k_m)
:=\sum_{0<n_1<\cdots<n_m}\frac{1}
{n_1^{k_1}\cdots n_m^{k_m}}
\end{equation*}
for $m$, $k_1$,\dots, $k_m\in {\mathbb N} (={\mathbb Z}_{>0})$
with $k_m>1$ (its convergent condition).
All of the coefficients of $\varPhi_\mathrm{KZ}$ (including its regularized terms)
are explicitly calculated in terms of
MZV's  in \cite{F03} Proposition 3.2.3
by Le-Murakami's method in \cite{LMb}.

(2) 
The {\it Deligne associator} $\varPhi_\mathrm{De}(A,B)$  (\cite{Br13})
(denoted by $\varPhi^{-}_\mathrm{KZ}(A,B)$ in \cite{F07})
is a non-commutative formal power
series in $M_{\mu}(\mathbb K)$ with  ${\mathbb K}={\mathbb R}$ and $\mu=1$
which is located  in the \lq middle' of 
$\varPhi_\mathrm{KZ}\left(\frac{1}{2\pi\sqrt{-1}}A,\frac{1}{2\pi\sqrt{-1}}B\right)$ and
$\varPhi_\mathrm{KZ}\left(\frac{-1}{2\pi\sqrt{-1}}A,\frac{-1}{2\pi\sqrt{-1}}B\right)$.
Its explicit relationship with the above $\varPhi_\mathrm{KZ}(A,B)$
is given in \cite{F07} Lemma 2.25.

(3)
The {\it AT-associator} $\varPhi_\mathrm{AT}(A,B)$  is another associator.
It was
constructed by  Alekseev and Torossian \cite{AT10} as a  holonomy of  AT-connection,
a certain non-holomorphic flat connection on  a certain configuration space.
\v{S}evera and Willwacher \cite{SW} showed that it belongs to $M_{\mu}(\mathbb K)$
with ${\mathbb K}={\mathbb R}$ and $\mu=1$.
Rossi and Willwacher showed that $\varPhi_\mathrm{AT}\neq\varPhi_\mathrm{De}$ in \cite{RW}. 
\end{eg}

We remark again that the author  also in this setting showed that 
the pentagon equation implies two hexagon equations.

\begin{prop}[\cite{F12}]
Let $\mathbb K$ be an algebraically closed field of characteristic $0$.
For each $\varphi\in \exp{\frak f}_2$
satisfying  \eqref{GRT-pentagon equation},
there always exists (actually unique up to signature) $\mu\in {\mathbb K}$ such that  
the pair $p=(\mu, \varphi)$ satisfies the two hexagon equations
\eqref{GRT-2-cycle} and \eqref{assoc-hexagon equation}.
\end{prop}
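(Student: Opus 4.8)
The plan is to adapt to the graded (associator) setting the strategy underlying the preceding Proposition of \cite{F10} for $GT$: first extract the candidate $\mu$ from the low-degree part of $\varphi$, and then show that the pentagon equation \eqref{GRT-pentagon equation} by itself forces both \eqref{GRT-2-cycle} and \eqref{assoc-hexagon equation} for that particular $\mu$.

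\emph{Determining $\mu$.} Write $\varphi=\exp\psi$ with $\psi=\sum_{k\geqslant1}\psi_k\in\widehat{\frak f}_2$ graded by degree. Comparing degree-$1$ parts on the two sides of \eqref{GRT-pentagon equation} forces the linear term $\psi_1$ to vanish, so $\varphi\in1+(\mathrm{degree}\geqslant2)$; since the degree-$2$ component of $\widehat{\frak f}_2$ is one-dimensional and spanned by $[A,B]$, necessarily $\psi_2=c[A,B]$ for a unique $c\in\mathbb K$. A direct degree-$2$ expansion (using $C=-A-B$) shows that the left-hand side of \eqref{assoc-hexagon equation} agrees with $1$ through degree $2$ exactly when $\mu^2=24c$. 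Since $\mathbb K$ is algebraically closed one may choose $\mu\in\mathbb K$ with $\mu^2=24c$, unique up to sign; this gives the uniqueness-up-to-signature claim, and shows that algebraic closedness enters only through this single square root, all subsequent steps being valid over an arbitrary field of characteristic $0$. It remains to prove that this $\mu$ works.

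\emph{The $2$-cycle relation.} I would deduce $\varphi(A,B)\varphi(B,A)=1$ from the pentagon by exploiting the symmetry of \eqref{GRT-pentagon equation} under the natural $\frak S_4$-action on $\frak p_4$, equivalently the dihedral symmetry of the pentagonal cell of $\overline{M}_{0,5}$: applying a suitable permutation to \eqref{GRT-pentagon equation} together with a standard degeneration $\frak p_4\to\frak p_3$ (annihilating one central sum $\sum_jt_{ij}$, i.e. one chord) and comparing with the untransformed relation isolates $\varphi(A,B)\varphi(B,A)=1$, once $\psi_1=0$ is invoked.

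\emph{The hexagon, and the main obstacle.} This is the heart of the matter and, as in \cite{F10}, there is no formal shortcut. I would combine the pentagon equation \eqref{GRT-pentagon equation} with its transforms under the $\frak S_4$-action on $\frak p_4$ and with the more symmetric five-point form recalled in the Remark above, then push everything forward along the boundary-degeneration homomorphisms attached to the strata of $\overline{M}_{0,5}$ where two marked points collide (each landing in some $\exp\frak p_4$). Taking the appropriate product of the resulting relations, reducing modulo the relevant central elements, and finally feeding in the $2$-cycle relation together with the normalization $\mu^2=24c$ from the first two steps, the combined identity should collapse to \eqref{assoc-hexagon equation} in $\exp\frak f_2$, the exponential factors $\exp\{\mu A/2\}$, $\exp\{\mu C/2\}$, $\exp\{\mu B/2\}$ arising precisely from those degenerations. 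The obstacle is exactly this collapse: one must propagate the non-commutative series $\varphi$ through many specialization maps and permutation substitutions, carefully tracking which central elements $\sum_jt_{ij}$ are annihilated at each boundary stratum — a bookkeeping parallel to, but more involved than, the $GT$-case of \cite{F10}.
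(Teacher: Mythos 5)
Your determination of $\mu$ is sound: the pentagon does kill the degree-one part of $\log\varphi$, the degree-two expansion of \eqref{assoc-hexagon equation} gives $\mu^2=24c$ with $c$ the coefficient of $[A,B]$, and algebraic closedness indeed enters only through this square root, which also accounts for the uniqueness up to sign. But beyond that the proposal is a plan rather than a proof, and note that the present paper gives no proof to compare with: the Proposition is imported by citation from the author's earlier work, so your sketch has to stand entirely on its own.

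Two steps do not stand. First, the 2-cycle relation: the manipulation you describe --- permute \eqref{GRT-pentagon equation} under the $\frak S_4$-action and apply a strand-deletion map $\frak p_4\to\frak p_3$ --- cannot isolate $\varphi(A,B)\varphi(B,A)=1$, because (once the linear term of $\log\varphi$ vanishes) every strand-deletion specialization of the four-strand pentagon degenerates to a tautology: deleting strand $4$ sends $\varphi_{12,3,4}$, $\varphi_{2,3,4}$, $\varphi_{1,23,4}$ to $1$ and both remaining factors to $\varphi(t_{12},t_{23})$, and the other three deletions behave in the same way, so no information survives; moreover the elements $\sum_j t_{ij}$ you propose to annihilate are not central in $\frak p_4$ (only the full sum $\sum_{i<j}t_{ij}$ is). Duality-from-pentagon is part of the hard content of the cited theorem, not a formal corollary of its symmetry. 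Second, and decisively, the hexagon itself: you say that after pushing the pentagon and its permutations through the boundary degenerations the combined identity ``should collapse'' to \eqref{assoc-hexagon equation}, and you name this collapse as the obstacle --- but that collapse \emph{is} the theorem. No mechanism is given for why a product of degenerated pentagon relations closes up into the hexagon with the three exponential factors, and the known proof of precisely this implication (in the work the paper cites) is a genuinely delicate argument, not a bookkeeping of central elements. As written, the proposal establishes the normalization of $\mu$ and nothing more.
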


It was shown by Drinfeld that $GRT(\mathbb K)$ acts freely and transitively on $M(\mathbb K)$
from the left,
$GT(\mathbb K)$ acts freely and transitively on $M(\mathbb K)$ from the right,
and these two actions are commutative:

\begin{prop}[\cite{Dr}]\label{GRT-GT-bitorsor}
The associator set $M(\mathbb K)$ forms a 
$\left(GRT(\mathbb K),GT(\mathbb K)\right)$-bitorsor by 
the left $GRT(\mathbb K)$-action given by 
$$
(c,g)\circ(\mu,\varphi):=\left(\frac{\mu}{c}, \ \varphi\left(g\frac{A}{c}g^{-1},\ \frac{B}{c}\right) \cdot g \right)
=\left(\frac{\mu}{c}, \ g\cdot \varphi\left(\frac{A}{c},\ g^{-1}\frac{B}{c}g\right)  \right)
$$
for $(c,g)\in GRT(\mathbb K)$ and  $(\mu,\varphi)\in M(\mathbb K)$ and 
the right $GT(\mathbb K)$-action given by 
$$
(\mu,\varphi)\circ (\lambda,f):=\left(\lambda\mu, \ f(\varphi e^{\mu A}\varphi^{-1},e^{\mu B})\cdot\varphi\right)
=\left(\lambda\mu, \ \varphi\cdot f( e^{\mu A},\varphi^{-1}e^{\mu B}\varphi)\right)
$$
for $(\mu,\varphi)\in M(\mathbb K)$ and $(\lambda,f)\in GT(\mathbb K)$.
\end{prop}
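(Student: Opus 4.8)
The plan is to verify, by direct computation, that the two displayed formulas take values in $M(\mathbb K)$, that they are commuting left/right group actions, and that each is simply transitive; this is Drinfeld's theorem and the argument follows \cite{Dr}. The bookkeeping is organized by the (heuristic, to be made precise in Proposition~\ref{M to Isom}) picture that an element $(\mu,\varphi)\in M(\mathbb K)$ amounts to an isomorphism of the tower $\{\widehat{{\mathbb K}[B_n]}\}_{n\geqslant 2}$ with the tower $\{\widehat{U\frak b_n}\}_{n\geqslant 2}$ respecting cabling and strand insertion/deletion, that $GT(\mathbb K)$ acts on the first tower through the homomorphisms $\rho_n$ of Proposition~\ref{proalg-GT-action theorem on braids} and $GRT(\mathbb K)$ on the second through those of Proposition~\ref{proalg-GRT-action theorem on braids}, and that the left $GRT(\mathbb K)$- and right $GT(\mathbb K)$-operations in the statement are obtained by composing an associator-isomorphism with automorphisms drawn from the two towers. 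Granting this picture, commutativity of the two operations and their compatibility with the group laws \eqref{product of proalg-GT} and \eqref{product of proalg-GRT} are formal consequences of associativity of composition; what remains to be pinned down by the explicit formulas is well-definedness (that the operations land in $M(\mathbb K)$) and simple transitivity.

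For well-definedness of the right action, given $(\mu,\varphi)\in M(\mathbb K)$ and $(\lambda,f)\in GT(\mathbb K)$ one sets $\mu':=\lambda\mu$, $\varphi':=f(\varphi e^{\mu A}\varphi^{-1},e^{\mu B})\cdot\varphi$, and verifies \eqref{GRT-2-cycle}, \eqref{GRT-pentagon equation}, \eqref{assoc-hexagon equation} for $(\mu',\varphi')$. The $2$-cycle relation \eqref{GRT-2-cycle} is immediate from \eqref{proalg-2-cycle} and the antisymmetry $\varphi(A,B)\varphi(B,A)=1$. For the hexagon and pentagon the mechanism is that conjugation by $\varphi$ carries the elements $e^{\mu t_{i,i+1}}$ to generators in which those identities acquire exactly the $GT(\mathbb K)$-shape \eqref{proalg-hexagon equation}, \eqref{proalg-pentagon equation}, so that after transport through $\varphi$ the $M$-relations for $(\mu',\varphi')$ become precisely the defining relations of $f$ in Definition~\ref{definition of proalgebraic GT}, whence they hold. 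Symmetrically one checks that the left $GRT(\mathbb K)$-operation preserves $M(\mathbb K)$ using Definition~\ref{definition of proalgebraic GRT}, and then that both operations are genuine group actions --- a routine substitution, most transparently carried out in the ``second equality'' forms of \eqref{product of proalg-GT}, \eqref{product of proalg-GRT} --- and that they commute.

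For freeness and transitivity, note that $(\lambda,f)\mapsto\lambda$ and $(c,g)\mapsto c$ intertwine the two operations with the fibration $(\mu,\varphi)\mapsto\mu$ of $M(\mathbb K)$ over ${\mathbb K}^\times$ (the graded fiber $M_0(\mathbb K)=GRT_1(\mathbb K)$ does not intervene, since $M(\mathbb K)$ requires $\mu\in{\mathbb K}^\times$), and that the elements $(c,1)\in GRT(\mathbb K)$ identify the various fibers $M_{\mu_0}(\mathbb K)$, $\mu_0\in{\mathbb K}^\times$, with one another; so it suffices to show that a single fiber $M_{\mu_0}(\mathbb K)$ is a $\bigl(GRT_1(\mathbb K),GT_1(\mathbb K)\bigr)$-bitorsor. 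Its non-emptiness is Drinfeld's existence theorem for associators (concretely $\varPhi_\mathrm{KZ}$ of Example~\ref{two associators} when $\mathbb K\supseteq\mathbb C$), and freeness of $GRT_1(\mathbb K)$ (and of $GT_1(\mathbb K)$) on it is read off from the formulas by a leading-term argument in the grading. The genuinely substantial point --- which I expect to be the main obstacle --- is transitivity of $GRT_1(\mathbb K)$ on $M_{\mu_0}(\mathbb K)$: given $\varphi,\varphi'\in M_{\mu_0}(\mathbb K)$ one constructs the unique $g\in\exp{\frak f}_2$ with $\varphi'(A,B)=g(A,B)\,\varphi(A,g^{-1}Bg)$ by induction on degree, the pentagon equation \eqref{GRT-pentagon equation} for $\varphi$ and $\varphi'$ being exactly what annihilates the obstruction to extending $g$ at each stage; this is Drinfeld's argument, and transitivity of $GT_1(\mathbb K)$ on the right follows by transporting the same argument to the braid tower through any fixed associator.
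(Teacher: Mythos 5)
The paper itself offers no proof of this proposition: it is quoted from Drinfeld \cite{Dr} without argument, so your proposal has to be measured against Drinfeld's original proof rather than anything in the text. Much of your outline is a fair plan of verification: closure of $M(\mathbb K)$ under the two displayed operations (your check of \eqref{GRT-2-cycle} for $\varphi'$ from \eqref{proalg-2-cycle} is indeed immediate, and the transport mechanism you describe for the hexagon and pentagon is the right one), the action axioms and their commutativity by direct substitution, the reduction via the elements $(c,1)$ to a single fiber $M_{\mu_0}(\mathbb K)$, freeness by a lowest-degree argument, and non-emptiness from Drinfeld's existence theorem for (rational) associators. The tower heuristic you open with is harmlessly circular only because you do not lean on it for the actual verification.

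The genuine gap is in the step you yourself identify as the substantial one, transitivity, and your description of it is not how the argument can go. Solving $\varphi'(A,B)=g(A,B)\,\varphi(A,g^{-1}Bg)$ for $g$ degree by degree requires no pentagon equation at all: the recursion $g=\varphi'\cdot\varphi(A,g^{-1}Bg)^{-1}$ determines a unique group-like $g\in\exp{\frak f}_2$ for \emph{any} two group-like series $\varphi,\varphi'$, since $A\mapsto A$, $B\mapsto g^{-1}Bg$ is a Hopf-algebra endomorphism; there is no obstruction at this stage for the pentagon to annihilate. The real content of transitivity is the point your sketch never addresses: that this $g$ satisfies \eqref{GRT-2-cycle}, \eqref{GRT-hexagon equation} and \eqref{GRT-pentagon equation}, i.e.\ that $(1,g)$ actually lies in $GRT_1(\mathbb K)$ (and, for the right action, that the analogous connecting element lies in $GT_1(\mathbb K)$). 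In Drinfeld's proof this is exactly where the work is; he derives it from the quasi-triangular quasi-Hopf twisting (gauge equivalence) machinery --- the same mechanism that resurfaces in this paper as the Twistor Lemma (Lemma \ref{twistor lemma}, after \cite{LM} Theorem 8) --- and if one instead insists on a degree-by-degree induction in which the partial $g$ is required to satisfy the $GRT$ relations modulo higher degree, then a nontrivial cohomological vanishing statement is needed to kill the resulting obstruction, which your sketch does not supply. Until membership of the connecting element in $GRT_1(\mathbb K)$ (resp.\ $GT_1(\mathbb K)$) is established, you have only produced a unique element of $\exp{\frak f}_2$ linking two associators, which shows injectivity of the orbit maps but not that the fibers $M_{\mu_0}(\mathbb K)$ are torsors under the groups in question.
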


We must note again that we reverse the order of the product given in the paper \cite{Dr}
for our purpose.
Drinfeld \cite{Dr} showed that associators give an isomorphism between
$\widehat{{\mathbb K}[B_n]}$ and $\widehat{U\frak b_n}$.

\begin{prop}[\cite{Dr}]\label{M to Isom}
Let $n\geqslant 2$.
The $\left(GRT(\mathbb K),GT(\mathbb K)\right)$-bitorsor $M(\mathbb K)$
is mapped to the $\left(\mathrm{Aut}(\widehat{U\frak b_n}),\
\mathrm{Aut}(\widehat{{\mathbb K}[B_n]})\right)$-bitorsor
$\mathrm{Isom}( \widehat{{\mathbb K}[B_n]}, \
\widehat{U\frak b_n})$ by the map
\begin{equation}\label{M to Isom KB UB}
\rho_n: M(\mathbb K)\to 
\mathrm{Isom}\left( \widehat{{\mathbb K}[B_n]}, \
\widehat{U\frak b_n}\right)
\end{equation}
induced by, for each $p=(\mu,\varphi)$,
\begin{equation*}
\rho_n(p):
\begin{cases}
\sigma_1 \quad \mapsto  &\tau_{1,2}\cdot \exp\left\{\frac{\mu t_{12}}{2}\right\}, \\
\sigma_i  \quad \mapsto  \quad
\varphi_{1\cdots i-1,i,i+1}^{-1}\cdot &\tau_{i,i+1}\cdot\exp\left\{\frac{\mu t_{i,i+1}}{2}\right\}
\cdot\varphi_{1\cdots i-1,i,i+1}
\ (2\leqslant i\leqslant n-1).
\end{cases}
\end{equation*}
It is a morphism as bitorsors, i.e. it
is compatible with  \eqref{GT to Aut B_n} and \eqref{GRT to Aut Ub_n}.
\end{prop}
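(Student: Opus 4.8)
The plan is to verify directly that the formulas defining $\rho_n(p)$ send the defining relations of $B_n$ to valid relations in $\widehat{U\frak b_n}$, that the resulting map is a filtered isomorphism onto $\widehat{U\frak b_n}$, and finally that it is a bitorsor morphism. First I would check well-definedness: the braid relations $\sigma_i\sigma_{i+1}\sigma_i=\sigma_{i+1}\sigma_i\sigma_{i+1}$ and $\sigma_i\sigma_j=\sigma_j\sigma_i$ for $|i-j|>1$. The far-commutativity case is essentially formal, using that $t_{i,i+1}$ and $t_{j,j+1}$ (together with the relevant $\tau$'s and the associator insertions, which live on disjoint index blocks) commute by the defining relations $[t_{ij},t_{kl}]=0$ of $\frak p_n$ and \eqref{tau and t}. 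The braid relation is the substantive computation: after conjugating everything by $\varphi_{1\cdots i-1,i,i+1}$ one reduces, via the standard block-functoriality of $\varphi$ together with \eqref{proalg-2-cycle}-type symmetry and the pentagon \eqref{GRT-pentagon equation}, to the three-strand identity, which is exactly the mixed hexagon equation \eqref{assoc-hexagon equation} rewritten in terms of $\tau_{1,2},\tau_{2,3}$ and $\exp\{\mu t_{ij}/2\}$ in $\widehat{U\frak b_3}$; this is Drinfeld's original argument and I would cite \cite{Dr} rather than reproduce all the bracket manipulations. The image of $\sigma_i-\sigma_i^{-1}$ lies in the degree-$\geqslant 1$ part (since $\tau_{i,i+1}^2=1$ and $\exp\{\mu t/2\}-\exp\{-\mu t/2\}\equiv \mu t \pmod{\deg\geqslant 2}$), so $\rho_n(p)$ is filtered for the $I$-filtration on the source and the augmentation filtration on the target.

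Next I would show $\rho_n(p)$ is an isomorphism of filtered algebras. It is a Hopf-algebra map because each generator is sent to a group-like-times-exponential-of-primitive expression, hence to a group-like element after the standard normalization; compatibility with coproducts then follows from group-likeness of $\tau$, $\varphi$ and exponentials of the primitive $t_{ij}$. For bijectivity one passes to associated graded algebras: $\mathrm{gr}\,\widehat{\mathbb K[B_n]}$ is generated by the classes of $\sigma_i-\sigma_i^{-1}$ and of the permutation part, and the associated graded map sends $\sigma_i-\sigma_i^{-1}\mapsto \mu\, t_{i,i+1}$ (up to the permutation factor, which is a degree-$0$ unit) and the permutation part isomorphically onto $\mathbb K[\frak S_n]$; since $\mu\in\mathbb K^\times$ and the $t_{i,i+1}$ together with $\frak S_n$ generate $\widehat{U\frak b_n}$ by the Lemma, $\mathrm{gr}\,\rho_n(p)$ is surjective, and a dimension count degree by degree (the Poincar\'e--Birkhoff--Witt bases of $\widehat{U\frak p_n}$ and $\mathrm{gr}\,\widehat{\mathbb K[P_n]}$ having the same graded dimensions, a classical fact) forces it to be an isomorphism. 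A filtered map inducing an isomorphism on the associated graded of complete filtered objects is itself an isomorphism, giving $\rho_n(p)\in\mathrm{Isom}(\widehat{\mathbb K[B_n]},\widehat{U\frak b_n})$.

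Finally I would check bitorsor compatibility, i.e. that for $(c,g)\in GRT(\mathbb K)$, $p\in M(\mathbb K)$, $(\lambda,f)\in GT(\mathbb K)$ one has $\rho_n\big((c,g)\circ p\big)=\rho_n(c,g)\circ\rho_n(p)$ and $\rho_n\big(p\circ(\lambda,f)\big)=\rho_n(p)\circ\rho_n(f)$, with $\rho_n$ on the two ends the actions of Propositions \ref{proalg-GRT-action theorem on braids} and \ref{proalg-GT-action theorem on braids}. This is a direct substitution: one inserts the explicit product formulas \eqref{product of proalg-GRT}, \eqref{product of proalg-GT} and the bitorsor action formulas from Proposition \ref{GRT-GT-bitorsor} into the generator-level definitions and checks that both sides send $\sigma_i$ to the same element of $\widehat{U\frak b_n}$; the rescalings by $c$ and $\lambda$ in the $t_{i,i+1}$ and $\exp\{\mu t_{i,i+1}/2\}$ slots match because $(c,g)\circ(\mu,\varphi)$ has first component $\mu/c$ and $(\mu,\varphi)\circ(\lambda,f)$ has first component $\lambda\mu$, and the conjugation-by-$\varphi$ factors reorganize using block-functoriality of the $f$ and $g$ series exactly as in the definition of the actions. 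The main obstacle is the braid relation / mixed hexagon verification in $\widehat{U\frak b_3}$ and the bookkeeping of block-functoriality when reducing the $n$-strand braid relation to the $3$-strand case; everything else (Hopf compatibility, the associated-graded bijectivity argument, and the torsor-compatibility substitution) is routine once that core identity is in hand, and all of it is due to Drinfeld \cite{Dr}, so I would present the core computation and cite \cite{Dr} for the remainder.
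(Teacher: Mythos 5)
Your plan is correct and is essentially the argument the paper relies on: the paper itself gives no proof of Proposition \ref{M to Isom}, attributing it entirely to \cite{Dr}, and your outline (braid relations reduce via the pentagon and block-functoriality to the mixed hexagon \eqref{assoc-hexagon equation} in $\widehat{U\frak b_3}$; filteredness and bijectivity via the associated graded, using that $\mathrm{gr}\,\widehat{\mathbb K[P_n]}\simeq\widehat{U\frak p_n}$; bitorsor compatibility by substituting the product formulas) is the standard reconstruction of Drinfeld's proof. The only point worth stating explicitly if you write this up is that the graded-dimension equality you invoke is Kohno's formality theorem for $P_n$, which must be quoted independently of the associator machinery to avoid circularity.
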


We again note that $\rho_n$ is injective when $n\geqslant 3$.

\subsection{The motivic Galois group}
\label{Formulation of motivic Galois group}
We briefly review the formulations of the motivic Galois groups and
their torsor
(consult also \cite{A} as a nice exposition).
We also review their relationship with the torsor of
the Grothendieck-Teichm\"{u}ller groups discussed in our previous subsections.

The triangulated category $DM(\mathbb Q)_{\mathbb Q}$
of {\it mixed motives} over $\mathbb Q$
(a part of an idea of mixed motives is explained in \cite{De} \S 1)
was
constructed by Hanamura, Levine and Voevodsky.
{\it Tate motives} $\mathbb Q(n)$ ($n\in\mathbb Z$) are
(Tate) objects of the category.
Let $DMT(\mathbb Q)_{\mathbb Q}$ be the triangulated sub-category of 
$DM(\mathbb Q)_{\mathbb Q}$ generated by Tate  motives $\mathbb Q(n)$ ($n\in\mathbb Z$).
By the work of Levine a neutral tannakian $\mathbb Q$-category 
$MT(\mathbb Q)=MT(\mathbb Q)_{\mathbb Q}$ of {\it mixed Tate motives over $\mathbb Q$} 
is extracted by taking the heart with respect to a $t$-structure of
$DMT(\mathbb Q)_{\mathbb Q}$.
Deligne and Goncharov \cite{DG} introduced the full subcategory
$MT(\mathbb Z)=MT(\mathbb Z)_\mathbb Q$
of {\it 
 mixed Tate motives over ${\rm Spec}\;\mathbb Z$} inside of $MT(\mathbb Q)_{\mathbb Q}$.
%
The category $MT(\mathbb Z)$ 
forms a neutral tannakian $\mathbb Q$-category 
and association of each object $M\in MT(\mathbb Z)$
with the underlying $\mathbb Q$-linear space of its 
Betti and de Rham realizations 
give the fiber functor $\omega_{\rm Be}$ and $\omega_{\rm DR}$ respectively.

\begin{defn}
For $*\in\{\text{Be, DR}\}$, the {\it motivic Galois group}
${\rm Gal}^{\mathcal M}_*(\mathbb Z)$
is defined to be the corresponding tannakian fundamental group of ${MT}(\mathbb Z)$,
that is, the pro-$\mathbb Q$-algebraic group defined by
$\underline{\rm Aut}^\otimes({MT}(\mathbb Z):\omega_*)$.
\end{defn}

For $*,*'\in\{\text{Be, DR}\}$,
we denote  the corresponding tannakian fundamental torsor
$\underline{\rm Isom}^\otimes({MT}(\mathbb Z):\omega_*,\omega_{*'})$ 
by ${\rm Gal}^{\mathcal M}_{*,*'}(\mathbb Z)$.
This is a $({\rm Gal}^{\mathcal M}_{*'}(\mathbb Z),{\rm Gal}^{\mathcal M}_{*}(\mathbb Z))$-bitorsor.
We note that ${\rm Gal}^{\mathcal M}_{*,*}(\mathbb Z)={\rm Gal}^{\mathcal M}_{*}(\mathbb Z)$.
%
By the fundamental theorem of tannakian category theory,  
each fiber functor $\omega_*$ induces an equivalence of categories
\begin{equation}\label{tannakian equivalence}
MT(\mathbb Z)\simeq \mathrm{Rep}\; \mathrm{Gal}^\mathcal{M}_*(\mathbb Z)
\end{equation}
where the right hand side stands for the category of finite dimensional $\mathbb Q$-vector spaces
equipped with ${\rm Gal}^\mathcal{M}_*(\mathbb Z)$-action.

\begin{rem}\label{free-remark}
For $*,*'\in\{\text{Be, DR}\}$,
the action of ${\rm Gal}_{*}^\mathcal{M}(\mathbb Z)$ 
on $\omega_*(\mathbb Q(1))\simeq\mathbb Q$
defines a surjection ${\rm Gal}^\mathcal{M}_*(\mathbb Z)\to\mathbb G_m$ 
and its kernel ${\rm Gal}_{*}^\mathcal{M}(\mathbb Z)_1$
is the unipotent radical of ${\rm Gal}_{*}^\mathcal{M}(\mathbb Z)$.
For $*={\rm DR}$,
there is a natural splitting $\tau:{\bf G}_m\to {\rm Gal}_{\rm DR}^\mathcal{M}(\mathbb Z)$
which gives a negative grading on its associated Lie algebra 
${\rm Lie}{\rm Gal}_{\rm DR}^\mathcal{M}(\mathbb Z)_1$.
By the axiom on the structure of the category $MT({\mathbb Z})$,
it is known that 
the Lie algebra is the graded Lie algebra
{\it freely} generated by one element in each degree $-3,-5,-7,\dots$.
(consult  \cite{De} \S 8 for the full story).
\end{rem}

The {\it motivic fundamental group}
$\pi_1^{\mathcal M}({\mathbb P}^1\backslash\{0,1,\infty\}
:\overrightarrow{01})$
constructed in \cite{DG} \S4
is a (pro-)object of $MT(\mathbb Z)$.
The KZ-associator (cf. Example \ref{two associators})
is essential in describing the Hodge realization of the motive
(cf. \cite{A, DG, F07}).
Since its Betti and de Rham realization is given by 
$F_2(\mathbb Q)$ and $\exp\frak f_2$,
the motivic Galois groups,
${\rm Gal}_{\rm Be}^\mathcal{M}(\mathbb Z)$ and
${\rm Gal}_{\rm DR}^\mathcal{M}(\mathbb Z)$,
acts there respectively.
The tannakian equivalence \eqref{tannakian equivalence}
induces a morphism of bitorsors
\begin{equation*}\label{map}
\Psi:{\rm Gal}_{\rm Be,DR}^\mathcal{M}(\mathbb Z)\to 
{\rm Isom}(F_2(\mathbb Q),\exp{\frak f}_2)
\end{equation*}
from the $({\rm Gal}^{\mathcal M}_{\rm DR}(\mathbb Z),{\rm Gal}^{\mathcal M}_{\rm Be}(\mathbb Z))$-bitorsor 
${\rm Gal}_{\rm Be,DR}^\mathcal{M}(\mathbb Z)$
to the $\left({\rm Aut}\exp{\frak f}_2,{\rm Aut}F_2(\mathbb Q)\right)$-bitorsor
${\rm Isom}(F_2(\mathbb Q),\exp{\frak f}_2)$.
%
%
The following has been conjectured (Deligne-Ihara conjecture) 
and finally proved by Brown
by using  Zagier's relation on MZV's. 

\begin{thm}[\cite{Br12}]\label{freeness}
The map $\Psi$ is injective.
\end{thm}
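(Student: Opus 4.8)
The plan is to reduce the statement, via the Tannakian formalism, to the computation of the graded dimensions of the $\mathbb Q$-algebra of motivic multiple zeta values, and then to invoke Brown's theorem in the shape of an explicit spanning family, whose arithmetic core is Zagier's evaluation of $\zeta(2,\dots,2,3,2,\dots,2)$.

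First I would observe that $\Psi$ is a morphism of (both non-empty) bitorsors lying over the group homomorphism ${\rm Gal}^{\mathcal M}_{\rm DR}(\mathbb Z)\to{\rm Aut}\exp{\frak f}_2$ that describes the action of the de Rham motivic Galois group on the de Rham realization $\exp{\frak f}_2$ of the pro-object $\pi_1^{\mathcal M}({\mathbb P}^1\backslash\{0,1,\infty\})$ (equipped with its tangential base points at $0$ and $1$); hence $\Psi$ is injective if and only if this group homomorphism is. By the fundamental theorem of Tannakian categories, ${\rm Gal}^{\mathcal M}_{\rm DR}(\mathbb Z)=\underline{\rm Aut}^\otimes(MT(\mathbb Z):\omega_{\rm DR})\to{\rm Aut}\bigl(\omega_{\rm DR}(\pi_1^{\mathcal M})\bigr)$ is injective exactly when the Tannakian subcategory of $MT(\mathbb Z)$ generated by $\pi_1^{\mathcal M}$ equals $MT(\mathbb Z)$. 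Using Remark \ref{free-remark}, which says that ${\rm Lie}\,{\rm Gal}^{\mathcal M}_{\rm DR}(\mathbb Z)_1$ is the free graded Lie algebra on one generator in each degree $-3,-5,-7,\dots$, one computes the graded dimensions of the affine ring of ${\rm Gal}^{\mathcal M}_{\rm DR}(\mathbb Z)$: in weight $n$ it has dimension $d_n$, where $d_n=d_{n-2}+d_{n-3}$ with $d_0=1$, $d_1=0$, $d_2=1$. Since the matrix coefficients of the motivic associator (the motivic multiple zeta values, $\zeta^{\frak m}(2)$ included) generate the affine ring of the image of ${\rm Gal}^{\mathcal M}_{\rm DR}(\mathbb Z)$ in ${\rm Aut}\exp{\frak f}_2$, the claim becomes equivalent to: for every $n$, the $\mathbb Q$-span of the weight-$n$ motivic multiple zeta values has dimension exactly $d_n$.

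The inequality $\le d_n$ is the easy direction (Deligne--Goncharov): since $\pi_1^{\mathcal M}({\mathbb P}^1\backslash\{0,1,\infty\}:\overrightarrow{01})$ is a pro-object of $MT(\mathbb Z)$ as recalled in \S\ref{Formulation of motivic Galois group}, its periods are de Rham periods of $MT(\mathbb Z)$ and therefore lie in the weight-$n$ part of $\mathcal O\bigl({\rm Gal}^{\mathcal M}_{\rm DR}(\mathbb Z)\bigr)$, of dimension $d_n$. Everything thus comes down to the matching lower bound $\ge d_n$, which is Brown's theorem \cite{Br12}.

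For that lower bound I would follow Brown: take the explicit family of motivic multiple zeta values $\zeta^{\frak m}(s_1,\dots,s_r)$ with every $s_i\in\{2,3\}$ and $s_1+\dots+s_r=n$ — a generating-function (Fibonacci) count shows there are exactly $d_n$ of them — and prove that this family is linearly independent. The engine is the motivic coaction on $\pi_1^{\mathcal M}$, i.e. Goncharov's coproduct, from which one extracts the infinitesimal derivation operators $D_{2r+1}$ lowering weight by $2r+1$; one argues by induction on $n$, reducing linear independence in weight $n$ to the invertibility of a transition matrix whose entries are sums of binomial coefficients, read off from Zagier's closed formula expressing $\zeta(2,\dots,2,3,2,\dots,2)$ as an explicit $\mathbb Q$-linear combination of the products $\zeta(2m+1)\zeta(2)^\ell$. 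The main obstacle — precisely the input flagged in the statement as ``Zagier's relation on MZV's'' — is to show this matrix is nondegenerate; Brown achieves this by a $2$-adic valuation estimate on the relevant alternating sums of binomial coefficients, which pins the determinant down to a nonzero power of $2$. Granting that estimate, the family is a basis, hence $\dim_{\mathbb Q}=d_n$ for all $n$, the Tannakian subcategory generated by $\pi_1^{\mathcal M}$ is all of $MT(\mathbb Z)$, and $\Psi$ is injective. Equivalently — this is the Deligne--Ihara conjecture phrased inside $\frak{grt}_1$ — the canonical generators $\sigma_3,\sigma_5,\dots$ of ${\rm Lie}\,{\rm Gal}^{\mathcal M}_{\rm DR}(\mathbb Z)_1$ are carried to free generators of a Lie subalgebra of $\frak{grt}_1$, cf. Remark \ref{free Lie subalgebra}.
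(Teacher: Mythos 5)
The paper does not prove this statement at all: it is quoted directly from Brown \cite{Br12}, with only the remark that the proof uses Zagier's relation on MZV's. Your outline is an accurate summary of exactly that cited argument (reduction via the Tannakian formalism to the dimension count $d_n$ with $d_n=d_{n-2}+d_{n-3}$ for motivic multiple zeta values, the Deligne--Goncharov upper bound, and Brown's lower bound via the weight-$n$ elements $\zeta^{\frak m}(s_1,\dots,s_r)$ with $s_i\in\{2,3\}$, the derivations $D_{2r+1}$ from the motivic coaction, Zagier's evaluation of $\zeta(2,\dots,2,3,2,\dots,2)$, and the $2$-adic nondegeneracy of the resulting transition matrix), so it takes essentially the same route the paper invokes.
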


It is a proalgebraic group analogue of the so-called Bely\u{\i}'s theorem \cite{Be}
in the profinite group setting.
The theorem says that all unramified mixed Tate motives over 
${\rm Spec}\;{\mathbb Z}$ are
associated with MZV's.

\begin{rem}\label{rem-incl-Gal-M}
We recall that our
$\left(GRT(\mathbb Q),GT(\mathbb Q)\right)$-bitorsor $M(\mathbb Q)$
is naturally injected to the
$\left({\rm Aut}\exp{\frak f}_2,{\rm Aut}F_2(\mathbb Q)\right)$-bitorsor
${\rm Isom}(F_2(\mathbb Q),\exp{\frak f}_2)$:
$$
M(\mathbb Q)\hookrightarrow {\rm Isom}(F_2(\mathbb Q),\exp{\frak f}_2).
$$
As is explained in \cite{A, F07}, a certain geometric interpretations of 
the Grothendieck-Teichm\"{u}ller groups shows that
${\rm Im}\Psi$ is injected in $M(\mathbb Q)$ as bitorsors.
Thus by the above theorem, 
 $({\rm Gal}^{\mathcal M}_{\rm DR}(\mathbb Z),{\rm Gal}^{\mathcal M}_{\rm Be}(\mathbb Z))$-bitorsor 
${\rm Gal}_{\rm Be,DR}^\mathcal{M}(\mathbb Z)$
is mapped injectively to
$\left(GRT(\mathbb Q),GT(\mathbb Q)\right)$-bitorsor $M(\mathbb Q)$
as bitorsors:
\begin{equation}\label{incl-Gal-M}
{\rm Gal}_{\rm Be,DR}^\mathcal{M}(\mathbb Z)
\hookrightarrow M(\mathbb Q).
\end{equation}
The inclusion induces the one from 
${\rm LieGal}_{\rm DR}^\mathcal{M}(\mathbb Z)_1$ to $GRT(\mathbb Q)$.
By Remark \ref{free-remark} we get the claim in Remark \ref{free Lie subalgebra}.  
The $GT(\mathbb Q)$-action on $\widehat{{\mathbb Q}[B_n]}$
given in \eqref{GT to Aut B_n} induces
a ${\rm Gal}_{\rm Be}^\mathcal{M}(\mathbb Z)$-action there
and
$GRT(\mathbb Q)$-action on $\widehat{U\frak b_n}$
given in \eqref{GRT to Aut Ub_n} also induces
a ${\rm Gal}_{\rm DR}^\mathcal{M}(\mathbb Z)$-action there.
Hence by the equivalence \eqref{tannakian equivalence},
$\widehat{{\mathbb Q}[B_n]}$ is the Betti realization of 
a certain mixed Tate (pro-)motive over ${\rm Spec}\;{\mathbb Z}$,
while whose de Rham realization is given by $\widehat{U\frak b_n}$.
\end{rem}


\section{Proalgebraic tangles and chord diagrams}
\label{Proalgebraic tangles and chord diagrams}
We develop the actions of the Grothendieck-Teichm\"{u}ller groups
on proalgebraic braids and on infinitesimal braids 
explained in our previous section 
into the  ones on proalgebraic tangles and on chord diagrams
by following the method indicated in \cite{KT98}.
This section might be regarded as an extension of Bar-Natan's formalism
\cite{Bar98} on a relationship of the Grothendieck-Teichm\"{u}ller groups
with  proalgebraic braids into their relationship with proalgebraic
tangles.

\subsection{The $GT$-action}\label{sec: GT-action on proalgebraic tangles}
In this subsection we give  a review but with more detailed considerations
on the last appendix of both \cite{KT98} and \cite{KRT}
where an interesting $GT(\mathbb K)$-action on proalgebraic tangles and knots are briefly explained.
Proalgebraic tangles and proalgebraic knots are recalled in Definition \ref{definition of proalgebraic knots}.
They are shown in Proposition \ref{algebraic presentation of proalgebraic tangles}
to be described  by
proalgebraic pre-tangles (and knots)  introduced by our ABC-construction
in Definition \ref{definition of fundamental proalgebraic tangle}.
The $GT(\mathbb K)$-action on proalgebraic tangles is explained
in Definition \ref{GT-action on proalgebraic tangles}
and Proposition \ref{GT-action on KT}.
The induced $GT(\mathbb K)$-action on proalgebraic knots is discussed
in Proposition \ref{GT-action on KK}.
In Proposition \ref{proposition on GK to GKQl} 
we give a relationship of the $GT(\mathbb K)$-action on the proalgebraic knots
with the profinite $\widehat{GT}$-action on profinite knots which was
constructed in our previous paper \cite{F12}.

\begin{nota}\label{usual tangles}
Let $k,l\geqslant 0$.
Let $\epsilon=(\epsilon_1,\dots,\epsilon_k)$ and $\epsilon'=(\epsilon'_1,\dots,\epsilon'_l)$
be sequences (including the empty sequence $\emptyset$)
of  symbols $\uparrow$ and $\downarrow$.
An (oriented)
\footnote{We occasionally omit to mention it.
Throughout the paper all tangles are assumed to be oriented.}
 {\it tangle} of type $(\epsilon,\epsilon')$
means a smooth embedded compact oriented one-dimensional  real manifolds
in $[0,1]\times {\mathbb C}$
(hence it is a finite disjoint union of embedded one-dimensional intervals
and circles),
whose boundaries are $\{(1,1),\dots,(1,k),(0,1)\dots,(0,l)\}$
such that $\epsilon_i$ (resp. $\epsilon_j'$) is $\uparrow$ or $\downarrow$ 
if the tangle is oriented upwards or downwards at $(1,i)$  (resp. at $(0,j)$) respectively.
A {\it link} is a tangle of type $(\emptyset, \emptyset)$ , i.e. $k=l=0$,
and a {\it knot} means a link with a single connected component.
An {\it $n$-string link}
\footnote{
A string link is not a link in the sense of the previous sentence.
},
a string link with $n$-components, 
means a tangle with $\epsilon=\epsilon'$ and $k=l=n$
which consists of $n$-intervals connecting  $(0,i)$ with $(1,i)$
for each $1\leqslant i\leqslant n$ and no circles
(cf. Figure \ref{example of string link}).

We denote $\mathcal T$ to be the full set of isotopy classes of oriented tangles
and ${\mathcal T}_{\epsilon,\epsilon'}$ to be its subset 
consisting of tangles with type $(\epsilon,\epsilon')$.
Figure \ref{example of tangles} might help the readers 
to have a good understanding of the definition.
\begin{figure}[h]
\begin{tabular}{c}
  \begin{minipage}{0.45\hsize}
      \begin{center}
          \begin{tikzpicture}
                \draw[dotted] (0.5,0)--(3.5,0);

                 \draw[->] (1,0) ..controls(0.6,2.0) and (1.2,2.6)     ..(1.6,3);
                \draw[-] (1.6,3) ..controls(2.0,3.4) and (3,3)   ..(2.6,2.0);
                 \draw[color=white, line width=7pt](2.6,2.0) ..controls(2.2,1.2)  and (0.4,2.)      ..(1,4.0);
                \draw[->] (2.6,2.0) ..controls(2.2,1.2)  and (0.4,2.)      ..(1,4.0);

             \draw[color=white, line width=7pt] (2.0,3)-- (2,0.5);
                 \draw[->] (2.0,4.0)-- (2,3.3) ;
                 \draw[->]  (2,3)--(2,0);

                 \draw[color=white, line width=7pt](3,4) ..controls(3.5,2) and (3,0.5)  ..(1.1,0.5);
               \draw[->] (3,4) ..controls(3.5,2) and (3,0.5)  ..(1.1,0.5);
                 \draw[color=white, line width=7pt](0.7,0.5) ..controls(0.2,0.6) and (-0.1,1.8)  ..(1.6,1.4);
               \draw[-] (0.7,0.5) ..controls(0.2,0.6) and (-0.1,1.8)  ..(1.6,1.4);
                 \draw[color=white, line width=7pt](1.6,1.4) ..controls(2.4,1.2) and (2.8,0.5)  ..(3,0);
               \draw[->] (1.6,1.4) ..controls(2.4,1.2) and (2.8,0.5)  ..(3,0);

                \draw[dotted] (0.5,4)--(3.5,4.0);
        \end{tikzpicture}
              \caption{A string link}
              \label{example of string link}
     \end{center}
 \end{minipage}
 \begin{minipage}{0.55\hsize}
           \begin{tikzpicture}
                \draw[dotted] (0,0)--(4.5,0);

                 \draw[->] (2,0) ..controls(0.6,2.0) and (1.2,2.6)     ..(1.6,3);
                \draw[-] (1.6,3) ..controls(2.0,3.4) and (3,3)   ..(2.6,2.0);
                 \draw[color=white, line width=7pt](2.6,2.0) ..controls(2.2,1.2)  and (0.4,2.)      ..(1,4.0);
                \draw[->] (2.6,2.0) ..controls(2.2,1.2)  and (0.4,2.)      ..(1,4.0);

                 \draw[->] (2.0,4.0)--(1.82,3.34) ;
                 \draw[color=white, line width=7pt] (1.8,2.9)  arc (180:360:0.7);
                  \draw[->] (1.8,2.9)  arc (180:360:0.7);
                  \draw[->] (3.2,2.9)--(3,4) ;

               \draw[->] (1.4,0.6) ..controls(0.2,0.2) and (-0.1,0.5)  ..(0.2,0.8);
                 \draw[color=white, line width=7pt](0.2,0.8) ..controls(0.6,1.2) and (1.2,1.6)  ..(1.6,1.4);
                \draw[->](0.2,0.8) ..controls(0.6,1.2) and (1.2,1.6)  ..(1.6,1.4);
              \draw[-] (1.6,1.4) ..controls(2.0,1.0) and (1.8,0.9)  ..(1.7,0.8);

                 \draw[color=white, line width=7pt](1,0) ..controls(1.,0.8) and (3.8,-0.5)  ..(4,4);
                 \draw[<-] (1,0) ..controls(1.,0.8) and (3.8,-0.5)  ..(4,4);

                \draw[dotted] (0,4)--(4.5,4.0);
            \end{tikzpicture}
\caption{A tangle in ${\mathcal T}_{\uparrow\downarrow\uparrow\downarrow, \downarrow\uparrow}$}
\label{example of tangles}  
     \end{minipage}
\end{tabular}
\end{figure}
It is easily seen that there is a natural composition map
\begin{equation}\label{composition in discrete case}
\cdot:
{\mathcal T}_{\epsilon_1,\epsilon_2}\times
{\mathcal T}_{\epsilon_2,\epsilon_3}\to
{\mathcal T}_{\epsilon_1,\epsilon_3}
\end{equation}
for any sequences $\epsilon_1,\epsilon_2,\epsilon_3$.
The set $\mathcal{SL}_{\epsilon}$ denotes  the subspace of ${\mathcal T}_{\epsilon,\epsilon}$
consisting of string links.
By the above composition
$\mathcal{SL}_{\epsilon}$ forms a monoid for each $\epsilon$.
By putting on each $i$-th strand an orientation $\epsilon_i$,  
the pure braid group $P_n$ ($n>1$) may be regarded as a submonoid of $\mathcal{SL}_{\epsilon}$
with $\epsilon=(\epsilon_1,\dots,\epsilon_n)$.
By definition ${\mathcal T}_{\emptyset,\emptyset}$ is the set of isotopy classes of (oriented) links.
We denote  $\mathcal K$ to be its subspace 
consisting isotopy classes of (oriented) knots.
The set $\mathcal K$ forms a monoid by the connected sum (the knot sum)
\begin{equation}\label{connected sum in discrete case}
\sharp: {\mathcal K}\times{\mathcal K}\to {\mathcal K}.
\end{equation}
It  is a natural way to fuse
two oriented knots, with an appropriate position of orientation,
into one 
(an example is illustrated in Figure \ref{Example of connected sum}).
\begin{figure}[h]
\begin{center}
\begin{tikzpicture}
                 \draw[<-] (0.8,2.2) arc (0:-90:0.6);
                  \draw[<-](0.2,1.6) arc (-90:-180:0.6);
                 \draw [<-] (0,2.0)  arc (270:180:0.2);
                 \draw[color=white, line width=5pt] (0,2.4) ..controls(0.4,2.4) and (0.4,2.0)..(0.8,2.0);
                 \draw[<-] (0,2.4) ..controls(0.4,2.4) and (0.4,2.0)..(0.8,2.0);
                 \draw[color=white, line width=5pt]  (0,2.0) ..controls(0.4,2.0) and (0.4,2.4) ..(0.8, 2.4);
                \draw[-] (0,2.0) ..controls(0.4,2.0) and (0.4,2.4) ..(0.8, 2.4);
                 \draw (0.8,2.0)  arc (-90:90:0.2);
                 \draw[color=white, line width=5pt] (-0.2,2.2) arc (180:0:0.5);  
                 \draw (-0.2,2.2) arc (180:0:0.5);  
                \draw[color=white, line width=5pt]  (-0.4,2.2) ..controls (-0.4,2.4) and (-0.1,2.4) ..(0,2.4);
                  \draw[<-] (-0.4,2.2) ..controls (-0.4,2.4) and (-0.1,2.4) ..(0,2.4);
              \draw[color=white, line width=5pt] (-0.1,0.1).. controls  (-0.3,0.1)  and (-0.1,0.8)..(0.2,0.8);
                \draw [<-] (-0.1,0.1).. controls  (-0.3,0.1)  and (-0.1,0.8)..(0.2,0.8); 
              \draw[color=white, line width=5pt]  (0.5,0.1).. controls  (0.1,0.1)  and (-0.1,1.4)..(0.2,1.4);
                \draw[<-]  (0.5,0.1).. controls  (0.1,0.1)  and (-0.1,1.4)..(0.2,1.4);
              \draw[color=white, line width=5pt] (-0.1,0.1).. controls  (0.3,0.1)  and (0.5,1.4)..(0.2,1.4);
                \draw[->] (-0.1,0.1).. controls  (0.3,0.1)  and (0.5,1.4)..(0.2,1.4);
               \draw[color=white, line width=5pt](0.5,0.1).. controls  (0.7,0.1)  and (0.5,0.8)..(0.2,0.8);
                \draw  (0.5,0.1).. controls  (0.7,0.1)  and (0.5,0.8)..(0.2,0.8);
  \draw[dotted]   (0.2,1.5) circle (0.3);
                \draw (2.2,1.2) node{$\Longrightarrow$};

                 \draw[<-] (4.8,2.2) .. controls (4.8,1.9) and  (4.7,1.6) ..(4.5,1.6);
                 \draw[->] (3.6,2.2) .. controls (3.6,1.9) and  (3.7,1.6) ..(3.9,1.6);
                 \draw [<-] (4.0,2.0)  arc (270:180:0.2);
                 \draw[color=white, line width=5pt] (4.0,2.4) ..controls(4.4,2.4) and (4.4,2.0)..(4.8,2.0);
                 \draw[<-] (4.0,2.4) ..controls(4.4,2.4) and (4.4,2.0)..(4.8,2.0);
                 \draw[color=white, line width=5pt]  (4.0,2.0) ..controls(4.4,2.0) and (4.4,2.4) ..(4.8, 2.4);
                \draw[-] (4.0,2.0) ..controls(4.4,2.0) and (4.4,2.4) ..(4.8, 2.4);
                 \draw (4.8,2.0)  arc (-90:90:0.2);
                 \draw[color=white, line width=5pt] (3.8,2.2) arc (180:0:0.5);  
                 \draw (3.8,2.2) arc (180:0:0.5);  
                \draw[color=white, line width=5pt]  (4-0.4,2.2) ..controls (4-0.4,2.4) and (4-0.1,2.4) ..(4.0,2.4);
                  \draw[<-] (4-0.4,2.2) ..controls (4-0.4,2.4) and (4-0.1,2.4) ..(4.0,2.4);
  \draw[-] (4.5,1.6) .. controls (4.7,1.6) and  (4.3,1.6) ..(4.3,1.4);
  \draw[-] (4-0.1,1.6) .. controls (4-0.3,1.6) and  (4.1,1.6) ..(4.1,1.4);
              \draw[color=white, line width=5pt] (4-0.1,0.1).. controls  (4-0.3,0.1)  and (4-0.1,0.8)..(4.2,0.8);
                \draw [<-] (4-0.1,0.1).. controls  (4-0.3,0.1)  and (4-0.1,0.8)..(4.2,0.8); 
              \draw[color=white, line width=3pt] (4.5,0.1).. controls  (4.1,0.1)  and (4.1,1.4)..(4.1,1.4); 
                \draw[<-]  (4.5,0.1).. controls  (4.1,0.1)  and (4.1,1.4)..(4.1,1.4); 
              \draw[color=white, line width=3pt](4-0.1,0.1).. controls  (4.3,0.1)  and (4.3,1.4)..(4.3,1.4); 
                \draw[->] (4-0.1,0.1).. controls  (4.3,0.1)  and (4.3,1.4)..(4.3,1.4); 
               \draw[color=white, line width=5pt](4.5,0.1).. controls  (4.7,0.1)  and (4.5,0.8)..(4.2,0.8);
                \draw  (4.5,0.1).. controls  (4.7,0.1)  and (4.5,0.8)..(4.2,0.8); 
\end{tikzpicture}
\end{center}
\caption{Connected sum (knot sum)}
\label{Example of connected sum}
\end{figure}
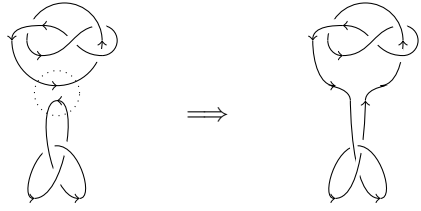
It can be done at any points.
Our short caution is that the connected sum $\sharp$ in \eqref{connected sum in discrete case}
is different from the composition  $\cdot$ in \eqref{composition in discrete case}.
(In fact a composition of two knots in not a knot but  a link).
\end{nota}

There is a  fundamental  identification between knots and long knots (string link with a single component).

\begin{prop}\label{identification between long knots and knots}
Let $\epsilon=\uparrow$ or $\downarrow$. Then 
the set $\mathcal{SL}_{\epsilon}$
of long knots with the composition $\cdot$
is identified with
the set  $\mathcal K$ with the connected sum $\sharp$
by closing the two endpoints of each.
Namely we have an identification of two monoids
$$\mathrm{cl}:(\mathcal{SL}_{\uparrow},\cdot)\simeq (\mathcal{K},\sharp).$$
\end{prop}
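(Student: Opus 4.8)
The plan is to construct the identification map explicitly and check it respects both the monoid structures and that it has an inverse. First I would describe the map $\mathrm{cl}$: given a long knot $L\in\mathcal{SL}_{\uparrow}$, which is an embedded interval in $[0,1]\times\mathbb C$ with the two boundary points $(1,1)$ and $(0,1)$ and oriented upward at both, I join these two endpoints by a fixed embedded arc running outside the strip $[0,1]\times\mathbb C$ (say a large semicircular arc in $\mathbb C$ sweeping around the right), obtaining an embedded oriented circle, i.e. an element of $\mathcal K$. One checks this is well defined on isotopy classes: an ambient isotopy of the long knot (fixing the boundary) extends to an ambient isotopy of the closed-up circle, and the choice of closing arc is unique up to isotopy by an innermost-arc/Alexander-trick argument, so $\mathrm{cl}$ depends only on the isotopy class.

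Next I would construct the inverse. Given an oriented knot $K\subset[0,1]\times\mathbb C$, isotope it so that it meets the hyperplanes $\{1\}\times\mathbb C$ and $\{0\}\times\mathbb C$ transversally in exactly one point each (possible since $K$ is connected and nonempty), with the point on $\{1\}\times\mathbb C$ being $(1,1)$ and the one on $\{0\}\times\mathbb C$ being $(0,1)$, and with $K$ running outside $[0,1]\times\mathbb C$ along a standard arc between these two points; cutting along that standard arc leaves a long knot. I would note this is inverse to $\mathrm{cl}$ — composing one way gives back $K$ up to isotopy, the other way gives back $L$ up to isotopy fixing endpoints — again by Alexander-trick type uniqueness of the cutting arc.

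The compatibility with the products is the heart of the matter but is in fact visual: stacking two long knots $L_1\cdot L_2$ and then closing up, versus closing each and forming the connected sum $\mathrm{cl}(L_1)\sharp\mathrm{cl}(L_2)$, produce isotopic knots, because in the composition $L_1\cdot L_2$ the two long knots are placed in series along a single strand, and closing the result up is precisely the local picture of a connected sum performed along that strand (cf. Figure \ref{Example of connected sum}); one slides the closing arc of the composite to see it as the band joining $\mathrm{cl}(L_1)$ to $\mathrm{cl}(L_2)$. I would also check that the unit of $(\mathcal{SL}_{\uparrow},\cdot)$, the trivial long knot, maps to the trivial knot $\orientedcircle$, the unit of $(\mathcal K,\sharp)$, which is immediate.

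The main obstacle, and the point deserving genuine care rather than a hand-wave, is the well-definedness on isotopy classes together with the uniqueness of the connecting/cutting arc: one must argue that any two embedded arcs in the complement of the strip joining the two boundary points are ambient isotopic rel endpoints, so that the maps are independent of the auxiliary choices, and that this uniqueness is exactly what makes $\mathrm{cl}$ and the cutting construction mutually inverse. This is standard three-dimensional topology (isotopy extension and the Alexander trick, or equivalently the observation that $S^3$ minus a ball is a ball), so I would cite it rather than reprove it, but it is the only step that is not purely formal bookkeeping.
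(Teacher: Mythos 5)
Your proposal is correct and follows essentially the same route as the paper, which simply closes up the two endpoints of a long knot and asserts that all compatibilities (well-definedness, inverse, and the identification of stacking with connected sum) are easy to check. You have merely filled in the standard three-dimensional topology details that the paper leaves implicit.
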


\begin{proof}
The identification 
is simply obtained by combining the ends of long knots.
Checking all compatibilities are easy to see.
\end{proof}

For more on tangles, consult the standard textbook, say, \cite{CDM}.

\begin{defn}[\cite{F12,KT98}]
\label{definition of proalgebraic knots}
(1)
Let ${\mathbb K}[{\mathcal T}_{\epsilon,\epsilon'}]$ be the free $\mathbb K$-module of finite formal sums
of elements of ${\mathcal T}_{\epsilon,\epsilon'}$.
A singular oriented tangle,
an \lq oriented tangle' 
allowed to have  a finite number of transversal double points (see \cite{KT98} for datail),
determines an element of ${\mathbb K}[{\mathcal T}_{\epsilon,\epsilon'}]$ 
by the desingularization of each double point by the following relation
$$
\diaProjected=\diaCrossP -\diaCrossN.
$$
Let ${\mathcal T}_n$ ($n\geqslant 0$) be the $\mathbb K$-submodule of 
$\mathbb K[{\mathcal T}_{\epsilon,\epsilon'}]$ generated by
all singular oriented tangles with type $(\epsilon,\epsilon')$
and with $n$ double points.
The descending filtration $\{{\mathcal T}_n\}_{n\geqslant 0}$
is called the {\it singular filtration}.
The topological $\mathbb K$-module 
$\widehat{{\mathbb K}[{\mathcal T}_{\epsilon,\epsilon'}]}$ 
of {\it proalgebraic tangles} of type $(\epsilon,\epsilon')$
means its completion
with respect to the singular filtration:
$$
\widehat{{\mathbb K}[{\mathcal T}_{\epsilon,\epsilon'}]}:=
\underset{N}{\varprojlim} \ {\mathbb K}[{\mathcal T}_{\epsilon,\epsilon'}]/{\mathcal T}_N.
$$
By abuse of notation, we denote the induced filtration on
$\widehat{{\mathbb K}[{\mathcal T}_{\epsilon,\epsilon'}]}$
by the same symbol $\{{\mathcal T}_n\}_{n\geqslant 0}$.
Note that there is a natural composition map
\begin{equation}\label{proalgebraic tangle composition}
\cdot:
\widehat{{\mathbb K}[{\mathcal T}_{\epsilon_1,\epsilon_2}]}\times
\widehat{{\mathbb K}[{\mathcal T}_{\epsilon_2,\epsilon_3}]}\to
\widehat{{\mathbb K}[{\mathcal T}_{\epsilon_1,\epsilon_3}]}
\end{equation}
for  any $\epsilon_1,\epsilon_2$ and $\epsilon_3$.
We denote  the collection of
$\widehat{{\mathbb K}[{\mathcal T}_{\epsilon,\epsilon'}]}$
for all  $\epsilon$ and $\epsilon'$
by $\widehat{{\mathcal T}_{\mathbb K}}$.

(2)
Let ${\mathbb K}[{\mathcal K}]$ be the  $\mathbb K$-submodule of ${\mathbb K}[{\mathcal T}_{\emptyset,\emptyset}]$ generated by ${\mathcal K}$.
By the product
\begin{equation}\label{connected sum in proalgebraic knots}
\sharp:\widehat{{\mathbb K}[{\mathcal K}]}\times
\widehat{{\mathbb K}[{\mathcal K}]}\to
\widehat{{\mathbb K}[{\mathcal K}]}
\end{equation}
induced by the {connected sum} $\sharp$ in \eqref{connected sum in discrete case}
and the coproduct map
$\Delta: {\mathbb K}[{\mathcal K}]\to {\mathbb K}[{\mathcal K}]\otimes_{\mathcal K}{\mathbb K}[{\mathcal K}]$
sending $k\mapsto k\otimes k$
and the augmentation map ${\mathbb K}[{\mathcal K}]\to {\mathcal K}$,
it carries a structure of co-commutative and commutative bi-algebra.
Put ${\mathcal K}_n:={\mathcal T}_n\cap{\mathbb K}[{\mathcal K}]$ ($n\geqslant 0$).
Then ${\mathcal K}_n$ forms an ideal of $\mathbb K[{\mathcal K}]$
and the descending filtration $\{{\mathcal K}_n\}_{n\geqslant 0}$
is called the {\it singular knot filtration} (cf. loc.cit.).
The topological commutative $\mathbb K$-algebra $\widehat{{\mathbb K}[{\mathcal K}]}$ of {\it proalgebraic knots} means its completion
with respect to the singular knot filtration:
$$
\widehat{{\mathbb K}[{\mathcal K}]}:=
\underset{N}{\varprojlim} \ {\mathbb K}[{\mathcal K}]/{\mathcal K}_N.
$$
It is a $\mathbb K$-linear subspace of $\widehat{{\mathbb K}[{\mathcal T}_{\emptyset\emptyset}]}$. 
Since each element $\gamma$ in $\mathcal K$ is congruent to the unit
$\orientedcircle$ modulo $\mathcal K_1$
due to the finiteness property of the unknotting number (cf. \cite{CDM}),
the inverse of $\gamma$ with respect to $\sharp$ always exists in
$\widehat{{\mathbb K}[{\mathcal K}]}$.
It defines the antipode map on $\widehat{{\mathbb K}[{\mathcal K}]}$,
which  yields a structure of co-commutative and commutative Hopf algebra
there.
Again by abuse of notation, we denote the induced filtration of
$\widehat{{\mathbb K}[{\mathcal K}]}$
by the same symbol $\{{\mathcal K}_n\}_{n\geqslant 0}$,
which is compatible with  a structure of filtered Hopf algebra on 
$\widehat{{\mathbb K}[{\mathcal K}]}$.

(3)
{\it Proalgebraic links} and {\it proalgebraic string links} 
can be defined in the same way.
The subset $\widehat{{\mathbb K}[{\mathcal{SL}}_{\epsilon}]}$
of  $\widehat{{\mathbb K}[{\mathcal{T}}_{\epsilon,\epsilon}]}$
consisting of proalgebraic string links forms a non-commutative
$\mathbb K$-algebra by the composition map \eqref{proalgebraic tangle composition}.
\end{defn}

Here is a fundamental identification in our proalgebraic setting.

\begin{lem}\label{identification between proalgebraic long knots and proalgebraic knots}
Let $\epsilon=\uparrow$ or $\downarrow$. 
There is an identification between two $\mathbb K$-algebras
\begin{equation}\label{cl on proalgebraic knots}
\mathrm{cl}:(\widehat{{\mathbb K}[{\mathcal{SL}}_{\epsilon}]},\cdot)\simeq 
(\widehat{{\mathbb K}[{\mathcal{K}}]},\sharp).
\end{equation}
which is compatible with their filtrations.
\end{lem}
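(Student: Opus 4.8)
The plan is to upgrade the identification $\mathrm{cl}\colon(\mathcal{SL}_{\epsilon},\cdot)\simeq(\mathcal{K},\sharp)$ of Proposition \ref{identification between long knots and knots} to the proalgebraic (completed) level, by checking that the closure operation is compatible with the two singular filtrations and hence extends continuously to the completions. First I would recall that $\mathrm{cl}$ is a bijection of monoids on the underlying discrete sets; extending $\mathbb K$-linearly gives an isomorphism of the (uncompleted) $\mathbb K$-algebras $\mathbb K[\mathcal{SL}_{\epsilon}]\simeq\mathbb K[\mathcal K]$, where the source carries the composition product and the target the connected-sum product. The only thing left is to see that this linear isomorphism carries the singular filtration $\{{\mathcal T}_n\cap\mathbb K[\mathcal{SL}_\epsilon]\}$ on the source onto the singular knot filtration $\{{\mathcal K}_n\}$ on the target, in both directions.

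The key observation is that closing up a long knot does not create or destroy transversal double points: a singular long knot with $n$ double points is sent by $\mathrm{cl}$ to a singular knot with exactly $n$ double points, and the desingularization relation $\diaProjected=\diaCrossP-\diaCrossN$ is performed pointwise on a small disk away from the closing arc, so it is respected by $\mathrm{cl}$. Concretely I would argue that $\mathrm{cl}$ (extended linearly to singular objects) maps the generators of ${\mathcal T}_n\cap\mathbb K[\mathcal{SL}_\epsilon]$ onto the generators of ${\mathcal K}_n$ and vice versa; since both filtrations are defined as the $\mathbb K$-spans of these generators, we get ${\mathcal K}_n=\mathrm{cl}\bigl({\mathcal T}_n\cap\mathbb K[\mathcal{SL}_\epsilon]\bigr)$ for every $n\geqslant 0$. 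A filtered $\mathbb K$-algebra isomorphism that strictly respects filtrations in both directions passes to an isomorphism of the completions, giving the desired $\mathrm{cl}\colon(\widehat{{\mathbb K}[{\mathcal{SL}}_{\epsilon}]},\cdot)\simeq(\widehat{{\mathbb K}[{\mathcal{K}}]},\sharp)$, automatically compatible with the induced filtrations. Finally I would note that multiplicativity, i.e. $\mathrm{cl}(L\cdot L')=\mathrm{cl}(L)\sharp\mathrm{cl}(L')$, is exactly the content of Proposition \ref{identification between long knots and knots} at the discrete level and extends by bilinearity and continuity.

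The main obstacle, and the only place requiring genuine care, is the claim that $\mathrm{cl}$ really identifies the two filtrations \emph{strictly}, i.e. that $\mathrm{cl}^{-1}({\mathcal K}_n)$ is not larger than ${\mathcal T}_n\cap\mathbb K[\mathcal{SL}_\epsilon]$. One direction (a singular long knot with $n$ double points closes to a singular knot with $n$ double points, so $\mathrm{cl}({\mathcal T}_n\cap\mathbb K[\mathcal{SL}_\epsilon])\subseteq{\mathcal K}_n$) is immediate; the reverse inclusion needs the remark that every singular knot arises, up to isotopy, as the closure of a singular long knot with the same number of double points — one simply cuts the knot at a regular (non-singular) point. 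Once both inclusions are in hand the rest is the routine functoriality of completion with respect to a strictly filtered isomorphism, which I would state without belaboring the $\varprojlim$ bookkeeping.
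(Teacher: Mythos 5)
Your proposal is correct and follows essentially the same route as the paper, which simply declares the lemma an immediate corollary of Proposition \ref{identification between long knots and knots}; what you have done is spell out the implicit content of that one-line proof, namely that closure preserves the number of double points in both directions (so the singular filtration on long knots is carried strictly onto the singular knot filtration) and that a strictly filtered isomorphism passes to completions. No gap; your version is just a more explicit writing-up of the argument the author leaves to the reader.
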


\begin{proof}
It is an immediate  corollary of  Proposition \ref{identification between long knots and knots}.
\end{proof}

We give a piecewise construction of the above proalgebraic tangles
by using proalgebraic pre-tangles introduced below.

\begin{defn}\label{definition of fundamental proalgebraic tangle}
(1)
A {\it fundamental proalgebraic (oriented)
tangle}  means a vector belonging to an {\it ABC-space},
one of
the following $\mathbb K$-linear spaces
$A_{k,l}^\epsilon$, $\widehat{B^{\epsilon}_\tau}$ and $C_{k,l}^\epsilon$
for some $k,l,\epsilon, \tau$:
\begin{align*}
&A_{k,l}^\epsilon :={\mathbb K}\cdot a_{k,l}^\epsilon, \
\text{with} \ 
\epsilon=(\epsilon_i)_{i=1}^{k+l+1}\in
\{\uparrow, \downarrow\}^{k}
\times \{\annihilation, \opannihilation\}
\times\{\uparrow, \downarrow\}^{l} \ 
(k,l=0,1,2,\dots) ,
\\
&\widehat{B^{\epsilon}_\tau} :=
\widehat{{\mathbb K}[P_n]}\cdot \tau \
\text{with} \ 
\epsilon=(\epsilon_i)_{i=1}^{n}
\in \{\uparrow, \downarrow\}^{n} \
\text{and} \ 
\tau\in{\frak S}_n \
(n=1,2,3,4,\dots ),
\\
&C_{k,l}^\epsilon :={\mathbb K}\cdot c_{k,l}^\epsilon, \
\text{with} \
\epsilon=(\epsilon_i)_{i=1}^{k+l+1}\in
\{\uparrow, \downarrow\}^{k}
\times \{\creation,\opcreation\}
\times\{\uparrow, \downarrow\}^{l} \
(k,l=0,1,2,\dots ).
\end{align*}
Here 
$\widehat{{\mathbb K}[P_n]}\cdot \tau$ stands for the coset
of $\widehat{{\mathbb K}[B_n]}\bigm/ \widehat{{\mathbb K}[P_n]}$
corresponding to $\tau\in{\frak S}_n=B_n/P_n$,
the inverse image of ${\mathbb K}\cdot\sigma$ under the natural homomorphism
$\widehat{{\mathbb K}[B_n]}\to {\mathbb K}[{\frak S}_n]$.
To stress that an element $b$ belongs to $\widehat{B^{\epsilon}_\tau}$,
we occasionally denote $b^\epsilon$ or $(b,\epsilon)$.

For each ABC-space $V$, 
its {\it source} $s(V)$ and {\it target} $t(V)$,
which are sequences of $\uparrow$ and $\downarrow$, 
are  defined in a completely same way to \cite{F12};  
e.g. $s(\widehat{B^{\epsilon}_\tau})=\epsilon$, $t(\widehat{B^{\epsilon}_\tau})=\tau(\epsilon)$.

(2)
A {\it proalgebraic pre-tangle} $\Gamma$ of type $(\epsilon,\epsilon')$
is a vector belonging to a $\mathbb K$-linear space which is
a finite consistent (successively composable)
$\mathbb K$-linear tensor product of  ABC-spaces.
Namely it is an element belonging to a $\mathbb K$-linear space
$V_n\otimes\cdots V_2\otimes V_1$ for some $n$
such that $s(V_{i+1})=t(V_i)$ for all $i=1,2,\dots,n-1$
and $s(V_1)=\epsilon$ and $t(V_n)=\epsilon'$.
For our simplicity, we denote each of its element 
$\Gamma=\gamma_n\otimes\cdots\otimes\gamma_2\otimes\gamma_1$ with $\gamma_i\in V_i$
by
$\Gamma=\gamma_n\cdots\gamma_2\cdot\gamma_1$.
We also define $s(\Gamma):=s(V_1)$ and $t(\Gamma):=t(V_n)$.
A {\it proalgebraic pre-link} $\Gamma$ is a proalgebraic pre-tangle with $s(\Gamma)=t(\Gamma)=\emptyset$.
Two proalgebraic pre-tangles 
$\Gamma=\gamma_n\cdots\gamma_2\cdot\gamma_1$ and
$\Gamma'=\gamma'_m\cdots\gamma'_2\cdot\gamma'_1$
are called {\it composable} when $s(\Gamma)=t(\Gamma')$.
Their {\it composition} $\Gamma\cdot\Gamma'$ is defined by
$\gamma_n\cdots\gamma_2\cdot\gamma_1\cdot 
\gamma'_m\cdots\gamma'_2\cdot\gamma'_1$.

For each ABC-space $V$, 
its {\it skeleton} ${\mathbb S}(V)$ is defined in a completely same way to \cite{F12}.  
For a proalgebraic pre-tangle  $\Gamma=\gamma_n\cdots\gamma_2\cdot\gamma_1$ with $\gamma_i\in V_i$,
its {\it skeleton} ${\mathbb S}(\Gamma)$ stands for the graph of the compositions
${\mathbb S}(V_n) \cdots{\mathbb S}(V_2)\cdot{\mathbb S}(V_1)$ and
its {\it connected components}  mean the connected components of ${\mathbb S}(\Gamma)$
as graphs.
A {\it proalgebraic pre-knot} is a proalgebraic pre-link with a single connected component.
A {\it proalgebraic pre-string link} of type $\epsilon=(\epsilon_i)_{i=1}^n$ is 
a proalgebraic pre-tangle of type $(\epsilon,\epsilon)$
whose connected components consist of  $n$ intervals
connecting each $i$-th point on the bottom to the one on the top.

(3)
Two proalgebraic pre-tangles are called {\it isotopic} when they are related by a finite number of
the 6 moves replacing profinite tangles and profinite braids group $\widehat{B_n}$
by proalgebraic pre-tangles  and proalgebraic braid algebras 
$\widehat{{\mathbb K}[B_n]}$ in (T1)-(T6) in \cite{F12} 
and $c\in\widehat{\mathbb Z}$ by $c\in{\mathbb K}$ in (T6).
(N.B. We note that $\sigma_i^c$ for $c\in{\mathbb K}$ makes sense in
$\widehat{{\mathbb K}[B_n]}$
by the reason explained in \cite{F12} Proof of Proposition 2.29 (2).)

(4)
We denote ${\mathbb K}[\mathcal{T}_{\epsilon,\epsilon'}^{\rm pre}]$ 
to be the $\mathbb K$-linear space which is the quotient of 
the $\mathbb K$-span of proalgebraic pre-tangles with type $(\epsilon,\epsilon')$
divided by the equivalence linearly generated by the above
isotopy.
Note that there is a natural composition map
\begin{equation}\label{composition of Tpre}
\cdot:
{\mathbb K}[{\mathcal T}_{\epsilon_1,\epsilon_2}^{\rm pre}]\times
{\mathbb K}[{\mathcal T}_{\epsilon_2,\epsilon_3}^{\rm pre}]\to
{\mathbb K}[{\mathcal T}_{\epsilon_1,\epsilon_3}^{\rm pre}]
\end{equation}
for  any $\epsilon_1,\epsilon_2$ and $\epsilon_3$.
The subset ${\mathbb K}[{\mathcal{SL}}_{\epsilon}^{\rm pre}]$
of ${\mathbb K}[{\mathcal T}_{\epsilon,\epsilon}^{\rm pre}]$
consisting of proalgebraic pre-string  links of type $\epsilon$
forms a  non-commutative $\mathbb K$-algebra  by the composition map.

The symbol $\mathbb K[\mathcal{K}^{\rm pre}]$ stands for the  subspace of
$\mathbb K[\mathcal{T}_{\emptyset,\emptyset}^{\rm pre}]$ generated by proalgebraic pre-knots.
It can be proven in a same way to \cite{F12} that  $\mathbb K[\mathcal{K}^{\rm pre}]$
inherits a structure of  a commutative $\mathbb K$-algebra by the connected sum 
\begin{equation}\label{connected sum on KK-pre}
\sharp:\mathbb K[\mathcal{K}^{\rm pre}]\times \mathbb K[\mathcal{K}^{\rm pre}]\to
\mathbb K[\mathcal{K}^{\rm pre}].
\end{equation}
Here for  any two proalgebraic knots $K_1=\alpha_m\cdots\alpha_1$ and $K_2=\beta_n\cdots\beta_1$
with
$(\alpha_m,\alpha_1)=(\opannihilation,\creation)$
and
$(\beta_n,\beta_1)=(\opannihilation,\creation)$
(we may assume such presentations by (T6)),
their {connected sum} is defined by
\begin{equation*}
K_1\sharp K_2:=\alpha_{m}\cdots\alpha_2\cdot\beta_{n-1}\cdots\beta_1.
\end{equation*}
Again we note that $\sharp$ is different from the above composition \eqref{composition of Tpre}.

(5)
For a  $\mathbb K$-linear space $V$ with $V=\widehat{B^\epsilon_\tau}$,
we give its descending filtration $\{\mathcal T_N(V)\}_{N=0}^\infty$ of
$\mathbb K$-linear subspaces by $\mathcal T_N(V):=I^N$
and for a $\mathbb K$-linear space $V$ with 
$V=A^\epsilon_{k,l}$ or $C^\epsilon_{k,l}$
we give its filtration by ${\mathcal T}_0(V)=V$ and $\mathcal T_N(V):=\{0\}$ for $N>0$.
For a finite consistent tensor product $V=V_n\otimes\cdots\otimes V_1$ of ABC-spaces,
we give its descending filtration $\{\mathcal T_N(V)\}_{N=0}^\infty$ 
with 
$$
\mathcal T_N(V):=\sum_{i_n+\cdots+i_1=N}{\mathcal T}_{i_n}(V_n)\otimes\cdots\otimes{\mathcal T}_{i_1}(V_1),
$$
i.e.
the $\mathbb K$-linear subspaces generated by 
the subspaces ${\mathcal T}_{i_n}(V_n)\otimes\cdots\otimes{\mathcal T}_{i_1}(V_1)$
with $i_n+\cdots+i_1=N$. 
For any $\epsilon$ and $\epsilon'$,
the collection of the filtrations of  such consistent tensor product
$V$ with $s(V)=\epsilon'$ and $t(V)=\epsilon$
yields
a  filtration of  $\mathbb K$-submodules of
$\mathbb K[\mathcal{T}_{\epsilon,\epsilon'}^{\rm pre}]$,
which we denote by $\{\mathcal T^{\rm pre}_N\}_{N\geqslant 0}$.
They are compatible  with the composition map \eqref{composition of Tpre}.
The filtration on $\mathbb K[\mathcal{K}^{\rm pre}]$ induced by the filtration 
of  $\mathbb K[\mathcal{T}_{\emptyset,\emptyset}^{\rm pre}]$
is denoted by $\{\mathcal K^{\rm pre}_N\}_{N\geqslant 0}$.
The filtration is compatible with its algebra structure given by
\eqref{connected sum on KK-pre}.
\end{defn}

\begin{lem}
(1)
For any $\epsilon$ and $\epsilon'$ and any $N\geqslant 0$, there is an isomorphism
of $\mathbb K$-linear spaces
\begin{equation*}\label{two maps}
{\mathbb K}[{\mathcal T}_{\epsilon,\epsilon'}]/{\mathcal T}_N
\simeq \mathbb K[\mathcal{T}_{\epsilon,\epsilon'}^{\rm pre}]/{\mathcal T}^{\rm pre}_N.
\end{equation*}

(2)
For any $\epsilon$  and any $N\geqslant 0$, there is an isomorphism
of non-commutative $\mathbb K$-algebras
\begin{equation*}
{\mathbb K}[{\mathcal{SL}}_{\epsilon}]/{\mathcal T}_N
\simeq \mathbb K[\mathcal{SL}_{\epsilon}^{\rm pre}]/{\mathcal T}^{\rm pre}_N.
\end{equation*}

(3)
For any $N\geqslant 0$,
there is an isomorphism 
of commutative $\mathbb K$-algebras
\begin{equation*}
{\mathbb K}[{\mathcal K}]/{\mathcal K}_N
\simeq\mathbb K[\mathcal{K}^{\rm pre}]/{\mathcal K}_N^{\rm pre}.
\end{equation*}
\end{lem}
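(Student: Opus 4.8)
The plan is to transcribe, essentially line by line, the argument for the corresponding statement in the author's previous paper \cite{F12}, replacing the profinite completion $\widehat{\mathbb Z}$ everywhere by the characteristic-zero field $\mathbb K$ and the profinite/$I$-adic completions by their $\mathbb K$-linear analogues introduced in Definition \ref{definition of proalgebraic knots} and Definition \ref{definition of fundamental proalgebraic tangle}. For part (1) I would first construct the comparison map. Realize each ABC-generator by an honest elementary tangle: $a_{k,l}^\epsilon$ by a single annihilation placed between $k$ oriented strands on its left and $l$ on its right, $c_{k,l}^\epsilon$ by a single creation placed likewise, and a class $b^\epsilon\in\widehat{B^\epsilon_\tau}=\widehat{{\mathbb K}[P_n]}\cdot\tau$ by the corresponding $\mathbb K$-linear combination of pure-braid tangles twisted by the permutation $\tau$. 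Composing these realizations along any consistent tensor product $V_m\otimes\cdots\otimes V_1$ of ABC-spaces and extending $\mathbb K$-linearly gives a map from the $\mathbb K$-span of proalgebraic pre-tangles of type $(\epsilon,\epsilon')$ into $\widehat{{\mathbb K}[{\mathcal T}_{\epsilon,\epsilon'}]}$; the six isotopy moves (T1)--(T6) of \cite{F12} (with $c\in\widehat{\mathbb Z}$ replaced by $c\in\mathbb K$ in (T6)) are precisely the relations needed for this to descend to ${\mathbb K}[{\mathcal T}^{\rm pre}_{\epsilon,\epsilon'}]$. On the $B$-pieces the desingularization relation $\diaProjected=\diaCrossP-\diaCrossN$ matches the generator $\sigma_i-\sigma_i^{-1}$ of the ideal $I$, so the comparison map is filtered, carrying $\mathcal T_N^{\rm pre}$ into $\mathcal T_N$, and hence induces ${\mathbb K}[{\mathcal T}^{\rm pre}_{\epsilon,\epsilon'}]/\mathcal T_N^{\rm pre}\to{\mathbb K}[{\mathcal T}_{\epsilon,\epsilon'}]/\mathcal T_N$ for each $N$.

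Next I would show this induced map is bijective. For surjectivity, put a representative singular tangle into generic (Morse) position with respect to the first coordinate of $[0,1]\times\mathbb C$, so that between consecutive critical levels it is a product of elementary pieces --- creations, annihilations, and (possibly singular) braidings --- and near each double point rewrite it via $\diaProjected=\diaCrossP-\diaCrossN$ as a difference of two ordinary braidings; this exhibits any class of ${\mathbb K}[{\mathcal T}_{\epsilon,\epsilon'}]/\mathcal T_N$ as the image of a pre-tangle, and the number of double points translates exactly into membership in the tensor-product filtration $\mathcal T^{\rm pre}_\bullet$, so the map is in addition strictly filtered. Injectivity is the nontrivial input: it is the presentation theorem for the category of oriented tangles by the generators creation/annihilation/braiding and the relations (T1)--(T6) (cf. \cite{KT98,KRT}), which says that two pre-tangles with isotopic realizations are already related by a finite sequence of these moves; together with the $I$-adic bookkeeping (each additional double point raising the filtration by one, and $\sigma_i^c$ for $c\in\mathbb K$ being legitimate inside $\widehat{{\mathbb K}[B_n]}$ as recalled in \cite{F12}) this gives that the kernel modulo $\mathcal T^{\rm pre}_N$ is zero. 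This proves (1); no passage to a projective limit is required, since the assertion is at each finite level $N$.

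For (2) and (3) I would simply restrict the isomorphism of (1). Being a string link of type $\epsilon$ (resp. a knot) is a condition on the skeleton ${\mathbb S}(\Gamma)$ --- that its connected components are $n$ intervals joining the $i$-th bottom point to the $i$-th top point (resp. that it is a single circle) --- and the realization map sends skeletons to skeletons; hence it restricts to surjections ${\mathbb K}[{\mathcal{SL}}^{\rm pre}_\epsilon]\to{\mathbb K}[{\mathcal{SL}}_\epsilon]$ and ${\mathbb K}[{\mathcal K}^{\rm pre}]\to{\mathbb K}[{\mathcal K}]$ compatible with the filtrations, and injectivity descends from (1). That the composition \eqref{composition of Tpre} goes over to the tangle composition, and the connected sum \eqref{connected sum on KK-pre} over to the knot sum, is immediate from the definitions --- for the latter using the normal form with end-caps $(\opannihilation,\creation)$ guaranteed by (T6) --- so the maps of (2) and (3) are isomorphisms of non-commutative, resp. commutative, $\mathbb K$-algebras; one may also cross-check the knot case against Lemma \ref{identification between proalgebraic long knots and proalgebraic knots}.

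The genuinely non-formal ingredient, and the step I expect to be the main obstacle, is matching the two filtrations strictly under the comparison map, i.e. checking that the singular filtration $\{\mathcal T_N\}$ on honest (singular) tangles pulls back exactly to the tensor-product filtration $\{\mathcal T^{\rm pre}_N\}$ on pre-tangles rather than to something coarser. In the discrete, uncompleted setting the presentation-by-generators-and-relations of the tangle category is classical; in our $\mathbb K$-linear completed setting the only new point is this strictness, and once it is in place the whole statement is a line-by-line transcription of the profinite argument of \cite{F12} with $\widehat{\mathbb Z}$ replaced by $\mathbb K$.
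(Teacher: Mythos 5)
Your proposal is correct and follows essentially the same route as the paper: the paper's own (very terse) proof rests precisely on the two ingredients you isolate, namely the presentation of discrete oriented tangles by ABC-pieces modulo the discrete versions of (T1)--(T6), and the finite-level identification ${\mathbb K}[B_n]/I^i\simeq\widehat{{\mathbb K}[B_n]}/I^i$, which is exactly the strictness/bookkeeping point you flag as the main obstacle. Parts (2) and (3) are then, as in your argument, obtained by restriction from (1).
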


\begin{proof}
(1)
The map
is obtained because the set  $\mathcal T$ 
is described by the  sequence of elements in the discrete sets $A$, $B$ and $C$
modulo the discrete version of our moves (T1)-(T6)
(see \cite{F12}).
Showing that it is an isomorphism is attained by the isomorphism
$$
{\mathbb K}[B_n]/I^i\simeq\widehat{{\mathbb K}[B_n]}/I^i
$$
for $i=0,1,2,\dots.$

(2) and (3) It is a direct consequence of (1).
\end{proof}

Here are algebraic reformulations of proalgebraic tangles and knots.
\begin{prop}\label{algebraic presentation of proalgebraic tangles}
(1)
For each $\epsilon$ and $\epsilon'$, there is an identification of filtered $K$-linear spaces
$$
\varprojlim_n \ \mathbb K[\mathcal{T}_{\epsilon,\epsilon'}^{\rm pre}]/{\mathcal T}^{\rm pre}_n
\simeq \widehat{{\mathbb K}[{\mathcal T}_{\epsilon,\epsilon'}]}.
$$

(2)
For each $\epsilon$, there is an identification of filtered non-commutative $\mathbb K$-algebras
$$
\varprojlim_n \ \mathbb K[\mathcal{SL}_{\epsilon}^{\rm pre}]/{\mathcal T}^{\rm pre}_n
\simeq \widehat{{\mathbb K}[{\mathcal{SL}}_{\epsilon}]}.
$$

(3)
There is an identification of filtered commutative $\mathbb K$-algebras
$$
\varprojlim_n \ \mathbb K[\mathcal{K}^{\rm pre}]/{\mathcal K}^{\rm pre}_n
\simeq \widehat{{\mathbb K}[{\mathcal K}]}.
$$
\end{prop}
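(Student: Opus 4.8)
The plan is to deduce all three identifications by passing to the projective limit in the Lemma above. Recall from Definition \ref{definition of proalgebraic knots} that, by construction, $\widehat{{\mathbb K}[{\mathcal T}_{\epsilon,\epsilon'}]}=\varprojlim_N {\mathbb K}[{\mathcal T}_{\epsilon,\epsilon'}]/{\mathcal T}_N$, and likewise $\widehat{{\mathbb K}[{\mathcal{SL}}_{\epsilon}]}=\varprojlim_N {\mathbb K}[{\mathcal{SL}}_{\epsilon}]/{\mathcal T}_N$ and $\widehat{{\mathbb K}[{\mathcal K}]}=\varprojlim_N {\mathbb K}[{\mathcal K}]/{\mathcal K}_N$, both as filtered $\mathbb K$-algebras. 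So for (1) it is enough to produce an isomorphism $\varprojlim_n \mathbb K[\mathcal{T}_{\epsilon,\epsilon'}^{\rm pre}]/{\mathcal T}^{\rm pre}_n \simeq \varprojlim_N {\mathbb K}[{\mathcal T}_{\epsilon,\epsilon'}]/{\mathcal T}_N$ of filtered $\mathbb K$-linear spaces, and analogously for (2) and (3) with their algebra structures.

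First I would check that the finite-level isomorphisms furnished by the Lemma above assemble into an isomorphism of projective systems. The isomorphism ${\mathbb K}[{\mathcal T}_{\epsilon,\epsilon'}]/{\mathcal T}_N\simeq \mathbb K[\mathcal{T}_{\epsilon,\epsilon'}^{\rm pre}]/{\mathcal T}^{\rm pre}_N$ used there is induced by the tautological assignment sending a (singular) tangle to a composable string of $ABC$-pieces that presents it modulo the discrete version of the moves (T1)--(T6) of \cite{F12}; this assignment makes no reference to $N$, hence it commutes with the transition maps from the $(N{+}1)$-st quotient onto the $N$-th quotient on both sides. Therefore the systems $\{\,{\mathbb K}[{\mathcal T}_{\epsilon,\epsilon'}]/{\mathcal T}_N\,\}_N$ and $\{\,\mathbb K[\mathcal{T}_{\epsilon,\epsilon'}^{\rm pre}]/{\mathcal T}^{\rm pre}_N\,\}_N$ are isomorphic as pro-objects, and applying the functor $\varprojlim$ yields the $\mathbb K$-linear isomorphism of (1). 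Repeating the argument with the string-link, resp. knot, pieces, and invoking parts (2) and (3) of the Lemma above, produces the corresponding limit isomorphisms; since every transition map and every finite-level map in those two systems is an algebra homomorphism, the limit maps are isomorphisms of non-commutative, resp. commutative, $\mathbb K$-algebras, and on the knot side one checks in the same way compatibility with the connected sum and the resulting Hopf structure.

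Next I would match the filtrations. By the convention recorded in Definition \ref{definition of proalgebraic knots}, the filtration ${\mathcal T}_n$ on $\widehat{{\mathbb K}[{\mathcal T}_{\epsilon,\epsilon'}]}$ is the kernel of the projection onto ${\mathbb K}[{\mathcal T}_{\epsilon,\epsilon'}]/{\mathcal T}_n$; on $\varprojlim_n \mathbb K[\mathcal{T}_{\epsilon,\epsilon'}^{\rm pre}]/{\mathcal T}^{\rm pre}_n$ I would take the analogous filtration, the kernel of the projection onto its $n$-th quotient. Because the isomorphism of (1) intertwines these two projections for each $n$ --- which is precisely the compatibility established in the previous step --- it carries the one filtration isomorphically onto the other, so (1) is an identification of \emph{filtered} $\mathbb K$-linear spaces. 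Since all the structure maps of both inverse systems (transitions, projections, products) respect the filtrations level by level, the same conclusion carries over to (2) and (3).

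The one step I expect to be the real obstacle, mild as it is, is the $N$-independence, and hence functoriality in $N$, of the ``presentation by $ABC$-pieces'' invoked in the Lemma above: one must know that the discrete moves (T1)--(T6) generate isotopy of tangles and that each of them preserves the singular filtration, so that $\mathcal T$ (and likewise $\mathcal{SL}_\epsilon$ and $\mathcal K$) is presented faithfully and \emph{compatibly at all finite levels at once}. This is exactly the input borrowed from \cite{F12}, and once it is granted every step above is formal bookkeeping. A routine secondary check is that the filtrations on all the spaces in play are separated and exhaustive, so that the projective limits genuinely recover the intended completions; this is immediate from the constructions in Definitions \ref{definition of proalgebraic knots} and \ref{definition of fundamental proalgebraic tangle}.
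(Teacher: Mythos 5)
Your proposal is correct and follows exactly the route the paper takes: the paper's own proof is the one-line observation that the statement is "immediate by Definition \ref{definition of proalgebraic knots} and the above lemma," i.e.\ pass to the projective limit in the finite-level isomorphisms. You have merely spelled out the bookkeeping (compatibility with transition maps, filtrations, and product structures) that the paper leaves implicit.
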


\begin{proof}
It is immediate by Definition \ref{definition of proalgebraic knots} and the above lemma.
\end{proof}

\begin{rem}\label{KP injected to KSL}
By \cite{Bar96} \S 4.2, we have a natural inclusion
$
\widehat{{\mathbb K}[{P_n}]}
\hookrightarrow\widehat{{\mathbb K}[{\mathcal{SL}}_{\uparrow^n}]}.
$
\end{rem}

Based on the identification,
the action of the  Grothendieck-Teichm\"{u}ller group $GT({\mathbb K})$ on
proalgebraic tangles
(in \cite{KRT, KT98} Appendix)
can be explained as follows:

\begin{defn}\label{GT-action on proalgebraic tangles}
Let ${\bar \Gamma}\in \widehat{{\mathbb K}[{\mathcal T}_{\epsilon,\epsilon'}]}/{\mathcal T}_N$
with any sequences $\epsilon,\epsilon'$ and $N\geqslant 0$.
Let $\Gamma$ be its representative in $\mathbb K[\mathcal{T}_{\epsilon,\epsilon'}^{\rm pre}]$
with a presentation $\Gamma=\gamma_{m}\cdots\gamma_{2}\cdot\gamma_{1}$
($\gamma_{j}$: fundamental proalgebraic tangle).
For $\sigma=(\lambda,f)\in {GT}({\mathbb K})$
with $\lambda\in{\mathbb K}^\times$ and $f\in {F}_2({\mathbb K})$,
we define 
\begin{equation}\label{GT-action on KT mod N}
\sigma(\bar \Gamma):=
\overline{\sigma(\gamma_{m})\cdots\sigma(\gamma_{2})\cdot\sigma(\gamma_{1})}
\in \widehat{{\mathbb K}[{\mathcal T}_{\epsilon,\epsilon'}]}/{\mathcal T}_N.   
\end{equation}
Here $\sigma(\gamma_j)$ is defined in a same way to \cite{F12} as follows.

(1)
If $\gamma_{j}\in A_{k,l}^\epsilon$, we define
$$\sigma(\gamma_{j}):=\gamma_{j}\cdot
(\nu_f)^{s(\gamma_{j})}_{k+2}
\cdot f_{1\cdots k,k+1,k+2}^{s(\gamma_{j})}.$$
Here the middle term stands for the proalgebraic tangle  whose source is $s(\gamma_{j})$
which is obtained by  putting the trivial braid with $k+1$-strands on the left of $\nu_f$ (see below) and
the trivial braid with $l$-strands on its right. 

(2)
If $\gamma_{j}=(b_n,\epsilon)\in \widehat{B_\tau^\epsilon}$ with 
$b_n\in  \widehat{{\mathbb K}[P_n]}\cdot\tau\subset \widehat{{\mathbb K}[B_n]}$, 
we define
$$\sigma(\gamma_{j}):=(\rho_n(\sigma)(b_n),\epsilon).$$
Here $\rho_n(\sigma)(b_n)$ is the image of $b_n$
by the action $\sigma\in GT(\mathbb K)$ on $\widehat{{\mathbb K}[B_n]}$
explained in \S \ref{GT-action on proalgebraic braids}.

(3)
If $\gamma_{j}\in C_{k,l}^\epsilon$, we define
$$\sigma(\gamma_{j}):=f_{1\cdots k,k+1,k+2}^{-1,t(\gamma_{j})}
\cdot\gamma_{j}.$$

The symbol $\nu_f^\epsilon$ ($\epsilon=\uparrow, \downarrow$) means
the proalgebraic  tangle in $\widehat{{\mathbb K}[{\mathcal T}_{\epsilon,\epsilon}]}$
which is a proalgebraic string link 
with a single strands  such that $s(\mu_f^\epsilon)=t(\mu_f^\epsilon)=\epsilon$
and which is given by the inverse of  $\Lambda_f^\epsilon$
with respect to the composition:
$$
\nu_f^\epsilon:=\{\Lambda_f^\epsilon\}^{-1}.
$$
Here $\Lambda_f^\downarrow$ in $\widehat{{\mathbb K}[{\mathcal T}_{\epsilon,\epsilon}]}$
represents the proalgebraic string link with a single strands given by
$$
\Lambda_f^\downarrow:=
a_{1,0}^{\downarrow\annihilation}
\cdot f^{\downarrow\uparrow\downarrow}\cdot 
c_{0,1}^{\creation\downarrow}.
$$
(cf. Figure \ref{proalgebraic error term})
and $\Lambda_f^\uparrow$ is the same one obtained by reversing its all arrows.
\end{defn}
\begin{figure}[h]
\begin{center}
         \begin{tikzpicture}
                    \draw (1.2,0.5) rectangle (2.4,0.9);
                    \draw (1.8,0.7) node{$f$};
                     \draw[<-] (1.4,0.2)--(1.4,0.5);
                    \draw[<-] (1.4,1.2)--(1.4,1.5) (1.4,0.9)--(1.4,1.2) ;
                      \draw (1.4,1.4) node{$\downarrow$};
                     \draw[->] (1.4,0.2)  arc (180:360:0.2);
                     \draw[->] (1.8,1.2)  arc (180:0:0.2);
                     \draw[->] (1.8,0.2)--(1.8,0.5);
                     \draw[->] (1.8,0.9)--(1.8,1.2);
                      \draw[<-] (2.2,-0.2)--(2.2,0.2) (2.2,0.2)--(2.2,0.5);
                      \draw[<-] (2.2,0.9)--(2.2,1.2);
                       \draw (2.2,0) node{$\downarrow$};
\draw[color=white, very thick] (1.3,-0.2)--(2.3,-0.2) (1.3,0.2)--(2.3,0.2) (1.3,1.2)--(2.3,1.2) (1.3,1.6)--(2.3,1.6) ;
         \end{tikzpicture}
\caption{$\Lambda_f^\downarrow$}
\label{proalgebraic error term}
\end{center}
\end{figure}
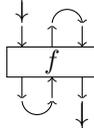

We note that the existence of the inverse of $\Lambda^\epsilon_f$ 
in $\widehat{{\mathbb K}[{\mathcal T}_{\epsilon,\epsilon}]}$ is immediate
because $\Lambda_f^\epsilon$ is congruent to the trivial braid with a string modulo
${{\mathcal T}_1}$
and $\widehat{{\mathbb K}[{\mathcal T}_{\epsilon,\epsilon}]}$ is completed by the filtration 
$\{\mathcal T_n\}_{n=0}^\infty$.

\begin{rem}
(1)
The inverse $\nu_f^\epsilon$ does not look exist in 
${\mathbb K}[{\mathcal T}_{\epsilon,\epsilon'}^{\rm pre}]$ generally. 
That is why we define 
$GT(\mathbb K)$-action on
$\widehat{{\mathbb K}[{\mathcal T}_{\epsilon,\epsilon'}]}$ below.

(2)
Our action \eqref{GT-action on KT mod N} looks slightly different from 
the one given in \cite{F12}.
One of the reasons is that we deal tangles
in Definition \ref{GT-action on proalgebraic tangles}
while we discuss knots in \cite{F12}.
\end{rem}


\begin{prop}[\cite{KRT, KT98}]\label{GT-action on KT}
The equation \eqref{GT-action on KT mod N} yields a well-defined $GT(\mathbb K)$-action on
$\widehat{{\mathbb K}[{\mathcal T}_{\epsilon,\epsilon'}]}/{\mathcal T}_N$
for any $\epsilon$, $\epsilon'$ and any $N\geqslant 0$,
and induces a well-defined $GT(\mathbb K)$-action on 
$\widehat{{\mathbb K}[{\mathcal T}_{\epsilon,\epsilon'}]}$ such that
the  equality 
$$\sigma(\Gamma\cdot\Gamma')=\sigma(\Gamma)\cdot\sigma( \Gamma')$$
holds in $\widehat{{\mathbb K}[{\mathcal T}_{\epsilon_1,\epsilon_3}]}$
for two proalgebraic tangles 
$\Gamma\in \widehat{{\mathbb K}[{\mathcal T}_{\epsilon_1,\epsilon_2}]}$ 
and $\Gamma'\in\widehat{{\mathbb K}[{\mathcal T}_{\epsilon_2,\epsilon_3}]}$
for any $\epsilon_1$, $\epsilon_2$ and $\epsilon_3$.
\end{prop}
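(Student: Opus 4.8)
The plan is to construct the action in stages: first on the free $\mathbb K$-span of proalgebraic pre-tangles by the tensor-wise rule \eqref{GT-action on KT mod N}, then to push it down successively to the isotopy quotient, to each truncation $\widehat{\mathbb K[\mathcal T_{\epsilon,\epsilon'}]}/\mathcal T_N$, and finally to the inverse limit, checking along the way that the outcome is a group action compatible with composition. On the free $\mathbb K$-span there is nothing to verify: a pre-tangle comes with a fixed tensor presentation $\gamma_m\cdots\gamma_1$, and in each of the three cases of Definition \ref{GT-action on proalgebraic tangles} the assignment $\gamma_j\mapsto\sigma(\gamma_j)$ is $\mathbb K$-linear in $\gamma_j$, so \eqref{GT-action on KT mod N} is unambiguous there. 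All the substance lies in compatibility with the isotopy relation and with the filtration.

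For invariance under isotopy I would go move by move through (T1)--(T6) of \cite{F12}, read now with $\widehat{B_n}$ replaced by $\widehat{\mathbb K[B_n]}$ and $c\in\widehat{\mathbb Z}$ by $c\in\mathbb K$ in (T6). The moves supported inside a block of parallel braid strands reduce at once to the fact, recalled in Proposition \ref{proalg-GT-action theorem on braids}, that $\rho_n$ is a well-defined automorphism of $\widehat{\mathbb K[B_n]}$, together with naturality of the cabling maps $\widehat{\mathbb K[B_m]}\to\widehat{\mathbb K[B_n]}$; here one also uses that $\sigma$ sends $(b_n,\epsilon)$ to $(\rho_n(\sigma)(b_n),\epsilon)$, so the underlying permutation, hence the source and target, is preserved, and (T6) follows from multiplicativity of $\rho_n(\sigma)$, which respects the rational powers $\sigma_i^c$. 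The substantive moves are those that create or cancel a cup--cap pair, slide a crossing past a cup or a cap, or introduce a zig-zag: there the defining relations of $GT(\mathbb K)$ enter precisely as in Drinfeld's passage from a pair $(\lambda,f)$ to a braided monoidal structure --- the two hexagon moves via \eqref{proalg-2-cycle} and \eqref{proalg-hexagon equation}, the pentagon move via \eqref{proalg-pentagon equation} --- while the Reidemeister-I (rigidity) move is matched exactly by the choice $\nu_f^\epsilon=\{\Lambda_f^\epsilon\}^{-1}$ with $\Lambda_f^\epsilon$ as in Figure \ref{proalgebraic error term}, inserted in case (1) so as to absorb the zig-zag discrepancy; the decorations $(-)^{s(\gamma_j)}$ and $(-)^{t(\gamma_j)}$ merely reverse the strands oriented $\uparrow$. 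This step is the heart of the proof and the principal obstacle: it is the computation sketched in the appendices of \cite{KT98} and \cite{KRT} and carried out for knots in \cite{F12}, but the orientation bookkeeping for general tangles demands care.

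The remaining points are routine. For the filtration, $\sigma(\gamma_j)$ lies in $\mathcal T_0$ of its ABC-space when $\gamma_j$ is of type $A$ or $C$ --- the extra factors $\nu_f^{s(\gamma_j)}$ and $f^{\pm1}_{1\cdots k,k+1,k+2}$ are respectively a proalgebraic string link congruent to the trivial one and a pure-braid element, both of filtration degree $0$ --- while for $\gamma_j=(b_n,\epsilon)$ of type $B$ the automorphism $\rho_n(\sigma)$ is continuous for the $I$-adic filtration by Proposition \ref{proalg-GT-action theorem on braids}; hence the tensor-wise map preserves $\mathcal T^{\rm pre}_N=\sum_{i_m+\cdots+i_1=N}\mathcal T_{i_m}(V_m)\otimes\cdots\otimes\mathcal T_{i_1}(V_1)$, descends to each quotient compatibly in $N$, and therefore to the inverse limit, which by Proposition \ref{algebraic presentation of proalgebraic tangles} is $\widehat{\mathbb K[\mathcal T_{\epsilon,\epsilon'}]}$. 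That $\sigma\mapsto\rho_n(\sigma)$ is a group action --- compatibility of \eqref{GT to Aut B_n} with the product \eqref{product of proalg-GT} --- handles the type-$B$ factors, and for types $A$ and $C$ one checks that the $f$-cocycle in \eqref{GT-action on KT mod N} composes according to \eqref{product of proalg-GT}, while $(1,1)\in GT(\mathbb K)$ plainly acts trivially. Finally, compatibility with composition is immediate: if $\Gamma=\gamma_m\cdots\gamma_1$ and $\Gamma'=\gamma'_{m'}\cdots\gamma'_1$ are composable then $\Gamma\cdot\Gamma'$ has the presentation $\gamma_m\cdots\gamma_1\cdot\gamma'_{m'}\cdots\gamma'_1$, so \eqref{GT-action on KT mod N} yields $\sigma(\Gamma\cdot\Gamma')=\sigma(\Gamma)\cdot\sigma(\Gamma')$ verbatim --- using only that $\sigma(\gamma_j)$ has the same source and target as $\gamma_j$ --- and this passes to the completion.
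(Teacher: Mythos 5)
Your proposal is correct and follows essentially the same route as the paper, which simply defers to the appendices of \cite{KT98, KRT} and to the direct move-by-move verification of \cite{F12} Theorem 2.38; your outline is that verification spelled out, with the hexagon and pentagon relations handling the cup/cap and crossing moves, $\nu_f^\epsilon=\{\Lambda_f^\epsilon\}^{-1}$ absorbing the zig-zag, and continuity of $\rho_n(\sigma)$ giving compatibility with the filtration and hence the passage to the inverse limit.
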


\begin{proof}
Though its proof can be found  in loc.~cit~
where it is explained in terms of $\mathbb K$-linear braided monoidal categories,
we can prove in a same way to  the proof of \cite{F12} Theorem 2.38.
by direct calculations.
\end{proof}

We denote the above induced  action by
\begin{equation}\label{GT to Aut KT}
\rho_{\epsilon,\epsilon'}:GT(\mathbb K)\to \mathrm{Aut}  \ \widehat{{\mathbb K}[{\mathcal T}_{\epsilon,\epsilon'}]}.
\end{equation}
We may say that 
it is a generalization  of the map \eqref{GT to Aut B_n}
into the  proalgebraic tangle case.
As a consequence of the above proposition, we have

\begin{prop}\label{GT-action on KSL}
For each sequence  $\epsilon$, the subspace
$\widehat{{\mathbb K}[{\mathcal{SL}}_{\epsilon}]}$ of proalgebraic string links
is stable under the above $GT(\mathbb K)$-action.
The action is compatible with its non-commutative algebra structure
whose product is given by the composition.
\end{prop}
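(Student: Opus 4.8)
The plan is to derive this from Proposition \ref{GT-action on KT} together with the fact that the $GT(\mathbb K)$-action is defined piecewise on fundamental proalgebraic tangles in a way that respects skeleta. First I would recall that, by Proposition \ref{algebraic presentation of proalgebraic tangles}.(2), the subspace $\widehat{{\mathbb K}[{\mathcal{SL}}_{\epsilon}]}$ is the inverse limit of the $\mathbb K$-algebras $\mathbb K[\mathcal{SL}_{\epsilon}^{\rm pre}]/{\mathcal T}^{\rm pre}_N$, so it suffices to check that the action on $\widehat{{\mathbb K}[{\mathcal T}_{\epsilon,\epsilon}]}/{\mathcal T}_N$ given by \eqref{GT-action on KT mod N} carries the image of $\mathbb K[\mathcal{SL}_{\epsilon}^{\rm pre}]$ into itself, compatibly with the composition. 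The compatibility with composition is already the last assertion of Proposition \ref{GT-action on KT} (note that on string links the composition map \eqref{proalgebraic tangle composition} is exactly the algebra product), so the only real content is stability.

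The key step is a skeleton-preservation argument. For $\sigma=(\lambda,f)\in GT(\mathbb K)$ and a proalgebraic pre-tangle $\Gamma=\gamma_m\cdots\gamma_1$, each $\sigma(\gamma_j)$ defined in Definition \ref{GT-action on proalgebraic tangles}(1)--(3) is obtained from $\gamma_j$ by multiplying on the left and/or right by elements $f_{1\cdots k,k+1,k+2}^{\pm 1,\ast}$ and $\nu_f^\ast$, all of which are proalgebraic \emph{string links} (pure-braid-type elements, resp. the one-strand tangle $\nu_f^\epsilon$, which by construction has the skeleton of the trivial one-strand braid). Hence $\sigma(\gamma_j)$ has the same skeleton as $\gamma_j$, and therefore $\sigma(\Gamma)=\sigma(\gamma_m)\cdots\sigma(\gamma_1)$ has the same skeleton as $\Gamma$. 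In particular, if $\Gamma$ is a proalgebraic pre-string link of type $\epsilon$ — i.e.\ its skeleton consists of $n$ intervals joining $i$-th bottom to $i$-th top with no circles — then so is $\sigma(\Gamma)$. Passing to the quotients $\mathbb K[\mathcal{SL}_{\epsilon}^{\rm pre}]/{\mathcal T}^{\rm pre}_N$ and then to the inverse limit, we get that the induced map \eqref{GT to Aut KT} restricts to an action on $\widehat{{\mathbb K}[{\mathcal{SL}}_{\epsilon}]}$, and since $\sigma$ is an automorphism of $\widehat{{\mathbb K}[{\mathcal T}_{\epsilon,\epsilon}]}$ whose inverse $\sigma^{-1}$ also preserves skeleta by the same reasoning, the restriction is an automorphism of $\widehat{{\mathbb K}[{\mathcal{SL}}_{\epsilon}]}$.

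I expect the main obstacle to be purely bookkeeping: making the notion ``$\sigma(\gamma_j)$ and $\gamma_j$ have the same skeleton'' precise at the level of the $\mathbb S(-)$ formalism of \cite{F12}, in particular verifying that the correction factors $f_{1\cdots k,k+1,k+2}^{\ast}$ and $\nu_f^\epsilon$ are genuinely string-link-type (equivalently, that their skeleta are identity braids) so that composing with them does not change $\mathbb S(\Gamma)$. This is immediate from the definitions — $f$ lies in the commutator of $F_2(\mathbb K)\hookrightarrow P_n(\mathbb K)$, so $f_{1\cdots k,k+1,k+2}$ maps to the trivial element of $\mathfrak S_n$, and $\nu_f^\epsilon$ is by definition a one-strand proalgebraic string link — but it must be spelled out. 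The compatibility with the algebra structure needs no extra work beyond citing the multiplicativity $\sigma(\Gamma\cdot\Gamma')=\sigma(\Gamma)\cdot\sigma(\Gamma')$ from Proposition \ref{GT-action on KT}, together with the observation that $\sigma$ fixes the unit (the trivial string link) since all correction factors are trivial on it.
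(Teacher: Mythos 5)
Your proposal is correct and follows essentially the same route the paper takes: the paper presents this proposition as an immediate consequence of Proposition \ref{GT-action on KT}, with stability justified (as it spells out explicitly in the parallel knot case, Proposition \ref{GT-action on KK}) by the observation that the action preserves skeletons, and compatibility with the product coming from the multiplicativity $\sigma(\Gamma\cdot\Gamma')=\sigma(\Gamma)\cdot\sigma(\Gamma')$. Your extra care in checking that the correction factors $f_{1\cdots k,k+1,k+2}^{\pm1,\ast}$ and $\nu_f^\ast$ have identity-braid skeleta is exactly the bookkeeping the paper leaves implicit.
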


We denote the above action by
\begin{equation}\label{GT to Aut KSL}
\rho_{\epsilon}:GT(\mathbb K)\to \mathrm{Aut}  \ \widehat{{\mathbb K}[{\mathcal SL}_{\epsilon}]}.
\end{equation}
We will see in Theorem \ref{inner automorphism theorem on tangles}
that this action restricted into $GT_1(\mathbb K)$
is given by inner conjugation. 

Particularly by restricting our action \eqref{GT to Aut KT} into $\widehat{{\mathbb K}[{\mathcal T}_{\emptyset,\emptyset}]}$
we obtain the following.

\begin{prop}\label{GT-action on KK}
The subspace $\widehat{{\mathbb K}[{\mathcal K}]}$ 
of proalgebraic knots is stable under the above
$GT(\mathbb K)$-action.
The action there is not compatible with the connected sum $\sharp$ however
the equality
\begin{equation}\label{incompatibility}
\sigma(K_1\sharp K_2)\sharp\sigma(\orientedcircle)=\sigma(K_1)\sharp\sigma(K_2)
\end{equation}
holds for any $\sigma\in GT(\mathbb K)$ and any
$K_1,K_2\in \widehat{{\mathbb K}[{\mathcal K}]}$.
\end{prop}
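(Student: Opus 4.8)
The plan is to reduce everything to the piecewise description of proalgebraic knots via proalgebraic pre-knots (Proposition \ref{algebraic presentation of proalgebraic tangles}) and to track carefully what the $GT(\mathbb K)$-action does to the $A$- and $C$-blocks that appear when one forms a connected sum. First I would fix representatives. By the isotopy move (T6) we may present $K_1=\alpha_m\cdots\alpha_1$ and $K_2=\beta_n\cdots\beta_1$ as proalgebraic pre-knots with $(\alpha_m,\alpha_1)=(\opannihilation,\creation)$ and $(\beta_n,\beta_1)=(\opannihilation,\creation)$, so that $K_1\sharp K_2=\alpha_m\cdots\alpha_2\cdot\beta_{n-1}\cdots\beta_1$ by the definition of $\sharp$ in \eqref{connected sum on KK-pre}. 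The key point is the interface: in $K_1$ the bottom cap $\alpha_1\in C_{0,0}^{\creation}$ (or the appropriate source, essentially a single $\creation$) meets nothing below it, and in $K_1\sharp K_2$ it is replaced by the whole top part $\beta_{n-1}\cdots\beta_1$ of $K_2$, whose topmost block $\beta_{n-1}$ was attached to the cap $\beta_n\in A_{0,0}^{\opannihilation}$ in $K_2$. I would therefore compute $\sigma(K_1\sharp K_2)$ using the multiplicativity in Proposition \ref{GT-action on KT} and compare, block by block, with $\sigma(K_1)$ and $\sigma(K_2)$.

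The heart of the matter is the mismatch caused by $\sigma$ acting on the $C$-block $\alpha_1$ and the $A$-block $\beta_n$. By Definition \ref{GT-action on proalgebraic tangles}, on a $C$-block $\gamma\in C_{k,l}^\epsilon$ one has $\sigma(\gamma)=f_{1\cdots k,k+1,k+2}^{-1,t(\gamma)}\cdot\gamma$ and on an $A$-block $\gamma\in A_{k,l}^\epsilon$ one has $\sigma(\gamma)=\gamma\cdot(\nu_f)^{s(\gamma)}_{k+2}\cdot f_{1\cdots k,k+1,k+2}^{s(\gamma)}$; for the minimal caps $k=l=0$ these reduce to a single $f^{\pm1}$ together with the correction term $\nu_f^\epsilon$, which is $\{\Lambda_f^\epsilon\}^{-1}$ with $\Lambda_f^\epsilon$ the proalgebraic string link of Figure \ref{proalgebraic error term}. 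So $\sigma(K_1)$ carries, at its bottom, an extra factor of $\sigma(\alpha_1)$ containing an $f^{-1}$ localized at the cap; $\sigma(K_2)$ carries at its top an extra $\sigma(\beta_n)$ containing an $f$ (and a $\nu_f$); and in $\sigma(K_1\sharp K_2)$ these two factors are simply absent because the corresponding caps were deleted when we glued. I would show that what is left over, after cancelling all the common interior factors, is precisely a closed-up copy of $\Lambda_f^\epsilon$ together with its inverse arrangement — equivalently that $\sigma(\orientedcircle)$, computed from the two-block pre-knot $a_{0,0}^{\opannihilation}\cdot c_{0,0}^{\creation}$, equals the connected-sum factor needed to correct the discrepancy. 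Concretely $\sigma(\orientedcircle)=\overline{\sigma(a_{0,0}^{\opannihilation})\cdot\sigma(c_{0,0}^{\creation})}$, and unwinding the definitions this is the closure of $\nu_f^\uparrow\cdot f\cdot f^{-1}$ (up to the bookkeeping of orientations), i.e. the closure of $\nu_f^\uparrow$; the same $\nu_f$-closure is exactly the defect between $\sigma(K_1)\sharp\sigma(K_2)$ and $\sigma(K_1\sharp K_2)$.

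Having isolated the defect, I would finish by a clean algebraic manipulation in $\widehat{{\mathbb K}[{\mathcal K}]}$ rather than diagram chasing: write $D:=\sigma(\orientedcircle)$ and argue that for any proalgebraic knot $K$ one has $\sigma(K)\sharp D^{-1}$ equal to the ``naive'' action obtained by acting on every block except deleting the cap corrections — a map which I will have shown is multiplicative for $\sharp$ in the sense that it sends $K_1\sharp K_2$ to (naive $\sigma$ of $K_1$) $\sharp$ (naive $\sigma$ of $K_2$). Then $\sigma(K_1\sharp K_2)\sharp D^{-1}=(\sigma(K_1)\sharp D^{-1})\sharp(\sigma(K_2)\sharp D^{-1})$, which upon rearranging (using commutativity of $\sharp$ and that $D^{-1}$ exists since $D\equiv\orientedcircle$ mod ${\mathcal K}_1$) gives $\sigma(K_1\sharp K_2)\sharp D=\sigma(K_1)\sharp\sigma(K_2)$, i.e. \eqref{incompatibility}. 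The main obstacle I anticipate is the orientation bookkeeping at the gluing interface: the cap $c_{0,0}^{\creation}$ at the bottom of $K_1$ and the cap $a_{0,0}^{\opannihilation}$ at the top of $K_2$ carry opposite-looking local pictures, and one must check that the $\nu_f^\epsilon$ with $\epsilon=\uparrow$ versus $\epsilon=\downarrow$, together with the $f^{s(\gamma)}$ versus $f^{-1,t(\gamma)}$ placements, really do combine to produce exactly one copy of $\sigma(\orientedcircle)$ and not its inverse or a twisted variant. This I would settle by the same explicit local computation used in the proof of \cite{F12} Theorem 2.38, which controls precisely how $\sigma$ interacts with a single maximum–minimum pair; stability of $\widehat{{\mathbb K}[{\mathcal K}]}$ under the action (the first assertion of the proposition) then drops out along the way, since the skeleton of a pre-knot — a single connected circle — is manifestly preserved block by block by the formulas in Definition \ref{GT-action on proalgebraic tangles}.
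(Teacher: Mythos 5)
Your proposal is correct and follows essentially the same route as the paper: stability is seen from the invariance of the skeleton's connected components, and the identity \eqref{incompatibility} is obtained by the block-by-block comparison of the cap corrections at the gluing interface, which is precisely the argument the paper delegates to the proof of equation (2.21) in \cite{F12}. Your explicit identification of the defect as the closure of $\nu_f$, i.e.\ $\sigma(\orientedcircle)$, is the content of that cited computation, so no new idea is needed or missing.
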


\begin{proof}
Seeing that the subspace is stable can be verified by showing that connected components of
skeletons of proalgebraic tangles are unchanged.
The equation \eqref{incompatibility}
can be proven in a same way to the proof of the equation (2.21) in \cite{F12}.
\end{proof}

We denote the above action by
\begin{equation}\label{GT to Aut KK}
\rho_{0}:GT(\mathbb K)\to \mathrm{Aut}  \ \widehat{{\mathbb K}[{\mathcal K}_0]}.
\end{equation}

\begin{rem}
We will see in Proposition \ref{Gm-action on knots}
that this action is given by ${\mathbb G}_m$-action. 
We will explicitly determine in Theorem \ref{invariant subspace theorem}
the subspace of $\widehat{{\mathbb K}[{\mathcal K}]}$
which is invariant under the above $GT(\mathbb K)$-action.
\end{rem}

We note that in the proalgebraic knot setting 
our action above is not compatible
with the product structure \eqref{connected sum in proalgebraic knots}
due to \eqref{incompatibility},
while in the profinite knot setting
the $\widehat{GT}$-action on the group of profinite knots is compatible with the product structure
as shown in \cite{F12}  Theorem 2.38.(4).
In order to relate the $\widehat{GT}$-action on $\mathrm{Frac}{\widehat{\mathcal K}}$ constructed in \cite{F12}
with the above $GT(\mathbb K)$-action on $\widehat{{\mathbb K}[{\mathcal K}]}$,
we introduce  the proalgebraic group $\mathrm{Frac}{\mathcal K}(\mathbb K)$ below.

\begin{nota}
Put ${\mathcal K}(\mathbb K)$ to be the group-like part of $\widehat{{\mathbb K}[{\mathcal K}]}$,
which carries a structure of proalgebraic group.
It can be checked  directly that $GT(\mathbb K)$-action on $\widehat{{\mathbb K}[{\mathcal K}]}$
is compatible with its coproduct map and its antipode map.
Hence we have $GT(\mathbb K)$-action on ${\mathcal K}(\mathbb K)$
though it is not compatible with its product structure of ${\mathcal K}(\mathbb K)$
due to  \eqref{incompatibility}.
We denote its group of fraction by $\mathrm{Frac}{\mathcal K}(\mathbb K),$
which is the quotient space of ${\mathcal K}(\mathbb K)\times {\mathcal K}(\mathbb K)$
by the equivalent relations $(r,s)\approx (r',s')$ if
$r\sharp s'\sharp t=r'\sharp s\sharp t$
holds  in ${\mathcal K}(\mathbb K)$ for some $t\in {\mathcal K}(\mathbb K)$.
\end{nota}

\begin{lem}
The induced $GT(\mathbb K)$-action on $\mathrm{Frac}{\mathcal K}(\mathbb K)$
is compatible its group structure, i.e.
$$
\sigma(e)=e, \qquad
\sigma(x\sharp y)=\sigma(x)\sharp\sigma(y), \qquad
\sigma(\frac{1}{x})=\frac{1}{\sigma(x)}
$$
for any $\sigma\in GT({\mathbb K})$ and
$x,y\in \mathrm{Frac}{\mathcal K}(\mathbb K)$.
Here $e={\orientedcircle}/{\orientedcircle}$.
\end{lem}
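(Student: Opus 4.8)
The plan is to exploit the fact that the defect term $\sigma(\orientedcircle)$ appearing in the incompatibility \eqref{incompatibility} is the \emph{same} for every knot and is invertible with respect to $\sharp$, so that it disappears upon passing to the group of fractions. First I would record that $\sigma(\orientedcircle)$ lies in $\mathcal K(\mathbb K)$: since $\orientedcircle$ is group-like in $\widehat{{\mathbb K}[{\mathcal K}]}$ and $\rho_0$ is compatible with the coproduct, $\sigma(\orientedcircle)$ is group-like, hence has a $\sharp$-inverse in $\mathcal K(\mathbb K)$. A straightforward induction on $n$ then promotes \eqref{incompatibility} to the $n$-ary identity
\begin{equation}\label{n-ary-incomp}
\sigma(K_1\sharp\cdots\sharp K_n)\sharp\sigma(\orientedcircle)^{n-1}=\sigma(K_1)\sharp\cdots\sharp\sigma(K_n)
\end{equation}
for all $\sigma\in GT(\mathbb K)$ and $K_1,\dots,K_n\in\widehat{{\mathbb K}[{\mathcal K}]}$, where $\sigma(\orientedcircle)^{n-1}$ denotes the $(n-1)$-fold $\sharp$-power in $\mathcal K(\mathbb K)$; the inductive step uses \eqref{incompatibility} together with the invertibility just noted to rearrange $\sigma(A\sharp B)=\sigma(A)\sharp\sigma(B)\sharp\sigma(\orientedcircle)^{-1}$.

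Second, I would check that the diagonal $GT(\mathbb K)$-action $\sigma\cdot(r,s):=(\sigma(r),\sigma(s))$ on $\mathcal K(\mathbb K)\times\mathcal K(\mathbb K)$ descends to ${\rm G}{\mathcal K}(\mathbb K)$, which is what the \emph{induced} action in the statement should mean. If $(r,s)\approx(r',s')$, say $r\sharp s'\sharp t=r'\sharp s\sharp t$ in $\mathcal K(\mathbb K)$, then applying $\sigma$ and then \eqref{n-ary-incomp} with $n=3$ to both sides lets the common factor $\sigma(\orientedcircle)^{2}$ cancel, leaving $\sigma(r)\sharp\sigma(s')\sharp\sigma(t)=\sigma(r')\sharp\sigma(s)\sharp\sigma(t)$, i.e.\ $(\sigma(r),\sigma(s))\approx(\sigma(r'),\sigma(s'))$ with witness $\sigma(t)$. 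Hence the action on classes is well defined.

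Third, I would verify the three displayed identities, using repeatedly the elementary facts — immediate from commutativity of $\sharp$ and the definition of $\approx$ — that the class of $(a,a)$ is $e$ and, more generally, that $(a\sharp c,\,b\sharp c)\approx(a,b)$ for all $a,b,c\in\mathcal K(\mathbb K)$. Then $\sigma(e)$ is the class of $(\sigma(\orientedcircle),\sigma(\orientedcircle))$, hence $\sigma(e)=e$. Writing $x=r/s$, $y=r'/s'$, so that $x\sharp y=(r\sharp r')/(s\sharp s')$, identity \eqref{n-ary-incomp} with $n=2$ shows that $\sigma(x\sharp y)$ is the class of $\bigl(\sigma(r)\sharp\sigma(r')\sharp\sigma(\orientedcircle)^{-1},\ \sigma(s)\sharp\sigma(s')\sharp\sigma(\orientedcircle)^{-1}\bigr)$, which after cancelling the common factor $\sigma(\orientedcircle)^{-1}$ equals the class of $\bigl(\sigma(r)\sharp\sigma(r'),\ \sigma(s)\sharp\sigma(s')\bigr)=\sigma(x)\sharp\sigma(y)$. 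Finally, the inverse of the class of $(r,s)$ is represented by $(s,r)$, and the diagonal action manifestly commutes with interchanging the two coordinates, so $\sigma(1/x)=1/\sigma(x)$.

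The whole argument is routine bookkeeping, and I do not anticipate any genuine obstacle beyond keeping track of these defect terms; the only place the hypotheses are really used is the invertibility of $\sigma(\orientedcircle)$ in $\mathcal K(\mathbb K)$ — which follows from $\rho_0$ being compatible with the coproduct and antipode of $\widehat{{\mathbb K}[{\mathcal K}]}$, so that $\sigma(\orientedcircle)$ is a group-like element — and this is exactly what lets the $n-1$ copies of the defect term in \eqref{n-ary-incomp} be stripped off systematically.
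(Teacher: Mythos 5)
Your proposal is correct and rests on exactly the same mechanism as the paper's proof: the defect term $\sigma(\orientedcircle)$ from \eqref{incompatibility} is common to numerator and denominator and therefore cancels in the group of fractions. The only difference is that you additionally spell out the well-definedness of the induced action on ${\rm G}{\mathcal K}(\mathbb K)$ (via the $n$-ary version of \eqref{incompatibility} and the invertibility of $\sigma(\orientedcircle)$ in ${\mathcal K}(\mathbb K)$), a point the paper's proof takes for granted.
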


\begin{proof}
Let $x=r_1/s_1$ and $y=r_2/s_2$. 
Then by  \eqref{incompatibility} it is easy to see
\begin{align*}
\sigma(x\sharp y)&=
\sigma(\frac{r_1\sharp r_2}{s_1\sharp s_2})=
\frac{\sigma({r_1\sharp r_2})}{\sigma({s_1\sharp s_2})} 
=\frac{\sigma(r_1\sharp r_2)\sharp\sigma(\orientedcircle)}
{\sigma(s_1\sharp s_2)\sharp\sigma(\orientedcircle)} 
=\frac{\sigma(r_1)\sharp\sigma(r_2)}{\sigma(s_1)\sharp\sigma(s_2)} \\
&=\frac{\sigma(r_1)}{\sigma(s_1)}\sharp\frac{\sigma(r_2)}{\sigma(s_2)}
=\sigma(\frac{r_1}{s_1})\sharp\sigma(\frac{r_2}{s_2}) 
=\sigma(x)\sharp\sigma(y),\\
\sigma(\frac{1}{x})\sharp\sigma(x)&=
\sigma(\frac{s_1}{r_1})\sharp\sigma(\frac{r_1}{s_1})
=\frac{\sigma(s_1)}{\sigma(r_1)}\sharp \frac{\sigma(r_1)}{\sigma(s_1)}
=\frac{\sigma(s_1)\sharp\sigma(r_1)}{\sigma(r_1)\sharp\sigma(s_1)}
=\frac{\sigma(r_1)\sharp\sigma(s_1)}{\sigma(r_1)\sharp\sigma(s_1)}
=\frac{\orientedcircle}{\orientedcircle}=e.
\end{align*}
\end{proof}

In \cite{F12}, the profinite  $\widehat{GT}$-action on $\mathrm{Frac}{\widehat{\mathcal K}}$ 
was constructed.
A relationship of the action 
with the above constructed $GT({\mathbb K})$-action on ${\mathcal K}({\mathbb K})$ 
is explicitly given below.

\begin{prop}\label{proposition on GK to GKQl}
For each prime $l$,
there is a  natural group homomorphism
\begin{equation}\label{GK to GKQl}
\mathrm{Frac}{\widehat{\mathcal K}}\to \mathrm{Frac}{\mathcal K}({\mathbb Q}_l).
\end{equation}
The $\widehat{GT}$-action on $\mathrm{Frac}{\widehat{\mathcal K}}$ constructed in \cite{F12}
is compatible with
the above $GT({\mathbb Q}_l)$-action on ${\mathcal K}({\mathbb Q}_l)$
under the maps \eqref{GT to GTQl} and \eqref{GK to GKQl}.
\end{prop}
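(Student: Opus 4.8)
The plan is to build \eqref{GK to GKQl} from the piecewise (ABC-) description of (pro)knots and then transport the $\widehat{GT}$-action along it. First I would use the natural maps $\widehat{B}_n\to B_n(\mathbb Q_l)\subset\widehat{\mathbb Q_l[B_n]}$ and $\widehat{P}_n\to P_n(\mathbb Q_l)$, together with $\widehat{\mathbb Z}\to\mathbb Q_l$, to send a profinite pre-knot (a consistent word in the discrete $A$-, $B$- and $C$-pieces, where now the $B$-pieces carry elements of $\widehat{B}_n$) to a proalgebraic pre-knot in the sense of Definition \ref{definition of fundamental proalgebraic tangle}. Since these maps respect the relations of $B_n(\mathbb Q_l)$ and are compatible with $\sigma_i^c$ for $c\in\widehat{\mathbb Z}\mapsto c\in\mathbb Q_l$, this assignment respects the six isotopy moves (T1)--(T6) of \cite{F12} and carries the singular knot filtration of profinite pre-knots into $\{\mathcal K^{\rm pre}_N\}_{N\geqslant 0}$; passing to the limit over $N$ and using Proposition \ref{algebraic presentation of proalgebraic tangles}.(3), it induces a filtered monoid homomorphism $\widehat{\mathcal K}\to\mathcal K(\mathbb Q_l)$ (group-likeness is preserved, so the image lies in $\mathcal K(\mathbb Q_l)$). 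Composing with $\mathcal K(\mathbb Q_l)\hookrightarrow\mathrm G\mathcal K(\mathbb Q_l)$, $r\mapsto r/\orientedcircle$, and invoking the universal property of the group of fractions $G\widehat{\mathcal K}$ then yields \eqref{GK to GKQl}. The same recipe with $\widehat{P}_n$ and Remark \ref{KP injected to KSL} produces the compatible map on (pro)string links, which is convenient for the compatibility step.

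For the compatibility of the actions I would argue piecewise on the ABC-pieces. Fix $\hat\sigma\in\widehat{GT}$ with image $\sigma=(\lambda,f)\in GT(\mathbb Q_l)$ under \eqref{GT to GTQl}; thus the profinite Drinfeld element $\hat f\in\widehat{F}_2$ maps to $f\in F_2(\mathbb Q_l)$. For a $B$-piece the needed equality is exactly the consistency of the two braid actions recorded in the remark following Proposition \ref{proalg-GT-action theorem on braids}. For an $A$-piece $a^\epsilon_{k,l}$ (resp. a $C$-piece $c^\epsilon_{k,l}$) the action of Definition \ref{GT-action on proalgebraic tangles} is built from $f^{s(\gamma)}_{1\cdots k,k+1,k+2}$ and $\nu^{s(\gamma)}_f$ (resp. $f^{-1,t(\gamma)}_{1\cdots k,k+1,k+2}$), and since $\nu^\epsilon_f=(\Lambda^\epsilon_f)^{-1}$ with $\Lambda^\epsilon_f$ a word in $a$-, $f$-, $c$-pieces, the map of the first step carries the profinite $\hat f$-data to the proalgebraic $f$-data verbatim; hence the two images agree modulo the singular filtration, and then in the limit. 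Multiplying these local identities along a presentation of a (pro)knot gives the compatibility for the induced action on $\widehat{{\mathbb Q}_l[{\mathcal K}]}$, and passing to group-like parts and then to fractions gives it for \eqref{GK to GKQl}.

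The main obstacle I expect is the normalization mismatch flagged in Remark~(2) after Definition \ref{GT-action on proalgebraic tangles}: the $\widehat{GT}$-action on $\widehat{\mathcal K}$ of \cite{F12} is $\sharp$-compatible and fixes the trivial knot, whereas the present $GT(\mathbb Q_l)$-action obeys only the corrected rule \eqref{incompatibility} and moves $\orientedcircle$. The heart of the argument will therefore be to check that the discrepancy between the two prescriptions on the two terminal $A$- and $C$-pieces of a closed knot is precisely a connected-sum factor of $\sigma(\orientedcircle)$, so that after dividing by $\sigma(\orientedcircle)$ — that is, after passing from $\mathcal K(\mathbb Q_l)$ to $\mathrm G\mathcal K(\mathbb Q_l)$, where $\sigma$ becomes a genuine group automorphism by the Lemma immediately preceding this proposition — the two actions literally coincide. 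Carrying out this bookkeeping, and with it the well-definedness of the comparison on the equivalence classes defining $\mathrm G\mathcal K(\mathbb Q_l)$, is where the real work lies; the remaining verifications are routine tracking through the ABC-construction and the filtrations.
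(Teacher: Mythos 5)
Your proposal is correct and follows essentially the same route as the paper: the paper's own proof is just the two sentences "the map \eqref{GK to GKQl} is naturally induced by the map in \cite{F12} Proposition 2.30" and "it follows from our construction that the map is compatible with the two actions," and your piecewise ABC-construction of $\widehat{\mathcal K}\to\mathcal K(\mathbb Q_l)$ together with the piece-by-piece comparison of the two actions is exactly the intended unpacking of that. One small remark: the normalization mismatch you brace for in the last paragraph is less severe than you suggest, since the $\widehat{GT}$-action of \cite{F12} on $\widehat{\mathcal K}$ itself satisfies the same corrected rule (equation (2.21) of \cite{F12} is the profinite analogue of \eqref{incompatibility}) and only becomes a genuine group action on $G\widehat{\mathcal K}$ --- so both sides must be pushed to the groups of fractions, which is precisely what you do, and the residual discrepancy you propose to track is essentially absent.
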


\begin{proof}
The map \eqref{GK to GKQl} is naturally induced  by the map in \cite{F12} Proposition 2.30.
It follows from our construction that the map is  compatible with two actions.
\end{proof}

\subsection{The $GRT$-action}\label{sec: GRT-action on infinitesimal tangles}
In this subsection, we establish an \lq infinitesimal' counterpart of 
the $GT(\mathbb K)$-action on proalgebraic tangles given in the previous subsection.
Infinitesimal tangles (and knots) are introduced in Definition \ref{definition of infinitesimal tangle}
as an infinitesimal counterpart of proalgebraic tangles (and knots).
(It will be shown in next subsection that this notion  coincides with the notion of chord diagrams.)
A consistent $GRT(\mathbb K)$-action there 
is established in Proposition \ref{GRT-action on KIT}.

\begin{defn}\label{definition of fundamental infinitesimal tangle}
(1)
A {\it fundamental infinitesimal tangle}  means a vector belonging to 
one of the following  $\mathbb K$-linear spaces
(let us again call them ABC-spaces):
$A_{k,l}^\epsilon$, $\widehat{IB^{\epsilon}_\tau}$ and $C_{k,l}^\epsilon$
for some $k,l,\epsilon, \tau$:
\begin{align*}
&A_{k,l}^\epsilon :={\mathbb K}\cdot a_{k,l}^\epsilon, \
\text{with} \ 
\epsilon=(\epsilon_i)_{i=1}^{k+l+1}\in
\{\uparrow, \downarrow\}^{k}
\times \{\annihilation, \opannihilation\}
\times\{\uparrow, \downarrow\}^{l} \ 
(k,l=0,1,2,\dots) ,
\\
&\widehat{IB^{\epsilon}_\tau} :=
\widehat{U\frak p_n} 
\cdot\tau \
\text{with} \ 
\epsilon=(\epsilon_i)_{i=1}^{n}
\in \{\uparrow, \downarrow\}^{n} \
\text{and} \ 
\tau\in{\frak S}_n \
(n=1,2,3,4,\dots ),
\\
&C_{k,l}^\epsilon :={\mathbb K}\cdot c_{k,l}^\epsilon, \
\text{with} \
\epsilon=(\epsilon_i)_{i=1}^{k+l+1}\in
\{\uparrow, \downarrow\}^{k}
\times \{\creation,\opcreation\}
\times\{\uparrow, \downarrow\}^{l} \
(k,l=0,1,2,\dots ).
\end{align*}
Here 
$\widehat{U\frak p_n}\cdot \tau$ stands for the coset
of $\widehat{U\frak b_n}\bigm/ \widehat{U\frak p_n}$
corresponding to $\tau\in{\frak S}_n=B_n/P_n$.
To stress that an element $b$ belongs to $\widehat{IB^{\epsilon}_\tau}$,
we occasionally denote $b^\epsilon$ or $(b,\epsilon)$.

For each above space $V$, 
its {\it source} $s(V)$ and {\it target} $t(V)$,
are  defined in a completely same way to Definition \ref{definition of fundamental proalgebraic tangle}.  

(2)
An {\it infinitesimal pre-tangle} $D$ of type $(\epsilon,\epsilon')$
is defined in a same way to Definition \ref{definition of fundamental proalgebraic tangle}.
Namely it is $D=d_n\cdots d_2\cdot d_1$
where each $d_i$ is an infinitesimal fundamental tangle, 
a vector belonging to $V_i$, one of the above ABC-spaces, such that $s(V_{i+1})=t(V_i)$
for all $i=1,2,\dots,n-1$
and $s(V_1)=\epsilon$ and $t(V_n)=\epsilon'$.
We also define $s(D):=s(V_1)$ and $t(D):=t(V_n)$.
An {\it infinitesimal pre-link} $D$ is an infinitesimal pre-tangle with 
$s(D)=t(D)=\emptyset$.
Two infinitesimal pre-tangles 
$D=d_n\cdots d_2\cdot d_1$ and
$D'=d'_m\cdots d'_2\cdot d'_1$
are called {\it composable} when $s(D)=t(D')$
and their {\it composition} $D\cdot D'$ is defined by
$d_n\cdots d_2\cdot d_1\cdot 
 d'_m\cdots d'_2\cdot d'_1$.

For an infinitesimal pre-tangle  $D$
its {\it skeleton} ${\mathbb S}( D)$ 
and its {\it connected components}  can be defined in the same way to
Definition \ref{definition of fundamental proalgebraic tangle}.
An {\it infinitesimal pre-knot} is an infinitesimal pre-link with a single connected component.
An {\it infinitesimal pre-string link} of type $\epsilon=(\epsilon_i)_{i=1}^n$ is 
an infinitesimal pre-tangle of type $(\epsilon,\epsilon)$
consisting of $n$ connected components whose each $i$-th component
connect  $i$-th point on the bottom to the one on the top.

(3)
Two infinitesimal pre-tangles are called {\it isotopic} when they are related by a finite number of
the 6 moves (IT1)-(IT6).
Here (IT1)-(IT5)  are  the moves
replacing profinite tangles and profinite braids group $\widehat{B_n}$
by infinitesimal pre-tangles  and  infinitesimal braid algebras 
$\widehat{U\frak b_n}$ in (T1)-(T5) in \cite{F12} and 
(IT6) is an  \lq infinitesimal' variant of (T6), which is stated below.

(IT6): For $\alpha\in \mathbb K$, 
$c_{k,l}^{\epsilon}\in C_{k,l}^{\epsilon}$ 
and $t_{k+1,k+2}\cdot \tau\in \widehat{IB^{\epsilon'}_\tau}$
with $\tau=\tau_{k+1,k+2}\in \frak S_{k+l+2}$ (the switch of $k+1$ and $k+2$)
and  $t(C_{k,l}^{\epsilon})={\epsilon'}$
$$
\exp\{\alpha \ t_{k+1,k+2}\}\cdot\tau_{k+1,k+2}\cdot c_{k,l}^{\epsilon}=c_{k,l}^{\bar\epsilon}
$$
where $\bar\epsilon$ is the sequence obtained by
revering the $(k+1)$-st and $(k+2)$-nd arrows. 
And for $\alpha\in{\mathbb K}$,
$a_{k,l}^{\epsilon}\in A_{k,l}^{\epsilon}$ and
$\tau \cdot t_{k+1,k+2}\in \widehat{IB^{\epsilon'}_\tau}$
with $\tau=\tau_{k+1,k+2}\in \frak S_{k+l+2}$
and $s(A_{k,l}^{\epsilon})=\tau(\epsilon')$
$$
a_{k,l}^{\epsilon}\cdot \tau_{k+1,k+2} \cdot \exp\{\alpha \ t_{k+1,k+2}\}
=a_{k,l}^{\bar\epsilon}.
$$
where $\bar\epsilon$ is the sequence
obtained by
revering the $(k+1)$-st and $(k+2)$-nd arrows. 
Figure \ref{IT6} depicts the moves.
(N.B. We note that $\exp\{\alpha \ t_{k+1,k+2}\}$ for $\alpha\in{\mathbb K}$ makes sense in
$\widehat{U\frak p}_{k+l+2}$.)
\begin{figure}[h]
\begin{tabular}{cc}
\begin{minipage}{0.5\hsize}
\begin{center}
          \begin{tikzpicture}
                    \draw[-] (0,0.3) --(0,2.);
                    \draw[loosely dotted] (0.1,0.45) --(0.3, 0.45) (0.1,0.8)--(0.3,0.8) (0.1,1.5)--(0.3,1.5);
                    \draw[-] (0.4,0.3) --(0.4,2.);
                     \draw[decorate,decoration={brace,mirror}] (-0.1,0.3) -- (0.5,0.3) node[midway,below]{$k$};
\draw (1.1,0.6)  arc (180:360:0.2);
                   \draw[-] (1.1,1.0) --(1.5, .6);
                   \draw[-] (1.1,.6) --(1.5,1.0);
\draw  (0.5,1.2) rectangle (2.1,1.8);
                   \draw[-] (1.1,1.0)--(1.1,1.2)  (1.1,1.8)--(1.1,2.);
                   \draw[-]  (1.5,1.0)--(1.5,1.2) (1.5,1.8)--(1.5,2.);
\draw  (1.0, 1.45) node{{$\exp\{\alpha$}};
                    \draw[-]  (1.5,1.3)--(1.5,1.6) ;
                   \draw[densely dotted] (1.5,1.45)--(1.8,1.45);
                    \draw[-]  (1.8,1.3)--(1.8,1.6) ;
\draw  (2.0, 1.45) node{{$\}$}};
                    \draw[-] (2.2,0.3) --(2.2,2.);
                   \draw[loosely dotted] (2.3,0.45) --(2.5,0.45) (2.3,0.8) --(2.5,0.8) (2.3,1.5)--(2.5,1.5) ;
                    \draw[-] (2.6,0.3) --(2.6,2.);
                    \draw[decorate,decoration={brace,mirror}] (2.1,0.3) -- (2.7,0.3) node[midway,below]{$l$};
\draw[color=white, very thick] (-0.1,0.6)--(2.7,0.6) (-0.1,1.0)--(2.7,1.0)  ;
\draw  (3.0,1.0)node{$=$};
                    \draw[-] (3.4,0.3) --(3.4,2.0);
                    \draw[loosely dotted] (3.5,1.1) --(3.7, 1.1) ;
                    \draw[-] (3.8,0.3) --(3.8,2.0);
                     \draw[decorate,decoration={brace,mirror}] (3.3,0.3) -- (3.9,0.3) node[midway,below]{$k$};
\draw (3.9,2.0)  arc (180:360:0.2);
                    \draw[-] (4.4,0.3) --(4.4,2.0) ;
                    \draw[-] (4.8,0.3) --(4.8,2.0) ;
                   \draw[loosely dotted] (4.5,1.1) --(4.7, 1.1) ;
                    \draw[decorate,decoration={brace,mirror}] (4.3,0.3) -- (4.9,0.3) node[midway,below]{$l$};
\draw (5.1,0) node{,};
           \end{tikzpicture}
\end{center}
\end{minipage}
\begin{minipage}{0.5\hsize}
\begin{center}
          \begin{tikzpicture}
                    \draw[-] (0,0) --(0,1.7);
                    \draw[loosely dotted] (0.1,0.5) --(0.3, 0.5) (0.1,1.2)--(0.3,1.2)(0.1,1.55)--(0.3,1.55);
                    \draw[-] (0.4,0) --(0.4,1.7);
                     \draw[decorate,decoration={brace,mirror}] (-0.1,0) -- (0.5,0) node[midway,below]{$k$};
                    \draw[-] (1.1,0)--(1.1,.2) (1.1,0.8)--(1.1,1.0) ;
                    \draw[-] (1.5,0)--(1.5,.2) (1.5,0.8)--(1.5,1.0);
                    \draw[-] (1.1,1.0) --(1.5, 1.4);
                    \draw[-] (1.1,1.4) --(1.5, 1.0);
           \draw (1.1,1.4)  arc (180:0:0.2);

\draw  (0.5,0.2) rectangle (2.1,0.8) ;
\draw  (1.0, 0.45) node{{$\exp\{\alpha$}};
                    \draw[-]  (1.5,.3)--(1.5,.6) ;
                   \draw[densely dotted] (1.5,.45)--(1.8,.45);
                    \draw[-]  (1.8,.3)--(1.8,.6) ;
\draw  (2.0, .45) node{{$\}$}};

                    \draw[-] (2.2,0) --(2.2,1.7) ;
                    \draw[-] (2.6,0) --(2.6,1.7) ;
                   \draw[loosely dotted] (2.3,0.5) --(2.5, 0.5) (2.3,1.2) --(2.5,1.2)(2.3,1.55) --(2.5,1.55)  ;
                    \draw[decorate,decoration={brace,mirror}] (2.1,0) -- (2.7,0) node[midway,below]{$l$};
\draw[color=white, very thick]  (-0.1,1.0)--(2.7,1.0) (-0.1,1.4)--(2.7,1.4) ;

\draw  (3.0,1.0)node{$=$};
                    \draw[-] (3.4,0) --(3.4,1.7);
                    \draw[loosely dotted] (3.5,1) --(3.7, 1) ;
                    \draw[-] (3.8,0) --(3.8,1.7);
                     \draw[decorate,decoration={brace,mirror}] (3.3,0) -- (3.9,0) node[midway,below]{$k$};
\draw (3.9,0.1)  arc (180:0:0.2);
                    \draw[-] (4.4,0) --(4.4,1.7) ;
                    \draw[-] (4.8,0) --(4.8,1.7) ;
                   \draw[loosely dotted] (4.5,1) --(4.7, 1) ;
                    \draw[decorate,decoration={brace,mirror}] (4.3,0) -- (4.9,0) node[midway,below]{$l$};
           \end{tikzpicture}
\end{center}
\end{minipage}
\end{tabular}
\caption{(IT6)}
\label{IT6}
\end{figure}

(4)
We denote ${\mathbb K}[\mathcal{IT}_{\epsilon,\epsilon'}^{\rm pre}]$ 
to be the $\mathbb K$-linear space which is the quotient of 
the $\mathbb K$-span of infinitesimal pre-tangles with type $(\epsilon,\epsilon')$
divided by the equivalence linearly generated by the above
isotopy.
Note that there is a natural composition map
\begin{equation}\label{composition of ITpre}
\cdot:
{\mathbb K}[{\mathcal{IT}}_{\epsilon_1,\epsilon_2}^{\rm pre}]\times
{\mathbb K}[{\mathcal{IT}}_{\epsilon_2,\epsilon_3}^{\rm pre}]\to
{\mathbb K}[{\mathcal{IT}}_{\epsilon_1,\epsilon_3}^{\rm pre}]
\end{equation}
for  any $\epsilon_1,\epsilon_2$ and $\epsilon_3$.
The subspace  ${\mathbb K}[{\mathcal{ISL}}_{\epsilon}^{\rm pre}]$
of ${\mathbb K}[{\mathcal{IT}}_{\epsilon,\epsilon}^{\rm pre}]$
consisting of infinitesimal pre-string links 
forms a non-commutative $\mathbb K$-algebras
by the composition \eqref{composition of ITpre}. 

The symbol $\mathbb K[\mathcal{IK}^{\rm pre}]$ stands for the  subspace of
$\mathbb K[\mathcal{IT}_{\emptyset,\emptyset}^{\rm pre}]$ generated by infinitesimal pre-knots.
As in Definition \ref{definition of fundamental proalgebraic tangle},
it 
inherits a structure of  a commutative $\mathbb K$-algebra by the connected sum
(which can be defined in the same way to \cite{F12}) 
\begin{equation}\label{connected sum on KIK-pre}
\sharp:\mathbb K[\mathcal{IK}^{\rm pre}]\times \mathbb K[\mathcal{IK}^{\rm pre}]\to
\mathbb K[\mathcal{IK}^{\rm pre}].
\end{equation}
Again we note that $\sharp$ is different from the above composition \eqref{composition of ITpre}.

(5)
For a  $\mathbb K$-linear space $V$ with 
$V=A^\epsilon_{k,l}$ or $C^\epsilon_{k,l}$
we give its descending filtration of $\mathbb K$-linear subspaces
by ${\mathcal{IT}}_0(V)=V$ and $\mathcal{IT}_N(V):=\{0\}$ for $N>0$, 
and when $V=\widehat{IB^\epsilon_\tau}$,
we give its descending filtration $\{\mathcal{IT}_N(V)\}_{N=0}^\infty$ such that
$\mathcal{IT}_N(V)$ is the $\mathbb K$-linear subspace topologically generated by elements whose  degrees 
are greater than or equal to $N$.

By the method indicated in Definition \ref{definition of fundamental proalgebraic tangle},
for any sequences $\epsilon$ and $\epsilon'$,
$\mathbb K[\mathcal{IT}_{\epsilon,\epsilon'}^{\rm pre}]$
is inherited a filtration of its $\mathbb K$-submodules 
which we denote by $\{\mathcal{IT}^{\rm pre}_N\}_{N\geqslant 0}$.
They are compatible  with the composition map \eqref{composition of ITpre}.
The filtration on $\mathbb K[\mathcal{IK}^{\rm pre}]$ induced by the filtration 
of  $\mathbb K[\mathcal{IT}_{\emptyset,\emptyset}^{\rm pre}]$
is denoted by $\{\mathcal{IK}^{\rm pre}_N\}_{N\geqslant 0}$.
The filtration is compatible with its algebra structure given by
\eqref{connected sum on KIK-pre}.
\end{defn}

\begin{defn}\label{definition of infinitesimal tangle}
(1)
An {\it infinitesimal tangle} with type $(\epsilon,\epsilon')$ is an element of
$$
\widehat{{\mathbb K}[{\mathcal{IT}}_{\epsilon,\epsilon'}]}:=
\varprojlim_n \ \mathbb K[\mathcal{IT}_{\epsilon,\epsilon'}^{\rm pre}]/{\mathcal{IT}}^{\rm pre}_n.
$$

(2)
An {\it infinitesimal string link} with type $\epsilon$ is an element of
$$
\widehat{{\mathbb K}[{\mathcal{ISL}}_{\epsilon}]}:=
\varprojlim_n \ \mathbb K[\mathcal{ISL}_{\epsilon}^{\rm pre}]/{\mathcal{IT}}^{\rm pre}_n.
$$

(3)
An  {\it infinitesimal knot} is an element of the following completed $\mathbb K$-algebra
$$
\widehat{{\mathbb K}[{\mathcal{IK}}]}:=
\varprojlim_n \ \mathbb K[\mathcal{IK}^{\rm pre}]/{\mathcal{IK}}^{\rm pre}_n.
$$
\end{defn}

We note that 
$\widehat{{\mathbb K}[{\mathcal{IT}}_{\epsilon,\epsilon'}]}$ for $\epsilon,\epsilon'$
is inherited a composition product $\cdot$ by \eqref{composition of ITpre}
and  the set
$\widehat{{\mathbb K}[{\mathcal{ISL}}_{\epsilon}]}$ generally forms a non-commutative $\mathbb K$-algebra.
The set $\widehat{{\mathbb K}[{\mathcal{IK}}]}$ is
inherited a connected  $\sharp$ by \eqref{connected sum on KIK-pre}
and  
forms a commutative $\mathbb K$-algebra.

\begin{rem}
By \cite{Bar96} \S 4.2, we have a natural inclusion
$
\widehat{U\frak p_n}
\hookrightarrow\widehat{{\mathbb K}[{\mathcal{ISL}}_{\uparrow^n}]}.
$
Similarly for each $\epsilon=(\epsilon_i)_{i=1}^n$ with $\epsilon=\uparrow$, 
$\downarrow$,  there is an inclusion
$\widehat{U\frak p_n}
\hookrightarrow\widehat{{\mathbb K}[{\mathcal{ISL}}_{\epsilon}]}.$
We denote the image of each element $h\in \widehat{U\frak p_n}$ by $h^\epsilon$.
\end{rem}

As an analogue of Lemma \ref{identification between proalgebraic long knots and proalgebraic knots}, we have

\begin{lem}\label{identification between infinitesimal long knots and proalgebraic knots}
Let $\epsilon=\uparrow$ or $\downarrow$. 
There is an identification of two algebras
\begin{equation*}
\mathrm{cl}:
(\widehat{{\mathbb K}[{\mathcal{ISL}}_{\epsilon}]},\cdot)\simeq 
(\widehat{{\mathbb K}[{\mathcal{IK}}]},\sharp)
\end{equation*}
which  is compatible with their filtrations.
\end{lem}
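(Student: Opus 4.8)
\emph{Proof proposal.} The plan is to transcribe, into the infinitesimal setting, the argument that produced Lemma \ref{identification between proalgebraic long knots and proalgebraic knots} (which itself rested on Proposition \ref{identification between long knots and knots}). The construction of infinitesimal pre-tangles out of the ABC-spaces, with $\widehat{U\frak p_n}$ in place of $\widehat{{\mathbb K}[P_n]}$ and the moves (IT1)--(IT6) in place of (T1)--(T6), runs entirely in parallel with the construction of proalgebraic pre-tangles in Definition \ref{definition of fundamental proalgebraic tangle}; moreover the closing operation $\mathrm{cl}$ only manipulates the $A$- and $C$-spaces and the moves, and is insensitive to the internal structure of the $B$-spaces. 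So the same recipe will apply.

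First I would define $\mathrm{cl}$ at the level of pre-tangles: given an infinitesimal pre-string link $D$ of type $(\uparrow,\uparrow)$ (a single-strand infinitesimal pre-tangle), one closes its two ends by attaching a cup below $D$ and a cap above $D$ running alongside it — concretely, composing $D$ with suitable fundamental tangles in $A_{k,l}^{\bullet}$ and $C_{k,l}^{\bullet}$, exactly as, e.g., $\Lambda_f^\downarrow$ was built in Definition \ref{GT-action on proalgebraic tangles} — to obtain an infinitesimal pre-knot $\mathrm{cl}(D)$; the case $\epsilon=\downarrow$ is obtained by reversing all arrows. Since isotopy of infinitesimal pre-tangles is, by definition, the equivalence generated by (IT1)--(IT6), which are the infinitesimal transcriptions of the moves used in \cite{F12} and in the proalgebraic case, the verification that $\mathrm{cl}$ descends to a well-defined $\mathbb K$-linear map
$$
\mathbb K[\mathcal{ISL}_\epsilon^{\rm pre}]\longrightarrow \mathbb K[\mathcal{IK}^{\rm pre}]
$$
goes through word for word. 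Because the attached cup and cap lie in the filtration-degree-zero pieces $A^{\bullet}_{k,l}$, $C^{\bullet}_{k,l}$, closing up changes no degrees, so $\mathrm{cl}$ carries $\mathcal{IT}_N^{\rm pre}$ into $\mathcal{IK}_N^{\rm pre}$ for every $N$.

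Next I would check that $\mathrm{cl}$ intertwines the composition product $\cdot$ on infinitesimal pre-string links with the connected sum $\sharp$ on infinitesimal pre-knots: stacking two single-strand pre-tangles and then closing the result is isotopic, via (IT1)--(IT6), to the connected sum of the two closed-up pre-knots — this is precisely the pictorial identity underlying Proposition \ref{identification between long knots and knots}. Finally I would produce the inverse: by (IT6) any infinitesimal pre-knot can be presented so as to begin with a creation and end with an annihilation of an adjacent pair of arrows (this is how $K_1\sharp K_2$ was set up in Definition \ref{definition of fundamental proalgebraic tangle}), and deleting that cup and cap returns a single-strand pre-tangle; one checks, again by transcribing the classical fact that a knot may be cut open at any point to isotopic long knots, that this is well-defined and two-sided inverse to $\mathrm{cl}$. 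Passing to the inverse limits over $\mathcal{IT}_n^{\rm pre}$ and $\mathcal{IK}_n^{\rm pre}$ (as in Definition \ref{definition of infinitesimal tangle}) then yields the asserted filtered algebra isomorphism $\mathrm{cl}\colon(\widehat{{\mathbb K}[{\mathcal{ISL}}_{\epsilon}]},\cdot)\simeq(\widehat{{\mathbb K}[{\mathcal{IK}}]},\sharp)$.

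The routine-looking but genuinely load-bearing step is the transcription of the topological facts — that ``a knot cut open at any point yields isotopic long knots'' and ``closing a stack is the connected sum of the closures'' — into statements about the equivalence generated by (IT1)--(IT6) on $\mathbb K$-linear combinations of infinitesimal pre-tangles. I expect this to be the main obstacle, but it is handled by exactly the same bookkeeping already used for Proposition \ref{algebraic presentation of proalgebraic tangles} and the lemma preceding it, since (IT1)--(IT6) were built to mirror (T1)--(T6) faithfully; no new idea beyond the proalgebraic case is needed.
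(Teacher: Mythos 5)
Your proposal is correct and follows exactly the route the paper intends: the paper states this lemma with no proof at all, presenting it as the direct analogue of Lemma \ref{identification between proalgebraic long knots and proalgebraic knots} (itself an ``immediate corollary'' of the topological closing-up identification in Proposition \ref{identification between long knots and knots}), and your transcription of that argument into the ABC/(IT1)--(IT6) framework is precisely the omitted verification. You also correctly isolate the one genuinely load-bearing point --- that ``cutting open at any point'' and ``closure of a stack equals connected sum of closures'' must be re-derived from the moves (IT1)--(IT6) rather than from honest topological isotopy --- which is where the infinitesimal case ceases to be a purely formal copy of the proalgebraic one.
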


Based on the identification,
the action of the graded Grothendieck-Teichm\"{u}ller group $GRT({\mathbb K})$ on
infinitesimal tangles is constructed as follows:

\begin{defn}\label{GRT-action on infinitesimal tangles}
Let ${\bar D}\in \widehat{{\mathbb K}[{\mathcal{IT}}_{\epsilon,\epsilon'}]}/{\mathcal{IT}}_N$
with any sequences $\epsilon,\epsilon'$ and $N\geqslant 0$.
Let $D$ be its representative in $\mathbb K[\mathcal{IT}_{\epsilon,\epsilon'}^{\rm pre}]$
with a presentation $D= d_{m}\cdots d_{2}\cdot d_{1}$
($ d_{j}$: fundamental  infinitesimal tangle).
For $\sigma=(c,g)\in {GRT}({\mathbb K})$,
hence $c\in{\mathbb K}^\times$ and $g\in\exp\frak{f}_2$,
we define 
\begin{equation}\label{GRT-action on KIT mod N}
\sigma(\bar D):=
\overline{\sigma( d_{m})\cdots\sigma( d_{2})\cdot\sigma( d_{1})}
\in \widehat{{\mathbb K}[{\mathcal{IT}}_{\epsilon,\epsilon'}]}/{\mathcal {IT}}_N.   
\end{equation}
with $\sigma( d_j)$ given  below. 

(1)
when $ d_{j}\in A_{k,l}^\epsilon$, we define
$$\sigma( d_{j}):= d_{j}\cdot
(\nu_g)^{s( d_{j})}_{k+2}
\cdot g_{1\cdots k,k+1,k+2}^{s( d_{j})}.$$
Here the middle term stands for the infinitesimal tangle  whose source is $s( d_{j})$
which is obtained by  putting the trivial infinitesimal braid with $k+1$-strands on the left of $\nu_g$ (see below) and
the trivial infinitesimal braid with $l$-strands on its right. 

(2)
when $ d_{j}=(b_n,\epsilon)\in \widehat{IB_\tau^\epsilon}$ with 
$b_n\in  \widehat{U\frak p_n} \cdot\tau\subset \widehat{U\frak b_n} $, 
we define
$$\sigma( d_{j}):=(\rho_n(\sigma)(b_n),\epsilon).$$
Here $\rho_n(\sigma)(b_n)$ is the image of $b_n$
by the action $\sigma\in GRT(\mathbb K)$ on $\widehat{U\frak b_n}$
explained in \S \ref{GRT-action on infinitesimal braids}.

(3)
when $ d_{j}\in C_{k,l}^\epsilon$, we define
$$\sigma( d_{j}):=g_{1\cdots k,k+1,k+2}^{-1,t( d_{j})}
\cdot d_{j}.$$

The symbol $\nu_g^\epsilon$ ($\epsilon=\uparrow, \downarrow$) means
the infinitesimal tangle in $\widehat{{\mathbb K}[{\mathcal{IT}}_{\epsilon,\epsilon}]}$
with a single connected component such that $s(\mu_g^\epsilon)=t(\mu_g^\epsilon)=\epsilon$
and which is given by the inverse of  $\Lambda_g^\epsilon$
with respect to the composition:
$$
\nu_g^\epsilon:=\{\Lambda_g^\epsilon\}^{-1}.
$$
Here $\Lambda_g^\downarrow$ in $\widehat{{\mathbb K}[{\mathcal{IT}}_{\epsilon,\epsilon}]}$
represents the infinitesimal string link with a single strands given by
$$
\Lambda_g^\downarrow:=
a_{1,0}^{\downarrow\annihilation}
\cdot g^{\downarrow\uparrow\downarrow}\cdot 
c_{0,1}^{\creation\downarrow}.
$$
(which can be depicted as the  picture in Figure \ref{proalgebraic error term}
replacing  $f$ by $g$)
and $\Lambda_g^\uparrow$ is the same one obtained by reversing its all arrows.
\end{defn}

We note that the existence of the inverse of $\Lambda^\epsilon_g$ 
in $\widehat{{\mathbb K}[{\mathcal{IT}}_{\epsilon,\epsilon}]}$ is immediate
because $\Lambda_g^\epsilon$ is congruent to the unit in 
 $\widehat{{\mathbb K}[{\mathcal{IT}}_{\epsilon,\epsilon}]}$ modulo
${{\mathcal T}_1}$
and $\widehat{{\mathbb K}[{\mathcal{IT}}_{\epsilon,\epsilon}]}$ is completed by the filtration 
$\{\mathcal T_n\}_{n=0}^\infty$.

\begin{prop}\label{GRT-action on KIT}
(1).
The equation \eqref{GRT-action on KIT mod N} yields a well-defined $GRT(\mathbb K)$-action on
$\widehat{{\mathbb K}[{\mathcal{IT}}_{\epsilon,\epsilon'}]}/{\mathcal{IT}}_N$
for any $\epsilon$, $\epsilon'$ and any $N\geqslant 0$,
and induces a well-defined $GRT(\mathbb K)$-action on 
$\widehat{{\mathbb K}[{\mathcal{IT}}_{\epsilon,\epsilon'}]}$ such that
the  equality 
$$\sigma( D\cdot D')=\sigma( D)\cdot\sigma( D')$$
holds in $\widehat{{\mathbb K}[{\mathcal{IT}}_{\epsilon_1,\epsilon_3}]}$
for two infinitesimal tangles 
$D\in \widehat{{\mathbb K}[{\mathcal{IT}}_{\epsilon_1,\epsilon_2}]}$ 
and $D'\in\widehat{{\mathbb K}[{\mathcal{IT}}_{\epsilon_2,\epsilon_3}]}$
for any $\epsilon_1$, $\epsilon_2$ and $\epsilon_3$.

(2)
For each $\epsilon$,
the subspace $\widehat{{\mathbb K}[{\mathcal{ISL}_{\epsilon}}]}$ 
of infinitesimal string links with type $\epsilon$
is stable under the above
$GRT(\mathbb K)$-action.
The action is compatible with its algebra structure whose product is given by the composition.

(3)
The subspace $\widehat{{\mathbb K}[{\mathcal{IK}}]}$ 
of infinitesimal knots
is stable under the above
$GRT(\mathbb K)$-action.
The action there is not compatible with the connected sum $\sharp$ however
the  equality
\begin{equation}\label{incompatibility in KIK}
\sigma(D_1\sharp D_2)\sharp\sigma(\orientedcircle)=\sigma(D_1)\sharp\sigma(D_2)
\end{equation}
holds for any $\sigma\in GRT(\mathbb K)$ and any
$D_1,D_2\in \widehat{{\mathbb K}[{\mathcal{IK}}]}$.
\end{prop}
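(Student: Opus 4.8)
The plan is to follow, \emph{mutatis mutandis}, the proof of Proposition \ref{GT-action on KT} (hence of \cite{F12} Theorem 2.38 and the arguments of \cite{KT98, KRT}), replacing everywhere the proalgebraic braid algebras $\widehat{{\mathbb K}[B_n]}$ by the infinitesimal braid algebras $\widehat{U\frak b_n}$, the series $f\in F_2(\mathbb K)$ by $g\in\exp\frak f_2$, the pair $(\lambda,f)\in GT(\mathbb K)$ by $(c,g)\in GRT(\mathbb K)$, the error term $\Lambda_f^\epsilon$ by $\Lambda_g^\epsilon$, and the move (T6) by its infinitesimal avatar (IT6). Since the $GRT(\mathbb K)$-action $\rho_n$ on $\widehat{U\frak b_n}$ of Proposition \ref{proalg-GRT-action theorem on braids} is the graded counterpart of the $GT(\mathbb K)$-action on $\widehat{{\mathbb K}[B_n]}$, and the defining equations \eqref{GRT-2-cycle}, \eqref{GRT-hexagon equation}, \eqref{GRT-pentagon equation} of $GRT(\mathbb K)$ are the graded counterparts of the ones for $GT(\mathbb K)$, the structural arguments transfer with essentially no change; only the bookkeeping specific to (IT6) and to the connected sum needs to be rechecked.

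For part (1), first I would show that the right-hand side of \eqref{GRT-action on KIT mod N} is independent of the representative $D$ and of its presentation $D=d_m\cdots d_1$, i.e.\ that it is invariant under the moves (IT1)--(IT6). The braiding moves among $\widehat{IB^\epsilon_\tau}$-factors are covered by the pentagon equation \eqref{GRT-pentagon equation} together with the homomorphism property of $\rho_n$, while the moves interchanging a braiding with a cup or a cap (the $A$- and $C$-spaces) are covered by the two hexagon equations \eqref{GRT-2-cycle}--\eqref{GRT-hexagon equation} and by the very definition $\nu_g^\epsilon:=\{\Lambda_g^\epsilon\}^{-1}$, which is engineered precisely so that creation/annihilation morphisms transform consistently; this is the literal replacement of the check carried out in \cite{F12}. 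The genuinely new verification is the invariance under (IT6): here one inserts the definitions of $\sigma$ on $C^\epsilon_{k,l}$ and $A^\epsilon_{k,l}$ and uses that $\exp\{\alpha\,t_{k+1,k+2}\}$ makes sense in $\widehat{U\frak p}_{k+l+2}$ (as noted in Definition \ref{definition of fundamental infinitesimal tangle}) to check that the two sides of (IT6) receive the same image, the role of $\sigma_i^c$ in the (T6)-check being played by $\exp\{\alpha\,t_{i,i+1}\}$. Passing to the inverse limit over $N$ then yields the action on $\widehat{{\mathbb K}[{\mathcal{IT}}_{\epsilon,\epsilon'}]}$, which is continuous by construction. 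The relation $\rho(\sigma_1\circ\sigma_2)=\rho(\sigma_1)\circ\rho(\sigma_2)$ follows from the product formula \eqref{product of proalg-GRT} for $GRT(\mathbb K)$, the homomorphism property of $\rho_n$ on $\widehat{U\frak b_n}$, and a short computation that $\nu_g$ composes correctly under the product of two elements of $GRT(\mathbb K)$. Finally, compatibility with the composition $\cdot$ is immediate from \eqref{GRT-action on KIT mod N}, since composition of pre-tangles concatenates their defining sequences of fundamental tangles while $\sigma$ acts factor by factor.

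For parts (2) and (3) I would argue by skeletons. Each factor $\sigma(d_j)$ is obtained from $d_j$ either by applying $\rho_n(\sigma)$ (when $d_j\in\widehat{IB^\epsilon_\tau}$), or by post/pre-composing with the auxiliary factors $(\nu_g)^{s(d_j)}_{k+2}$ and $g^{\pm,\cdot}_{1\cdots k,k+1,k+2}$ (when $d_j$ is in an $A$- or $C$-space). The map $\rho_n(\sigma)$ induces the identity on $\mathbb K[\frak S_n]=\widehat{U\frak b_n}/\widehat{U\frak p_n}$, and the auxiliary factors have trivial skeleton (a single strand, resp.\ the identity permutation on the strands), so the skeleton ${\mathbb S}(D)$ and hence its set of connected components is unchanged under $\sigma$. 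Therefore $\sigma$ maps infinitesimal pre-string links of type $\epsilon$ to infinitesimal pre-string links of type $\epsilon$, giving stability of $\widehat{{\mathbb K}[{\mathcal{ISL}}_\epsilon]}$, with compatibility with the algebra structure being the compatibility with $\cdot$ from part (1); and $\sigma$ preserves the property of having a single connected component, giving stability of $\widehat{{\mathbb K}[{\mathcal{IK}}]}$. For the identity \eqref{incompatibility in KIK} I would write $D_1=\alpha_m\cdots\alpha_1$, $D_2=\beta_n\cdots\beta_1$ with $(\alpha_m,\alpha_1)$ and $(\beta_n,\beta_1)$ of cap/cup type as in the definition of $\sharp$ on $\mathbb K[\mathcal{IK}^{\rm pre}]$, form $D_1\sharp D_2=\alpha_m\cdots\alpha_2\cdot\beta_{n-1}\cdots\beta_1$, apply $\sigma$ and compare with $\sigma(D_1)\sharp\sigma(D_2)$; the discrepancy between the two sides is exactly one extra copy of the error term produced at the fusion point by the $A$/$C$-factors, which, after closing, equals $\sigma(\orientedcircle)$. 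This is the verbatim analogue of the proof of the equation (2.21) in \cite{F12} (equivalently, of Proposition \ref{GT-action on KK}), with $f$ replaced by $g$.

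I expect the main obstacle to be the (IT6)-invariance in part (1) and the precise identification of the error term in \eqref{incompatibility in KIK}: one must verify that the exponential $\exp\{\alpha\,t_{k+1,k+2}\}$ occurring in (IT6) interacts with the $\nu_g$-factors and with $\rho_n(\sigma)$ exactly as $\sigma_i^c$ did in the proalgebraic (GT) case, and that the $c$-graded ($\mu=0$) feature of $GRT(\mathbb K)$ — so that $\Lambda_g^\epsilon$ carries no $\exp\{\mu(\cdot)\}$-factors, in contrast with a generic associator — is used consistently throughout. Everything else is a routine transcription of the arguments of \cite{F12, KT98, KRT}.
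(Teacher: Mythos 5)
Your proposal is correct and follows exactly the route the paper takes: the paper's own proof consists of a single sentence asserting that the argument is a verbatim transcription of the proofs of Propositions \ref{GT-action on KT} and \ref{GT-action on KK} (hence of \cite{F12} Theorem 2.38), and your mutatis mutandis replacement of $\widehat{{\mathbb K}[B_n]}$ by $\widehat{U\frak b_n}$, $(\lambda,f)$ by $(c,g)$, $\Lambda_f^\epsilon$ by $\Lambda_g^\epsilon$, and (T6) by (IT6), together with the skeleton argument for stability and the error-term analysis for \eqref{incompatibility in KIK}, is precisely that transcription. You in fact supply more detail than the paper does, correctly isolating the (IT6)-invariance as the only genuinely new verification.
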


\begin{proof}
Proof can be done in a completely same way to the proof of
Proposition \ref{GT-action on KT} and \ref{GT-action on KK}.
%
\end{proof}

We denote the above induced  actions respectively by
\begin{align}
\label{GRT to Aut KIT}
\rho_{\epsilon,\epsilon'}&:GRT(\mathbb K)\to \mathrm{Aut} \ \widehat{{\mathbb K}[{\mathcal {IT}}_{\epsilon,\epsilon'}]},\\
\label{GRT to Aut KISL}
\rho_{\epsilon}&:GRT(\mathbb K)\to \mathrm{Aut} \ \widehat{{\mathbb K}[{\mathcal {ISL}}_{\epsilon}]}, \\
\label{GRT to Aut KIK}
\rho_{0}&:GRT(\mathbb K)\to \mathrm{Aut} \ \widehat{{\mathbb K}[{\mathcal {IK}}]}.
\end{align}
We may say that the map \eqref{GRT to Aut KIT}
is a generalization  of the map \eqref{GRT to Aut Ub_n}
into the  infinitesimal tangle case.
We will see
that the action \eqref{GRT to Aut KISL} restricted into $GRT_1(\mathbb K)$
is given by inner conjugation in  Theorem \ref{inner automorphism theorem on chord diagrams}
and
that the action \eqref{GRT to Aut KIK} is given by ${\mathbb G}_m$-action
in Proposition \ref{Gm-action on knots}. 

\subsection{Associators}
In this subsection we reformulate
the isomorphism given in  \cite{KT98} Theorem 2.4
(also shown  in \cite{Bar, C, KT98, LM, P})
in our terminologies of proalgebraic  and infinitesimal tangles.
It is shown that each associator gives an  isomorphism between 
the system 
of proalgebraic tangles
and the system  
of infinitesimal tangles 
in Proposition \ref{KT to KIT}.
Proposition \ref{IT=CD} shows an equivalence of the notion of infinitesimal tangles
and the notion of chord diagrams.

Similarly to our previous subsections,
such an isomorphism is constructed piecewise.

\begin{defn}\label{definition: KT to KIT mod N}
Let ${\bar \Gamma}\in \widehat{{\mathbb K}[{\mathcal{T}}_{\epsilon,\epsilon'}]}/{\mathcal T}_N$
with any sequences $\epsilon,\epsilon'$ and $N\geqslant 0$.
Let $\Gamma$ be its representative in $\mathbb K[\mathcal{T}_{\epsilon,\epsilon'}^{\rm pre}]$
with a presentation $\Gamma=\gamma_{m}\cdots\gamma_{2}\cdot\gamma_{1}$
($\gamma_{j}$: fundamental proalgebraic tangle).
For $p=(\mu,\varphi)\in {M}({\mathbb K})$,
hence $\mu\in{\mathbb K}^\times$ and $\varphi\in\exp\frak{f}_2$,
we define 
\begin{equation}\label{KT to KIT mod N}
p(\bar \Gamma):=
\overline{p(\gamma_{m})\cdots p(\gamma_{2})\cdot p(\gamma_{1})}
\in \widehat{{\mathbb K}[{\mathcal{IT}}_{\epsilon,\epsilon'}]}/{\mathcal{IT}}_N.   
\end{equation}
with $ p(\gamma_j)$ given  below. 

(1)
when $\gamma_{j}\in A_{k,l}^\epsilon$, we define
$$ p(\gamma_{j}):=\gamma_{j}\cdot
(\nu_\varphi)^{s(\gamma_{j})}_{k+2}
\cdot \varphi_{1\cdots k,k+1,k+2}^{s(\gamma_{j})}.$$
Here $\nu_\varphi$ is the infinitesimal string link defined in
Definition \ref{GRT-action on infinitesimal tangles}.

(2)
when $\gamma_{j}=(b_n,\epsilon)\in \widehat{B_\tau^\epsilon}$ with 
$b_n\in  \widehat{{\mathbb K}[P_n]}\cdot\tau\subset \widehat{{\mathbb K}[B_n]}$, 
we define
$$ p(\gamma_{j}):=(\rho_n(p)(b_n),\epsilon).$$
Here $\rho_n(p)(b_n)\in \widehat{U\frak b_n}$ is the image of $b_n$
by the map given in Proposition \ref{M to Isom}.

(3)
when $\gamma_{j}\in C_{k,l}^\epsilon$, we define
$$ p(\gamma_{j}):=\varphi_{1\cdots k,k+1,k+2}^{-1,t(\gamma_{j})}
\cdot\gamma_{j}.$$
\end{defn}

As an analogue of Proposition \ref{GT-action on KT} and \ref{GRT-action on KIT}
in this subsection, we have the following: 

\begin{prop}\label{KT to KIT}
(1)
For each $p=(\mu,\varphi)\in {M}({\mathbb K})$,
the equation \eqref{KT to KIT mod N} yields a well-defined isomorphism
of $\mathbb K$-linear spaces
$$
\rho^N_{\epsilon,\epsilon'}(p):
\widehat{{\mathbb K}[{\mathcal{T}}_{\epsilon,\epsilon'}]}/{\mathcal T}_N
\simeq
\widehat{{\mathbb K}[{\mathcal{IT}}_{\epsilon,\epsilon'}]}/{\mathcal{IT}}_N
$$
for any $\epsilon$ and $\epsilon'$ and $N\geqslant 0$.

(2)
This induces an isomorphism 
of $\mathbb K$-linear spaces
\begin{equation}\label{isomorphism between KT and KIT}
\rho_{\epsilon,\epsilon'}(p):
\widehat{{\mathbb K}[{\mathcal{T}}_{\epsilon,\epsilon'}]}
\simeq
\widehat{{\mathbb K}[{\mathcal{IT}}_{\epsilon,\epsilon'}]}
\end{equation}
such that  the equality
$$
\rho_{\epsilon_1,\epsilon_3}(p)(\Gamma\cdot\Gamma')=\rho_{\epsilon_1,\epsilon_2}(p) (\Gamma)\cdot \rho_{\epsilon_2,\epsilon_3}(p)(\Gamma')
$$
holds in $\widehat{{\mathbb K}[{\mathcal{IT}}_{\epsilon_1,\epsilon_3}]}$
for two proalgebraic tangles 
$\Gamma\in \widehat{{\mathbb K}[{\mathcal T}_{\epsilon_1,\epsilon_2}]}$ 
and $\Gamma'\in\widehat{{\mathbb K}[{\mathcal T}_{\epsilon_2,\epsilon_3}]}$
for any $\epsilon_1$, $\epsilon_2$ and $\epsilon_3$.

(3)
Restriction of the map \eqref{isomorphism between KT and KIT} 
into proalgebraic string links of type $\epsilon$
yields an isomorphism
\begin{equation}\label{isomorphism between KSL and KISL}
\rho_\epsilon(p):\widehat{{\mathbb K}[{\mathcal{SL}}_{\epsilon}]}
\simeq
\widehat{{\mathbb K}[{\mathcal{ISL}}_{\epsilon}]}.
\end{equation}
It is compatible with non-commutative product structures of both algebra given by the compositions.

(4)
Restriction of the map \eqref{isomorphism between KT and KIT} 
into proalgebraic knots
yields an isomorphism
\begin{equation}\label{isomorphism between KK and KIK}
\rho_0(p):\widehat{{\mathbb K}[{\mathcal{K}}]}
\simeq
\widehat{{\mathbb K}[{\mathcal{IK}}]}.
\end{equation}
It is not compatible with the commutative product structure given by 
the connected sum $\sharp$
however the equality
\begin{equation}\label{incompatibility in KK and KIK}
\rho_0(p)(K_1\sharp K_2) \ \sharp  \ \rho_0(p)(\orientedcircle)
=\rho_0(p)(K_1) \ \sharp  \ \rho_0(p)(K_2)
\end{equation}
holds in $\widehat{{\mathbb K}[{\mathcal{IK}}]}$
for any $K_1, K_2\in \widehat{{\mathbb K}[{\mathcal{K}}]}$.
\end{prop}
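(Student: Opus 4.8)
The plan is to imitate, almost verbatim, the proofs of Propositions \ref{GT-action on KT} and \ref{GRT-action on KIT}, the single group $GT(\mathbb K)$ (resp.\ $GRT(\mathbb K)$) being replaced by a point $p=(\mu,\varphi)$ of the bitorsor $M(\mathbb K)$ and the target being the infinitesimal side rather than the same side. First, for part (1), I would prove that \eqref{KT to KIT mod N} is independent of the choice of representative $\Gamma$ and of its presentation $\gamma_m\cdots\gamma_1$. Since $\mathbb K[\mathcal{T}_{\epsilon,\epsilon'}^{\rm pre}]$ is the $\mathbb K$-span of pre-tangles modulo the isotopy moves (T1)--(T6), it suffices to verify that the assignments (1)--(3) of Definition \ref{definition: KT to KIT mod N} carry each move on the proalgebraic side to the corresponding move (IT1)--(IT6) on the infinitesimal side. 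For the moves not involving a creation/annihilation cap this reduces to the fact, already contained in Proposition \ref{M to Isom}, that $\rho_n(p)\colon\widehat{\mathbb K[B_n]}\to\widehat{U\frak b_n}$ is a well-defined filtered isomorphism respecting the $\mathfrak S_n=B_n/P_n$-structure. The cap-involving moves are treated exactly as in \cite{F12} Theorem 2.38: consistency of the $A$- and $C$-assignments uses the pentagon equation \eqref{GRT-pentagon equation} and the $2$-cycle equation \eqref{GRT-2-cycle}, while the interaction of a cap with a neighbouring crossing is governed by the hexagon equation \eqref{assoc-hexagon equation}, which is precisely the associator-level replacement of \eqref{proalg-hexagon equation}; the correction string links $\nu_\varphi^{\epsilon}$ were defined exactly so that the (T6)/(IT6) move goes through, this being checked degree by degree using that $\Lambda_\varphi^{\epsilon}$ is congruent to the trivial strand modulo $\mathcal T_1$.

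For bijectivity of each $\rho^N_{\epsilon,\epsilon'}(p)$, I would pass to the associated graded spaces for the singular filtrations (a finite filtration at level $N$). On the associated graded all factors $\varphi_{\cdots}$ and $\nu_\varphi$ become trivial, so the $A$- and $C$-pieces are preserved identically, while on the $B$-piece the assignment $\sigma_i\mapsto\varphi^{-1}\tau_{i,i+1}\exp\{\mu t_{i,i+1}/2\}\varphi$ sends the generator $\sigma_i-\sigma_i^{-1}$ of the singular filtration to $\mu\,\tau_{i,i+1}t_{i,i+1}$ up to higher order; since $\mu\in\mathbb K^{\times}$ the induced map on associated gradeds is an isomorphism. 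As both $\widehat{\mathbb K[\mathcal{T}_{\epsilon,\epsilon'}]}$ and $\widehat{\mathbb K[\mathcal{IT}_{\epsilon,\epsilon'}]}$ are complete for these filtrations, a filtered map that is an isomorphism on associated gradeds is an isomorphism, giving \eqref{isomorphism between KT and KIT} after taking the limit (alternatively the inverse can be written down directly from $\rho_n(p)^{-1}$ on braids together with $\nu_\varphi^{-1}$). The multiplicativity $\rho_{\epsilon_1,\epsilon_3}(p)(\Gamma\cdot\Gamma')=\rho_{\epsilon_1,\epsilon_2}(p)(\Gamma)\cdot\rho_{\epsilon_2,\epsilon_3}(p)(\Gamma')$ is then immediate from \eqref{KT to KIT mod N}, since composition of pre-tangles is concatenation of presentations and the assignment is defined factor by factor.

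Parts (3) and (4) follow by restriction, exactly as Propositions \ref{GT-action on KSL} and \ref{GT-action on KK} follow from \ref{GT-action on KT}. The maps (1)--(3) of Definition \ref{definition: KT to KIT mod N} only modify the decorations along strands by single-strand or pure-braid elements, hence leave the skeleton and its connected components unchanged; therefore $\widehat{\mathbb K[\mathcal{SL}_\epsilon]}$ and $\widehat{\mathbb K[\mathcal K]}$ land in $\widehat{\mathbb K[\mathcal{ISL}_\epsilon]}$ and $\widehat{\mathbb K[\mathcal{IK}]}$ respectively, and the inverse maps respect skeletons as well, yielding \eqref{isomorphism between KSL and KISL} and \eqref{isomorphism between KK and KIK}; compatibility of \eqref{isomorphism between KSL and KISL} with composition is the multiplicativity of part (2). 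Finally, the incompatibility \eqref{incompatibility in KK and KIK} is proved exactly as \eqref{incompatibility} and \eqref{incompatibility in KIK} (equation (2.21) of \cite{F12}): presenting $K_1,K_2$ so that each begins with a creation cap and ends with an annihilation cap, the connected sum $K_1\sharp K_2$ deletes one annihilation--creation pair relative to $K_1$ and $K_2$ placed side by side, so the two sides of \eqref{incompatibility in KK and KIK} differ by exactly the $\nu_\varphi$/$\varphi$ contribution of one closed cap, which is $\rho_0(p)(\orientedcircle)$.

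The main obstacle is the well-definedness in part (1): checking that the cap-involving moves (T6)/(IT6) and the pentagon/hexagon moves are respected by the piece-wise assignment. This is where all three associator axioms \eqref{GRT-2-cycle}, \eqref{GRT-pentagon equation} and \eqref{assoc-hexagon equation} enter, and it is the only point genuinely beyond the braid-level statement of Proposition \ref{M to Isom}; the remaining assertions are either formal consequences of the definition or repetitions of arguments already made for the $GT$- and $GRT$-actions.
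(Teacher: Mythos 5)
Your proposal is correct and follows essentially the same route as the paper: well-definedness is reduced to checking the moves (T1)--(T6) against (IT1)--(IT6) exactly as in \cite{F12} Theorem 2.38.(1), bijectivity is obtained degree by degree from the invertibility of $\rho_n(p)$ on the braid factors together with completeness, and parts (2)--(4) are the same formal restrictions and the same cap-counting argument for \eqref{incompatibility in KK and KIK}. The only presentational difference is that the paper makes your parenthetical alternative the main argument, constructing an explicit section $S_a:{\mathcal{IT}}_a/{\mathcal{IT}}_{a+1}\to {\mathcal{T}}_a/{\mathcal{IT}}_{a+1}$ (identity on $A$- and $C$-pieces, $\rho_n(p)^{-1}$ on braid pieces) and isolating the congruence $\rho_n(p)^{-1}(t_{k+1,k+2})\equiv \sigma_{k+1}-\sigma_{k+1}^{-1}\pmod{I^1}$ as the key point for the (IT6) compatibility --- which is the precise justification your associated-graded phrasing quietly relies on.
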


\begin{proof}
(1)
Firstly we have to show that Definition \ref{definition: KT to KIT mod N} makes sense,
that is, the map $\rho^N_{\epsilon,\epsilon'}(p)$ is well-defined.
It is enough to  prove the equality 
$\rho^N_{\epsilon,\epsilon'}(p)(\bar{\Gamma_1})
=\rho^N_{\epsilon,\epsilon'}(p)(\bar{\Gamma_2})$
for
$\Gamma_1$ and $\Gamma_2 \in \widehat{{\mathbb K}[{\mathcal{T}}_{\epsilon,\epsilon'}]}$
when $\Gamma_1$ is obtained from $\Gamma_2$  by a single operation of one of the moves
(T1)-(T6).
This can be proved in a completely same way to the proof of \cite{F12} Theorem 2.38.(1).

Secondly we prove that the map $\rho^N_{\epsilon,\epsilon'}(p)$ is isomorphic.
This is achieved by considering the  $\mathbb K$-linear map
$$
S_a:{\mathcal{IT}}_a\bigm/{\mathcal{IT}}_{a+1}\to {\mathcal{T}}_a\bigm/{\mathcal{IT}}_{a+1}
$$
for $a=0,1,2,\dots, N-1$.
It is a map sending each $\Gamma=\gamma_m\cdots\gamma_1$ 
with each $\gamma_i$ belonging to  one $V_i$ of the ABC-spaces
to $S_a(\Gamma):= S_a(\gamma_m)\cdots S_a(\gamma_1)$
with $S_a(\gamma_i)=\gamma_i$ when $V_i=A_{k,l}^\epsilon$ or $C_{k,l}^\epsilon$
for some $k,l$, $\epsilon$,
and $S_a(\gamma_i)=\rho_n(p)^{-1}(\gamma_i)$
when $V_i=\widehat{IB^\tau_\epsilon}$
for some  $\tau\in\frak S_n$ (some $n\geqslant 1$) and $\epsilon$.
Here $\rho_n(p):
\widehat{{\mathbb K}[B_n]}
\to  
\widehat{U\frak b_n}
$
is the isomorphism given by \eqref{M to Isom KB UB}.
The well-definedness of $S_a$ can be checked directly. 
To see the compatibility for the move (IT6),
we need to use the congruence $\rho_n(p)^{-1}(t_{k+1,k+2})\equiv \sigma_{k+1}-\sigma_{k+1}^{-1} \pmod{I^1}$.
By the construction, $S_a$ is isomorphic.
By checking that $S_a$ gives an right inverse of  
$$
\rho^{a+1}_{\epsilon,\epsilon'}(p)\bigm|_{{\mathcal{T}}_a}:
{\mathcal{T}}_a\bigm/{\mathcal{IT}}_{a+1}
\to {\mathcal{IT}}_a\bigm/{\mathcal{IT}}_{a+1},
$$
inductively we get  that  $\rho^{a+1}_{\epsilon,\epsilon'}(p)$
is isomorphic.

(2)
Since both filtration $\{\mathcal{T}_n\}_{n=0}^\infty$ and $\{\mathcal{IT}_n\}_{n=0}^\infty$
are compatible with the isomorphism $\rho^{N}_{\epsilon,\epsilon'}(p)$,
the isomorphism \eqref{isomorphism between KT and KIT} is obtained.
Checking the equality of the composition is immediate to see.

(3)
It can be proved by the  same arguments of the proof of
Proposition \ref{GT-action on KSL}.

(4)
The statements follow from the same arguments given in the proof of
Proposition \ref{GT-action on KK}.
\end{proof}

The above isomorphism \eqref{isomorphism between KT and KIT} is compatible with 
both $GT(\mathbb K)$-action on $\widehat{{\mathbb K}[{\mathcal{T}}_{\epsilon,\epsilon'}]}$
and $GRT(\mathbb K)$-action on $\widehat{{\mathbb K}[{\mathcal{IT}}_{\epsilon,\epsilon'}]}$.

\begin{prop}\label{three torsor maps}
The induced maps 
\begin{align}
\label{M to Isom KT KIT}
\rho_{\epsilon,\epsilon'}&:M(\mathbb K)\to \mathrm{Isom}\left(\widehat{{\mathbb K}[{\mathcal{T}}_{\epsilon,\epsilon'}]},
\ \widehat{{\mathbb K}[{\mathcal{IT}}_{\epsilon,\epsilon'}]}\right),\\
\label{M to Isom KSL KISL}
\rho_{\epsilon}&:M(\mathbb K)\to \mathrm{Isom}\left(\widehat{{\mathbb K}[{\mathcal{SL}}_{\epsilon}]},
\ \widehat{{\mathbb K}[{\mathcal{ISL}}_{\epsilon}]}\right), \\
\label{M to Isom KK KIK}
\rho_{0}&:M(\mathbb K)\to \mathrm{Isom}\left(\widehat{{\mathbb K}[{\mathcal{K}}]},
\ \widehat{{\mathbb K}[{\mathcal{IK}}]}\right) 
\end{align}
are all  morphisms of bitorsors. 
\end{prop}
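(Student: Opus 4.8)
The plan is to prove the tangle statement \eqref{M to Isom KT KIT} and then deduce \eqref{M to Isom KSL KISL} and \eqref{M to Isom KK KIK} by restriction, exactly as in Proposition \ref{KT to KIT}.(3) and (4) (the failure of $\sharp$-compatibility in the knot case being absorbed into the same error term as in \eqref{incompatibility in KK and KIK}). Being a morphism of bitorsors means compatibility with \eqref{GRT to Aut KIT} on the left and \eqref{GT to Aut KT} on the right; since the two group actions on $M(\mathbb K)$ commute, it is enough to check the two one-sided identities
\[
\rho_{\epsilon,\epsilon'}\bigl((c,g)\circ p\bigr)=\rho_{\epsilon,\epsilon'}(c,g)\circ\rho_{\epsilon,\epsilon'}(p),\qquad
\rho_{\epsilon,\epsilon'}\bigl(p\circ(\lambda,f)\bigr)=\rho_{\epsilon,\epsilon'}(p)\circ\rho_{\epsilon,\epsilon'}(\lambda,f)
\]
for $(c,g)\in GRT(\mathbb K)$, $p\in M(\mathbb K)$, $(\lambda,f)\in GT(\mathbb K)$, where $\rho_{\epsilon,\epsilon'}(c,g)$ and $\rho_{\epsilon,\epsilon'}(\lambda,f)$ denote the images under \eqref{GRT to Aut KIT} and \eqref{GT to Aut KT}. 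All the maps $\rho_{\epsilon,\epsilon'}(p)$, $\rho_{\epsilon,\epsilon'}(c,g)$, $\rho_{\epsilon,\epsilon'}(\lambda,f)$ are defined by the same piecewise recipe on the generators (Definitions \ref{GT-action on proalgebraic tangles}, \ref{GRT-action on infinitesimal tangles}, \ref{definition: KT to KIT mod N}) and all respect composition of (pro)algebraic tangles (Propositions \ref{GT-action on KT}, \ref{GRT-action on KIT}, \ref{KT to KIT}.(2)), so it suffices to verify the two identities on a single fundamental tangle $\gamma_j$ lying in one of the ABC-spaces.

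First I would dispose of the $\widehat{B^{\epsilon}_\tau}$-pieces. On such a piece all three maps act through the braid-level maps $\rho_n$, and the two identities are then instances of Proposition \ref{M to Isom}: that $\rho_n\colon M(\mathbb K)\to\mathrm{Isom}(\widehat{{\mathbb K}[B_n]},\widehat{U\frak b_n})$ is a morphism of bitorsors compatible with \eqref{GT to Aut B_n} and \eqref{GRT to Aut Ub_n}. The substantive case is that of the creation/annihilation pieces $A^{\epsilon}_{k,l}$ and $C^{\epsilon}_{k,l}$, where the maps differ by the error terms $\nu_f,\nu_g,\nu_\varphi$ and the conjugators $f_{1\cdots k,k+1,k+2}$, $g_{1\cdots k,k+1,k+2}$, $\varphi_{1\cdots k,k+1,k+2}$. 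Here I would expand both sides of each identity, using that $\rho_{\epsilon,\epsilon'}(c,g)$ and $\rho_{\epsilon,\epsilon'}(\lambda,f)$ act on the braid-part factors through $\rho_n$, and reduce the identity to a single relation in the string-link algebra on one strand. Concretely, writing $(c,g)\circ p=\bigl(\mu/c,\ \varphi'\bigr)$ with $\varphi'=g\cdot\varphi\!\left(\frac Ac,\ g^{-1}\frac Bc g\right)$, the required relation is that $\Lambda_{\varphi'}^{\epsilon}$ (hence $\nu_{\varphi'}^{\epsilon}$) decomposes in terms of $\Lambda_g^{\epsilon}$ and the image of $\Lambda_\varphi^{\epsilon}$ under $\rho_{\epsilon,\epsilon}(c,g)$ in precisely the way dictated by the product formula \eqref{product of proalg-GRT} and the left $GRT(\mathbb K)$-action of Proposition \ref{GRT-GT-bitorsor}; on the $GT$-side the analogous relation is governed by \eqref{product of proalg-GT} and the right $GT(\mathbb K)$-action there. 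Each of these is a direct computation in $\widehat{{\mathbb K}[{\mathcal{IT}}_{\epsilon,\epsilon}]}$ (respectively $\widehat{{\mathbb K}[{\mathcal T}_{\epsilon,\epsilon}]}$) requiring no input beyond the already established formulas.

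The main obstacle is the bookkeeping at these creation/annihilation pieces: one must keep track of the strand indices hidden in the notation $(\nu_\bullet)_{k+2}$ and $(\cdot)_{1\cdots k,k+1,k+2}$, of the orientation superscripts $s(\gamma_j),t(\gamma_j)$ that decide whether $f$ or $f^{-1}$ (and $\Lambda^{\uparrow}$ or $\Lambda^{\downarrow}$) appears, and — most delicately — verify that the error terms $\nu_\bullet$ combine in the twisted-multiplicative fashion forced by the group laws, i.e. that $\bullet\mapsto\Lambda_\bullet$ satisfies a cocycle-type identity for the relevant actions. Once this single computation is carried out for one $A$-piece and one $C$-piece, the three maps are all morphisms of bitorsors, and the string-link map \eqref{M to Isom KSL KISL} and the knot map \eqref{M to Isom KK KIK} follow by restriction as above.
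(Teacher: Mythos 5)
Your proposal is correct and follows essentially the same route as the paper: the paper's entire proof is that the statement "is derived from Proposition \ref{M to Isom}", i.e.\ from the bitorsor compatibility at the level of braids, and your argument makes precise exactly how that derivation goes (piecewise reduction to ABC-generators, Proposition \ref{M to Isom} handling the $\widehat{B^{\epsilon}_\tau}$-pieces, and the cocycle-type bookkeeping for the error terms $\nu_\bullet$ at the $A$- and $C$-pieces). You have simply supplied the details the paper leaves implicit.
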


\begin{proof}
It is derived from Proposition \ref{M to Isom}.
\end{proof}

We may say that the map \eqref{M to Isom KT KIT} is a generalization
of the map \eqref{M to Isom} into proalgebraic tangles.

Next we will discuss a relation of infinitesimal tangles with chord diagrams.

\begin{nota}[\cite{KT98,KRT} etc]
Let  $\Gamma$ be a tangle in $\mathcal T_{\epsilon,\epsilon'}$.
A {\it chord diagram} on a curve $\Gamma$ is a finite (possibly empty) set of unordered pair
of points on $\Gamma\setminus \partial \Gamma$.
A homeomorphism of chord diagrams means 
a homeomorphism of the underlying curves preserving their orientations and 
fixing their endpoints
such that it preserves the distinguished pairs of points.
In our picture, we draw a dashed line , called a chord, between the two points of  a distinguished pair. 

We denote $\mathcal{CD}_{\epsilon,\epsilon'}^m$ to be the $\mathbb K$-linear space
generated by all homeomorphism classes of chord diagrams with $m$ chords 
($m\geqslant 0$)
on tangles of type $(\epsilon, \epsilon')$, 
subject to the 4T-relation and the FI-relation.
Here the  {\it  4T-relation} stands for the 4 terms relation
defined by 
$
D_1-D_2+D_3-D_4=0
$
where $D_j$ are chord diagrams with four chords
identical outside a ball in which they differ as illustrated in Figure \ref{4T-relation}
\begin{figure}[h]
\begin{tabular}{cccc}
     \begin{minipage}{0.25\hsize}
         \begin{center}
            \begin{tikzpicture}
                  \draw[->] (0,0)--(0,1.5) ;
                  \draw[->] (0.5,0)--(0.5,1.5) ;
                  \draw[->] (1,0)--(1,1.5) ;
                   \draw[densely dotted] (0,0.5)--(1,0.5);
                   \draw[densely dotted] (0,1)--(0.5,1);
            \end{tikzpicture}
                      \caption*{$D_1$}
         \end{center}
    \end{minipage}
     \begin{minipage}{0.25\hsize}
         \begin{center}
            \begin{tikzpicture}
                  \draw[->] (0,0)--(0,1.5) ;
                  \draw[->] (0.5,0)--(0.5,1.5) ;
                  \draw[->] (1,0)--(1,1.5) ;
                   \draw[densely dotted] (0,0.5)--(0.5,0.5);
                   \draw[densely dotted] (0,1)--(1,1) ;
            \end{tikzpicture}
                      \caption*{$D_2$}
         \end{center}
    \end{minipage}
     \begin{minipage}{0.25\hsize}
         \begin{center}
            \begin{tikzpicture}
                  \draw[->] (0,0)--(0,1.5) ;
                  \draw[->] (0.5,0)--(0.5,1.5) ;
                  \draw[->] (1,0)--(1,1.5) ;
                   \draw[densely dotted] (0.5,0.5)--(1,0.5);
                   \draw[densely dotted] (0,1)--(0.5,1);
            \end{tikzpicture}
                      \caption*{$D_3$}
         \end{center}
    \end{minipage}
     \begin{minipage}{0.25\hsize}
         \begin{center}
            \begin{tikzpicture}
                  \draw[->] (0,0)--(0,1.5) ;
                  \draw[->] (0.5,0)--(0.5,1.5) ;
                  \draw[->] (1,0)--(1,1.5) ;
                   \draw[densely dotted] (0,0.5)--(0.5,0.5);
                   \draw[densely dotted] (0.5,1)--(1,1);
            \end{tikzpicture}
                      \caption*{$D_4$}
         \end{center}
    \end{minipage}
\end{tabular}
    \caption{4T-relation}
    \label{4T-relation}
\end{figure} 
and the  {\it  FI-relation} stands for the frame independent relation
where we put 
$
D=0
$
for any chord diagrams $D$ with an isolated chord, 
a chord that does not intersect on any other one in  their diagrams.
We put $\widehat{\mathcal{CD}}_{\epsilon,\epsilon'}
:=\widehat{\oplus}_{m=0}^\infty\mathcal{CD}_{\epsilon,\epsilon'}^m$. 

It is known that it is well-behaved under the composition
\begin{equation}\label{composition of CD}
\cdot:\widehat{\mathcal{CD}}_{\epsilon_3,\epsilon_2}\times\widehat{\mathcal{CD}}_{\epsilon_2,\epsilon_1}
\to\widehat{\mathcal{CD}}_{\epsilon_3,\epsilon_1}.
\end{equation}
We denote the subspace of $\widehat{\mathcal{CD}}_{\epsilon,\epsilon'}$
consisting of chord diagrams whose underlying spaces are string links with type $\epsilon$
by $\widehat{\mathcal{CD}}({\epsilon})$.
 It forms a non-commutative $\mathbb K$-algebra by the composition map
\eqref{composition of CD}.

The subspace $\widehat{\mathcal{CD}}(\orientedcircle)$
$(\subset\widehat{\mathcal{CD}}_{\emptyset,\emptyset})$
of chord diagrams whose underlying spaces are homeomorphic to the oriented circles
forms a commutative algebra by the connected sum
\begin{equation}
\sharp:\widehat{\mathcal{CD}}(\orientedcircle)\times\widehat{\mathcal{CD}}(\orientedcircle)
\to\widehat{\mathcal{CD}}(\orientedcircle).
\end{equation}
We remind that the unit is given by 
the chordless chord diagram on the oriented circle $\orientedcircle$. 
\end{nota}

\begin{prop}\label{IT=CD}
(1) 
For each $\epsilon,\epsilon'$, there is a natural identifications
$$
\widehat{{\mathbb K}[{\mathcal{IT}}_{\epsilon,\epsilon'}]}
\simeq
\widehat{\mathcal{CD}}_{\epsilon,\epsilon'}
$$
which is compatible with the composition maps.

(2)
For each $\epsilon$, there is a natural identification of non-commutative graded $\mathbb K$-algebras:
$$
\widehat{{\mathbb K}[{\mathcal{ISL}}_{\epsilon}]}
\simeq
\widehat{\mathcal{CD}}({\epsilon}).
$$

(3)
There is a natural identification of commutative graded $\mathbb K$-algebras:
$$
\widehat{{\mathbb K}[{\mathcal{IK}}]}
\simeq
\widehat{\mathcal{CD}}(\orientedcircle).
$$
\end{prop}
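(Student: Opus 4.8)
The plan is to build the identification directly from the ABC-presentation of infinitesimal tangles (Definitions \ref{definition of fundamental infinitesimal tangle} and \ref{definition of infinitesimal tangle}), and then to recognise it as a reformulation, in our language, of the standard presentation of the category of chord diagrams established in \cite{Bar, C, KT98, LM, P} (\cite{KT98} Theorem 2.4). First I would define a map $\widehat{{\mathbb K}[{\mathcal{IT}}_{\epsilon,\epsilon'}]}\to\widehat{\mathcal{CD}}_{\epsilon,\epsilon'}$ on generators: the chordless cap $a^\epsilon_{k,l}$ and cup $c^\epsilon_{k,l}$ go to the chordless chord diagrams on the corresponding elementary cap and cup tangles, a permutation $\tau\in{\frak S}_n$ goes to the chordless chord diagram on the associated permutation tangle, and the generator $t_{ij}\in\widehat{U\frak p_n}$ goes to the single horizontal chord joining the $i$-th and $j$-th strands; this extends ${\mathbb K}$-linearly and multiplicatively along the compositions \eqref{composition of ITpre} and \eqref{composition of CD}.

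The verification that this is well defined amounts to matching relations: the infinitesimal pure braid relations $[t_{ij},t_{ik}+t_{jk}]=0$ and $[t_{ij},t_{kl}]=0$ become, drawn locally, the $4$T-relation of Figure \ref{4T-relation}; the crossed-product rule \eqref{tau and t} is respected on the nose; the moves (IT1)--(IT5), being the infinitesimal counterparts of the tangle moves (T1)--(T5) of \cite{F12}, translate into the zig-zag/planar-isotopy relations and the slide relations of $\widehat{U\frak b_n}$ that relate different Morse presentations of a chord diagram; and the move (IT6) becomes the FI-relation, since a chord $t_{k+1,k+2}$ placed next to the elementary cap or cup on strands $k+1,k+2$ slides, over the extremum, into an isolated chord and is therefore killed by FI, so that $\exp\{\alpha\,t_{k+1,k+2}\}$ collapses to the chordless piece.

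Conversely, choosing a generic height function cuts any chord diagram of type $(\epsilon,\epsilon')$ into a composition of elementary caps, elementary cups, and horizontal slices, each slice carrying at most one chord and hence realising an element of $\widehat{U\frak b_n}=\mathbb K[\frak S_n]*\widehat{U\frak p_n}$; this yields a map $\widehat{\mathcal{CD}}_{\epsilon,\epsilon'}\to\widehat{{\mathbb K}[{\mathcal{IT}}_{\epsilon,\epsilon'}]}$, whose well-definedness---independence of the height function and invariance modulo $4$T and FI---is exactly the cited presentation theorem, with $4$T absorbed into the defining relations of $\frak p_n$ and FI absorbed into (IT6). The two maps are mutually inverse on each ABC-generator and are both assembled piecewise, so they intertwine the two composition products, proving (1). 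The filtration on $\widehat{{\mathbb K}[{\mathcal{IT}}_{\epsilon,\epsilon'}]}$ restricts on each $\widehat{IB^\epsilon_\tau}$ to the degree filtration of $\widehat{U\frak p_n}$ (with $\deg t_{ij}=1$), which matches the grading of $\widehat{\mathcal{CD}}_{\epsilon,\epsilon'}$ by number of chords, so the identification is one of graded (hence filtered) objects. Parts (2) and (3) then follow by restriction: to string-link skeleta of type $\epsilon$, where on both sides the product is the composition, giving the isomorphism of non-commutative graded algebras; and to the circle skeleton, where the connected sum \eqref{connected sum on KIK-pre} and the connected sum on $\widehat{\mathcal{CD}}(\orientedcircle)$ are both ``cut at a non-critical point and re-glue'' (compatibly via Lemma \ref{identification between infinitesimal long knots and proalgebraic knots}), giving the isomorphism of commutative graded algebras sending $\orientedcircle$ (without chords) to the unit.

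The only genuinely substantive point is the well-definedness of the inverse cutting map: that dissecting a chord diagram into ABC-pieces is independent of all choices and descends modulo $4$T and FI to precisely the relations (IT1)--(IT6). This is the ``fundamental theorem'' on chord diagrams, and I would invoke it from \cite{KT98, Bar, C, LM, P} rather than reprove it; the remaining work on our side is the bookkeeping identifying the horizontal chord diagram algebra on $n$ strands with $\widehat{U\frak p_n}$ and the FI-relation with (IT6).
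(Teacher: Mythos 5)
Your proposal is correct in substance but proves bijectivity by a genuinely different route from the paper. You build the identification in both directions: the forward map on ABC-generators (matching $4$T with the relations of $\frak p_n$, FI with (IT6), and (IT1)--(IT5) with homeomorphisms of underlying curves), and an explicit inverse by Morse-decomposing a chord diagram into caps, cups and horizontal slices, with well-definedness of the inverse delegated to a presentation theorem for the chord-diagram category. The paper instead constructs only the forward map (``replace each projected crossing by an honest crossing''), and then gets bijectivity for free by a two-out-of-three argument: the composite $\widehat{{\mathbb K}[{\mathcal T}_{\epsilon,\epsilon'}]}\xrightarrow{\rho_{\epsilon,\epsilon'}(p)}\widehat{{\mathbb K}[{\mathcal{IT}}_{\epsilon,\epsilon'}]}\to\widehat{\mathcal{CD}}_{\epsilon,\epsilon'}$ is identified with the (non-framed) isomorphism of \cite{KT98} Theorem 2.4, and $\rho_{\epsilon,\epsilon'}(p)$ is already known to be an isomorphism by Proposition \ref{KT to KIT}, so the second factor must be one too. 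Your approach is associator-free and more self-contained in spirit, and it makes explicit the relation-matching that the paper leaves implicit in ``well-defined''; the price is that the load-bearing step is the presentation theorem, and your citation for it is slightly off --- \cite{KT98} Theorem 2.4 as stated is the isomorphism between Vassiliev-completed tangles and chord diagrams, not a generators-and-relations presentation of $\widehat{\mathcal{CD}}_{\epsilon,\epsilon'}$; the statement you actually need (independence of the Morse decomposition modulo the listed relations) is closer to the categorical presentations in \cite{Bar} or the formalism of \cite{KRT}, so that reference should be adjusted. The paper's route avoids this issue entirely at the cost of invoking the full strength of Proposition \ref{KT to KIT}, which itself required an associator.
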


\begin{proof}
(1)
By replacing each
$\diaProjected$-part  on the associated picture of each infinitesimal tangle 
$D$ by $\diaCrossP$ 
(actually we may replace it by $\diaCrossN$ because both are equivalent 
modulo homeomorphisms of underlying tangles)
and multiplying $(-1)^{D_\downarrow}$ to each $D$
(which is necessary to keep 4T-relation),
we obtain a well-defined $\mathbb K$-linear map
\begin{equation}\label{isom-KIT-CD}
\widehat{{\mathbb K}[{\mathcal{IT}}_{\epsilon,\epsilon'}]}\to
\widehat{\mathcal{CD}}_{\epsilon,\epsilon'}.
\end{equation}
Here $D_{\downarrow}$ is the the set of ends of chord on $D$
which hit downward lines.
Its composition with the map 
$\rho_{\epsilon,\epsilon'}(p)$ in \eqref{isomorphism between KT and KIT}
is  the isomorphism
$$
\widehat{{\mathbb K}[{\mathcal T}_{\epsilon,\epsilon'}]}\to
\widehat{\mathcal{CD}}_{\epsilon,\epsilon'}.
$$
given as the non-framed version of \cite{KT98} Theorem 2.4.
Since $\rho_{\epsilon,\epsilon'}(p)$ is isomorphic,
the morphism in our claim should be isomorphic.
Checking the compatibility with the composition map is immediate.

(2)
It immediately follows from (1).

(3)
It is obtained by a restriction of the above claim  into the case 
$(\epsilon,\epsilon')=(\emptyset,\emptyset)$. 
It can be checked directly that  the map is compatible with the connected sum.
\end{proof}

Hereafter we identify
$\widehat{{\mathbb K}[{\mathcal IT}_{\epsilon,\epsilon'}]}$ with
$\widehat{\mathcal{CD}}_{\epsilon,\epsilon'}$,
$\widehat{{\mathbb K}[{\mathcal{ISL}}_{\epsilon}]}$ with
$\widehat{\mathcal{CD}}({\epsilon})$, and
$\widehat{{\mathbb K}[{\mathcal IK}]}$ with
$\widehat{\mathcal{CD}}(\orientedcircle)$
by the map \eqref{isom-KIT-CD}.
Thus the identification given in 
Lemma \ref{identification between infinitesimal long knots and proalgebraic knots}
is reformulated as the identification below
\begin{equation}\label{CD(O)=CD(1)}
(\widehat{\mathcal{CD}}({\epsilon}),\cdot)\simeq 
(\widehat{\mathcal{CD}}(\orientedcircle),\sharp)
\end{equation}
for $\epsilon=\uparrow$ or $\downarrow$.

\begin{rem}
Kontsevich's isomorphism (\cite{K})
\begin{equation}\label{Kontsevich's isomorphism}
I:\widehat{{\mathbb C}[{\mathcal K}]}\simeq
\widehat{\mathcal{CD}}(\orientedcircle)
\end{equation}
is given  by specifying $p$ of
$\rho_0(p)$ of \eqref{isomorphism between KK and KIK}
to $p_\mathrm{KZ}=(1,\varphi_\mathrm{KZ})\in M(\mathbb C)$ with
$\varphi_\mathrm{KZ}=\varPhi_\mathrm{KZ}\left(\frac{1}{2\pi\sqrt{-1}}A,\frac{1}{2\pi\sqrt{-1}}B\right).$
We note that it is independent of the choice of $\varphi$
(cf. \cite{LM}).
For each oriented knot $K$, the image $I(K)$ is called the {\it Kontsevich invariant} of $K$.
\end{rem}

\begin{rem}\label{MTM-structure}
By the same arguments to Remark \ref{rem-incl-Gal-M},
the proalgebraic tangles $\widehat{{\mathbb Q}[{\mathcal{T}}_{\epsilon,\epsilon'}]}$,
the proalgebraic string links $\widehat{{\mathbb Q}[{\mathcal{SL}}_{\epsilon}]}$
and 
the proalgebraic knots $\widehat{{\mathbb K}[{\mathcal{K}}]}$
are regarded as Betti realizations of mixed Tate (pro-)motives over
${\rm Spec}\; {\mathbb Z}$.
And their corresponding de Rham realizations are given by
the spaces 
$\widehat{\mathcal{CD}}_{\epsilon,\epsilon'}$,
$\widehat{\mathcal{CD}}({\epsilon})$ and
$\widehat{\mathcal{CD}}(\orientedcircle)$
of chord diagrams located there
respectively.
In Remark \ref{TM-structure}, we will see that
the proalgebraic knots $\widehat{{\mathbb K}[{\mathcal{K}}]}$
carries a structure of Tate (pro-)motives.
\end{rem}


\section{Main results}\label{Main results}
We discuss and derive  distinguished properties of
the action  of the Grothendieck-Teichm\"{u}ller groups
on proalgebraic tangles (constructed in \S\ref{sec: GT-action on proalgebraic tangles})
which can not be observed in  the action on proalgebraic braids
 (discussed in \S\ref{GT-action on proalgebraic braids}).
By exploiting the properties,
we explicitly determine the proalgebraic knot whose Kontsevich invariant
is the unit, the trivial chord diagram (Theorem \ref{inverse image theorem}).

\subsection{Proalgebraic string links}
We restrict the previously constructed action of the Grothendieck-Teichm\"{u}ller group
$GT(\mathbb K)$ on proalgebraic tangles
into  the action of its unipotent part $GT_1(\mathbb K)$
on proalgebraic string links and show that 
it is simply described by an  inner conjugation
(Proposition \ref{inner automorphism theorem on chord diagrams} and
Theorem \ref{inner automorphism theorem on tangles}). 
The proofs are based on Twistor Lemmas (Lemma \ref{twistor lemma} and \ref{twistor lemma on GT}).

\begin{nota}
(1)
For $n>1$, 
$\epsilon_i:\widehat{{\mathbb K}[{\mathcal{ISL}}_{\uparrow^n}]}
\to\widehat{{\mathbb K}[{\mathcal{ISL}}_{\uparrow^{n-1}}]}$
($i=1,2\dots n$) means the map sending an infinitesimal string link $D$ to $0$
if at least  one chord of $D$ has an endpoint on the $i$-th strand;
otherwise  $\epsilon_i(D)$ is obtained by removing the $i$-th strand.

(2)
For $D\in  \widehat{{\mathbb K}[{\mathcal{ISL}}_{\uparrow^n}]}$,
we denote  $D_{1,\dots,n}$ (resp. $D_{2,\dots,n+1}$)  to be the element 
in $\widehat{{\mathbb K}[{\mathcal{ISL}}_{\uparrow^{n+1}}]}$
obtained by putting a chordless straight line  on the right (resp. the left) of $D$
and  $D_{1,\dots, i-1 ,i\ i+1,i+2, \dots, n+1}$ ($i=1,2,\dots,n$)
to be also the element  in $\widehat{{\mathbb K}[{\mathcal{ISL}}_{\uparrow^{n+1}}]}$
obtained by doubling the $i$-th strand and taking the sum over all possible lifts of
the chord endpoints of $D$ from the $i$-th strand
to one of the new two strands.

(3)
For $D\in \widehat{{\mathbb K}[{\mathcal{ISL}}_{\uparrow^n}]}$ and $\tau\in \frak S_n$,
$D_{\tau(1),\dots,\tau(n)}$ denote the element in $\tau^{-1}\cdot D\cdot \tau$
where the product is taken as a product of infinitesimal tangles
(recall the identification given in Proposition \ref{IT=CD}).
 \end{nota}

Hereafter we regard $\exp\frak f_2$ and $U\frak p_5$ to be the subspaces
of $ \widehat{{\mathbb K}[{\mathcal{ISL}}_{\uparrow\uparrow\uparrow}]}$ and
$\widehat{{\mathbb K}[{\mathcal{ISL}}_{\uparrow\uparrow\uparrow\uparrow}]}$ respectively.
The following lemma which is shown for $GRT(\mathbb K)$
might be called as a reformulation of 
\cite{LM} Theorem 8 which is shown for a \lq chord diagrammatic' analogue of
$M_1(\mathbb K)$.

\begin{lem}[Twistor Lemma]\label{twistor lemma}
Let $\sigma=(c,g)\in GRT(\mathbb K)$, thus $c\in {\mathbb K}^\times$ and $g\in \exp \frak f_2\subset 
\widehat{{\mathbb K}[{\mathcal{ISL}}_{\uparrow\uparrow\uparrow}]}$.
Then $g$ is gauge equivalent to $1$,
namely, there exists $ \varDelta(\sigma)\in \widehat{{\mathbb K}[{\mathcal{ISL}}_{\uparrow\uparrow}]}^\times$
satisfying
\begin{equation}
\epsilon_1( \varDelta(\sigma))=\epsilon_2( \varDelta(\sigma))=\uparrow
\end{equation}
and   the symmetric condition
\begin{equation}\label{symmetric condition}
 \varDelta(\sigma)= \varDelta(\sigma)_{2,1}
\end{equation}
such that
\begin{equation}\label{twisting formula}
g= \varDelta(\sigma)_{2,3}\cdot  \varDelta(\sigma)_{1,23}\cdot  \varDelta(\sigma)^{-1}_{12,3}\cdot  \varDelta(\sigma)^{-1}_{1,2}
\end{equation}
holds in $\widehat{{\mathbb K}[{\mathcal{ISL}}_{\uparrow\uparrow\uparrow}]}^\times$. 
\end{lem}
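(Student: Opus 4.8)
The plan is to reduce the Twistor Lemma to the structure of the graded Grothendieck-Teichm\"uller group and to known results about Drinfeld associators. First I would recall that $g\in GRT_1(\mathbb K)$ (the relevant part; the scalar $c$ plays no role in the twisting formula) satisfies the pentagon equation \eqref{GRT-pentagon equation} and the hexagon equations \eqref{GRT-2-cycle}, \eqref{GRT-hexagon equation}. The key observation is that $(1,g)\in GRT(\mathbb K)$ is exactly a \emph{group-like} solution of the pentagon relation in $\exp\frak p_4\subset\widehat{\mathcal{CD}}(\uparrow^4)$, and that the pentagon relation is precisely the cocycle condition that, in a co-commutative Hopf-algebraic setting, forces a coboundary. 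The natural candidate for the ``twistor'' $\varDelta(\sigma)$ is therefore the element of $\widehat{\mathcal{CD}}(\uparrow\uparrow)^\times$ obtained by a recursive (degree-by-degree) solution of \eqref{twisting formula}: working in the associated graded, the degree-$N$ part of $g$ determines the degree-$N$ part of $\varDelta(\sigma)$ up to primitive ambiguity, and the pentagon equation guarantees that the obstruction to extending the solution vanishes at each stage.

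The key steps, in order, would be: (i) Set up the cohomological framework. Let $X=\widehat{\mathcal{CD}}(\uparrow^{\bullet})$ be the cosimplicial object with cofaces given by strand-doubling and strand-insertion $D\mapsto D_{1,\dots,i\,i+1,\dots}$; then the pentagon equation says $g\in\widehat{\mathcal{CD}}(\uparrow^3)^\times$ is a $2$-cocycle. (ii) Prove the vanishing of $H^2$ of this complex in each positive degree, or more precisely prove that every group-like $2$-cocycle is a coboundary. This is where I would either cite \cite{LM} Theorem 8 for the analogous statement for $M(\mathbb K)$ and transport it via an associator (using Proposition \ref{M to Isom} and Proposition \ref{KT to KIT}), or give a direct inductive argument: assuming $\varDelta$ has been constructed modulo degree $N$, the degree-$N$ discrepancy $\delta_N$ in \eqref{twisting formula} is, by the pentagon equation applied in degree $N$, a $2$-cocycle of the \emph{linearized} (i.e. additive, graded) complex, whose $H^2$ is computed to vanish by a standard simplicial/bar-complex argument over $\mathbb K$; hence $\delta_N=\partial(\varDelta_N)$ for some primitive $\varDelta_N$ of degree $N$, and we update $\varDelta\mapsto\varDelta\cdot\exp(\varDelta_N)$. (iii) Impose the symmetric condition \eqref{symmetric condition}: using the $2$-cycle relation \eqref{GRT-2-cycle} $g(A,B)g(B,A)=1$, show that the flip $\tau_{1,2}$ acts on the set of solutions, so one may average (or antisymmetrize the primitive correction at each step) to obtain a $\tau$-invariant $\varDelta(\sigma)$; the normalization $\epsilon_1(\varDelta(\sigma))=\epsilon_2(\varDelta(\sigma))=\uparrow$ is arranged by dividing out the degree-$0$ and isolated-chord parts, which is harmless because of the FI-relation. (iv) Check that the resulting $\varDelta(\sigma)$ lies in $\widehat{\mathcal{CD}}(\uparrow\uparrow)^\times$ (invertibility is automatic since it is $1+(\text{positive degree})$) and that \eqref{twisting formula} now holds exactly, by construction of the limit.

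The main obstacle I expect is step (ii): showing that the relevant ``pentagon'' cohomology vanishes in positive degrees, and doing so in a way compatible with the group-like (exponential) structure rather than merely the linear one, so that $\varDelta(\sigma)$ is genuinely an element of the pro-unipotent group $\widehat{\mathcal{CD}}(\uparrow\uparrow)^\times$ and not just a formal series. The cleanest route is almost certainly to \emph{not} reprove this from scratch but to invoke \cite{LM} Theorem 8 (the gauge-triviality of associators / the statement that any associator is gauge-equivalent to the KZ-associator, hence in the graded case any $g\in GRT_1$ is gauge-equivalent to $1$): one picks any associator $p=(\mu,\varphi)\in M(\mathbb K)$, transports the GRT-picture to the $M(\mathbb K)$-picture under $\rho_n(p)$ of Proposition \ref{KT to KIT}, applies the Le--Murakami twist there, and transports back; the compatibility of $\rho_n(p)$ with compositions and with strand operations ensures the twisting formula \eqref{twisting formula} is preserved. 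The remaining care is purely bookkeeping: tracking that the Le--Murakami twist, which is stated for the universal enveloping / chord-diagram algebra on three strands, matches our cofaces $(-)_{2,3},(-)_{1,23},(-)_{12,3},(-)_{1,2}$ and that the symmetry condition is the one Le--Murakami impose.
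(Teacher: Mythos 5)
Your proposal is essentially the paper's argument: the paper proves the lemma by running the recursive, degree-by-degree gauge-trivialization of \cite{LM} Theorem 8 with $\Phi=g$ and $\Phi'=1$, which is legitimate precisely because $GRT_1(\mathbb K)=M_0(\mathbb K)$, so $g$ and $1$ satisfy the same pentagon and hexagon relations and the pentagon kills the obstruction at each degree, exactly as in your step (ii), with the symmetry condition inherited from the Le--Murakami construction as in your step (iii). The only deviation is your preferred ``cleanest route'' of transporting to the $M(\mathbb K)$-picture via $\rho_n(p)$ and back, which is an unnecessary detour (and slightly miscast, since $g$ already lives on the chord-diagram side where \cite{LM} Theorem 8 is stated); the direct application of the Le--Murakami recursion to the pair $(g,1)$ is all that is needed.
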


\begin{proof}
The proof can be done recursively in the same way to the proof of \cite{LM} Theorem 8,
or rather, we may say that actually it is easier:
Because we  have $(1,g)$ and $(1,1)\in GRT_1(\mathbb K)$,
both $g$ and $1$ satisfy the same relations \eqref{GRT-2-cycle}--\eqref{GRT-pentagon equation}.
Our proof is obtained just by replacing  $\Phi$ by $g$ and $\Phi'$  by $1$ in their proof.
\end{proof}

\begin{rem}
Other variants of twistor lemma 
can be found in several literatures such as
\cite{LS} Theorem 2.1 with twistors in $\mathrm{Aut} \widehat{F_2}$ for $\widehat{GT}_1$,
\cite{AT12} Theorem 7.5 with twistors in $\mathrm{TAut} \frak f_2$ for $KRV^0_3$, and
\cite{AET} Theorem 2 with twistors in $\mathrm{TAut} \frak f_2$ for $M_1(\mathbb K)$.
Actually all of them are attributed to \cite{Dr} Theorem ${\mathrm A}^\prime$.
\end{rem}

The above $ \varDelta(\sigma)$ may not be uniquely chosen  but it can be chosen independently
from $c$ by the construction.
When we make such a choice,  we  occasionally denote $ \varDelta(g)$ 
instead of $ \varDelta(\sigma)$ by abuse of notations.
We note that the first two terms on the right hand side of \eqref{twisting formula}
commute each other, so do the last two terms.

\begin{defn}
We call such $ \varDelta(\sigma)$ in $\widehat{{\mathbb K}[{\mathcal{ISL}}_{\uparrow\uparrow}]}^\times$
a {\it twistor}
\footnote{
We call this element twistor
because it is related to Drinfeld's notion of twisting in \cite{Dr}.
}
of $\sigma =(c,g)\in GRT(\mathbb K)$.
For  a twisor $ \varDelta(\sigma)$,
we put 
$ \varDelta(\sigma,\uparrow):=\uparrow\in\widehat{{\mathbb K}[{\mathcal{ISL}}_{\uparrow}]}^\times$ and 
\begin{equation}
 \varDelta(\sigma,\uparrow^n):=   \varDelta(\sigma)_{12\cdots n-1,n}\cdots  \varDelta(\sigma)_{12,3}\cdot  \varDelta(\sigma)_{1,2}
\in \widehat{{\mathbb K}[{\mathcal{ISL}}_{\uparrow^n}]}^\times
\end{equation}
for $n\geqslant 2$.
Here $ \varDelta(\sigma)_{1\cdots k,k+1}$ means the element in $\widehat{{\mathbb K}[{\mathcal{ISL}}_{\uparrow^n}]}$
obtained  multi-doubling of the first strand of $ \varDelta(\sigma)\in\widehat{{\mathbb K}[{\mathcal{ISL}}_{\uparrow\uparrow}]}$
by $k$ strands, summing up all possible lifts of chords
and putting  $n-k-1$ chordless line  $\uparrow^{n-k-1}$ on its right.
For any sequence  $\epsilon=(\epsilon_i)_{i=1}^n$,
we determine  the element $ \varDelta(\sigma,\epsilon)$ in $\widehat{{\mathbb K}[{\mathcal{ISL}}_{\epsilon}]}$
by reversing corresponding all arrows of $ \varDelta(\sigma,\uparrow^n)$.
We note that $ \varDelta(\sigma,\uparrow\uparrow)= \varDelta(\sigma)$ and 
$g= \varDelta(\sigma,\uparrow\uparrow\uparrow)_{3,2,1}\cdot
 \varDelta(\sigma,\uparrow\uparrow\uparrow)^{-1}$
by \eqref{symmetric condition}.

An automorphism $\theta_{\epsilon,\epsilon'}^{ \varDelta(\sigma)}$
of 
$\widehat{{\mathbb K}[{\mathcal{IT}}_{\epsilon,\epsilon'}]}$ 
associated to a twistor $ \varDelta(\sigma)$ can be constructed piecewise as follows.

(1)
when $D\in A_{k,l}^\epsilon$, we define
$$
\theta_{\epsilon,\epsilon'}^{ \varDelta(\sigma)}(D):= 
D\cdot (\varDelta(\sigma)^{-1}_{k+1,k+2})^{s(D)}\cdot
(\nu_g)^{s(D)}_{k+2}
\cdot g_{1\cdots k,k+1,k+2}^{s( D)}. 
$$
Here the term $(\varDelta(\sigma)^{-1}_{k+1,k+2})^{s(D)}$ means the element in 
$\widehat{{\mathbb K}[{\mathcal{ISL}}_{s(D)}]}$
obtained by putting 
the trivial chordless diagram with $k$-strands  on the left of
$\varDelta(\sigma)^{-1}$ and
and the trivial chordless diagram with $l$-strands  on its right.

(2)
when $ D=(b_n,\epsilon)\in \widehat{IB_\tau^\epsilon}$ ,
we define
$$
\theta_{\epsilon,\epsilon'}^{ \varDelta(\sigma)}(D)
:= 
(\rho_n(\sigma)(b_n),\epsilon). 
$$

(3)
when $ D\in C_{k,l}^\epsilon$, we define
$$
\theta_{\epsilon,\epsilon'}^{ \varDelta(\sigma)}(D)
:=
g_{1\cdots k,k+1,k+2}^{-1,t( D)} 
\cdot (\varDelta(\sigma)_{k+1,k+2})^{s(D)}\cdot D.
$$

Here $\mu_g^\epsilon$ is the one defined in  Definition \ref{GRT-action on infinitesimal tangles}.
\end{defn}

\begin{lem}
For  any sequences $\epsilon, \epsilon'$, any $\sigma\in GRT(\mathbb K)$ and any twistor
$\varDelta(\sigma)$,
the above construction determines a well-defined automorphism
$\theta_{\epsilon,\epsilon'}^{ \varDelta(\sigma)}$ of 
$\widehat{{\mathbb K}[{\mathcal{IT}}_{\epsilon,\epsilon'}]}$
which is compatible with the composition map \eqref{composition of CD}.
\end{lem}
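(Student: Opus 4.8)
The plan is to verify that $\theta_{\epsilon,\epsilon'}^{\varDelta(\sigma)}$ is well-defined by checking it respects each of the six isotopy moves (IT1)--(IT6) that generate the equivalence relation on infinitesimal pre-tangles, and then to verify compatibility with the composition of tangles. First I would note that the definitions of $\theta$ on the three types of ABC-spaces $A_{k,l}^\epsilon$, $\widehat{IB_\tau^\epsilon}$, $C_{k,l}^\epsilon$ differ from the formulas defining the $GRT(\mathbb{K})$-action in Definition \ref{GRT-action on infinitesimal tangles} only by insertion of the twistor factors $(\varDelta(\sigma)^{\pm 1}_{k+1,k+2})^{s(D)}$. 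So $\theta_{\epsilon,\epsilon'}^{\varDelta(\sigma)}$ can be written as the composite of $\rho_{\epsilon,\epsilon'}(\sigma)$ (the already-established $GRT(\mathbb{K})$-action, Proposition \ref{GRT-action on KIT}) with a ``conjugation by twistor'' operator; concretely, for a pre-tangle $D = d_m\cdots d_1$ one expects an identity of the shape
\begin{equation*}
\theta_{\epsilon,\epsilon'}^{\varDelta(\sigma)}(D) = \varDelta(\sigma,\epsilon')^{-1}\cdot \sigma(D)\cdot \varDelta(\sigma,\epsilon)
\end{equation*}
after one suitably interprets the right-hand side; the twistor pieces on adjacent factors telescope, leaving only the boundary twistors $\varDelta(\sigma,\epsilon)$ and $\varDelta(\sigma,\epsilon')$. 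The key algebraic input for the telescoping is the twisting formula \eqref{twisting formula}, the symmetry \eqref{symmetric condition}, and the pentagon-type cocycle relations obeyed by $\varDelta(\sigma)$ under multi-doubling of strands, which express $g_{1\cdots i-1,i,i+1}$ (appearing in $\rho_n(\sigma)$) in terms of $\varDelta(\sigma)$-factors.

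Granting that reformulation, well-definedness is immediate: since $\sigma(-)$ is already a well-defined automorphism of $\widehat{{\mathbb K}[{\mathcal{IT}}_{\epsilon,\epsilon'}]}$ by Proposition \ref{GRT-action on KIT}(1), and left-multiplication by $\varDelta(\sigma,\epsilon')^{-1}\in\widehat{\mathcal{CD}}(\epsilon')^\times$ and right-multiplication by $\varDelta(\sigma,\epsilon)\in\widehat{\mathcal{CD}}(\epsilon)^\times$ are invertible $\mathbb{K}$-linear operators respecting the filtration, the composite is a well-defined automorphism. Compatibility with composition \eqref{composition of CD} then follows formally: if $D\in\widehat{\mathcal{CD}}_{\epsilon_3,\epsilon_2}$ and $D'\in\widehat{\mathcal{CD}}_{\epsilon_2,\epsilon_1}$, then
\begin{equation*}
\theta^{\varDelta(\sigma)}(D\cdot D') = \varDelta(\sigma,\epsilon_3)^{-1}\sigma(D)\sigma(D')\varDelta(\sigma,\epsilon_1)
= \bigl(\varDelta(\sigma,\epsilon_3)^{-1}\sigma(D)\varDelta(\sigma,\epsilon_2)\bigr)\bigl(\varDelta(\sigma,\epsilon_2)^{-1}\sigma(D')\varDelta(\sigma,\epsilon_1)\bigr),
\end{equation*}
using $\sigma(D\cdot D')=\sigma(D)\cdot\sigma(D')$ from Proposition \ref{GRT-action on KIT}(1) and inserting $\varDelta(\sigma,\epsilon_2)\varDelta(\sigma,\epsilon_2)^{-1}=1$; this is exactly $\theta^{\varDelta(\sigma)}(D)\cdot\theta^{\varDelta(\sigma)}(D')$. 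One still has to carry out this argument at the level of the pre-tangle presentations and the finite quotients $\widehat{\mathcal{CD}}_{\epsilon,\epsilon'}/\mathcal{IT}_N$, which is the analogue of the proof of Proposition \ref{GT-action on KT}; the verification of the moves (IT1)--(IT5) is routine once the telescoping identity is in place, since these are the moves already handled in the $GRT$-action case.

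The main obstacle I anticipate is move (IT6), the one genuinely new relation involving the creation/annihilation morphisms $c_{k,l}^\epsilon$, $a_{k,l}^\epsilon$ together with $\exp\{\alpha\,t_{k+1,k+2}\}\cdot\tau_{k+1,k+2}$: here the twistor factors $(\varDelta(\sigma)^{\pm 1}_{k+1,k+2})^{s(D)}$ interact directly with the arrow-reversal that changes $\epsilon$ to $\bar\epsilon$, and one must check that the symmetric condition \eqref{symmetric condition} on $\varDelta(\sigma)$ is precisely what makes $\theta^{\varDelta(\sigma)}$ compatible with (IT6). Concretely, reversing the $(k+1)$-st and $(k+2)$-nd arrows sends $\varDelta(\sigma)_{k+1,k+2}$ to $\varDelta(\sigma)_{k+2,k+1}=\varDelta(\sigma)_{2,1}$ (in the relevant two-strand block), and \eqref{symmetric condition} guarantees this equals $\varDelta(\sigma)_{k+1,k+2}$, so the twistor insertion is invariant under the reversal — this is where the symmetry hypothesis is essential and where the careful bookkeeping of sources, targets and orientations must be done. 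I would treat this case explicitly with a small diagram chase, and leave (IT1)--(IT5) and the composition identity as routine consequences of the already-established properties of $\rho_{\epsilon,\epsilon'}(\sigma)$ together with the telescoping of twistor factors.
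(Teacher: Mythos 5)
Your central identity $\theta_{\epsilon,\epsilon'}^{\varDelta(\sigma)}(D)=\varDelta(\sigma,\epsilon')^{-1}\cdot\sigma(D)\cdot\varDelta(\sigma,\epsilon)$ is false, and since both your well-definedness argument and your composition argument rest on it, this is a genuine gap. First, the new factors $(\varDelta(\sigma)^{\pm1}_{k+1,k+2})^{s(D)}$ are inserted at the \emph{internal} caps and cups of the diagram (the $A$- and $C$-pieces), on the two strands adjacent to each cap or cup; they are not attached to the boundary $\epsilon,\epsilon'$, and nothing is inserted between consecutive braid pieces, so there is no telescoping that could assemble them into boundary twistors of $\sigma(D)$. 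Second, the identity contradicts known facts: for a string link ($\epsilon'=\epsilon$) and $\sigma\in GRT_1(\mathbb K)$ one has both $\theta_\epsilon^{\varDelta(\sigma)}=\rho_\epsilon(\sigma)$ (Proposition \ref{prop:twisting=our action}) and $\rho_\epsilon(\sigma)(D)=\varDelta(\sigma,\epsilon)\cdot D\cdot\varDelta(\sigma,\epsilon)^{-1}$ (Theorem \ref{inner automorphism theorem on chord diagrams}), so your formula would force $\theta(D)=\varDelta^{-1}(\varDelta D\varDelta^{-1})\varDelta=D$ identically, i.e.\ a trivial action, contradicting faithfulness for $n\geqslant 3$ (Remark \ref{faithful-trivial}). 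The correct conjugation statement (Proposition \ref{prop:twisting=inner conjugation}) conjugates $D$ itself, not $\sigma(D)$, and --- crucially for the present lemma, which is asserted for all of $GRT(\mathbb K)$ --- it holds only when $c=1$: the $\mathbb G_m$-part sends $t_{i,i+1}$ to $t_{i,i+1}/c$, which no inner conjugation can produce.

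Consequently well-definedness cannot be outsourced to Proposition \ref{GRT-action on KIT} composed with an invertible operator; it must be checked move by move on pre-tangles. The checks for (IT1)--(IT4) coincide with those already done for $\rho_{\epsilon,\epsilon'}(\sigma)$ because the inserted factors play no role there, and compatibility with the composition \eqref{composition of CD} is immediate from the piecewise definition (no global identity is needed). The genuinely new verifications are (IT5) and (IT6). You correctly isolate (IT6) and the role of the symmetric condition \eqref{symmetric condition}, but you dismiss (IT5) as routine, and it is not: the zigzag move is precisely where the $\varDelta(\sigma)^{-1}$ inserted at a cap must cancel the $\varDelta(\sigma)$ inserted at the matching cup, and this cancellation is established by closing the two strands into a circle and invoking (IT6) together with $\varDelta(\sigma)=\varDelta(\sigma)_{2,1}$. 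That (IT5) cancellation is also exactly why $\theta$ agrees with $\rho$ on string links, so it is the heart of the lemma rather than a routine corollary.
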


\begin{proof}
This can be verified by an almost same way to the proof of Proposition \ref{GRT-action on KIT}
except for  compatibilities of (IT5) and (IT6).
\begin{itemize}
\item 
To check the compatibility of the first equality of (IT5), it is enough to show the equality
illustrated in Figure \ref{Compatibility with the first equality of (IT5)}.
By the identification of 
$\widehat{{\mathbb K}[{\mathcal{ISL}}_{\uparrow}]}$
with 
$\widehat{{\mathbb K}[{\mathcal{IK}}]}$
given in Lemma \ref{identification between infinitesimal long knots and proalgebraic knots} (cf.\eqref{CD(O)=CD(1)}),
showing the validity
is deduced to showing the equality illustrated in Figure \ref{proof of compatibility with  the first equality of (IT5)}.
It is immediate to see because we have (IT6) and
$\varDelta(\sigma)_{2,1}=\varDelta(\sigma)$ by \eqref{symmetric condition}.
\begin{figure}[h]
\begin{tabular}{c}
  \begin{minipage}{0.5\hsize}
      \begin{center}
         \begin{tikzpicture}
                      \draw (1.,0.5) rectangle (2.2,1.0);
                     \draw (1.6,0.7) node{$\varDelta(\sigma)$};
                     \draw[-] (1.3,0.2)--(1.3,0.5);
                    \draw[-] (1.3,1.0)--(1.3,3.1);
                     \draw[-] (1.3,0.2)  arc (180:360:0.3);
                     \draw[-] (1.9,0.2)--(1.9,0.5);
                     \draw[-] (1.9,1.0)--(1.9,1.9);
                     \draw[-] (1.9,2.4)--(1.9,2.7);
                     \draw[-] (2.5,-0.2)--(2.5,1.9);
                     \draw[-] (2.5,2.4)--(2.5,2.7);
                      \draw (1.6,1.9) rectangle (2.8,2.4);
                     \draw (2.2,2.1) node{$\varDelta(\sigma)^{-1}$};
                     \draw[-] (1.9,2.7)  arc (180:0:0.3);
\draw  (3.2,1.3)node{$=$};
                     \draw[-] (4,-0.2)--(4,3.1);
\draw[color=white, very thick] (0.9,0.2)--(2.6,0.2) (0.9,1.45)--(2.6,1.45)  (0.9,2.7)--(2.6,2.7);
         \end{tikzpicture}
\caption{}
\label{Compatibility with  the first equality of (IT5)}
     \end{center}
 \end{minipage}

 \begin{minipage}{0.5\hsize}
     \begin{center}
         \begin{tikzpicture}
                      \draw (1.,0.5) rectangle (2.2,1.0);
                     \draw (1.6,0.7) node{$\varDelta(\sigma)$};
                     \draw[-] (1.3,0.2)--(1.3,0.5);
                    \draw[-] (1.3,1.0)--(1.3,1.2);
                    \draw[-] (1.3,1.7)--(1.3,1.9);
                    \draw[-] (1.3,2.4)--(1.3,2.7);
                     \draw[-] (1.3,0.2)  arc (180:360:0.3);
                     \draw[-] (1.9,0.2)--(1.9,0.5);
                     \draw[-] (1.9,1.0)--(1.9,1.2);
                     \draw[-] (1.9,1.7)--(1.9,1.9);
                     \draw[-] (1.9,2.4)--(1.9,2.7);

                    \draw[-] (1.3,1.2)--(1.9,1.7);
                    \draw[-] (1.3,1.7)--(1.9,1.2);

                      \draw (1.,1.9) rectangle (2.2,2.4);
                     \draw (1.6,2.1) node{$\varDelta(\sigma)^{-1}$};
                     \draw[-] (1.9,2.7)  arc (0:180:0.3);
\draw  (2.5,1.4)node{$=$};
                     \draw[-] (3.8,1.45)  arc (0:360:0.4);
\draw[color=white, very thick] (0.9,0.2)--(2.6,0.2) (0.9,1.2)--(2.6,1.2) (0.9,1.7)--(2.6,1.7)  (0.9,2.7)--(2.6,2.7) (2.9,1.45)--(3.9,1.45);
         \end{tikzpicture}
\caption{}
\label{proof of compatibility with  the first equality of (IT5)}
      \end{center}
  \end{minipage}
\end{tabular}
\end{figure}

The compatibility of the second equality of (IT5) can be done in the same way.
\item
To check  the compatibility of the first equality of (IT6), it is enough to show the equality
illustrated in Figure \ref{Compatibility with the first equality of (IT6)}.
Actually it is a consequence of (IT3), (IT5) and (IT6).
The compatibility of the second equality of (IT6) can be done in the same way.
\begin{figure}[h]
\begin{center}
         \begin{tikzpicture}
                      \draw (1.,0.5) rectangle (2.2,1.0);
                     \draw (1.6,0.7) node{$\varDelta(\sigma)$};
                     \draw[-] (1.3,0.2)--(1.3,0.5);
                    \draw[-] (1.3,1.0)--(1.3,1.2);
                    \draw[-] (1.3,1.7)--(1.3,1.9);
                    \draw[-] (1.3,2.4)--(1.3,2.7);
                     \draw[-] (1.3,0.2)  arc (180:360:0.3);
                     \draw[-] (1.9,0.2)--(1.9,0.5);
                     \draw[-] (1.9,1.0)--(1.9,1.2);
                     \draw[-] (1.9,1.7)--(1.9,1.9);
                     \draw[-] (1.9,2.4)--(1.9,2.7);

                    \draw[-] (1.3,1.2)--(1.9,1.7);
                    \draw[-] (1.3,1.7)--(1.9,1.2);

                      \draw (0.7,1.9) rectangle (2.5,2.4);
  \draw (1.3,2.15) node{$\exp\{c\alpha$};
                    \draw[-]  (1.95,2)--(1.95,2.3) ;
                   \draw[densely dotted] (1.95,2.15)--(2.25,2.15);
                    \draw[-]  (2.25,2)--(2.25,2.3) ;
\draw  (2.4, 2.15) node{{$\}$}};
\draw[color=white, very thick] (1.2,0.2)--(2.,0.2) (1.2,1.2)--(2,1.2) (1.2,1.7)--(2,1.7)  ;

\draw  (2.8,1.3)node{$=$};

                      \draw (3.2,1.1) rectangle (4.4,1.6);
                     \draw (3.8,1.3) node{$\varDelta(\sigma)$};
                     \draw[-] (3.5,0.8)--(3.5,1.1) (3.5,1.6)--(3.5, 2.);
                     \draw[-] (3.5,0.8)  arc (180:360:0.3);
                     \draw[-] (4.1,0.8)--(4.1,1.1)  (4.1,1.6)--(4.1, 2.);
\draw[color=white, very thick] (3.4,0.8)--(4.2,0.8)  ;
         \end{tikzpicture}
\caption{}
\label{Compatibility with the first equality of (IT6)}
\end{center}
\end{figure}
\end{itemize}
\end{proof}

It is easily shown that the restriction of $\theta_{\epsilon,\epsilon}^{ \varDelta(\sigma)}$
into the algebra $\widehat{{\mathbb K}[{\mathcal{ISL}}_{\epsilon}]}$
of infinitesimal string links
induces its automorphism, 
denoted by $\theta_{\epsilon}^{ \varDelta(\sigma)}$.
The following explains its relationship with our previous automorphism 
$\rho_{\epsilon}(\sigma)$ in \eqref{GRT to Aut KIT}.

\begin{prop}\label{prop:twisting=our action}
For any $\sigma\in GRT(\mathbb K)$ and any sequence $\epsilon$, 
\begin{equation}\label{twisting=our action}
    \rho_{\epsilon}(\sigma)(D)= \theta_{\epsilon}^{ \varDelta(\sigma)}(D)
\end{equation}
holds for all  
$D\in \widehat{{\mathbb K}[{\mathcal{ISL}}_{\epsilon}]}$.
\end{prop}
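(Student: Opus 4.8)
The plan is to compare the two piece-wise definitions directly. Fix $\sigma=(c,g)\in GRT(\mathbb K)$ together with a chosen twistor $\varDelta(\sigma)$, and a sequence $\epsilon$. Both $\rho_\epsilon(\sigma)$ and $\theta_\epsilon^{\varDelta(\sigma)}$ are algebra automorphisms of $\widehat{\mathcal{CD}}(\epsilon)$ compatible with the composition, so it suffices to prove $\theta(d_m)\cdots\theta(d_1)=\rho(d_m)\cdots\rho(d_1)$ in $\widehat{\mathcal{CD}}(\epsilon)$ for every presentation $D=d_m\cdots d_1$ of a chord diagram on a string link of type $\epsilon$, the $d_j$ ranging over the ABC-spaces. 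Comparing Definition~\ref{GRT-action on infinitesimal tangles} with the definition of $\theta^{\varDelta(\sigma)}$, the two recipes are literally identical on the braid pieces $d_j\in\widehat{IB^\epsilon_\tau}$ (both give $(\rho_n(\sigma)(b_n),\epsilon)$), and on an $A$-piece (resp. a $C$-piece) they differ by exactly one local factor: $\theta$ inserts $(\varDelta(\sigma)^{-1}_{k+1,k+2})^{s(d_j)}$ (resp. $(\varDelta(\sigma)_{k+1,k+2})^{s(d_j)}$) next to the cap (resp. the cup), on the two strands that get annihilated (resp. created). Thus the whole discrepancy between $\theta_\epsilon^{\varDelta(\sigma)}$ and $\rho_\epsilon(\sigma)$ is concentrated at the cap and cup pieces.

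The strategy is then to show that these extra $\varDelta^{\pm1}$-factors telescope away along a string link. To each intermediate type $\delta$ occurring in the word $D$ one attaches the stacked twistor $\varDelta(\sigma,\delta)$, normalised by $\varDelta(\sigma,\uparrow)=\uparrow$. Using the twisting formula \eqref{twisting formula} (which, applied strand by strand, lets one trade the $g$-conjugations appearing in $\rho_n(\sigma)$ for conjugations by the stacked twistors), the symmetry condition \eqref{symmetric condition} (needed to pass a twistor through a permutation piece and through the orientation turn-back of a cap or cup), and the infinitesimal moves (IT1)--(IT6) (which govern how a two-strand factor slides past a braiding and past a cap or cup), one proves a local absorption identity expressing each $\theta(d_j)$ as $\rho(d_j)$ conjugated by the stacked twistors attached to its bottom type $\delta_{j-1}$ and top type $\delta_j$. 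Because consecutive pieces satisfy $\delta_j=s(d_{j+1})=t(d_j)$, and because $D$ is a string link so that $\delta_0=s(D)=\epsilon=t(D)=\delta_m$, the interior twistors cancel pairwise and the boundary twistors cancel against the residues produced by the cap/cup reductions; the product collapses exactly to $\rho(d_m)\cdots\rho(d_1)=\rho_\epsilon(\sigma)(D)$, giving $\theta_\epsilon^{\varDelta(\sigma)}(D)=\rho_\epsilon(\sigma)(D)$. The matched combinatorics of cap and cup pieces forced by the skeleton of a string link (a disjoint union of intervals) is what makes the cancellation complete; on a general tangle the two maps need not agree.

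The main obstacle is the verification of the local absorption identities at the $A$- and $C$-pieces: one must track precisely how the stacked twistor $\varDelta(\sigma,\uparrow^{n+2})$ restricts to $\varDelta(\sigma,\uparrow^{n})$ when two neighbouring strands are created or capped off, and check that the resulting boundary residue is exactly the single factor $\varDelta(\sigma)^{\mp1}_{k+1,k+2}$ in the definition of $\theta$, with the turn-back of orientation absorbed by \eqref{symmetric condition}. This is the tangle-level analogue of Drinfeld's computation that a gauge transformation realises the change of associator \eqref{twisting formula}, and of the corresponding statement for $\widehat{U\frak b_n}$; once it is established, the permutation pieces require only the symmetry condition and the braid pieces require nothing further, since there $\theta$ and $\rho$ already coincide. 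The $\mathbb G_m$-part of $GRT(\mathbb K)=\mathbb K^\times\ltimes GRT_1(\mathbb K)$ enters both $\theta$ and $\rho$ through the same rescaling $t_{i,i+1}\mapsto t_{i,i+1}/c$ and is handled identically, so the computation may be carried out for $\sigma\in GRT_1(\mathbb K)$.
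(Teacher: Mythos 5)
Your opening step---both maps are built piece-wise and are compatible with composition, they literally coincide on the $\widehat{IB^\epsilon_\tau}$ pieces, and the entire discrepancy is the single factor $\varDelta(\sigma)^{-1}_{k+1,k+2}$ inserted below each cap and $\varDelta(\sigma)_{k+1,k+2}$ inserted above each cup---is exactly how the paper's proof begins. But the mechanism you then propose for removing these factors does not work. Your ``local absorption identity'' $\theta(d_j)=\varDelta(\sigma,\delta_j)\cdot\rho(d_j)\cdot\varDelta(\sigma,\delta_{j-1})^{-1}$ is already false on a braid piece: there $\theta(d_j)=\rho(d_j)$, so the identity would force $\rho(d_j)$ to commute with the stacked twistor, which it does not (try $d_j=t_{13}\in\widehat{U\frak p_3}$). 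The correct piece-wise conjugation formula, Proposition~\ref{prop:twisting=inner conjugation}, has the \emph{untransformed} piece $d_j$ in the middle, not $\rho(d_j)$, and it is only proved for $c=1$. Moreover, even if a uniform conjugation identity held, telescoping would leave $\varDelta(\sigma,\epsilon)\cdot\rho_\epsilon(\sigma)(D)\cdot\varDelta(\sigma,\epsilon)^{-1}$, and for a string link $\epsilon\neq\emptyset$ this outer conjugation does not go away: there are no caps or cups at the boundary of a string link, so your ``boundary twistors cancel against the residues produced by the cap/cup reductions'' has nothing to act on. Finally, if you repair the identity by putting $d_j$ in the middle, you are reproving Proposition~\ref{prop:twisting=inner conjugation}, and deducing $\rho_\epsilon(\sigma)=\theta_\epsilon^{\varDelta(\sigma)}$ from that would require Theorem~\ref{inner automorphism theorem on chord diagrams}, which the paper derives \emph{from} the present proposition---a circular route.

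What the proof actually hinges on, and what your proposal never isolates, is one concrete cancellation: the two extra factors attached to the matched cap/cup pair of the zig-zag move (IT5) annihilate each other, i.e.\ $\mathrm{cap}_{2,3}\cdot\varDelta(\sigma)^{-1}_{2,3}\cdot\varDelta(\sigma)_{1,2}\cdot\mathrm{cup}_{1,2}$ equals the straight strand. This is precisely the pictorial identity checked in the preceding well-definedness lemma for $\theta$, and it is proved by closing the strand up to a circle via \eqref{CD(O)=CD(1)} and then using the symmetry condition \eqref{symmetric condition} together with (IT6). Because the skeleton of a string link is a union of boundary-to-boundary intervals, every cap and cup occurs in such matched (IT5) pairs, so this single identity disposes of all the extra $\varDelta^{\pm1}$ factors simultaneously and gives $\theta_\epsilon^{\varDelta(\sigma)}=\rho_\epsilon(\sigma)$ directly. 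In your write-up the symmetry condition appears only in a vague supporting role, whereas it is the engine of the actual cancellation; without the zig-zag identity your argument has a genuine gap.
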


\begin{proof}
Differences between two actions
$\theta_{\epsilon}^{ \varDelta(\sigma)}$ and $\rho_{\epsilon}(\sigma)$
are observed only on the action on $A_{k,l}^\epsilon$ and  $C_{k,l}^\epsilon$.
It is enough  to show that equality of Figure \ref{Compatibility with  the first equality of (IT5)},
which were proved in the above lemma.
\end{proof}

When we restrict $\sigma$ into the unipotent part $GRT_1(\mathbb K)$,
we obtain a relationship of $\theta_{\epsilon, \epsilon'}^{ \varDelta(\sigma)}$
with the identity map of 
$\widehat{{\mathbb K}[{\mathcal{IT}}_{\epsilon,\epsilon'}]}$
as follows.

\begin{prop}\label{prop:twisting=inner conjugation}
For any $\sigma=(1,g)\in GRT_1(\mathbb K)$ and  any sequences $\epsilon$ and $\epsilon'$,
\begin{equation}\label{twisting=inner conjugation}
\theta_{\epsilon,\epsilon'}^{ \varDelta(\sigma)}(D)=
 \varDelta(\sigma,\epsilon)\cdot D 
\cdot  \varDelta(\sigma,\epsilon')^{-1}.
\end{equation}
holds for any 
$D\in
\widehat{{\mathbb K}[{\mathcal{IT}}_{\epsilon,\epsilon'}]}$.
\end{prop}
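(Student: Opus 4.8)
The plan is to reduce the identity \eqref{twisting=inner conjugation} to the fundamental infinitesimal tangles and then to verify it separately on each of the three kinds of ABC-pieces. First I would note that both operations occurring in \eqref{twisting=inner conjugation} are $\mathbb K$-linear, continuous, and well behaved under the composition \eqref{composition of CD}: for the left-hand side $\theta_{\epsilon,\epsilon'}^{\varDelta(\sigma)}$ this is exactly the lemma preceding Proposition \ref{prop:twisting=our action}, and for $D\mapsto\varDelta(\sigma,\epsilon)\cdot D\cdot\varDelta(\sigma,\epsilon')^{-1}$ it is immediate because the twistors attached to the intermediate sequences telescope, i.e. $\bigl(\varDelta(\sigma,\epsilon)\,D\,\varDelta(\sigma,\epsilon_2)^{-1}\bigr)\cdot\bigl(\varDelta(\sigma,\epsilon_2)\,D'\,\varDelta(\sigma,\epsilon')^{-1}\bigr)=\varDelta(\sigma,\epsilon)\,(D\cdot D')\,\varDelta(\sigma,\epsilon')^{-1}$. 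Since every element of $\widehat{\mathcal{CD}}_{\epsilon,\epsilon'}$ is a limit of $\mathbb K$-linear combinations of compositions of fundamental infinitesimal tangles, it then suffices to check \eqref{twisting=inner conjugation} for a single generator $D$ lying in one of $A_{k,l}^\epsilon$, $\widehat{IB_\tau^\epsilon}$, $C_{k,l}^\epsilon$.

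For $D=(b_n,\epsilon)\in\widehat{IB_\tau^\epsilon}$ the map $\theta_{\epsilon,\epsilon'}^{\varDelta(\sigma)}$ is by definition $\rho_n(\sigma)$, and $s(D)=t(D)=\epsilon$, so the claim reduces to the identity $\rho_n(\sigma)(b_n)=\varDelta(\sigma,\epsilon)\,b_n\,\varDelta(\sigma,\epsilon)^{-1}$ (computed in $\widehat{\mathcal{CD}}(\uparrow^n)=\widehat{U\frak b_n}$ when $\epsilon=\uparrow^n$, the general case following by reversing arrows). Both sides are continuous algebra automorphisms, so by the lemma asserting that $\widehat{U\frak b_n}$ is algebraically generated by $t_{i,i+1}$ and $\tau_{i,i+1}$ it is enough to test them on these generators. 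For $i=1$ the automorphism $\rho_n(\sigma)$ fixes $t_{1,2}$ and $\tau_{1,2}$ because $\sigma\in GRT_1(\mathbb K)$ forces $c=1$, and conjugation by $\varDelta(\sigma,\uparrow^n)=\varDelta(\sigma)_{1\cdots n-1,n}\cdots\varDelta(\sigma)_{1,2}$ also fixes them: each factor $\varDelta(\sigma)_{1\cdots j,j+1}$ is a power series in $\sum_{i\leqslant j}t_{i,j+1}$, which commutes with $t_{1,2}$ by the infinitesimal braid relations $[t_{1,2},t_{1,j+1}+t_{2,j+1}]=0$ and $[t_{1,2},t_{i,j+1}]=0$ for $i\ne1,2$, while the bottom factor $\varDelta(\sigma)_{1,2}$ is invariant under conjugation by $\tau_{1,2}$ by the symmetric condition \eqref{symmetric condition} and the other factors involve neither strand. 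For $i\geqslant2$ I would induct on $i$, using the recursion $\varDelta(\sigma,\uparrow^{j+1})=\varDelta(\sigma)_{1\cdots j,j+1}\cdot\varDelta(\sigma,\uparrow^{j})$ together with the twisting formula \eqref{twisting formula}, which records precisely the way conjugation by $\varDelta(\sigma)$ carries the trivial associator to $g$; this is the standard computation underlying Drinfeld's twisting, and it identifies $\rho_n(\sigma)(t_{i,i+1})=g^{-1}_{1\cdots i-1,i,i+1}t_{i,i+1}g_{1\cdots i-1,i,i+1}$ (and similarly for $\tau_{i,i+1}$) with the conjugate of $t_{i,i+1}$ by $\varDelta(\sigma,\uparrow^n)$.

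For $D\in A_{k,l}^\epsilon$, and by a mirror-image argument for $D\in C_{k,l}^\epsilon$, write $N$ for the number of strands of $s(D)$ (so $t(D)$ has $N-2$ strands). Moving $\varDelta(\sigma,s(D))$ to the left, the identity to be proved becomes $a_{k,l}^\epsilon\cdot(\varDelta(\sigma)^{-1}_{k+1,k+2})^{s(D)}\cdot(\nu_g)^{s(D)}_{k+2}\cdot g^{s(D)}_{1\cdots k,k+1,k+2}\cdot\varDelta(\sigma,s(D))=\varDelta(\sigma,t(D))\cdot a_{k,l}^\epsilon$. I would evaluate the left side by pushing the cap $a_{k,l}^\epsilon$ through $\varDelta(\sigma,s(D))=\varDelta(\sigma)_{1\cdots N-1,N}\cdots\varDelta(\sigma)_{1,2}$: every factor $\varDelta(\sigma)_{1\cdots j,j+1}$ with $j\leqslant k-1$ lies to the left of the cap and passes through unchanged, every factor with $j\geqslant k+2$ has the two capped strands $k+1,k+2$ inside its first group and likewise passes through (using that on two strands $\widehat{\mathcal{CD}}(\uparrow\uparrow)$ is commutative, topologically generated by the single chord joining them, and that capping a pair of adjacent strands annihilates $t_{a,k+1}+t_{a,k+2}$ and $t_{k+1,k+2}$), and after re-indexing these factors reassemble into all of $\varDelta(\sigma,t(D))$ except the two factors $\varDelta(\sigma)_{1\cdots k+1,k+2}\cdot\varDelta(\sigma)_{1\cdots k,k+1}$; one also uses here that the correction terms commute with the factors for $j\geqslant k+2$ by the same commutation relations. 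What remains is then a purely local identity near strands $k,k+1,k+2$, and multi-doubling the first strand of \eqref{twisting formula} to $k$ strands (grouping the last two) converts $g_{1\cdots k,k+1,k+2}\cdot\varDelta(\sigma)_{1\cdots k+1,k+2}\cdot\varDelta(\sigma)_{1\cdots k,k+1}$ into a product of two factors each annihilated by the cap; recalling $\nu_g^\epsilon=\{a_{1,0}^{\epsilon}\cdot g^\epsilon\cdot c_{0,1}^\epsilon\}^{-1}$ from Definition \ref{GRT-action on infinitesimal tangles} and the already-checked compatibility of $\theta^{\varDelta(\sigma)}$ with the moves (IT3), (IT5), (IT6), this local identity follows. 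The step I expect to be the main obstacle is precisely this last local computation in the $A_{k,l}^\epsilon$ (equivalently $C_{k,l}^\epsilon$) case: one must keep careful track of the orientation sequence $\epsilon$ and its partial reversal, of the re-indexing of capped and doubled strands, and of how the cap and creation morphisms act on the $t_{ij}$, so as to see that the leftover of ``cap through twistor'' is exactly the $g$- and $\nu_g$-corrections built into the definition of $\theta^{\varDelta(\sigma)}$; once this bookkeeping is organised the verification is routine and uses no genuinely new input beyond the twisting formula and the compatibility relations already established.
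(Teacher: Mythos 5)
Your overall architecture --- reduce by linearity, continuity and compatibility with the composition \eqref{composition of CD} to the three kinds of ABC-generators, then verify the identity on each by expanding $g$ via the twisting formula \eqref{twisting formula} --- is exactly the route the paper takes, and you correctly single out the $A_{k,l}^\epsilon$/$C_{k,l}^\epsilon$ computation as the delicate point. But there is a genuine gap in how you justify the commutations that make each case close up. You assert that each factor $\varDelta(\sigma)_{1\cdots j,j+1}$ is a power series in $\sum_{i\leqslant j}t_{i,j+1}$, and later that $\widehat{\mathcal{CD}}(\uparrow\uparrow)$ is commutative and topologically generated by the single chord $t_{12}$. This is false: $\widehat{\mathcal{CD}}(\uparrow\uparrow)$ strictly contains $\widehat{U\frak p_2}=\mathbb K[[t_{12}]]$, since chords may have both endpoints on one strand, and in fact the twistor must live outside $\mathbb K[[t_{12}]]$ --- the normalization $\epsilon_1(\varDelta(\sigma))=\epsilon_2(\varDelta(\sigma))=\uparrow$ kills every positive power of $t_{12}$, so a twistor of the form you describe would equal $1$ and \eqref{twisting formula} would force $g=1$. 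Consequently the infinitesimal braid relations cannot supply the commutativity you invoke. What the paper uses instead is Lemma \ref{center lemma}, proved topologically (full twists are central among $2$-string links, transported through \eqref{isomorphism between KSL and KISL}), which says only that $t_{12}$ is central in $\widehat{\mathcal{CD}}(\uparrow\uparrow)$; combined with the symmetry \eqref{symmetric condition} this is precisely what cancels the factors $\varDelta(\sigma)_{i,i+1}^{\pm1}$ and $\varDelta(\sigma)_{1\cdots i-1,\,i\ i+1}^{\pm1}$ against $t_{i,i+1}$ and $\tau_{i,i+1}$ in the $\widehat{IB}$-case. You need to replace your structural claim by this lemma (or an equivalent centrality statement); nothing weaker will do.

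A second, smaller omission: in the $A_{k,l}^\epsilon$ case you defer the ``local identity'' to bookkeeping, but closing it requires the additional input that $\nu_g$ is the trivial chord diagram on a single strand (Lemma \ref{mu=1}, itself a consequence of \eqref{twisting formula} and the cap-absorbs-doubling identity of Figure \ref{Proof of Lemma mu=1}). Without it the factor $(\nu_g)^{s(D)}_{k+2}$ does not disappear and the remaining terms do not reassemble into $\varDelta(\sigma,t(D))\cdot a_{k,l}^\epsilon\cdot\varDelta(\sigma,s(D))^{-1}$. Once $\nu_g=\uparrow$ is in hand, the conjugation $\varDelta(\sigma)^{-1}_{k+1,k+2}\cdot(\nu_g)_{k+2}\cdot\varDelta(\sigma)_{k+1,k+2}$ is trivially $(\nu_g)_{k+2}$, so no commutativity of $\widehat{\mathcal{CD}}(\uparrow\uparrow)$ is needed there at all.
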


\begin{proof}
The equation can be checked piecewise.
\begin{itemize}
\item When $D\in A_{k,l}^\epsilon$,
we may assume that $D=a_{k,l}^\epsilon$
with $s(D)=\epsilon_1$ and $t(D)=\epsilon_2$. Then
\begin{align*}
\theta_{\epsilon_2,\epsilon_1}^{ \varDelta(\sigma)}(a_{k,l}^\epsilon)= &
a_{k,l}^\epsilon\cdot\varDelta(\sigma)_{k+1,k+2}^{-1}\cdot (\mu_g)_{k+2}\cdot
g_{1\cdots k, k+1,k+2} \\
=& a_{k,l}^\epsilon\cdot\varDelta(\sigma)_{k+1,k+2}^{-1}\cdot (\mu_g)_{k+2} \\
&\cdot
\varDelta(\sigma)_{k+1,k+2}\cdot \varDelta(\sigma)_{1\cdots k, k+1 \ k+2}\cdot
\varDelta(\sigma)_{1\cdots k+1,k+2}^{-1}\cdot \varDelta(\sigma)_{1\cdots k,k+1}^{-1} \\
=& a_{k,l}^\epsilon\cdot  (\mu_g)_{k+2} \cdot
\varDelta(\sigma)_{1\cdots k+1,k+2}^{-1}\cdot \varDelta(\sigma)_{1\cdots k,k+1}^{-1}. \\
\intertext{By Lemma \ref{mu=1},}
=& a_{k,l}^\epsilon\cdot  
\varDelta(\sigma)_{1\cdots k+1,k+2}^{-1}\cdot \varDelta(\sigma)_{1\cdots k,k+1}^{-1} \\
=& \varDelta(\sigma)_{1,2}\cdots  \varDelta(\sigma)_{1\cdots k+l-1,k+l}\cdot
 a_{k,l}^\epsilon \cdot
\varDelta(\sigma)_{1\cdots k+l+1,k+l+2}^{-1}\cdots \varDelta(\sigma)_{1,2}^{-1} \\
=&\varDelta(\sigma,\epsilon_2)\cdot a_{k,l}^\epsilon \cdot
\varDelta(\sigma,\epsilon_1)^{-1}.
\end{align*} 
\item When $D\in \widehat{IB}$,
it is enough to to show the case when
$D=\tau_{i,i+1}$ or $t_{i,i+1} \in U{\frak b}_n$  ($1\leqslant i \leqslant n-1$).
If $D=\tau_{i,i+1}$  with $s(D)=\epsilon_1$ and $t(D)=\epsilon_2$,
by Proposition \ref{proalg-GRT-action theorem on braids},
\begin{align*}
\theta_{\epsilon_2,\epsilon_1}^{ \varDelta(\sigma)}(\tau_{i,i+1} )= &
g_{1\cdots i-1,i,i+1}^{-1}\cdot{\tau_{i,i+1}}\cdot g_{1\cdots i-1,i,i+1} \\
=&\varDelta(\sigma)_{1\cdots i-1,i}\cdot  \varDelta(\sigma)_{1\cdots i,i+1}\cdot
 \varDelta(\sigma)_{1\cdots i-1, i \ i+1}^{-1}\cdot \varDelta(\sigma)_{i,i+1}^{-1} \\
&\cdot \tau_{i,i+1}\cdot
\varDelta(\sigma)_{i,i+1}\cdot \varDelta(\sigma)_{1\cdots i-1, i \ i+1}\cdot
\varDelta(\sigma)_{1\cdots i,i+1}^{-1}\cdot \varDelta(\sigma)_{1\cdots i-1,i}^{-1}. \\
\intertext{By \eqref{symmetric condition},}
=&\varDelta(\sigma)_{1\cdots i-1,i}\cdot  \varDelta(\sigma)_{1\cdots i,i+1}\cdot\tau_{i,i+1}\cdot
\varDelta(\sigma)_{1\cdots i,i+1}^{-1}\cdot \varDelta(\sigma)_{1\cdots i-1,i}^{-1} \\
=&\varDelta(\sigma)_{1,2}\cdots  \varDelta(\sigma)_{1\cdots n-1,n}\cdot\tau_{i,i+1}\cdot
\varDelta(\sigma)_{1\cdots n-1,n}^{-1}\cdots\varDelta(\sigma)_{1,2}^{-1}\\
=&\varDelta(\sigma,\epsilon_2)\cdot \tau_{i,i+1}\cdot
\varDelta(\sigma,\epsilon_1)^{-1}.
\end{align*}
If $D=t_{i,i+1}$  with $s(D)=\epsilon_1$ and $t(D)=\epsilon_2$,
again by Proposition \ref{proalg-GRT-action theorem on braids},
\begin{align*}
\theta_{\epsilon_2,\epsilon_1}^{ \varDelta(\sigma)}(t_{i,i+1} )= &
g_{1\cdots i-1,i,i+1}^{-1}\cdot{t_{i,i+1}}\cdot g_{1\cdots i-1,i,i+1} \\
=&\varDelta(\sigma)_{1\cdots i-1,i}\cdot  \varDelta(\sigma)_{1\cdots i,i+1}\cdot
 \varDelta(\sigma)_{1\cdots i-1, i \ i+1}^{-1}\cdot \varDelta(\sigma)_{i,i+1}^{-1} \\
&\cdot t_{i,i+1}\cdot
\varDelta(\sigma)_{i,i+1}\cdot \varDelta(\sigma)_{1\cdots i-1, i \ i+1}\cdot
\varDelta(\sigma)_{1\cdots i,i+1}^{-1}\cdot \varDelta(\sigma)_{1\cdots i-1,i}^{-1} \\
\intertext{By Lemma \ref{center lemma},}
=&\varDelta(\sigma)_{1\cdots i-1,i}\cdot  \varDelta(\sigma)_{1\cdots i,i+1}\cdot t_{i,i+1}\cdot
\varDelta(\sigma)_{1\cdots i,i+1}^{-1}\cdot \varDelta(\sigma)_{1\cdots i-1,i}^{-1} \\
=&\varDelta(\sigma)_{1,2}\cdots  \varDelta(\sigma)_{1\cdots n-1,n}\cdot t_{i,i+1}\cdot
\varDelta(\sigma)_{1\cdots n-1,n}^{-1}\cdots\varDelta(\sigma)_{1,2}^{-1}\\
=&\varDelta(\sigma,\epsilon_2)\cdot t_{i,i+1}\cdot
\varDelta(\sigma,\epsilon_1)^{-1}.
\end{align*}
We note that we use $c=1$  for the second equality.
\item When $D\in C_{k,l}^\epsilon$,
we may assume that $D=c_{k,l}^\epsilon$  with $s(D)=\epsilon_1$ and $t(D)=\epsilon_2$. Then 
\begin{align*}
\theta_{\epsilon_2,\epsilon_1}^{ \varDelta(\sigma)}(c_{k,l}^\epsilon)= &
g_{1\cdots k, k+1,k+2}^{-1}\cdot \varDelta(\sigma)_{k+1,k+2}\cdot c_{k,l}^\epsilon \\
=& \varDelta(\sigma)_{1\cdots k,k+1}\cdot  \varDelta(\sigma)_{1\cdots k+1,k+2}\cdot
 \varDelta(\sigma)_{1\cdots k, k+1 \ k+2}^{-1}\cdot
 \varDelta(\sigma)_{k+1,k+2}^{-1} \\
& \cdot  \varDelta(\sigma)_{k+1,k+2}\cdot c_{k,l}^\epsilon \\
=& \varDelta(\sigma)_{1\cdots k,k+1}\cdot  \varDelta(\sigma)_{1\cdots k+1,k+2}\cdot
 c_{k,l}^\epsilon \\
=& \varDelta(\sigma)_{1,2}\cdots  \varDelta(\sigma)_{1\cdots k+l+1,k+l+2}\cdot
 c_{k,l}^\epsilon \cdot
\varDelta(\sigma)_{1\cdots k+l,k+l}^{-1}\cdots \varDelta(\sigma)_{1,2}^{-1} \\
=&\varDelta(\sigma,\epsilon_2)\cdot c_{k,l}^\epsilon \cdot
\varDelta(\sigma,\epsilon_1)^{-1}.
\end{align*}
\end{itemize}
Hence we get the equality \eqref{twisting=inner conjugation} for any
$D\in\widehat{{\mathbb K}[{\mathcal{IT}}_{\epsilon,\epsilon'}]}$
\end{proof}

The followings are required to prove the previous proposition.

\begin{lem}\label{mu=1}
For $\sigma=(c,g)\in GRT(\mathbb K)$, the infinitesimal long knot $\mu_g\in 
\widehat{{\mathbb K}[{\mathcal{ISL}}_{\uparrow}]}$
is actually equal to the trivial 
chordless chord diagram on $\uparrow$.
\end{lem}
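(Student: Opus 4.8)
The plan is to prove the equivalent statement $\Lambda_g^\downarrow=\downarrow$, the chordless chord diagram on a single downward strand: since $\mu_g=\nu_g^\uparrow=\{\Lambda_g^\uparrow\}^{-1}$ and $\Lambda_g^\uparrow$ is obtained from $\Lambda_g^\downarrow$ by reversing all arrows, triviality of either one forces triviality of the other. Recall that $\Lambda_g^\downarrow=a_{1,0}^{\downarrow\annihilation}\cdot g^{\downarrow\uparrow\downarrow}\cdot c_{0,1}^{\creation\downarrow}$ is the cap--cup closure of the group-like element $g=g(t_{12},t_{23})$, viewed inside $\widehat{\mathcal{CD}}(\downarrow\uparrow\downarrow)$ as in the convention stated before Lemma \ref{twistor lemma}: along the resulting long knot the three strands of $g$ become three consecutive arcs $1,2,3$, with arc $1$ joined to arc $2$ by the bottom cup and arc $2$ joined to arc $3$ by the top cap, so that a $t_{12}$-chord (resp.\ a $t_{23}$-chord) becomes a chord whose two arcs meet at a turnback.

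First I would record the standard consequence of the pentagon \eqref{GRT-pentagon equation} that $g$ lies in the commutator of $\exp{\frak f}_2$ --- exactly as is noted after Definition \ref{definition of proalgebraic GT} in the $GT$ setting --- so that $g$ is a group-like series with vanishing linear part and hence $g(A,0)=g(0,B)=1$ (a group-like series in one variable is the exponential of that variable times its own linear coefficient). The heart of the proof is to show that the cap--cup closure annihilates $g$ all the way down to $1$. Working one degree at a time modulo the singular filtration, so that every manipulation stays inside the pronilpotent algebra $\widehat{\mathcal{CD}}(\downarrow)$, I would use the isotopy moves (IT1)--(IT6) together with the 4T- and FI-relations to push the chord endpoints of $g^{\downarrow\uparrow\downarrow}$ towards the two turnbacks --- the $t_{23}$-endpoints around the top cap, the $t_{12}$-endpoints around the bottom cup. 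Every isolated chord so produced dies by the FI-relation, while the corrections created when one chord is pushed past another via the 4T-relation are governed precisely by the hexagon relation \eqref{GRT-hexagon equation} for $g$. The point to emphasize is that, in contrast with a genuine associator $(\mu,\varphi)\in M(\mathbb K)$ whose hexagon \eqref{assoc-hexagon equation} carries the exponential factors $\exp\{\frac{\mu t_{ij}}{2}\}$ --- the closure of which produces the non-trivial Kontsevich invariant of the unknot computed in \cite{BLT} --- the $GRT$-hexagon \eqref{GRT-hexagon equation} has no such factors, so the closure of $g$ collapses to the closure of $1$, which is $a_{1,0}^{\downarrow\annihilation}\cdot c_{0,1}^{\creation\downarrow}=\downarrow$ by a planar isotopy straightening the $S$-bend. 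I note also that $\Lambda_g^\downarrow$ is in any case unipotent, being congruent to $\downarrow$ modulo $\mathcal T_1$, so it would even suffice to show it is an involution; one may try to reach this more cheaply via the $2$-cycle relation \eqref{GRT-2-cycle} and the behaviour of the closure under orientation reversal, but the hexagon route seems the most direct.

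The main obstacle is the bookkeeping in the middle step: making rigorous that the cap--cup closure transforms the defining relations of $g$ into the triviality of the closed-up diagram. Two points need care. First, once the term $C=-A-B$ of the hexagon \eqref{GRT-hexagon equation} is read inside $\widehat{U{\frak p}_3}$ it produces $t_{13}$-chords, that is, chords on the far pair $\{\text{arc }1,\text{arc }3\}$ of the long knot, which must be slid across arc $2$ (using (IT3) and (IT5)) before the FI-relation can be applied. Second, each slide of a chord endpoint around the top cap or the bottom cup has to be checked to be an instance of the moves (IT5)--(IT6), with the orientations $\downarrow\uparrow\downarrow$ correctly accounted for; and throughout one must be sure not to invoke $g(A,0)=1$, or the conclusion itself, in a circular way --- the cleanest organization is an induction on the singular degree in which the degree-$N$ contribution of $g$ is expressed, via the hexagon, through its lower-degree parts, whose closures have already been shown to vanish.
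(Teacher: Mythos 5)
Your plan diverges from the paper's proof, and at its center there is a genuine gap rather than a complete argument. The paper deduces Lemma \ref{mu=1} in two lines from the Twistor Lemma \ref{twistor lemma}: since $g=\varDelta(\sigma)_{2,3}\cdot\varDelta(\sigma)_{1,23}\cdot\varDelta(\sigma)^{-1}_{12,3}\cdot\varDelta(\sigma)^{-1}_{1,2}$, substituting this into $\Lambda_g$ lets the cap and the cup swallow the doubled factors $\varDelta(\sigma)_{1,23}$ and $\varDelta(\sigma)^{-1}_{12,3}$ (using the symmetry \eqref{symmetric condition} and the move (IT6), exactly the graphical computation of Figure \ref{proof of compatibility with  the first equality of (IT5)}), so that what remains is $\varDelta(\sigma)^{-1}$ composed with $\varDelta(\sigma)$ along the zigzag, which straightens to the trivial strand. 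All the hard work is therefore hidden in Lemma \ref{twistor lemma}, whose recursive construction of $\varDelta(\sigma)$ \`{a} la \cite{LM} Theorem 8 uses the pentagon \eqref{GRT-pentagon equation} in an essential way. You instead propose to prove $\Lambda_g^\downarrow=\downarrow$ by pushing chord endpoints toward the two turnbacks, killing isolated chords by FI, and asserting that the 4T-corrections are ``governed precisely by the hexagon'' \eqref{GRT-hexagon equation}. That assertion is exactly the missing step: you give no mechanism translating the hexagon, which is an identity among chord diagrams on three parallel strands, into the vanishing of the closure of $g-1$ on a single zigzag strand, and you yourself flag the ``bookkeeping in the middle step'' as unresolved. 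Moreover your scheme uses the pentagon only through the remark that $g$ lies in the commutator (so has no linear part); it is very doubtful that the hexagons plus 2-cycle alone control the closure, and nothing in your sketch rules out nonzero contributions in higher degrees once chords of type $t_{13}$ and the 4T-corrections start interacting. The contrast you draw with the associator hexagon \eqref{assoc-hexagon equation} (whose exponential factors produce the nontrivial unknot invariant of \cite{BLT}) is a good heuristic for why the statement should be true, but it is not a proof.

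If you want to complete the argument along the paper's lines, prove (or invoke) the gauge-triviality of $g$ first: both $(1,g)$ and $(1,1)$ lie in $GRT_1(\mathbb K)$, so the Le--Murakami twisting argument yields $\varDelta(\sigma)$ with \eqref{symmetric condition} and \eqref{twisting formula}; then the cap--cup closure of $g$ collapses by the same (IT5)/(IT6)-type picture used in the compatibility lemma, with no degree-by-degree chord-pushing needed. Your observation that $\Lambda_g$ is unipotent (congruent to the trivial diagram modulo $\mathcal T_1$) is correct but does not by itself produce the involutivity you would need for the cheaper route via \eqref{GRT-2-cycle}, so that alternative also remains unsubstantiated.
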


\begin{proof}
It is enough to show that $\Lambda_g$ 
is the  trivial diagram $\downarrow$.
Since
$g= \varDelta(\sigma)_{2,3}\cdot  \varDelta(\sigma)_{1,23}\cdot  \varDelta(\sigma)^{-1}_{12,3}\cdot  \varDelta(\sigma)^{-1}_{1,2}$,
it is enough to show the equality depicted in Figure \ref{Proof of Lemma mu=1}.
\begin{figure}[h]
\begin{center}
         \begin{tikzpicture}
                      \draw (1.,0.5) rectangle (2.2,1.0);
                     \draw (1.6,0.7) node{$\varDelta(\sigma)^{-1}$};
                     \draw[-] (1.3,0.2)--(1.3,0.5);
                    \draw[-] (1.3,1.0)--(1.3,3.1);
                     \draw[-] (1.3,0.2)  arc (180:360:0.3);
                     \draw[-] (1.9,0.2)--(1.9,0.5);
                     \draw[-] (1.9,1.0)--(1.9,1.9);
                     \draw[-] (1.9,2.4)--(1.9,2.7);
                     \draw[-] (2.5,-0.2)--(2.5,1.9);
                     \draw[-] (2.5,2.4)--(2.5,2.7);
                      \draw (1.6,1.9) rectangle (2.8,2.4);
                     \draw (2.2,2.1) node{$\varDelta(\sigma)$};
                     \draw[-] (1.9,2.7)  arc (180:0:0.3);
\draw  (3.2,1.3)node{$=$};
                     \draw[-] (4,-0.2)--(4,3.1);
\draw[color=white, very thick] (0.9,0.2)--(2.6,0.2) (0.9,1.45)--(2.6,1.45)  (0.9,2.7)--(2.6,2.7);
         \end{tikzpicture}
\caption{Proof of Lemma \ref{mu=1}}
\label{Proof of Lemma mu=1}
\end{center}
\end{figure}
It can be proved in a same way to Figure \ref{proof of compatibility with  the first equality of (IT5)}.
\end{proof}

The following  is proved in a topological way.

\begin{lem}\label{center lemma}
The element $t_{12}$ lies in the center of 
$\widehat{{\mathbb K}[{\mathcal{ISL}}_{\uparrow\uparrow}]}$.
Namely 
\begin{equation}\label{center equation}
t_{12}\cdot D=D\cdot t_{12}
\end{equation}
holds for any $D\in 
\widehat{{\mathbb K}[{\mathcal{ISL}}_{\uparrow\uparrow}]}$.
\end{lem}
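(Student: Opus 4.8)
The plan is to deduce the centrality of $t_{12}$ from the corresponding topological statement about the full twist, using an associator to transport the question to proalgebraic $2$-string links. Recall that the full twist on two strands is $x_{12}=\sigma_1^2\in\widehat{{\mathbb K}[P_2]}$, which by Remark \ref{KP injected to KSL} sits inside $\widehat{{\mathbb K}[{\mathcal{SL}}_{\uparrow\uparrow}]}$. First I would fix an associator $p=(\mu,\varphi)\in M({\mathbb K})$; by Proposition \ref{M to Isom} the isomorphism $\rho_2(p)\colon\widehat{{\mathbb K}[B_2]}\simeq\widehat{U\frak b_2}$ sends $\sigma_1\mapsto\tau_{1,2}\exp\{\tfrac{\mu}{2}t_{12}\}$, so, using $\tau_{1,2}^2=1$ and $\tau_{1,2}\cdot t_{12}=t_{21}=t_{12}$, it sends $x_{12}\mapsto\exp\{\mu t_{12}\}$. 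By Proposition \ref{KT to KIT}.(3) and Proposition \ref{IT=CD}.(2), together with the compatibility with the inclusion of Remark \ref{KP injected to KSL}, the same holds for the algebra isomorphism $\rho_{\uparrow\uparrow}(p)\colon\widehat{{\mathbb K}[{\mathcal{SL}}_{\uparrow\uparrow}]}\simeq\widehat{\mathcal{CD}}(\uparrow\uparrow)$. Since $x_{12}\equiv 1$ modulo ${\mathcal{T}}_1$, the series $\log x_{12}=\log\bigl(1+(x_{12}-1)\bigr)$ converges in $\widehat{{\mathbb K}[{\mathcal{SL}}_{\uparrow\uparrow}]}$ and is carried to $\mu t_{12}$; hence $t_{12}$ is central in $\widehat{\mathcal{CD}}(\uparrow\uparrow)$ as soon as $x_{12}$ is central in $\widehat{{\mathbb K}[{\mathcal{SL}}_{\uparrow\uparrow}]}$, and by ${\mathbb K}$-linearity and completeness this in turn reduces to showing that $x_{12}$ lies in the centre of the monoid ${\mathcal{SL}}_{\uparrow\uparrow}$ of (topological) oriented $2$-string links.

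For that I would argue by a band (ribbon) move. Given a $2$-string link $L\subset D^2\times[0,1]$, its two strands are disjoint arcs properly embedded in the $3$-ball $D^2\times[0,1]$ meeting the boundary in the four prescribed points, hence they cobound an embedded band $R\cong[0,1]\times[0,1]$ whose two long edges are the strands of $L$ and whose two short edges lie in $D^2\times\{0\}$ and $D^2\times\{1\}$ respectively. Inside a regular neighbourhood of $R$, composing $L$ with $x_{12}$ on the top (resp. the bottom) is the same as inserting a full $2\pi$-twist into the band near its top edge (resp. near its bottom edge); and a full $2\pi$-twist of an embedded band can be slid along $R$ from one end to the other by an ambient isotopy supported near $R$ and fixing $\partial(D^2\times[0,1])$. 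Therefore $x_{12}\cdot L$ is isotopic rel boundary to $L\cdot x_{12}$, i.e. $x_{12}\cdot L=L\cdot x_{12}$ in ${\mathcal{SL}}_{\uparrow\uparrow}$, which is exactly what the first step requires.

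Putting the two steps together yields $t_{12}\cdot D=D\cdot t_{12}$ for every $D\in\widehat{\mathcal{CD}}(\uparrow\uparrow)$. (Alternatively, the same conclusion can be reached directly inside $\widehat{\mathcal{CD}}(\uparrow\uparrow)$, by representing $t_{12}$ as a single chord joining the two strands above $D$ and sliding it down past the chords of $D$, the corrections coming from the possibly degenerate $4T$-relations cancelling in pairs; this is the $n=2$ instance of the classical fact that $\sum_{i<j}t_{ij}$ is central in the chord-diagram algebra on $n$ strands.) The step I expect to be the main obstacle is the band argument itself: one has to justify carefully the existence of the embedded band $R$ joining the two strands, and to check that the isotopy sliding the $2\pi$-twist along $R$ can indeed be taken to fix the boundary of the ambient $3$-ball throughout.
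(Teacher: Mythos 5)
Your overall route is exactly the paper's: reduce, via an associator and the isomorphism $\rho_{\uparrow\uparrow}(p)\colon\widehat{{\mathbb K}[{\mathcal{SL}}_{\uparrow\uparrow}]}\simeq\widehat{\mathcal{CD}}(\uparrow\uparrow)$ sending $x_{12}=\sigma_1^2$ to $\exp\{\mu t_{12}\}$, to the topological statement that the full twist commutes with every $2$-string link, and then pass to $t_{12}$ by completeness. That reduction, and the bookkeeping around $\log x_{12}$, are fine. The problem is the band argument you use for the topological step (which the paper merely asserts). The claim that the two strands of an arbitrary $2$-string link cobound an embedded band $R\cong[0,1]\times[0,1]$ is false: if such a band existed, each strand could be pushed across $R$ onto its core, so the two strands would be isotopic arcs in $D^2\times[0,1]$ (in particular their closures would be the same knot). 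Taking strand $1$ to be a trivial vertical arc and strand $2$ to be a locally knotted arc, split off from strand $1$, gives a $2$-string link admitting no such band. So the step you yourself flagged as the main obstacle is not merely delicate; it fails.

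The centrality of $x_{12}$ in ${\mathcal{SL}}_{\uparrow\uparrow}$ is nevertheless true, but the correct mechanism is a rotation of the whole disk rather than a twist of a band joining the strands. Write the full twist as the image of $L$ under the homeomorphism $\Phi_\alpha\colon(s,z)\mapsto(s,e^{2\pi i\alpha(s)}z)$ of $[0,1]\times D^2$ (after shrinking the ambient $\mathbb C$ to a disk containing the tangle), where $\alpha\colon[0,1]\to[0,1]$ is nondecreasing with $\alpha(0)=0$, $\alpha(1)=1$. Each $\Phi_\alpha$ fixes the top and bottom endpoints since $e^{2\pi i}=1$, and deforming $\alpha$ from a ramp concentrated near $s=0$ to one concentrated near $s=1$ gives a continuous family of embeddings carrying $x_{12}\cdot L$ to $L\cdot x_{12}$; no band between the strands is needed. (Your parenthetical alternative -- sliding the extra $t_{12}$ chord past the chords of $D$ and cancelling the 4T correction terms in pairs, one endpoint on each strand -- is also a legitimate self-contained proof inside $\widehat{\mathcal{CD}}(\uparrow\uparrow)$, and avoids the topological input altogether; the paper does not take that route, but it would serve.) With the band argument replaced by either of these, your proof matches the paper's.
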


\begin{proof}
First, for a discrete case, we have 
$T\cdot (\sigma_{1,2})^{2\alpha}=(\sigma_{1,2})^{2\alpha}\cdot T$
for any $T\in\mathcal{SL}(\uparrow\uparrow)$ and $\alpha\in\mathbb Z$.
Whence we will have the same equality 
$T\cdot (\sigma_{1,2})^{2\alpha}=(\sigma_{1,2})^{2\alpha}\cdot T$
for any $T\in\widehat{{\mathbb K}[{\mathcal{SL}}_{\uparrow\uparrow}]}$ and $\alpha\in\mathbb K$.
Then by the isomorphism \eqref{isomorphism between KSL and KISL},
we have 
$D\cdot \exp\{\alpha t_{1,2}\}=\exp\{\alpha t_{1,2}\}\cdot D$
for any $D\in \widehat{{\mathbb K}[{\mathcal{ISL}}_{\uparrow\uparrow}]}$
and $\alpha\in\mathbb K$.
Since the space $\widehat{{\mathbb K}[{\mathcal{ISL}}_{\uparrow\uparrow}]}$ is completed,
we have the equality \eqref{center equation}.
\end{proof}

The proposition below states that our $GRT_1(\mathbb K)$-action on
 $\widehat{{\mathbb K}[{\mathcal{ISL}}_{\epsilon}]}$ is described by
an inner conjugation action of twistor.

\begin{prop}\label{inner automorphism theorem on chord diagrams}
Let $\epsilon$ 
be any sequence.
The action $\rho_{\epsilon}$ of \eqref{GRT to Aut KISL}
restricted into the unipotent part  $GRT_1(\mathbb K)$
on the algebra  $\widehat{{\mathbb K}[{\mathcal{ISL}}_{\epsilon}]}$  of infinitesimal string links
is  given by the inner conjugation of the twistor $\varDelta(\sigma,\epsilon)$, i.e.
\begin{equation}\label{inner action on chords of string links}
\rho_{\epsilon}(\sigma)(D)= \varDelta(\sigma,\epsilon)\cdot D\cdot  \varDelta(\sigma,\epsilon)^{-1}
\end{equation}
holds for $\sigma=(1,g)\in GRT_1(\mathbb K)$ and 
$D\in\widehat{{\mathbb K}[{\mathcal{ISL}}_{\epsilon}]}$. 
\end{prop}

\begin{proof}
The formula is obtained by combining 
\eqref{prop:twisting=our action} and \eqref{prop:twisting=inner conjugation}.
\end{proof}

Next we discuss  $GT_1(\mathbb K)$-action on
the algebra  $\widehat{{\mathbb K}[{\mathcal{SL}}_{\epsilon}]}$ of proalgebraic string links.

\begin{nota}
(1)
For $n>1$, 
$\epsilon_i:\widehat{{\mathbb K}[{\mathcal{SL}}_{\uparrow^n}]}\to
\widehat{{\mathbb K}[{\mathcal{SL}}_{\uparrow^{n-1}}]}$
($i=1,2\dots n$) means the map  removing the $i$-th strand on
each proalgebraic string links.

(2)
For each $\Gamma\in \widehat{{\mathbb K}[{\mathcal{SL}}_{\uparrow^n}]}$,
we denote  $\Gamma_{1,\dots,n}$ (resp. $\Gamma_{2,\dots,n+1}$)  to be the element 
in $\widehat{{\mathbb K}[{\mathcal{SL}}_{\uparrow^{n+1}}]}$
obtained by putting a straight line  on the right (resp. the left) of $\Gamma$
and  $\Gamma_{1,\dots, i-1 ,i\ i+1,i+2, \dots, n+1}$ ($i=1,2,\dots,n$)
to be also the element  in $\widehat{{\mathbb K}[{\mathcal{SL}}_{\uparrow^{n+1}}]}$
obtained by {\it doubling} the $i$-th strand.

(3)
Particularly for $\Gamma\in \widehat{{\mathbb K}[{\mathcal{SL}}_{\uparrow\uparrow}]}$,
the symbol $\Gamma_{2,1}$ denotes
the element $\sigma_{1,2}^{-1}\cdot \Gamma\cdot \sigma_{1,2}$
in $\Gamma\in \widehat{{\mathbb K}[{\mathcal{SL}}_{\uparrow\uparrow}]}$.
\end{nota}

Hereafter we regard $F_2(\mathbb K)$ and $P_5(\mathbb K)$ to be the subspaces
of $\widehat{{\mathbb K}[{\mathcal{SL}}_{\uparrow\uparrow\uparrow}]}$ and
$\widehat{{\mathbb K}[{\mathcal{SL}}_{\uparrow\uparrow\uparrow\uparrow}]}$ respectively.
The following lemma is a $GT(\mathbb K)$-analogue of 
Twistor Lemma \ref{twistor lemma}.

\begin{lem}[Twistor Lemma]\label{twistor lemma on GT}
Let $\sigma=(\lambda,f)\in GT(\mathbb K)$, thus $\lambda\in {\mathbb K}^\times$ and $f\in F_2(\mathbb K)\subset 
\widehat{{\mathbb K}[{\mathcal{SL}}_{\uparrow\uparrow\uparrow}]}$.
Then $f$ is gauge equivalent to $1$,
namely, there exists $ \varpi(\sigma)\in \widehat{{\mathbb K}[{\mathcal{SL}}_{\uparrow\uparrow}]}^\times$
satisfying
\begin{equation}\label{normalization condition on Pi}
\epsilon_1( \varpi(\sigma))=\epsilon_2( \varpi(\sigma))=\uparrow
\end{equation}
and   the symmetric condition
\begin{equation}\label{symmetric condition on Pi}
 \varpi(\sigma)= \varpi(\sigma)_{2,1}
\end{equation}
such that
\begin{equation}\label{twisting formula of Pi}
f= \varpi(\sigma)_{2,3}\cdot  \varpi(\sigma)_{1,23}\cdot  \varpi(\sigma)^{-1}_{12,3}\cdot  \varpi(\sigma)^{-1}_{1,2}
\end{equation}
holds in $\widehat{{\mathbb K}[{\mathcal{SL}}_{\uparrow\uparrow\uparrow}]}^\times$. 
\end{lem}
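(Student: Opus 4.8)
The plan is to mimic, in the multiplicative setting of proalgebraic string links, the recursive argument behind the graded Twistor Lemma \ref{twistor lemma} and, before it, \cite{LM} Theorem~8, taking there $\Phi=f$ and $\Phi'=1$. The point is that, since $\sigma=(\lambda,f)\in GT(\mathbb{K})$, the element $f$ satisfies the $2$-cycle relation \eqref{proalg-2-cycle} and the pentagon relation \eqref{proalg-pentagon equation}, and $1$ satisfies them as well; the scalar $\lambda$ plays no role, so $\varpi(\sigma)$ will depend only on $f$, just as $\varDelta(\sigma)$ depends only on $g$. I would regard $\widehat{{\mathbb K}[{\mathcal{SL}}_{\uparrow^{n}}]}$ for $n=2,3,4$ as the low-degree part of a (non-abelian) cosimplicial-type complex: the coface operations are the strand-insertion and strand-doubling maps $\Gamma\mapsto\Gamma_{2,\dots,n+1}$, $\Gamma\mapsto\Gamma_{1,\dots,n}$ and $\Gamma\mapsto\Gamma_{1,\dots,i-1,i\,i+1,i+2,\dots,n+1}$, and the coboundary of a $1$-cochain $\varpi\in\widehat{{\mathbb K}[{\mathcal{SL}}_{\uparrow\uparrow}]}^{\times}$ is $\partial\varpi:=\varpi_{2,3}\cdot\varpi_{1,23}\cdot\varpi_{12,3}^{-1}\cdot\varpi_{1,2}^{-1}$. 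Two elementary facts will be used throughout: the cosimplicial identity $\partial\partial=1$, which guarantees that $\partial\varpi$ automatically solves the pentagon \eqref{proalg-pentagon equation}; and the fact, noted after Definition \ref{definition of proalgebraic GT}, that the pentagon forces $f$ into the commutator subgroup of $F_{2}(\mathbb{K})$, whence $f\in 1+{\mathcal T}_{2}$ and $\epsilon_{i}(f)=\uparrow$.

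First I would build $\varpi(\sigma)$ by successive approximation along the singular (Vassiliev) filtration $\{{\mathcal T}_{N}\}$, starting from $\varpi^{(2)}:=\uparrow\uparrow$. Assume inductively that a symmetric, normalized $\varpi^{(N)}\in\widehat{{\mathbb K}[{\mathcal{SL}}_{\uparrow\uparrow}]}^{\times}$ has been produced with $e_{N}:=f\cdot(\partial\varpi^{(N)})^{-1}\in 1+{\mathcal T}_{N}$. Passing to $\mathrm{gr}^{N}$ and using the canonical identification of $\mathrm{gr}^{N}\widehat{{\mathbb K}[{\mathcal{SL}}_{\uparrow^{n}}]}$ with the degree-$N$ chord diagrams on $\uparrow^{n}$ (the associated graded of the isomorphism $\rho_{\epsilon}(p)$ of \eqref{isomorphism between KSL and KISL} is independent of the chosen associator $p$), the leading term $\bar e_{N}$ of $e_{N}$ becomes a degree-$N$ chord diagram on three strands. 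Comparing the pentagon relations satisfied by $f$ and by $\partial\varpi^{(N)}$ shows, at the level of $\mathrm{gr}^{N}$, that $\bar e_{N}$ lies in the kernel of the linearized pentagon operator $\mathrm{gr}^{N}\widehat{{\mathbb K}[{\mathcal{SL}}_{\uparrow\uparrow\uparrow}]}\to\mathrm{gr}^{N}\widehat{{\mathbb K}[{\mathcal{SL}}_{\uparrow^{4}}]}$; likewise the $2$-cycle relation for $f$, together with normality of $\varpi^{(N)}$, makes $\bar e_{N}$ anti-invariant under the strand-swap $D\mapsto D_{2,1}$ and reduced, i.e. $\epsilon_{i}\bar e_{N}=0$.

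The hard part will be the cohomological step: solving $\bar e_{N}=\partial\bar\eta_{N}$ for a degree-$N$ chord diagram $\bar\eta_{N}$ on two strands that is itself symmetric and reduced. This is exactly the input that drives \cite{LM} Theorem~8 and the proof of Lemma \ref{twistor lemma}, namely the vanishing, in every positive degree, of the relevant $H^{2}$ of the chord-diagram pentagon complex, together with the possibility of choosing the primitive symmetrically; the symmetrization uses the anti-invariance just recorded and the centrality of $t_{12}$ (Lemma \ref{center lemma}), which is precisely what allows the condition $\varpi=\varpi_{2,1}$ to be propagated across successive approximations. Granting this, I would lift $\bar\eta_{N}$ to $\eta_{N}\in{\mathcal T}_{N}\subset\widehat{{\mathbb K}[{\mathcal{SL}}_{\uparrow\uparrow}]}$ and set $\varpi^{(N+1)}:=\varpi^{(N)}\cdot\exp(\eta_{N})$; this is still symmetric and normalized, since both conditions are detected modulo ${\mathcal T}_{N+1}$ and $\bar\eta_{N}$ was chosen accordingly, and now $e_{N+1}\in 1+{\mathcal T}_{N+1}$. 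Because $\widehat{{\mathbb K}[{\mathcal{SL}}_{\uparrow\uparrow}]}$ is complete for $\{{\mathcal T}_{N}\}$ and the coface maps, hence $\partial$, are continuous, the $\varpi^{(N)}$ converge to an element $\varpi(\sigma)$ satisfying \eqref{normalization condition on Pi}, \eqref{symmetric condition on Pi} and \eqref{twisting formula of Pi}.

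A variant would be to deduce the statement from Lemma \ref{twistor lemma} itself by transporting along the bitorsor isomorphism $\rho_{\epsilon}(p)$ for an associator $p\in M(\mathbb{K})$ (which exists as $\mathrm{char}\,\mathbb{K}=0$), after checking that $\rho_{\epsilon}(p)$ commutes with the strand-doubling operations; however $\rho_{\uparrow\uparrow\uparrow}(p)(f)$ need not satisfy the graded $2$-cycle relation, so the symmetry of the resulting twistor would still have to be restored by a separate averaging argument, and I expect the recursive route above to be cleaner.
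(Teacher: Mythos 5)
Your primary, recursive route is not the one the paper takes, and as written it has a gap; the ``variant'' you set aside at the end is essentially the paper's actual proof. The paper fixes $p=(1,\varphi)\in M_1(\mathbb K)$, uses the bitorsor structure of Proposition \ref{GRT-GT-bitorsor} to transport the \emph{group element} $\sigma$ (not the series $f$) to $r_p(\sigma)\in GRT(\mathbb K)$, applies the graded Twistor Lemma \ref{twistor lemma} there, and defines $\varpi(\sigma):=\rho_{\uparrow\uparrow}(p)^{-1}\left(\varDelta(r_p(\sigma))\right)$, verifying \eqref{normalization condition on Pi}--\eqref{twisting formula of Pi} by direct computation with Proposition \ref{three torsor maps}. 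Your objection to this route --- that $\rho_{\uparrow\uparrow\uparrow}(p)(f)$ need not satisfy the graded $2$-cycle relation --- is aimed at the wrong map: $f$ is never pushed through $\rho(p)$; the element $r_p(\sigma)$ lies in $GRT(\mathbb K)$ by construction, so its $g$-component satisfies all of \eqref{GRT-2-cycle}--\eqref{GRT-pentagon equation} and Lemma \ref{twistor lemma} applies verbatim. The symmetry \eqref{symmetric condition on Pi} then survives the pull-back with no averaging, since $D\mapsto D_{2,1}$ is conjugation by $\sigma_{1,2}$ on string links and by $\tau_{1,2}\exp\{\mu t_{12}/2\}$ on chord diagrams, and $t_{12}$ is central by Lemma \ref{center lemma}.

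The gap in your recursion is the case $\lambda\neq 1$. You compare $f$ with $1$ and extract constraints on the degree-$N$ obstruction $\bar e_N$ only from the $2$-cycle \eqref{proalg-2-cycle} and the pentagon \eqref{proalg-pentagon equation}; the hexagon \eqref{proalg-hexagon equation} is unavailable to you, because $f$ and $1$ do not satisfy a common hexagon (the parameter $m=(\lambda-1)/2$ enters for $f$ but not for the trivial solution). The cohomological input you invoke from \cite{LM} Theorem 8 and from Lemma \ref{twistor lemma} is proved there for two solutions of the \emph{same} full system --- pentagon and both hexagons with a common parameter --- so your reduction to it does not go through as stated; that the linearized pentagon plus antisymmetry alone force $\bar e_N$ into $\mathrm{im}\,\partial$ with a symmetric, reduced primitive is exactly what would have to be established, and you only assert it. The issue is visible already at the first step: the hexagon forces the coefficient of $[\log x,\log y]$ in $\log f$ to equal $(\lambda^2-1)/24$, so the very first obstruction carries $\lambda$; accordingly, your claim that ``$\lambda$ plays no role, so $\varpi(\sigma)$ will depend only on $f$'' is in tension with the paper's explicit remark, just after this lemma, that $\varpi(\sigma)$ may depend on $\lambda$, unlike the $GRT$ case. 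For $\sigma\in GT_1(\mathbb K)$ your recursion is a plausible filtered replica of the graded argument, but the lemma is stated for all of $GT(\mathbb K)$.
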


\begin{proof}
Fix an element $p=(1,\varphi)\in M_1(\mathbb K)$.
Then by Proposition \ref{GRT-GT-bitorsor}
it yields an isomorphism 
$$r_p:GT_1(\mathbb K) \simeq GRT_1(\mathbb K).$$
By Proposition \ref{KT to KIT}.(3), we have an isomorphism
$$\rho_{\uparrow\uparrow\uparrow}(p):\widehat{{\mathbb K}[{\mathcal{SL}}_{\uparrow\uparrow\uparrow}]}\simeq
\widehat{{\mathbb K}[{\mathcal{ISL}}_{\uparrow\uparrow\uparrow}]}.$$
Put
$\varpi(\sigma):=\rho_{\uparrow\uparrow\uparrow}(p)^{-1}\left( \varDelta\left(r_p(\sigma)\right)\right)$.
Then using Proposition \ref{three torsor maps} and Twistor Lemma \ref{twistor lemma},
we can check the validities of 
\eqref{normalization condition on Pi}--\eqref{twisting formula of Pi} for $\varpi(\sigma)$
by direct calculations.
\end{proof}

The above $ \varpi(\sigma)$ may not be uniquely chosen  and
may depend on $\lambda\in{\mathbb K}^\times$ unlike the case for $GRT(\mathbb K)$.
Here we note again that the first two terms on the right hand side of \eqref{twisting formula}
commute each other, so do the last two terms.

\begin{defn}
We call such $ \varpi(\sigma)$ in $\widehat{{\mathbb K}[{\mathcal{SL}}_{\uparrow\uparrow}]}^\times$
a {\it twistor} of $\sigma =(\lambda,f)\in GT(\mathbb K)$.
For  a twisor $ \varpi(\sigma)$, we put
$ \varpi(\sigma,\uparrow):=\uparrow\in\widehat{{\mathbb K}[{\mathcal{SL}}_{\uparrow}]}^\times$ and 
\begin{equation}
 \varpi(\sigma,\uparrow^n):=   \varpi(\sigma)_{12\cdots n-1,n}\cdots  \varpi(\sigma)_{12,3}\cdot  \varpi(\sigma)_{1,2}
\in \widehat{{\mathbb K}[{\mathcal{SL}}_{\uparrow^n}]}^\times
\end{equation}
for $n\geqslant 2$.
Here $ \varpi(\sigma)_{1\cdots k,k+1}$ means the element in 
$\widehat{{\mathbb K}[{\mathcal{SL}}_{\uparrow^n}]}$
obtained  multi-doubling of the first strand of $ \varpi(\sigma)\in
\widehat{{\mathbb K}[{\mathcal{SL}}_{\uparrow\uparrow}]}$
by $k$ strands, 
and putting  $n-k-1$ straight lines  $\uparrow^{n-k-1}$ on its right.
For any sequence  $\epsilon=(\epsilon_i)_{i=1}^n$,
we determine  the element $ \varpi(\sigma,\epsilon)$ in $
\widehat{{\mathbb K}[{\mathcal{SL}}_{\epsilon}]}^\times$
by reversing all corresponding arrows of $ \varpi(\sigma,\uparrow^n)$.
We note that $ \varpi(\sigma,\uparrow\uparrow)= \varpi(\sigma)$ and 
$f= \varpi(\sigma,\uparrow\uparrow\uparrow)_{3,2,1}\cdot
 \varpi(\sigma,\uparrow\uparrow\uparrow)^{-1}$
by \eqref{symmetric condition on Pi}.
\end{defn}

Our theorem in this subsection is to state that
our $GT_1(\mathbb K)$-action on $\widehat{{\mathbb K}[{\mathcal{SL}}_{\epsilon}]}$
is described by the inner conjugation action by the twistor.

\begin{thm}\label{inner automorphism theorem on tangles}
Let $\epsilon$ be any sequence.
The action
$$
\rho_{\epsilon}: GT_1(\mathbb K)\to  \mathrm{Aut}\ \widehat{{\mathbb K}[{\mathcal{SL}}_{\epsilon}]}
$$
induced by \eqref{GT to Aut KSL},
of the unipotent part  $GT_1(\mathbb K)$
on the algebra  $\widehat{{\mathbb K}[{\mathcal{SL}}_{\epsilon}]}$ of proalgebraic string links
is simply given by an inner conjugation of twistor  $\varpi(\sigma,\epsilon)\in
\widehat{{\mathbb K}[{\mathcal{SL}}_{\epsilon}]}^\times$.
That is,
\begin{equation}\label{inner action on string links}
\rho_{\epsilon}(\sigma)(\Gamma)= \varpi(\sigma,\epsilon)\cdot \Gamma\cdot  \varpi(\sigma,\epsilon)^{-1}
\end{equation}
holds for $\sigma=(1,f)\in GT_1(\mathbb K)$ and 
$\Gamma\in \widehat{{\mathbb K}[{\mathcal{SL}}_{\epsilon}]}$.
\end{thm}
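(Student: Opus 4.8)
The plan is to deduce this from its chord-diagram analogue, Theorem \ref{inner automorphism theorem on chord diagrams}, by transporting everything along an associator, in the same spirit as the construction of the twistor $\varpi(\sigma)$ in the proof of Twistor Lemma \ref{twistor lemma on GT}. Fix once and for all an element $p=(1,\varphi)\in M_1(\mathbb K)$. By Proposition \ref{GRT-GT-bitorsor} the bitorsor structure on $M_1(\mathbb K)$ yields a group isomorphism $r_p:GT_1(\mathbb K)\simeq GRT_1(\mathbb K)$ characterized by $p\circ\sigma=r_p(\sigma)\circ p$, and by \eqref{isomorphism between KSL and KISL} in Proposition \ref{KT to KIT}.(3) an isomorphism of $\mathbb K$-algebras $\rho_\epsilon(p):\widehat{{\mathbb K}[{\mathcal{SL}}_{\epsilon}]}\simeq\widehat{{\mathbb K}[{\mathcal{ISL}}_{\epsilon}]}=\widehat{\mathcal{CD}}(\epsilon)$ intertwining the two composition products. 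To keep the two $\rho_\epsilon$'s apart, write $\rho^{\mathrm{GT}}_\epsilon$ and $\rho^{\mathrm{GRT}}_\epsilon$ for the actions \eqref{GT to Aut KSL} and \eqref{GRT to Aut KISL} respectively.

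The first step is to apply $\rho_\epsilon$ to the identity $p\circ\sigma=r_p(\sigma)\circ p$ and use that, by Proposition \ref{three torsor maps}, the map $\rho_\epsilon:M(\mathbb K)\to\mathrm{Isom}\bigl(\widehat{{\mathbb K}[{\mathcal{SL}}_{\epsilon}]},\widehat{{\mathbb K}[{\mathcal{ISL}}_{\epsilon}]}\bigr)$ is a morphism of bitorsors, the right $GT(\mathbb K)$- and left $GRT(\mathbb K)$-actions on the target being given by $\rho^{\mathrm{GT}}_\epsilon$ and $\rho^{\mathrm{GRT}}_\epsilon$. This produces the equality $\rho_\epsilon(p)\circ\rho^{\mathrm{GT}}_\epsilon(\sigma)=\rho^{\mathrm{GRT}}_\epsilon(r_p(\sigma))\circ\rho_\epsilon(p)$, hence
$$
\rho^{\mathrm{GT}}_\epsilon(\sigma)=\rho_\epsilon(p)^{-1}\circ\rho^{\mathrm{GRT}}_\epsilon(r_p(\sigma))\circ\rho_\epsilon(p)
$$
as automorphisms of $\widehat{{\mathbb K}[{\mathcal{SL}}_{\epsilon}]}$. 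Since $\sigma\in GT_1(\mathbb K)$ forces $r_p(\sigma)=(1,g)\in GRT_1(\mathbb K)$, Theorem \ref{inner automorphism theorem on chord diagrams} applies to $r_p(\sigma)$ and gives $\rho^{\mathrm{GRT}}_\epsilon(r_p(\sigma))(D)=\varDelta(r_p(\sigma),\epsilon)\cdot D\cdot\varDelta(r_p(\sigma),\epsilon)^{-1}$ for every $D\in\widehat{\mathcal{CD}}(\epsilon)$. Putting $D=\rho_\epsilon(p)(\Gamma)$ and using that $\rho_\epsilon(p)^{-1}$ is an algebra homomorphism for the composition product, one gets $\rho^{\mathrm{GT}}_\epsilon(\sigma)(\Gamma)=W\cdot\Gamma\cdot W^{-1}$ with $W:=\rho_\epsilon(p)^{-1}\bigl(\varDelta(r_p(\sigma),\epsilon)\bigr)$.

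It then remains to identify $W$ with the twistor $\varpi(\sigma,\epsilon)$ of the statement. By construction $\varpi(\sigma)=\rho_{\uparrow\uparrow}(p)^{-1}\bigl(\varDelta(r_p(\sigma))\bigr)$ (Lemma \ref{twistor lemma on GT}), and both $\varpi(\sigma,\epsilon)$ and $\varDelta(r_p(\sigma),\epsilon)$ are assembled from $\varpi(\sigma)$, resp. $\varDelta(r_p(\sigma))$, by one and the same recipe: iterated strand-doublings of the shape $(\,\cdot\,)_{1\cdots k,k+1}$, insertion of trivial strands on the right, and reversal of the arrows prescribed by $\epsilon$. So the identification $W=\varpi(\sigma,\epsilon)$ reduces to the claim that the associator isomorphisms $\rho_{\epsilon'}(p)$ commute with these cabling and arrow-reversal operations; this follows from the piece-wise Definition \ref{definition: KT to KIT mod N} of $\rho(p)$, since doubling a strand and reversing an arrow act compatibly on the ABC-decomposition of a pre-tangle. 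I expect this last bookkeeping to be the only non-formal point of the proof, everything else being a formal consequence of Proposition \ref{three torsor maps} and Theorem \ref{inner automorphism theorem on chord diagrams}.

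One could instead prove \eqref{inner action on string links} directly, imitating the piece-wise computation of Proposition \ref{prop:twisting=inner conjugation}; this would require introducing a $GT$-version of the twisted automorphism $\theta^{\varpi(\sigma)}_{\epsilon,\epsilon'}$ together with the $GT$-analogues of Lemma \ref{mu=1} (that $\nu_f$ is the trivial long knot, again via Twistor Lemma \ref{twistor lemma on GT}) and of Lemma \ref{center lemma} (centrality of $(\sigma_{1,2})^{2\alpha}$ in $\widehat{{\mathbb K}[{\mathcal{SL}}_{\uparrow\uparrow}]}$, which is already noted there). I would present the transport argument above as the main proof, since it reuses the machinery already in place and concentrates the only genuine verification in the compatibility of $\rho(p)$ with cabling.
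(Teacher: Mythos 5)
Your argument is correct and is essentially the paper's own proof: the paper likewise fixes $p=(1,\varphi)\in M_1(\mathbb K)$, uses $r_p:GT_1(\mathbb K)\simeq GRT_1(\mathbb K)$ together with Proposition \ref{three torsor maps} to transport Theorem \ref{inner automorphism theorem on chord diagrams} across $\rho_\epsilon(p)$, and takes $\varpi(\sigma,\epsilon)=\rho_\epsilon(p)^{-1}\left(\varDelta(r_p(\sigma),\epsilon)\right)$. Your extra remark about checking that this element agrees with the cabled $\varpi(\sigma,\epsilon)$ of the Definition is a point the paper passes over silently ("we may take $\varpi(\sigma,\epsilon)$ to be \dots"), and your identification of it as the only nontrivial bookkeeping is accurate.
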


\begin{proof}
Let $p=(1,\varphi)\in M_1(\mathbb K)$ be an element taken in the proof of 
Twistor Lemma \ref{twistor lemma on GT}.
By Proposition \ref{GRT-GT-bitorsor}
it yields an isomorphism  $r_p:GT_1(\mathbb K) \simeq GRT_1(\mathbb K)$.
By Proposition \ref{KT to KIT}.(3), we have an isomorphism
$$\rho_\epsilon(p):\widehat{{\mathbb K}[{\mathcal{SL}}_{\uparrow^n}]}\simeq
\widehat{{\mathbb K}[{\mathcal{ISL}}_{\uparrow^n}]}.$$
Our claim follows from 
Proposition \ref{three torsor maps} and \ref{inner automorphism theorem on chord diagrams}.
We may take $\varpi(\sigma,\epsilon)$
in  the above \eqref{inner action on string links}.
by
$\rho_\epsilon(p)^{-1}\left( \varDelta(r_p(\sigma),\epsilon)\right)$.
\end{proof}

Here is a corollary of  Theorem \ref{inner automorphism theorem on tangles}.

\begin{cor}\label{inner action on braids}
The restricted action  of  \eqref{GT to Aut B_n}  into the unipotent part $GT_1(\mathbb K)$
on proalgebraic pure braids, denoted by
$$
\rho_n:GT_1(\mathbb K)\to \mathrm{Aut}\ \widehat{{\mathbb K}[P_n]},
$$
is simply given by an inner conjugation by the element 
$\varpi(\sigma,\uparrow^n)\in \widehat{{\mathbb K}[{\mathcal{SL}}_{\uparrow^n}]}$.
That is, 
\begin{equation}\label{eq:inner action on braids}
\rho_n(\sigma)(x_{ij})=\varpi(\sigma,\uparrow^n)\cdot x_{ij}\cdot\varpi(\sigma,\uparrow^n)^{-1}
\end{equation}
holds for $1\leqslant i,j\leqslant n$ and  $\sigma\in GT_1(\mathbb K)$.
\end{cor}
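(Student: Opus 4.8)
The plan is to obtain the corollary as a direct consequence of Theorem~\ref{inner automorphism theorem on tangles} together with the inclusion of proalgebraic pure braids into proalgebraic string links from Remark~\ref{KP injected to KSL}. Write $\iota\colon \widehat{{\mathbb K}[P_n]}\hookrightarrow\widehat{{\mathbb K}[{\mathcal{SL}}_{\uparrow^n}]}$ for that inclusion. For $n=1$ the group $P_1$ is trivial and there is nothing to prove, so I assume $n\geqslant 2$. In that case $\widehat{B^{\uparrow^n}_{\mathrm{id}}}=\widehat{{\mathbb K}[P_n]}\cdot\mathrm{id}$ is itself one of the ABC-spaces of Definition~\ref{definition of fundamental proalgebraic tangle}, so any $b\in\widehat{{\mathbb K}[P_n]}$ is already a fundamental proalgebraic tangle, namely the one-term pre-tangle $(b,\uparrow^n)$; under $\iota$ it is sent to this same element viewed in $\widehat{{\mathbb K}[{\mathcal{SL}}_{\uparrow^n}]}$.

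First I would check that $\iota$ is $GT({\mathbb K})$-equivariant, that is, $\rho_{\uparrow^n}(\sigma)\circ\iota=\iota\circ\rho_n(\sigma)$, where on the right $\rho_n$ denotes the braid action of Proposition~\ref{proalg-GT-action theorem on braids} and on the left $\rho_{\uparrow^n}$ is the string-link action \eqref{GT to Aut KSL}. This is essentially built into the construction: evaluating Definition~\ref{GT-action on proalgebraic tangles}.(2) on the one-term pre-tangle $(b,\uparrow^n)$ gives $\sigma\big((b,\uparrow^n)\big)=(\rho_n(\sigma)(b),\uparrow^n)$, with $\rho_n(\sigma)$ the action reviewed in \S\ref{GT-action on proalgebraic braids}. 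The only thing to confirm is compatibility with the defining isotopies of pre-tangles, and since the pre-tangle here has only $B$-pieces this reduces to the internal relations of $\widehat{{\mathbb K}[B_n]}$, under which $\rho_n$ is already a well-defined automorphism.

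Granting equivariance, I would apply Theorem~\ref{inner automorphism theorem on tangles} with $\epsilon=\uparrow^n$ to the proalgebraic string link $\iota(x_{ij})$: for $\sigma=(1,f)\in GT_1({\mathbb K})$ this yields
\[
\iota\big(\rho_n(\sigma)(x_{ij})\big)=\rho_{\uparrow^n}(\sigma)\big(\iota(x_{ij})\big)=\varpi(\sigma,\uparrow^n)\cdot\iota(x_{ij})\cdot\varpi(\sigma,\uparrow^n)^{-1}
\]
in $\widehat{{\mathbb K}[{\mathcal{SL}}_{\uparrow^n}]}$. Since $\iota$ is injective this is precisely \eqref{eq:inner action on braids}, where the identity is understood inside $\widehat{{\mathbb K}[{\mathcal{SL}}_{\uparrow^n}]}$ because the twistor $\varpi(\sigma,\uparrow^n)$ need not lie in the image of $\iota$. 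Because $\rho_n(\sigma)$ and conjugation by $\varpi(\sigma,\uparrow^n)$ are both continuous algebra maps and the $x_{ij}$ topologically generate $\widehat{{\mathbb K}[P_n]}$, the same formula then propagates to all of $\widehat{{\mathbb K}[P_n]}$.

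The one place where any genuine care is needed is the equivariance of $\iota$; everything else is formal. Concretely, the potential obstacle is to be sure that the Bar-Natan inclusion of Remark~\ref{KP injected to KSL} is the tautological one --- a braid regarded as the string link carried by the same diagram --- so that presenting a pure braid as a pre-tangle introduces no $A$- or $C$-type correction terms, and the tangle action of Definition~\ref{GT-action on proalgebraic tangles} therefore restricts on it to the braid action of Proposition~\ref{proalg-GT-action theorem on braids} verbatim.
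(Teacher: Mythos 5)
Your proposal is correct and follows exactly the route the paper intends: the corollary is read off from Theorem \ref{inner automorphism theorem on tangles} via the tautological inclusion $\widehat{{\mathbb K}[P_n]}\hookrightarrow\widehat{{\mathbb K}[{\mathcal{SL}}_{\uparrow^n}]}$ of Remark \ref{KP injected to KSL}, with equivariance built into case (2) of Definition \ref{GT-action on proalgebraic tangles}. Your closing remark that the conjugation takes place in the ambient string-link algebra while the result lands back in $\widehat{{\mathbb K}[P_n]}$ is precisely the caveat the paper records immediately after the corollary.
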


We note that 
though $\varpi(\sigma,\uparrow^n)$ belongs to $\widehat{{\mathbb K}[{\mathcal{SL}}_{\uparrow^n}]}$
the right hand side of \eqref{eq:inner action on braids} belongs to 
its subspace $\widehat{{\mathbb K}[P_n]}$. 

\begin{rem}\label{faithful-trivial}
(1)
Since  the $GT(\mathbb K)$-action on $\widehat{{\mathbb K}[P_n]}$ is faithful
for $n\geqslant 3$,
the action 
$\rho_{\epsilon}: GT_1(\mathbb K)\to  \mathrm{Aut}\ \widehat{{\mathbb K}[{\mathcal{SL}}_{\epsilon}]}$
is faithful for $n\geqslant 3$,
where $n$ is the number of strings, i.e. the cardinality of $\epsilon$,
by Remark \ref{KP injected to KSL}.

(2)
When $n=1$,  the action is far from faithful.
Actually the kernel of the action
$\rho_{\epsilon}: GT_1(\mathbb K)\to  \mathrm{Aut}\ \widehat{{\mathbb K}[{\mathcal{SL}}_{\epsilon}]}$
(with $\epsilon=\uparrow$ or $\downarrow$)
on proalgebraic long knots
is the unipotent part $GT_1(\mathbb K)$
because $\varpi(\sigma,\epsilon)$ in Theorem \ref{inner automorphism theorem on tangles}
is trivial.
\end{rem}

The rest case $n=2$ looks unclear.

\begin{prob}\label{problem n=2}
Is the action 
$$\rho_{\epsilon}: GT(\mathbb K)\to  \mathrm{Aut}\ \widehat{{\mathbb K}[{\mathcal{SL}}_{\epsilon}]}$$
(with $\epsilon=(\epsilon_1,\epsilon_2)\in\{\uparrow, \downarrow\}^2$)
on proalgebraic $2$-string links 
faithful?
\end{prob}

\subsection{Proalgebraic knots}
We discuss a non-trivial grading on proalgebraic knots 
induced by $GT(\mathbb K)$-action.
In \cite{BLT} the image of the trivial knot, the unknot, 
under the Kontsevich isomorphism \eqref{Kontsevich's isomorphism} is explicitly determined.
In this subsection, we  work in an opposite direction.
That is, we explicitly calculate the inverse image $\gamma_0$ of the unit,
the trivial (chordless) chord diagram,  under the Kontsevich isomorphism.
It is explicitly described as combinations of two-bridge  knots (Theorem \ref{inverse image theorem}).
We also show that the invariant space of  proalgebraic knots under the  $GT(\mathbb K)$-action is one-dimensional generated by the element $\gamma_0$
(Theorem \ref{invariant subspace theorem}).

The following lemma is a knot analogue 
of  Proposition \ref{prop:twisting=our action}.

\begin{lem}\label{twisting is our action on knots}
For any $\sigma\in GRT(\mathbb K)$ 
\begin{equation}\label{twisting=our action on knots}
    \rho_{0}(\sigma)(D)= \theta_{0}^{ \varDelta(\sigma)}(D)
\end{equation}
holds for all 
$D\in \widehat{{\mathbb K}[{\mathcal{IK}}]}$. 
\end{lem}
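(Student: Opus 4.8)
The plan is to repeat, in the knot setting, the argument used for Proposition~\ref{prop:twisting=our action}. Via the identification of Proposition~\ref{IT=CD}.(3), a chord diagram $D\in\widehat{\mathcal{CD}}(\orientedcircle)\simeq\widehat{{\mathbb K}[{\mathcal{IK}}]}$ is an infinitesimal pre-link $D=d_m\cdots d_2\cdot d_1$ with each $d_j$ a fundamental infinitesimal tangle. Both maps appearing in \eqref{twisting=our action on knots} are defined piece-wise — $\rho_0(\sigma)$ as the restriction of \eqref{GRT to Aut KIT} (cf.\ Definition~\ref{GRT-action on infinitesimal tangles}), and $\theta_0^{\varDelta(\sigma)}$ as the restriction of $\theta_{\emptyset,\emptyset}^{\varDelta(\sigma)}$ — and both are compatible with the composition map, the former by Proposition~\ref{GRT-action on KIT} and the latter by the lemma preceding Proposition~\ref{prop:twisting=our action}. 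Hence it suffices to compare the two images on each ABC-piece $d_j$ and then to track how the comparison propagates along the successive compositions $d_{j+1}\cdot d_j$.

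On a braid piece $d_j=(b_n,\epsilon)\in\widehat{IB^{\epsilon}_{\tau}}$ the two prescriptions agree literally, both returning $(\rho_n(\sigma)(b_n),\epsilon)$. On pieces $d_j\in A_{k,l}^\epsilon$ and $d_j\in C_{k,l}^\epsilon$ they differ only by the twistor factors $\varDelta(\sigma)^{-1}_{k+1,k+2}$, respectively $\varDelta(\sigma)_{k+1,k+2}$, that occur in $\theta_{\emptyset,\emptyset}^{\varDelta(\sigma)}$ but not in $\rho_0(\sigma)$; moreover the error term $\nu_g$ appearing on the $A$-pieces is the trivial long knot by Lemma~\ref{mu=1}. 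Running along the pre-link, these extra factors telescope exactly as in the lemma preceding Proposition~\ref{prop:twisting=our action}: a $\varDelta(\sigma)^{-1}$ released just past a local maximum of the underlying circle is absorbed — via the equalities of Figures~\ref{Compatibility with  the first equality of (IT5)} and~\ref{proof of compatibility with  the first equality of (IT5)}, which reduce to (IT6) and the symmetry $\varDelta(\sigma)_{2,1}=\varDelta(\sigma)$ of \eqref{symmetric condition} — against the $\varDelta(\sigma)$ released near the matching minimum. What survives after these cancellations is precisely $\rho_0(\sigma)(D)$, which gives \eqref{twisting=our action on knots}.

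The only non-routine step is this telescoping bookkeeping, and it deserves a word on why care is needed. For a string link of type $\epsilon$ the $\theta$- and $\rho$-images differ by conjugation by the twistor $\varDelta(\sigma,\epsilon)$ (Proposition~\ref{prop:twisting=inner conjugation}), whereas a knot has $\epsilon=\epsilon'=\emptyset$ so that the conjugating element $\varDelta(\sigma,\emptyset)$ is trivial; one must therefore verify that the factors $\varDelta(\sigma)^{\pm1}_{k+1,k+2}$ inserted by $\theta$ at the various maxima and minima of the circle leave behind no residual automorphism at all, not merely a visibly inner one. Since the relevant local identities have already been established in the string link case, this verification is a line-by-line transcription. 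As an independent check one may instead present $D$ as the closure $\mathrm{cl}(L)$ of a chord diagram $L$ on a long knot via \eqref{CD(O)=CD(1)}, apply Proposition~\ref{prop:twisting=our action} with $\epsilon=\uparrow$ to $L$, and dispose of the two arcs closing $L$ into a circle by means of Figure~\ref{Compatibility with  the first equality of (IT5)} and Lemma~\ref{mu=1}.
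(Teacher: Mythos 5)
Your proposal is correct and follows essentially the same route as the paper, which proves this lemma by declaring it "completely the same" as the proof of Proposition \ref{prop:twisting=our action}: compare the two actions piece-wise on the ABC-spaces, observe agreement on the braid pieces, and cancel the extra twistor factors on the $A$- and $C$-pieces via the identity of Figure \ref{Compatibility with  the first equality of (IT5)} together with Lemma \ref{mu=1}. Your explicit telescoping bookkeeping and the alternative check via the closure \eqref{CD(O)=CD(1)} merely spell out details the paper leaves implicit.
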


\begin{proof}
It can be proven in a completely same way
to the proof of Proposition \ref{prop:twisting=our action}.
\end{proof} 

We may say that the following is an analogue of 
Theorem \ref{inner automorphism theorem on chord diagrams} and
\ref{inner automorphism theorem on tangles}
for proalgebraic knots. 

\begin{prop}\label{Gm-action on knots}
(1)
The $GRT(\mathbb K)$-action $\rho_0$ constructed in \eqref{GRT to Aut KIK}
on the algebra 
$\widehat{{\mathbb K}[{\mathcal{IK}}]}$
of infinitesimal knots
actually factors through $\mathbb{G}_m(\mathbb K)$($=\mathbb{K}^\times$)-action.
Namely the kernel of the action is its unipotent part $GRT_1(\mathbb K)$.

(2)
The $GT(\mathbb K)$-action $\rho_0$ constructed in \eqref{GT to Aut KK}
on the algebra $\widehat{{\mathbb K}[{\mathcal K}]}$ of proalgebraic knots
actually factors through $\mathbb{G}_m(\mathbb K)$($=\mathbb{K}^\times$)-action.
Namely the kernel of the action is its unipotent part $GT_1(\mathbb K)$.
\end{prop}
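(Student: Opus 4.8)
The plan is to deduce part~(1) from the twisting description of the $GRT(\mathbb K)$-action already available, and then to transport that conclusion to part~(2) along an associator. For part~(1): by Lemma~\ref{twisting is our action on knots} one has $\rho_0(\sigma)(D)=\theta_0^{\varDelta(\sigma)}(D)$ for every $\sigma\in GRT(\mathbb K)$ and every $D\in\widehat{\mathcal{CD}}(\orientedcircle)$, where $\theta_0^{\varDelta(\sigma)}$ is the restriction to $\widehat{\mathcal{CD}}(\orientedcircle)$ of $\theta_{\emptyset,\emptyset}^{\varDelta(\sigma)}$. When $\sigma=(1,g)\in GRT_1(\mathbb K)$, Proposition~\ref{prop:twisting=inner conjugation} applied with $\epsilon=\epsilon'=\emptyset$ gives
\[
\theta_{\emptyset,\emptyset}^{\varDelta(\sigma)}(D)=\varDelta(\sigma,\emptyset)\cdot D\cdot\varDelta(\sigma,\emptyset)^{-1}=D ,
\]
since the empty-sequence twistor $\varDelta(\sigma,\emptyset)$ is the unit of $\widehat{\mathcal{CD}}_{\emptyset,\emptyset}$. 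Hence $GRT_1(\mathbb K)$ lies in the kernel of $\rho_0$, and since $GRT(\mathbb K)=\mathbb K^\times\ltimes GRT_1(\mathbb K)$ (the remark following Definition~\ref{definition of proalgebraic GRT}) the action \eqref{GRT to Aut KIK} factors through $\mathbb G_m(\mathbb K)$. To see that the kernel is no bigger, I would check that the residual $\mathbb G_m(\mathbb K)$-action is faithful: for $c\in\mathbb K^\times$ one has $(c,1)\in GRT(\mathbb K)$, and unwinding Definition~\ref{GRT-action on infinitesimal tangles} in the case $g=1$ (so that $\nu_1$ is the trivial diagram) shows that $\rho_0((c,1))$ acts trivially on the $A$- and $C$-pieces and on the $\widehat{IB}$-pieces by $t_{i,i+1}\mapsto c^{-1}t_{i,i+1}$, hence on the degree-$n$ homogeneous part of $\widehat{\mathcal{CD}}(\orientedcircle)$ by multiplication by $c^{-n}$; as this graded algebra has non-zero components in degrees $2$ and $3$ (cf.\ \cite{CDM}), $\rho_0((c,1))\neq\mathrm{id}$ whenever $c\neq1$, giving $\ker\rho_0=GRT_1(\mathbb K)$.

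For part~(2), fix a base point $p=(\mu,\varphi)\in M(\mathbb K)$, which exists since $M(\mathbb K)$ contains the rational associators of \cite{Dr}. By Proposition~\ref{KT to KIT}.(4) the map $\rho_0(p)\colon\widehat{{\mathbb K}[{\mathcal K}]}\simeq\widehat{\mathcal{CD}}(\orientedcircle)$ is an isomorphism, and by Proposition~\ref{three torsor maps} the assignment $p\mapsto\rho_0(p)$ is a morphism of bitorsors. Letting $\tau\colon GT(\mathbb K)\to GRT(\mathbb K)$ be the group isomorphism determined by $p$ (the unique $\tau(\sigma)$ with $\tau(\sigma)\circ p=p\circ\sigma$), equivariance of $\rho_0$ yields
\[
\rho_0(p)\circ\rho_0(\sigma)=\rho_0(\tau(\sigma))\circ\rho_0(p)\qquad(\sigma\in GT(\mathbb K)),
\]
with $\rho_0(\sigma)$ the $GT(\mathbb K)$-action \eqref{GT to Aut KK} on the left and $\rho_0(\tau(\sigma))$ the $GRT(\mathbb K)$-action of part~(1) on the right. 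Hence $\ker\bigl(\rho_0\colon GT(\mathbb K)\to\mathrm{Aut}\,\widehat{{\mathbb K}[{\mathcal K}]}\bigr)=\tau^{-1}\bigl(GRT_1(\mathbb K)\bigr)$. Comparing first coordinates in Proposition~\ref{GRT-GT-bitorsor}, if $\tau(\lambda,f)=(c,g)$ then $p\circ(\lambda,f)=(\lambda\mu,\dots)$ and $\tau(\lambda,f)\circ p=(\mu/c,\dots)$, so $c=\lambda^{-1}$; therefore $\tau$ restricts to an isomorphism $GT_1(\mathbb K)\simeq GRT_1(\mathbb K)$, and we conclude $\ker\rho_0=GT_1(\mathbb K)$, i.e.\ the $GT(\mathbb K)$-action on $\widehat{{\mathbb K}[{\mathcal K}]}$ factors through $\mathbb G_m(\mathbb K)$.

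The step I expect to be the main obstacle is the faithfulness statement in part~(1): one must confirm carefully both that the $\mathbb K^\times$-factor of $GRT(\mathbb K)$ acts on $\widehat{\mathcal{CD}}(\orientedcircle)$ exactly through the grading — a short but not entirely automatic inspection of $\rho_0$ on the fundamental pieces when $g=1$ — and that this graded algebra is genuinely non-trivial in positive degree. Everything else, namely the containment $GRT_1(\mathbb K)\subseteq\ker\rho_0$ and the whole of part~(2), is then formal, the latter being pure transport of structure along the $\bigl(GRT(\mathbb K),GT(\mathbb K)\bigr)$-bitorsor $M(\mathbb K)$.
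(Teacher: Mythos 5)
Your proof is correct and follows essentially the same route as the paper: part (1) combines Lemma \ref{twisting is our action on knots} with Proposition \ref{prop:twisting=inner conjugation} using $s(\orientedcircle)=t(\orientedcircle)=\emptyset$, and part (2) transports this along the bitorsor structure of Proposition \ref{three torsor maps} exactly as in the proof of Theorem \ref{inner automorphism theorem on tangles}. You are in fact slightly more complete than the paper, which leaves implicit both the faithfulness of the residual $\mathbb{G}_m(\mathbb K)$-action (your degree $2$ and $3$ argument) and the check that the torsor isomorphism carries $GT_1(\mathbb K)$ onto $GRT_1(\mathbb K)$ (your computation $c=\lambda^{-1}$).
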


\begin{proof}
(1)
It is obtained from a combination of \eqref{twisting=our action on knots}
and \eqref{twisting=inner conjugation} 
because we have $s(\orientedcircle)=t(\orientedcircle)=\emptyset$.

(2)
The proof is almost same to the proof of  Theorem \ref{inner automorphism theorem on tangles}.
It can be derived from the above claim in this proposition and
Proposition \ref{three torsor maps}.
\end{proof}

As a corollary of the above proposition, we obtain a non-trivial decomposition of knots below.

\begin{cor}\label{decomposition of knots}
Each oriented knot $K$ admits a canonical decomposition
\begin{equation}\label{grading decomposition of knots}
K =K_0+K_1+K_2+\cdots
\end{equation}
in $\widehat{{\mathbb K}[{\mathcal K}]}$
such that
$$\sigma(K_m)=\lambda^m\cdot K_m $$
holds  for $\sigma=(\lambda,f)\in GT(\mathbb K)$ and $m\geqslant 0$.
\end{cor}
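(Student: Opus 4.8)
The plan is to derive the decomposition from Proposition \ref{Gm-action on knots}.(2), which tells us that the $GT(\mathbb{K})$-action $\rho_0$ on $\widehat{{\mathbb K}[{\mathcal K}]}$ factors through $\mathbb{G}_m(\mathbb{K})$. It therefore suffices to produce a weight decomposition $\widehat{{\mathbb K}[{\mathcal K}]}=\widehat{\oplus}_{m\geqslant 0}W_m$ for this $\mathbb{G}_m$-action in which $\sigma=(\lambda,f)$ acts on $W_m$ by the scalar $\lambda^{m}$, compatible with the singular knot filtration so that the resulting expansion of a knot converges; the components $K_m$ are then obtained by projecting $K$ onto $W_m$.

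To pin down the weights I would transport the action to the chord-diagram side, where the relevant grading is visible. Fix an associator $p=(1,\varphi)\in M_1(\mathbb{K})$, as in the proof of Theorem \ref{inner automorphism theorem on tangles}, and consider the isomorphism $\rho_0(p):\widehat{{\mathbb K}[{\mathcal K}]}\simeq\widehat{\mathcal{CD}}(\orientedcircle)$ of Proposition \ref{KT to KIT}.(4). By Proposition \ref{three torsor maps} this is a morphism of bitorsors, hence it intertwines $\rho_0$ on $\widehat{{\mathbb K}[{\mathcal K}]}$ with $\rho_0$ on $\widehat{\mathcal{CD}}(\orientedcircle)$ through the group isomorphism $r_p:GT(\mathbb{K})\simeq GRT(\mathbb{K})$ characterized by $p\circ\sigma=r_p(\sigma)\circ p$ inside the $\bigl(GRT(\mathbb{K}),GT(\mathbb{K})\bigr)$-bitorsor $M(\mathbb{K})$. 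Comparing the first coordinates in this identity via the product formulas of Proposition \ref{GRT-GT-bitorsor} (with $\mu=1$) shows that if $\sigma=(\lambda,f)$ then $r_p(\sigma)=(c,g)$ with $c=\lambda^{-1}$. Now $\widehat{\mathcal{CD}}(\orientedcircle)$ is graded by the number of chords, and by Proposition \ref{Gm-action on knots}.(1) the $GRT(\mathbb{K})$-action on it factors through $\mathbb{G}_m(\mathbb{K})$ and, via \eqref{Gm-action}, multiplies a diagram in the $m$-chord subspace $\mathcal{CD}^{m}(\orientedcircle)$ by $c^{-m}$. Putting $W_m:=\rho_0(p)^{-1}\bigl(\mathcal{CD}^{m}(\orientedcircle)\bigr)$ we get $\sigma(w)=(\lambda^{-1})^{-m}w=\lambda^{m}w$ for all $w\in W_m$ and all $\sigma=(\lambda,f)\in GT(\mathbb{K})$. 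Since $W_m$ is intrinsically the subspace on which the induced $\mathbb{G}_m$-action has weight $m$, it is independent of the choice of $p$, which yields the canonicity.

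Finally, because $\rho_0(p)$ is a filtered isomorphism and the chord-number grading of $\widehat{\mathcal{CD}}(\orientedcircle)$ is compatible with its filtration, every element of $\widehat{{\mathbb K}[{\mathcal K}]}$, in particular every oriented knot $K$, has a unique convergent expansion $K=\sum_{m\geqslant 0}K_m$ with $K_m\in W_m$ the inverse image under $\rho_0(p)$ of the $m$-chord part of $\rho_0(p)(K)$; this is the decomposition \eqref{grading decomposition of knots}. The step that genuinely needs care is the coordinate bookkeeping in the middle paragraph — keeping straight the inversion in the bitorsor product of Proposition \ref{GRT-GT-bitorsor} and in \eqref{Gm-action}, so that the weight of $W_m$ comes out to be $+m$ under the normalization $\mu=1$, and checking that only non-negative weights occur; both are transparent on the chord-diagram side, where $\widehat{\mathcal{CD}}(\orientedcircle)$ carries a genuine $\mathbb{Z}_{\geqslant 0}$-grading. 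Convergence of $\sum_m K_m$ is then immediate from completeness of $\widehat{{\mathbb K}[{\mathcal K}]}$ with respect to the singular knot filtration.
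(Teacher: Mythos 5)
Your argument is correct and coincides with the paper's own second proof (``Another Proof''): you transport the action to $\widehat{\mathcal{CD}}(\orientedcircle)$ via $\rho_0(p)$ for $p=(1,\varphi)\in M_1(\mathbb K)$, identify the weight spaces with the chord-number grading (with the same sign bookkeeping $c=\lambda^{-1}$, so the $m$-chord part pulls back to the $\lambda^{m}$-eigenspace), and deduce convergence and canonicity exactly as the paper does. The paper also records a shorter first proof that applies the representation theory of $\mathbb G_m$ directly to $\widehat{{\mathbb K}[{\mathcal K}]}$ using Proposition \ref{Gm-action on knots}.(2), but your route is the one the paper itself elaborates.
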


\begin{proof}
It is a direct corollary of Proposition \ref{Gm-action on knots}.(2).
By the representation theory of ${\mathbb G}_m$,
the completed vector space
$\widehat{{\mathbb K}[{\mathcal K}]}$ is decomposed into the product of eigenspaces
$V_a$ ($a\in\mathbb Z$) where $\mathbb G_m$ acts as a multiplication of  $a$-th power:
\begin{equation}\label{V=KK}
\widehat{{\mathbb K}[{\mathcal K}]}=\prod_{i\geqslant 0}V_i.
\end{equation}
By our construction,
the decomposition is compatible its filtration $\{\mathcal K\}_{N=0}^\infty$ , i.e.
$
\prod_{i\geqslant N}V_i=\mathcal K_N.
$
We also have  $\dim V_a<\infty$ for all $a\in\mathbb Z$ and actually
$\dim V_a=0$ for $a<0$.
Thus we get the claim.\qed   \\
{\it Another Proof.}
Again by the representation theory of ${\mathbb G}_m$,
Proposition \ref{Gm-action on knots}.(1) implies that 
the completed vector space 
$\widehat{{\mathbb K}[{\mathcal{IK}}]}$, hence
$\widehat{\mathcal{CD}}({\orientedcircle})$,
is decomposed into the product of eigenspaces $W_a$ ($a\in\mathbb Z$)
where $\mathbb G_m$ acts as a multiplication of  $a$-th power:
\begin{equation}
\widehat{\mathcal{CD}}({\orientedcircle})=\prod_{i\geqslant 0}W_i.
\end{equation}
Since  the $\mathbb G_m$-action is compatible with the grading
$\widehat{\mathcal{CD}}({\orientedcircle})=\widehat{\oplus}_{m=0}^\infty{\mathcal{CD}^m}({\orientedcircle})$
where ${\mathcal{CD}}^m({\orientedcircle})$ is the $\mathbb K$-linear space
spanned by the chord diagrams with $m$-chords,
we have 
\begin{equation}\label{W=CD}
W_{-m}={\mathcal{CD}}^m({\orientedcircle})
\end{equation}
for $m\geqslant 0$.
Thus $\dim W_a<\infty$ for all $a\in\mathbb Z$ and actually $\dim W_{a}=0$ for $a>0$.
Let $p=(1,\varphi)\in M_1(\mathbb K)$.
Then we have 
an isomorphism
$$r_p:GT(\mathbb K) \simeq GRT(\mathbb K)$$
by Proposition \ref{GRT-GT-bitorsor} and
an isomorphism
$$\rho_0(p):\widehat{{\mathbb K}[{\mathcal{K}}]}\simeq
\widehat{\mathcal{CD}}(\orientedcircle)$$
by Proposition \ref{KT to KIT}.(4).
By using Proposition \ref{three torsor maps},
we can show that
\begin{equation}\label{V=W}
V_a:=\rho_0(p)^{-1}(W_{-a})
\end{equation}
is the eigenspace where 
$\mathbb G_m$ acts as a multiplication of $a$-th power and
actually the space is invariant under any choice of $p=(1,\varphi)\in M_1(\mathbb K)$.
\end{proof}

We note that in the decomposition \eqref{V=KK}, we have
$
\dim V_m=\dim {\mathcal{CD}}^m({\orientedcircle}).
$
for all $m\geqslant 0$.

\begin{rem}
Let $\iota_0=(-1,1)\in GT(\mathbb K)$.
Since it is an involution, the space $\widehat{{\mathbb K}[{\mathcal K}]}$
is divided into two eigenspaces $V_+$ and $V_-$
where the action of $\iota_0$ is given by the multiplication by $1$ and $-1$
respectively.
So an each oriented knot $K$ is decomposed as 
$$K=K_++K_-\in \widehat{{\mathbb K}[{\mathcal K}]}
\qquad \text{with}\quad K_\pm\in V_\pm.$$
Since $\iota_0(K)$ is nothing but the mirror image $\bar K$ of $K$, we have
$K_+=\frac{1}{2}(K+\bar K)$ and 
$K_-=\frac{1}{2}(K-\bar K).$
We note that 
in terms of the decomposition \eqref{grading decomposition of knots}
they are expressed as
$$K_{+}=\sum_{i\geqslant 0} K_{2i}  \quad \text{and} \quad
K_{-}=\sum_{i\geqslant 0} K_{2i+1}.$$
\end{rem}

Our first theorem in this subsection is an explicit presentation 
of  the proalgebraic knot 
whose Kontsevich invariant is trivial.
That is, we explicitly calculate the inverse image of
the unit, 
the trivial (chordless) chord diagram, under the Kontsevich isomorphism 
$I:\widehat{{\mathbb C}[{\mathcal K}]}\simeq\widehat{\mathcal{CD}}(\orientedcircle)$
given in \eqref{Kontsevich's isomorphism}.

\begin{thm}\label{inverse image theorem}
The inverse image $I^{-1}(e)$ of the unit  $e\in \widehat{\mathcal{CD}}(\orientedcircle)$
($\simeq\widehat{{\mathbb K}[{\mathcal{IK}}]}$), 
under Kontsevich's isomorphism $I$ 
is explicitly  given by 
\begin{equation}\label{gamma0}
\gamma_0:=\orientedcircle-{c_0}+c_0\sharp c_0-c_0\sharp c_0\sharp c_0
+c_0\sharp c_0\sharp c_0\sharp c_0-\cdots  
\in\widehat{{\mathbb C}[{\mathcal K}]}
\end{equation}
where $\sharp$ is the connected sum and 
$c_0\in \widehat{{\mathbb C}[{\mathcal K}]}$ is given below (see also Figure \ref{c0-intro}):
\begin{align}\label{c0}
c_0:=\underset{ k_m>1}{\sum_{m,k_1,\dots,k_m\in {\mathbb N}}}&
(-1)^m\frac{\zeta^\mathrm{inv}(k_1,\dots,k_m)}{(2\pi\sqrt{-1})^{k_1+\cdots+k_m}}\cdot 
\Bigl\{
a_{0,0}^{\opannihilation}\cdot a_{2,0}^{\downarrow\uparrow\opannihilation} \cdot
(\log\sigma_2^2)^{k_m-1}\cdot (\log\sigma_3^2)\cdot  \\
&
(\log\sigma_2^2)^{k_{m-1}-1}\cdot (\log\sigma_3^2) \cdots 
\cdots (\log\sigma_2^2)^{k_1-1}\cdot (\log\sigma_3^2)
\cdot c_{1,1}^{\downarrow\opcreation\uparrow}\cdot c_{0,0}^{\creation}\Bigr\}. \notag
\end{align}
\begin{figure}[h]
\begin{center}
         \begin{tikzpicture}

\draw  (-2,2) node{{\huge $c_0=\underset{ k_m>1}{\sum\limits_{m,k_1,\dots,k_m\in {\mathbb N}}}
\frac{(-1)^m\zeta^\mathrm{inv}(k_1,\dots,k_m)}{(2\pi\sqrt{-1})^{k_1+\cdots+k_m}}\cdot$}};

                     \draw[->] (2.3,-3.2)  arc (180:360:0.9);
                   \draw[<-] (2.9,-2.8)  arc (180:360:0.3);

                     \draw[->] (2.9,-2.8)--(2.9,-1.9);
                     \draw[-] (3.5,-2.8)--(3.5,-2.6);
                    \draw[->]  (4.1,-3.2)--(4.1,-2.6);

                      \draw (3.2,-2.6) rectangle (4.4,-2.1);
                     \draw (3.8,-2.4) node{$log \ \sigma_3^2$};

                     \draw[-]  (2.9,-2.1)--(2.9,-1.9) (3.5,-2.1)--(3.5,-1.9) ;
                \draw[->] (4.1,-2.1)--(4.1,0.2) ;

                      \draw (2.6,-1.9) rectangle (3.8,-1.4);
                     \draw (3.2,-1.7) node{$log \ \sigma_2^2$};
                     \draw[-]  (2.9,-1.4)--(2.9,-1.2) (3.5,-1.4)--(3.5,-1.2) ;

                     \draw[dotted] (2.9,-1.2)--(2.9,-0.8) (3.5,-1.2)--(3.5, -0.8)  ;

                     \draw[-]  (2.9,-0.8)--(2.9,-0.6) (3.5,-0.8)--(3.5,-0.6) ;
                      \draw (2.6,-0.6) rectangle (3.8,-0.1);
                     \draw (3.2,-0.4) node{$log \ \sigma_2^2$};

    \draw[decorate,decoration={brace,mirror}] (4.5,-1.8) -- (4.5,-0.2) node[right, midway]{$k_1-1$};

                     \draw[-]  (3.5,-0.1)--(3.5,0.2)  ;
                      \draw (3.2,0.2) rectangle (4.4,0.7);
                     \draw (3.8,0.4) node{$log \ \sigma_3^2$};

                     \draw[->]  (2.9,-0.1)--(2.9,0.9);
                     \draw[<-]   (3.5,0.7)--(3.5,0.9);
                     \draw[->]   (4.1,0.7)--(4.1,0.9);

                     \draw[dotted] (2.9,0.9)--(2.9,2.4) (3.5,0.9)--(3.5, 2.4)  (4.1,0.9)--(4.1, 2.4);

                     \draw[->] (2.9,2.4)--(2.9,3.3) ;
                     \draw[<-] (3.5,2.4)--(3.5,2.6)  ;
                     \draw[->] (4.1,2.4)--(4.1,2.6) ;

                      \draw (3.2,2.6) rectangle (4.4,3.1);
                     \draw (3.8,2.8) node{$log \ \sigma_3^2$};
                     \draw[-]  (3.5,3.1)--(3.5,3.3) ;

                   \draw[->] (4.1,3.1)--(4.1,5.3);

                      \draw (2.6,3.3) rectangle (3.8,3.8);
                     \draw (3.2,3.5) node{$log \ \sigma_2^2$};
                     \draw[-]  (2.9,3.8)--(2.9,4)  (3.5,3.8)--(3.5,4) ;

                     \draw[dotted]  (2.9,4)--(2.9,4.4)  (3.5,4)--(3.5,4.4) ;

                     \draw[-]  (2.9,4.4)--(2.9,4.6)  (3.5,4.4)--(3.5,4.6) ;
                      \draw (2.6,4.6) rectangle (3.8,5.1);
                     \draw (3.2,4.8) node{$log \ \sigma_2^2$};
                     \draw[-]   (3.5,5.1)--(3.5,5.4) ;

    \draw[decorate,decoration={brace,mirror}] (4.5,3.4) -- (4.5,5) node[right, midway]{$k_{m-1}-1$};

                     \draw[->]  (2.9,5.1)--(2.9,6.1);

                      \draw (3.2,5.4) rectangle (4.4,5.9);
                     \draw (3.8,5.6) node{$log \ \sigma_3^2$};
                     \draw[-]  (3.5,5.9)--(3.5,6.1)  ;

                   \draw[->] (4.1,5.9)--(4.1,8.1);

                      \draw (2.6,6.1) rectangle (3.8,6.6);
                     \draw (3.2,6.3) node{$log \ \sigma_2^2$};
                     \draw[-]  (2.9,6.6)--(2.9,6.8)  (3.5,6.6)--(3.5,6.8) ;

                     \draw[dotted]  (2.9,6.8)--(2.9,7.2)  (3.5,6.8)--(3.5,7.2) ;

                     \draw[-]  (2.9,7.2)--(2.9,7.4)  (3.5,7.2)--(3.5,7.4) ;

                      \draw (2.6,7.4) rectangle (3.8,7.9);
                     \draw (3.2,7.6) node{$log \ \sigma_2^2$};
                     \draw[-] (2.9,7.9)--(2.9,8.5) (3.5,7.9)--(3.5,8.1) ;

    \draw[decorate,decoration={brace,mirror}] (4.5,6.2) -- (4.5,7.8) node[right, midway]{$k_m-1$};

                     \draw[<-] (3.5,8.1)  arc (180:0:0.3);
                     \draw[<-] (2.3,8.5)  arc (180:0:0.3);

                     \draw[<-] (2.3,-3.2)--(2.3,8.5) ;

\draw[color=white, very thick]  (2.3,-3.2)--(4.3,-3.2) (2.3,-2.8)--(4.3,-2.8) (2.3,8.1)--(4.3,8.1) (2.3,8.5)--(4.3,8.5)  ;
       \end{tikzpicture}
\caption{$c_0$}
\label{c0-intro}
\end{center}
\end{figure}
\end{thm}

Here we define 
\begin{equation}\label{log-sigma}
\log\sigma_i^2=-\sum_{k=1}^\infty \frac{1}{k}\{1-\sigma_i^2\}^k\in\widehat{{\mathbb C}[P_4]}
\end{equation}
for $i=2,3$.
And  we define the {\it inversed MZV} $\zeta^{\mathrm{inv}}(k_1,\dots,k_m)$ to be the coefficient 
of $A^{k_m-1}B\cdots A^{k_1-1}B$ multiplied by $(-1)^m$
in the {\it inversed KZ-associator} $\varPhi_\mathrm{KZ}^{\mathrm{inv}}(A,B)\in \mathbb{R}\langle\langle A,B\rangle\rangle$,
which is the inverse of the KZ-associator
$\varPhi_\mathrm{KZ}(A,B)$ in \eqref{LM-formula} with respect to the multiplication \eqref{product of proalg-GRT}. Namely
\begin{align}\label{inv-KZ}
\varPhi_\mathrm{KZ}^{\mathrm{inv}}(A,B)=:1+
\underset{k_m>1}
{\underset{m,k_1,\dots,k_m\in{\mathbb N}}{\sum}}
(-1)^m & \zeta^{\mathrm{inv}}(k_1,\cdots,k_m)
A^{k_m-1}B\cdots A^{k_1-1}B \\ \notag
&+\text{(other terms)}
\end{align}
where $\varPhi_\mathrm{KZ}^{\mathrm{inv}}(A,B)$
is the series uniquely defined by
\begin{equation}\label{inv-KZ2}
\varPhi_\mathrm{KZ}^{\mathrm{inv}}\left(\varPhi_\mathrm{KZ}(A,B)\cdot
A\cdot\varPhi_\mathrm{KZ}(A,B)^{-1},B\right)
=\varPhi_\mathrm{KZ}(A,B)^{-1}.
\end{equation}

The inversed MZV's with small depth are calculated in
Example \ref{examples of zeta-inv}
and the first two terms of $\gamma_0$ is calculated in 
Example \ref{2terms}.

\begin{proof}
By our construction,
the degree $0$-part of the image $I(\orientedcircle)$ of the trivial knot (unknot) $\orientedcircle$ 
under the Kontsevich isomorphism $I$ given in \eqref{Kontsevich's isomorphism}
is
the trivial (chordless) chord diagram $e$.
Therefore on the decomposition $K=K_0+K_1+\cdots$ 
in \eqref{grading decomposition of knots}  for $K=\orientedcircle$,
we have
\begin{equation}\label{K=I-1(e)}
K_0=I^{-1}(e)
\end{equation}
by \eqref{V=W}.
To calculate $K_0$, we take ${\mathbb K}$ to be the polynomial algebra ${\mathbb C}[T^\pm]$ 
generated by  $T$ and $T^{-1}$ and
take $p\in M_1(\mathbb C)$ to be a specific element 
$p_\mathrm{KZ}=(1,\varphi_\mathrm{KZ})\in M(\mathbb C)$
with $\varphi_\mathrm{KZ}=\varPhi_\mathrm{KZ}\left(\frac{1}{2\pi\sqrt{-1}}A,\frac{1}{2\pi\sqrt{-1}}B\right)$.
We have $(T^{-1},1)\in GRT(\mathbb K)$.
By Proposition \ref{GRT-GT-bitorsor}, we obtain a unique element 
$(T,f_T)\in GT(\mathbb K(T))$
satisfying
\begin{equation*}
(T^{-1},1)\circ (1,\varphi_\mathrm{KZ})=(1,\varphi_\mathrm{KZ})\circ (T,f_T).
\end{equation*}
It can be read as
\begin{equation*}
\varphi_\mathrm{KZ}(TA,TB)=
f_T(\varphi_\mathrm{KZ}\cdot e^A\cdot\varphi_\mathrm{KZ}^{-1},e^B)\cdot\varphi_\mathrm{KZ}.
\end{equation*}
We get that $f_T$ belongs to $F_2({\mathbb C}[T])$. So
\begin{equation*}
f_T\left(\exp\{\varphi_\mathrm{KZ}\cdot A\cdot\varphi_\mathrm{KZ}^{-1}\},\exp\{B\}\right)
\equiv \varphi_\mathrm{KZ}^{-1} \pmod T.
\end{equation*}
By replacing $A$ and $B$ by $2\pi\sqrt{-1} A$ and $2\pi\sqrt{-1} B$ respectively,
we obtain 
\begin{equation*}
f_T(e^{2\pi\sqrt{-1}A},e^{2\pi\sqrt{-1}B})\equiv \varPhi_\mathrm{KZ}^{\mathrm{inv}} \pmod T.
\end{equation*}
It says that
\begin{equation*}\label{fT mod T}
f_T(\sigma_1^2,\sigma_2^2)\equiv
\varPhi_\mathrm{KZ}^{\mathrm{inv}} \left(\frac{\log\sigma_1^2}{2\pi\sqrt{-1}} \ ,\frac{\log\sigma_2^2}{2\pi\sqrt{-1}}\right) \pmod T,
\end{equation*}
where precisely it means that
\begin{equation*}
\mathrm{red}\left(f_T(\sigma_1^2,\sigma_2^2)\right)=
\mathrm{red}\left(\varPhi_\mathrm{KZ}^{\mathrm{inv}} \left(\frac{\log\sigma_1^2}{2\pi\sqrt{-1}} \ , \frac{\log\sigma_2^2}{2\pi\sqrt{-1}}\right)\right)
\end{equation*}
with the reduction map $\mathrm{red}:F_2(\mathbb C[T])\to F_2(\mathbb C)$ 
caused by putting $T=0$.
Therefore by Definition \ref{GT-action on proalgebraic tangles}
\begin{align*}
\Lambda_{f_T}^\uparrow=&a_{1,0}^{\uparrow\opannihilation} \cdot
f_T(\sigma_1^2,\sigma_2^2)
\cdot c_{0,1}^{\opcreation\uparrow} \\
\equiv& a_{1,0}^{\uparrow\opannihilation} \cdot\varPhi_\mathrm{KZ}^{\mathrm{inv}} \left(\frac{\log\sigma_1^2}{2\pi\sqrt{-1}} \ , \frac{\log\sigma_2^2}{2\pi\sqrt{-1}}\right)
\cdot c_{0,1}^{\opcreation\uparrow}   \pmod T\\
=&a_{1,0}^{\uparrow\opannihilation}\cdot c_{0,1}^{\opcreation\uparrow}+
\underset{ k_m>1}{\sum_{m,k_1,\dots,k_m\in {\mathbb N}}}
(-1)^m\frac{\zeta^\mathrm{inv}(k_1,\dots,k_m)}{(2\pi\sqrt{-1})^{k_1+\cdots+k_m}}\cdot 
\Bigl\{
a_{1,0}^{\uparrow\opannihilation} 
\cdot
(\log\sigma_1^2)^{k_m-1}\cdot (\log\sigma_2^2) \\
&\qquad\qquad\qquad\qquad
\cdot(\log\sigma_1^2)^{k_{m-1}-1}\cdot (\log\sigma_2^2) \cdots 
\cdots (\log\sigma_1^2)^{k_1-1}\cdot (\log\sigma_2^2)\cdot
c_{0,1}^{\opcreation\uparrow}
\Bigr\} .
\end{align*}
The third equality follows from \eqref{inv-KZ}.
Here we note that the non-admissible terms, 
the \lq other terms' in \eqref{inv-KZ}, all vanish by (T6).
By the isomorphism \eqref{cl on proalgebraic knots}, we have
$$
\mathrm{cl}(\Lambda_{f_T}^\uparrow)
=a_{0,0}^{\opannihilation}\cdot (\downarrow\otimes \Lambda_{f_T}^\uparrow)\cdot
c_{0,0}^{\creation}
\equiv \orientedcircle+c_0   \pmod T
$$
(for $c_0$, see Figure \ref{c0-intro}).
Whence, by taking inverse of both, we have
\begin{equation}\label{cl=gamma}
\mathrm{cl}(\nu_{f_T}^\uparrow)\equiv \gamma_0   \pmod T
\end{equation}
(for $\gamma_0$, see \eqref{gamma0-intro}).
We note that the infinite summation on the right hand side of \eqref{gamma0}
converges because $c_0^{\sharp n}\in {\mathcal K}_n$ for  $n\geqslant 1$.
By Corollary \ref{decomposition of knots},
$K_0$ is obtained  by evaluating $(T,f_T)(\orientedcircle)$
at $T=0$, i.e.
\begin{equation}
(T,f_T)(\orientedcircle)\equiv K_0 \pmod T.
\end{equation}
Since $\orientedcircle=a^{\opannihilation}_{0,0}\cdot c_{0,0}^{\creation}$,
the action on $\orientedcircle$ is calculated to be
\begin{equation}\label{Tf=cl}
(T,f_T)(\orientedcircle)=a^{\opannihilation}_{0,0}\cdot (\downarrow\otimes\nu_{f_T}^\uparrow)\cdot c_{0,0}^{\creation}=\mathrm{cl}( \nu_{f_T}^\uparrow).
\end{equation}
By \eqref{cl=gamma}--\eqref{Tf=cl}, we have
\begin{equation}
\gamma_0= K_0
\end{equation}
because both $\gamma_0$ and $K_0$ are independent of $T$. 
Finally we obtain
$$\gamma_0=I^{-1}(e)$$
by \eqref{K=I-1(e)}.
\end{proof}

It might be worthy to mention that our $c_0$ is given by a linear combination of two bridge knots
and our $\gamma_0$ is given by a linear combination of connected sums of
two bridge knots.

\begin{rem}
We remind that the image $I(\orientedcircle)$ of the unit $\orientedcircle$,
the trivial knot, 
under the Kontsevich isomorphism $I$ 
was calculated in \cite{BLT},
which may be be regarded as a calculation 
in an opposite direction to our theorem.
\end{rem}

By the definition of inversed MZV's, \eqref{inv-KZ} and \eqref{inv-KZ2},
they are given by polynomial combinations
of MZV's.

\begin{eg}\label{examples of zeta-inv}
(1)
For $n>1$,
$$
\zeta^\mathrm{inv}(n)=-\zeta(n).
$$

(2)
For $a>0$ and $b>1$,
\begin{align*}
\zeta^\mathrm{inv}(a,b)=&\zeta(a)\zeta(b)-\zeta(a,b)
+\sum_{i=0}^{a-2}(-1)^i\binom{i+b-1}{i}\zeta(b+i)\zeta(a-i) \\
&+(-1)^a\sum_{j=0}^{b-2}\binom{j+a-1}{j}\zeta(b-j)\zeta(a+j).
\end{align*}
\end{eg}

Here is a brief computation of our $\gamma_0$.

\begin{eg}\label{2terms}
\begin{equation*}
\gamma_0\equiv
\orientedcircle-\frac{1}{24}\{\orientedcircle-3_1^l \} \pmod{\mathcal K_4}.
\end{equation*}
Here $3_1^l$ stands for the left trefoil knot.
\end{eg}

\begin{prop}
$\gamma_0\in\widehat{{\mathbb Q}[{\mathcal K}]}$.
\end{prop}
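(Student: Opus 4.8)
The plan is to show that the canonical grading of $\widehat{\mathbb{C}[\mathcal{K}]}$ produced by Corollary \ref{decomposition of knots} is already defined over $\mathbb{Q}$; then $\gamma_0$, being by the proof of Theorem \ref{inverse image theorem} equal to the weight-$0$ component $K_0$ of the rational element $\orientedcircle$, is automatically rational.

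First I would record a base-change statement. For each $N\geqslant 0$ the space $\mathbb{K}[\mathcal{K}]/\mathcal{K}_N$ is finite dimensional, since its associated graded pieces are the spaces $\mathcal{CD}^{m}(\orientedcircle)$ of chord diagrams, and it is spanned by the classes of genuine knots; hence $\mathbb{C}[\mathcal{K}]/\mathcal{K}_N=(\mathbb{Q}[\mathcal{K}]/\mathcal{K}_N)\otimes_{\mathbb{Q}}\mathbb{C}$ for all $N$, and $\widehat{\mathbb{C}[\mathcal{K}]}$ is the completed base change of $\widehat{\mathbb{Q}[\mathcal{K}]}$. Next I would descend the $\mathbb{G}_m$-action. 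The construction of $\rho_0$ in Definition \ref{GT-action on proalgebraic tangles} makes sense over any field of characteristic $0$ and is functorial in the base, so at each level it yields a morphism of $\mathbb{Q}$-algebraic groups $GT\to\mathrm{GL}(\mathbb{Q}[\mathcal{K}]/\mathcal{K}_N)$ whose extension of scalars to $\mathbb{C}$ is the corresponding map for $\mathbb{C}[\mathcal{K}]/\mathcal{K}_N$. By Proposition \ref{Gm-action on knots}.(2) this latter map kills $GT_1(\mathbb{C})$, hence kills the subgroup $GT_1(\mathbb{Q})$; since $GT_1$ is prounipotent, $GT_1(\mathbb{Q})$ is Zariski dense in $GT_1$, so the $\mathbb{Q}$-morphism annihilates all of $GT_1$ and factors through a $\mathbb{Q}$-morphism $\mathbb{G}_m=GT/GT_1\to\mathrm{GL}(\mathbb{Q}[\mathcal{K}]/\mathcal{K}_N)$. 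Taking the $\mathbb{G}_m$-weight decomposition over $\mathbb{Q}$ and passing to the inverse limit over $N$ then produces a decomposition $\widehat{\mathbb{Q}[\mathcal{K}]}=\prod_{i\geqslant 0}V_i^{\mathbb{Q}}$ whose base change to $\mathbb{C}$ is exactly \eqref{V=KK}, so that $V_i^{\mathbb{C}}=V_i^{\mathbb{Q}}\otimes_{\mathbb{Q}}\mathbb{C}$.

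Finally I would conclude: the trivial knot $\orientedcircle$ lies in $\widehat{\mathbb{Q}[\mathcal{K}]}$, hence its components in the decomposition \eqref{grading decomposition of knots} lie in the $V_i^{\mathbb{Q}}$; in particular $K_0\in V_0^{\mathbb{Q}}\subseteq\widehat{\mathbb{Q}[\mathcal{K}]}$. Since $\gamma_0=K_0$ by the proof of Theorem \ref{inverse image theorem}, this gives $\gamma_0\in\widehat{\mathbb{Q}[\mathcal{K}]}$.

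The main obstacle I anticipate is purely formal rather than substantive: one must check carefully that $\rho_0$ at each finite level $N$ is genuinely a morphism of $\mathbb{Q}$-algebraic groups compatible with extension of scalars $\mathbb{Q}\hookrightarrow\mathbb{C}$ (so that triviality of its $\mathbb{C}$-fibre on $GT_1(\mathbb{C})$ does force triviality of the $\mathbb{Q}$-morphism on all of $GT_1$ via Zariski density), and that the formation of the $\mathbb{G}_m$-weight decomposition commutes with the inverse limit over $N$ and with this base change. Both points are routine given the finite-dimensionality of each $\mathbb{K}[\mathcal{K}]/\mathcal{K}_N$, but they are where the argument actually does its work.
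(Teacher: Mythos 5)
Your argument is correct, but it proceeds along a genuinely different route from the paper's. The paper's proof stays entirely inside the explicit computation of Theorem \ref{inverse image theorem}: it observes that nothing in that proof is special to $p_\mathrm{KZ}$, reruns it with $(1,\varphi(A/\mu,B/\mu))$ for an arbitrary associator $(\mu,\varphi)\in M(\mathbb K)$ --- so that the formula \eqref{c0} for $c_0$ merely has $2\pi\sqrt{-1}$ replaced by $\mu$ and $\zeta(k_1,\dots,k_m)$ by the coefficients $c(k_1,\dots,k_m)$ of $\varphi$ --- and then invokes Drinfeld's theorem that a \emph{rational} associator exists in $M(\mathbb Q)$. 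Since $\gamma_0=K_0$ is independent of the choice, rationality follows. You instead descend the $\mathbb G_m$-grading itself to $\mathbb Q$: base change of the finite-level quotients $\mathbb K[\mathcal K]/\mathcal K_N$, triviality of the $\mathbb C$-fibre of the action on $GT_1$, Zariski density of $GT_1(\mathbb Q)$ in the prounipotent group $GT_1$, and compatibility of weight decompositions with $\varprojlim$. This is a softer, more structural argument whose notable advantage is that it does \emph{not} use the existence of rational associators at any point (you only ever need $M_1(\mathbb C)\neq\emptyset$, i.e.\ $\varphi_\mathrm{KZ}$, to know $GT_1(\mathbb C)$ acts trivially; note that applying Proposition \ref{Gm-action on knots} directly over $\mathbb Q$ would secretly require $M_1(\mathbb Q)\neq\emptyset$ via the Twistor Lemma, and your detour through $\mathbb C$ cleanly sidesteps this). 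What you lose relative to the paper is the explicit rational formula for $c_0$ obtained by substituting the coefficients of a rational associator into \eqref{c0}; what you gain is independence from Drinfeld's rational-associator theorem and a statement (rationality of the whole decomposition \eqref{V=KK}, $V_i^{\mathbb C}=V_i^{\mathbb Q}\otimes_{\mathbb Q}\mathbb C$) that is stronger than the proposition itself. The two "routine" points you flag at the end --- algebraicity of $\rho_0$ at each finite level over $\mathbb Q$ and commutation of the weight decomposition with the inverse limit --- are indeed where the work lies, and both hold for the reasons you give.
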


\begin{proof}
In the proof of above theorem, there is no specific reason to choose
$(1,\varphi_\mathrm{KZ})\in M_1(\mathbb C)$.
Take $(\mu, \varphi)\in M(\mathbb K)$ to be any associator with an expansion
\begin{align*}
\varphi(A,B)=1+
\underset{k_m>1}
{\underset{m,k_1,\dots,k_m\in{\mathbb N}}{\sum}}
(-1)^m & c(k_1,\cdots,k_m)
A^{k_m-1}B\cdots A^{k_1-1}B  
+\text{(regularized terms)}.\notag
\end{align*}
Then $(1,\varphi(A/\mu,B/\mu))\in M_1(\mathbb K)$.
By using $(1,\varphi(A/\mu,B/\mu))\in M_1(\mathbb K)$ 
instead of $(1,\varphi_\mathrm{KZ})\in M_1(\mathbb C)$
in the proof of Theorem \ref{inverse image theorem},
we obtain an explicit formula of $c_0$,
which is nothing but the replacement of 
$2\pi\sqrt{-1}$ by $\mu\in {\mathbb K}^\times$ and $\zeta(k_1,\dots,k_m)$ by
$c(k_1,\dots,k_m)$  in \eqref{c0}. 
Thus our claim is obtained by
particularly taking a rational associator, an element of $M(\mathbb Q)$,
whose existence is guaranteed in \cite{Dr}.
\end{proof}

Our second theorem in this subsection is on the invariant subspace of $\widehat{{\mathbb K}[{\mathcal K}]}$
under our $GT(\mathbb K)$-action  (cf. \eqref{GT to Aut KK}).

\begin{thm}\label{invariant subspace theorem}
(1)
The 
invariant subspace $V_0$ of $\widehat{{\mathbb K}[{\mathcal K}]}$
under our $GT(\mathbb K)$-action 
is $1$-dimensional and actually generated by $\gamma_0$.

(2)
On the decomposition $K=K_0+K_1+\cdots$ given in \eqref{grading decomposition of knots},
$$K_0=\gamma_0$$
holds for any oriented knot $K$.
\end{thm}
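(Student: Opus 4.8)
The plan is to extract both statements from the $\mathbb G_m$-decomposition of $\widehat{{\mathbb K}[{\mathcal K}]}$ that is already in hand. By Proposition \ref{Gm-action on knots}.(2) the $GT(\mathbb K)$-action on $\widehat{{\mathbb K}[{\mathcal K}]}$ factors through $\mathbb G_m$, so the $GT(\mathbb K)$-invariant subspace is exactly the weight-$0$ eigenspace, which is the space $V_0$ of Corollary \ref{decomposition of knots}; there one has $\widehat{{\mathbb K}[{\mathcal K}]}=\prod_{i\geqslant 0}V_i$ with $\prod_{i\geqslant N}V_i=\mathcal K_N$, so the degree-$0$ projection $\pi_0:\widehat{{\mathbb K}[{\mathcal K}]}\to V_0$, $K\mapsto K_0$, is the continuous quotient by $\mathcal K_1$. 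The first point I would record is that $\mathcal K_1$ is the augmentation ideal, so $\widehat{{\mathbb K}[{\mathcal K}]}/\mathcal K_1\cong\mathbb K$ via the counit $\varepsilon$ (which sends every knot to $1$); composing with $\pi_0$, the map $\varepsilon$ restricts to an isomorphism $V_0\cong\mathbb K$, so in particular $\dim V_0=1$.

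Next I would pin down $\gamma_0$. Each summand of $c_0$ in \eqref{c0} contains a factor $\log\sigma_3^2$ lying in the augmentation ideal, hence $c_0\in\mathcal K_1$ (in fact in $\mathcal K_2$), and therefore $\gamma_0=\orientedcircle-c_0+c_0\sharp c_0-\cdots\equiv\orientedcircle\pmod{\mathcal K_1}$; thus $\varepsilon(\gamma_0)=\varepsilon(\orientedcircle)=1$, so $\gamma_0\neq 0$. On the other hand the proof of Theorem \ref{inverse image theorem} identifies $\gamma_0$ with the degree-$0$ component $\orientedcircle_0$ of the trivial knot in the decomposition \eqref{grading decomposition of knots} (and the ensuing proposition $\gamma_0\in\widehat{{\mathbb Q}[{\mathcal K}]}$ shows this holds over an arbitrary $\mathbb K$, using a rational associator), so $\gamma_0=\orientedcircle_0\in V_0$. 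Together with $\dim V_0=1$ and $\gamma_0\neq 0$ this gives part (1): $V_0=\mathbb K\cdot\gamma_0$.

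For part (2), let $K$ be any oriented knot. Then $K\equiv\orientedcircle\pmod{\mathcal K_1}$ by finiteness of the unknotting number, so $K-\gamma_0\equiv\orientedcircle-\orientedcircle=0\pmod{\mathcal K_1}$, i.e. $K-\gamma_0\in\mathcal K_1=\prod_{i\geqslant 1}V_i$. Applying $\pi_0$ and using $\pi_0(\gamma_0)=\gamma_0$ (valid since $\gamma_0\in V_0$) yields $K_0-\gamma_0=\pi_0(K-\gamma_0)=0$, that is $K_0=\gamma_0$. Equivalently, $K_0$ and $\gamma_0$ both lie in $V_0$ and have the same image $\varepsilon(K)=\varepsilon(\gamma_0)=1$ under the injective restriction $\varepsilon|_{V_0}$.

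I do not expect a real obstacle: the argument is a short deduction from results already established. The only points requiring care are bookkeeping ones — that the \lq\lq singular knot filtration'' term $\mathcal K_1=\prod_{i\geqslant 1}V_i$ coincides with the kernel of the counit and contains both $K-\orientedcircle$ and $c_0$, and that the identity $\gamma_0=\orientedcircle_0$ (hence $\gamma_0\in V_0$) is exactly what the proof of Theorem \ref{inverse image theorem}, together with the subsequent rationality proposition, already provides over an arbitrary $\mathbb K$.
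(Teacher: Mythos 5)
Your argument is correct, but it reaches both conclusions by a somewhat different and more elementary route than the paper. The paper establishes $\dim V_0=1$ by transporting everything to the chord-diagram side: by \eqref{W=CD} and \eqref{V=W} one has $V_0=\rho_0(p)^{-1}\left(\mathcal{CD}^0(\orientedcircle)\right)$, which is one-dimensional because the degree-$0$ chord diagrams form the line ${\mathbb K}\cdot e$; the generator is then identified as $\gamma_0=\rho_0(p)^{-1}(e)$ by Theorem \ref{inverse image theorem}, and part (2) follows because the degree-$0$ part of $I(K)$ is always $e$, whence $I(K_0)=e$ and $K_0=I^{-1}(e)=\gamma_0$. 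You instead stay entirely on the knot side: you identify $V_0$ with $\widehat{{\mathbb K}[{\mathcal K}]}/\mathcal K_1$ using the relation $\prod_{i\geqslant N}V_i=\mathcal K_N$ from the proof of Corollary \ref{decomposition of knots}, observe that this quotient is ${\mathbb K}$ via the counit (every knot being congruent to $\orientedcircle$ modulo $\mathcal K_1$ by finiteness of the unknotting number), and deduce (2) from $K\equiv\orientedcircle\equiv\gamma_0\pmod{\mathcal K_1}$ followed by projection to $V_0$. Both proofs depend in exactly the same way on Theorem \ref{inverse image theorem} (more precisely on its proof) to know that $\gamma_0$ is the weight-$0$ component of $\orientedcircle$ and hence lies in $V_0$, and both implicitly use that, $\mathbb K$ being infinite, the $GT(\mathbb K)$-invariants of the $\mathbb G_m$-module $\prod_i V_i$ of Proposition \ref{Gm-action on knots}.(2) are exactly the weight-$0$ part. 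What your version buys is independence of the dimension count from the identification $W_{-m}=\mathcal{CD}^m(\orientedcircle)$, at the cost of two small extra verifications --- that $\mathcal K_1=\ker\varepsilon$ and that $c_0\in\mathcal K_1$ --- both of which you supply and both of which are sound (the latter is also implicit in the paper's remark that $c_0^{\sharp n}\in\mathcal K_n$).
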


\begin{proof}
(1)
Since ${\mathcal{CD}}^0({\orientedcircle})$ is $1$-dimensional generated by the 
trivial (chordless) chord diagram $e$,
the $\mathbb K$-linear space $V_0$ in \eqref{V=KK} is $1$-dimensional
by \eqref{W=CD} and \eqref{V=W}.
Thus it is enough to show that 
$\rho_0(p)^{-1}(e)=\gamma_0$
for a particular choice of $p=(1,\varphi)\in M_1(\mathbb K)$.
It is obtained by Theorem \ref{inverse image theorem}
because  $\rho_0(p)=I$ for $p=p_\mathrm{KZ}$ (actually it holds for any $p\in M_1(\mathbb K)$).

(2)
By our construction, the degree $0$-part of the image $I(K)$ is always equal to 
$e \in{\mathcal{CD}}^0({\orientedcircle})$.
So  $I(K_0)=e$, by  \eqref{V=W}.
Then by the  arguments given above,
we get $K_0=\gamma_0$.
\end{proof}

Though our knot $\gamma_0$ is not  equal to the trivial knot, 
the unknot $\orientedcircle$ ,
we may say that it is  the \lq trivial' knot
in a particular sense.

\begin{rem}\label{TM-structure}
We saw in Remark \ref{MTM-structure}  particularly that
the space  $\widehat{{\mathbb K}[{\mathcal{K}}]}$ of proalgebraic knots
carries a structure of a mixed Tate (pro-)motive over
${\rm Spec}\; {\mathbb Z}$.
But Proposition \ref{Gm-action on knots} tells that
it falls to infinite direct sum of Tate motives ${\mathbb Q}(n)$
for $n\geqslant 0$.
Our $\gamma_0$ is the generator of the degree $0$-part $V_0$.
Finding a basis of the degree $n$-part $V_n$ for another $n$, 
such as the  basis which is dual to the Vassiliev invariants listed
in \cite{CDM} Table 3.2,
is worthy to calculate.
\end{rem}

\thanks{
{\it Acknowledgements}.
Part of the paper was written at Max Planck Institute for Mathematics.
The author also thanks the institute for its hospitality.
The referees' efforts to make this paper better is gratefully acknowledged.
This work was supported by 
Grant-in-Aid for Young Scientists (A) 24684001.
}


\end{document}